\documentclass[11pt]{article}

\pdfoutput=1

\RequirePackage{my_packages}
\RequirePackage{my_theorems}
\RequirePackage{my_macros}
\RequirePackage{my_tikz}

\usepackage[backend = biber,        
            language = catalan,
            style = numeric-comp,   
            giveninits = true,
            isbn = false,
            url = false,
            doi = false,
            sorting = nyt,
            maxnames = 6,
            backref=true
            ]{biblatex}

\renewbibmacro{in:}{%
  \ifentrytype{article}{}{%
  \printtext{\bibstring{in}\intitlepunct}}}
\addbibresource{references.bib}  


\AtEveryBibitem{%
  \clearfield{note}%
  \clearfield{edition}%
  \clearlist{language}
}

\setlength\bibitemsep{0.5\baselineskip}
\DeclareNameAlias{labelname}{given-family}


\title{Autòmats de Stallings, un camí d'anada i tornada
}

\author{Jordi Delgado\thanks{\url{jorge.delgado@upc.edu}}}
\author{Enric Ventura\thanks{\url{enric.ventura@upc.edu}}}

\affil{Departament de Matem\`atiques, Universitat Polit\`ecnica de Catalunya, i Institut de Matem\`atiques de la UPC-BarcelonaTech (Barcelona)}

\newenvironment{cita}[1]                                                     
  {\xdef\autorcita{ #1}
   \leftskip=0.56\textwidth
   \parindent=0pt                               
   \small\sffamily}
  {\par \vspace{-0.3\baselineskip}                                           
  \begin{flushright}\bfseries\autorcita\end{flushright}
    \vspace{\baselineskip}                                     
  \xdef\autorcita{}}

\begin{document}

\maketitle
\begin{abstract}
En aquest article revisem algunes de les propietats fonamentals del grup lliure i fem una exposició detallada de la teoria dels autòmats de Stallings, una interpretació geomètrica dels seus subgrups que ha estat (i segueix essent) immensament fructífera, tant com a mitjà per entendre resultats clàssics, com com a font de nous resultats. N'expliquem alguns dels més rellevants.
\end{abstract}

\bigskip
\textsf{\textbf{MSC2020:} 20-02, 20E05, 20F05, 20F10, 20F65, 05C25}.

\textsf{\textbf{Keywords:} grup lliure, subgrup, autòmat, Stallings, problema algorísmic, problema de decisió}.
\tableofcontents
\vfill
\hfill \textsf{Versió 1.00}

\newpage

\begin{cita}{Sophie Germain (1776--1831)}
L'algèbre n'est qu'une géometrie écrite;\\
la géométrie n'est qu'une algèbre figurée.
\end{cita}


\section{Introducció}

La relació entre l'àlgebra i la geometria és, probablement, una de les més fructíferes entre les diferents àrees matemàtiques. Val a dir que es tracta d'una relació bidireccional. Si bé inicialment l'àlgebra es va constituir com un potent recurs per resoldre (entre d'altres) problemes geomètrics (per exemple, amb el treball d'Euclides al Llibre II dels \emph{Elements}, molts segles més tard, de forma sistemàtica, amb el descobriment de Descartes de la geometria analítica, o, més modernament, amb el desenvolupament de la topologia algebraica), a mesura que aquesta guanyava en abstracció, i es constituïa en una disciplina matemàtica \emph{per se}, s'ha anant configurant també una relació fructífera en sentit contrari: els arguments geomètrics i topològics han esdevingut un instrument essencial per resoldre problemes d'origen i motivació algebraics.

Aquesta influència ha estat especialment acusada durant el darrer segle i escaig en l'àmbit de la teoria de grups infinits, on l'enfoc combinatori (basat en la idea de presentació d'un grup) ha donat pas a una explosió de mètodes i arguments geomètrics que han acabat conformant l'àrea anomenada \emph{Teoria Geomètrica de Grups}.

El concepte de presentació d'un grup (vegeu la~\Cref{def: presentacio}) es basa en el de \emph{grup lliure}, una mena de llenç\footnote{Com veurem a la \Cref{sec: grup lliure}, un full d'origami, del que sorgeixen tots els altres grups després de ``plegar-lo'' adequadament, és potser una metàfora més acurada.} on tots els grups poden ser dibuixats. No és, per tant, gens sorprenent la rellevància que ha tingut l'estudi del grup lliure en el desenvolupament de la teoria de grups, especialment dels grups infinits. Per als no avesats, convé recordar que el grup lliure admet una interpretació molt senzilla i intuïtiva com a ``conjunt de paraules'' usant un conjunt predefinit de lletres (vegeu la~\Cref{sec: grup lliure}).

La concisió de la descripció combinatòria donada per les presentacions és especialment propícia per a la formulació de qüestions algorísmiques, que, durant el darrer segle, han esdevingut un tercer ingredient estretament lligat a l'àlgebra i la geometria dels grups. 

Malgrat la seva aparent simplicitat, reforçada pel fet que els seus subgrups són també lliures (Teorema de Nielsen–Schreier), les relacions entre els subgrups del grup lliure són intricades i amaguen sorpreses interessants. Per exemple, veurem que --- en contrast amb el que succeeix en els ambients abelians clàssics --- el grup lliure de rang $2$ té subgrups de qualsevol rang finit, i fins i tot de rang infinit. I estudiarem el comportament de les seves interseccions, que va donar lloc a la coneguda conjectura de Hanna Neumann, un problema obert durant més de mig segle, i resolt fa uns anys independentment per Friedman a~\cite{friedman_sheaves_2015}, i per Mineyev a~\cite{mineyev_submultiplicativity_2012} (vegeu també les remarcables simplificacions de Dicks a~\cite[Appendix B]{friedman_sheaves_2015} i~\cite{dicks_simplified_2012}).

Tot i la innegable complexitat del seu reticle de subgrups, veurem que, des del punt de vista algorísmic, els grups lliures són, en general, molt ben comportats. Multitud de problemes algorísmics naturals (incloent els problemes clàssics de Dehn, així com els de la pertinença,\footnote{Membership Problem (\MP), en anglès.} la intersecció de subgrups o la finitud de l'índex) són decidibles al grup lliure $\Fn$; molts d'ells gràcies, precisament, a tècniques basades en els autòmats de Stallings, que expliquem en aquest article.  Veure \cite{delgado_list_2022} per a un inventari (no exhaustiu) de problemes decidibles usant autòmats de Stallings, els protagonistes d'aquest article.
Cal assenyalar però, que els comportaments díscols tampoc no estan lluny\ldots\ és suficient fer un producte directe de dos grups lliures per començar a trobar problemes molt naturals que són algorísmicament indecidibles (vegeu~\cite{mikhailova_occurrence_1958}).

En aquest article començarem fent una breu introducció al grup lliure i les seves propietats principals. Tot seguit, exposem els fonaments de la teoria dels autòmats de Stallings, una interpretació geomètrica molt aclaridora dels seus subgrups que, a més, ha resultat immensament fructífera en termes d'aplicacions. 
A la \Cref{sec: aplicacions} en revisarem algunes de les més importants.

\subsection{Notació, terminologia i convenis}

La major part de la notació i la terminologia utilitzades en aquest article és estàndard; tanmateix, a continuació aclarim alguns aspectes per evitar possibles confusions.

El conjunt dels nombres naturals, designat per $\NN$, inclou el zero, i especifiquem condicions sobre ell mitjançant subíndexs; per exemple, designem per $\NN_{\geq 1}$ el conjunt de nombres naturals estrictament positius.
El cardinal d'un conjunt S es designa per $\card S$, mentre que la notació $|{\cdot}|$ es reserva per indicar longitud (en diferents contextos).
Escrivim $[m,n] = \set{k \in \NN \st m \leq k \leq n}$ el conjunt de nombres naturals entre $m$ i~$n$ (ambdós inclosos) i $[0,\aleph_0] = \NN \cup \set{\aleph_0}$.

Designem per $\Free$ un grup lliure genèric, mentre que les notacions $\Free[A]$ i $\Free[\kappa]$ (\resp~$\Free[n])$ s'usem per emfatitzar una base $A$ i el rang $\kappa$ (\resp $n$, si és finit) de $\Free$, respectivament.
Les primeres lletres de l'alfabet llatí ($a,b,c,\ldots$) s'usen normalment per designar els símbols dels alfabets formals, mentre que les últimes ($u,v,w,\ldots$) solen designar paraules formals o elements del grup lliure.

Les funcions actuen per la dreta. És a dir, designem per $(x)\varphi$ (o simplement per $x \varphi$) la imatge de l'element $x$ per la funció $\varphi$, i designem per $\phi \psi$ la composició \smash{$A \xto{\phi} B \xto{\psi} C$}. En conseqüència, escrivim, per exemple, $g^{h} = h^{-1} gh$ (el conjugat de $g$ per $h$) i $[g,h] = g^{-1} h^{-1} g h$ (el commutador de $g$ i $h$). 

Designem la conjugació (d'elements o subgrups) per $\sim$, i escrivim $H \leqslant G$, $H \normaleq G$, $H \leqslant\fg G$, $H \leqslant\ff G$, $H \leqslant\fin G$ per designar que $H$ és subgrup, subgrup normal, subgrup finitament generat, factor lliure, i subgrup d'índex finit, respectivament.

\section{Grup lliure} \label{sec: grup lliure}

Comencem recordant les definicions de monoide i de grup. Un \defin{monoide} és un parell $(G,\cdot)$ on $G$ és un conjunt arbitrari, i $\cdot \colon G\times G \to G$, $(g_1,g_2) \mapsto g_1 \cdot g_2$ és una operació associativa amb element neutre (que representarem per $\trivial$). Si, a més, tot element $g\in G$ té un (únic) invers $g^{-1}$ (\ie tal que $g\cdot g^{-1} = g^{-1}\cdot g =\trivial$) diem que $(G,\cdot)$ és un \defin{grup}.
Si $A \subseteq G$, designarem per $A^{-1}$ el conjunt d'inversos d'elements de~$A$, \ie ${A^{-1}=\set{a^{-1} \st a\in A}}$. Usualment ometrem el símbol per a l'operació i escriurem $g\cdot h=gh$. Si, a més, $gh=hg$ per a tot $g,h\in G$, direm que el monoide (o el grup) és \defin{commutatiu} o \defin{abelià}; en aquest cas l'operació se sol designar per `$+$' i l'element neutre per $0$ (\emph{notació additiva}). 

Hi ha moltes maneres de definir els grups lliures i les seves bases: geomètriques, algebraiques, combinatòries, categòriques, \ldots\ La més concreta és, potser, la que adapta la noció estàndard de base de l'àlgebra lineal al context de grups, mentre que la més abstracta és probablement la categòrica.\footnote{Vegeu \url{https://en.wikipedia.org/wiki/Abstract_nonsense}.} Al llarg d'aquesta secció les desenvoluparem totes dues i veurem com estan relacionades. Abans, però, emfatitzem el polimorfisme del grup lliure amb dues caracteritzacions extra, de caràcter geomètric, que enunciem sense demostració.

\begin{thm} \label{thm: free geom}
Sigui $F$ un grup. Aleshores, els següents enunciats són equivalents:
\begin{enumerate}[dep]
\item $F$ és un grup lliure;
\item \label{item: free iff graph} $F$ és el grup fonamental d'un graf connex;\,\footnote{Vegeu la \Cref{def: fundamental group} per a la definició de grup fonamental, i l'\Cref{rem: rose free} i la \Cref{prop: S_T}.\ref{item: S_T lliure} per una reformulació en termes d'autòmats.}
\item \label{item: free iff tree} $F$ actua lliurement i sense inversions sobre les arestes d'un arbre.\footnote{Un arbre és un graf connex sense cicles. Vegeu \cite[capítol 3]{clay_office_2017}.}
\end{enumerate}
\end{thm}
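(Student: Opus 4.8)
The plan is to establish the two equivalences $(1)\Leftrightarrow(2)$ and $(2)\Leftrightarrow(3)$, so that all three statements become interchangeable. Both rest on the theory of coverings of graphs, which is particularly clean here because graphs are one-dimensional: a covering of a graph is again a graph, and a simply connected graph (one with no reduced nontrivial cycles) is precisely a tree. Throughout I use the combinatorial description of the fundamental group as reduced closed paths based at a fixed vertex, under concatenation and reduction (\Cref{def: fundamental group}), which keeps every argument inside the category of graphs.

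For $(1)\Leftrightarrow(2)$ I would argue as follows. For $(1)\Rightarrow(2)$, given $F = \Free[A]$ I exhibit the explicit graph that realizes it, namely the rose $R_A$ with a single vertex and one loop-edge for each element of $A$: this is a connected graph, and reading off its reduced based loops gives $\pi_1(R_A)\cong\Free[A]$, each petal contributing one free generator (this is the content of \Cref{rem: rose free}). For the converse $(2)\Rightarrow(1)$, let $X$ be a connected graph; choosing a spanning tree $T_0\subseteq X$ and collapsing it produces a rose with one petal per edge of $X\setminus T_0$, and $\pi_1$ is unchanged by the collapse, so $\pi_1(X)$ is free on (a transversal of) the edges outside $T_0$. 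This is the topological face of Nielsen--Schreier, recorded in \Cref{prop: S_T}.

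For $(2)\Leftrightarrow(3)$ I would invoke covering theory. Given $F\cong\pi_1(X)$ with $X$ connected, pass to the universal cover $\widetilde X\to X$; it is a simply connected graph, hence a tree, and $F$ acts on it as the group of deck transformations. This deck action is free (a nontrivial deck transformation of a connected covering fixes no point) and without inversions (orientations of edges lift consistently), which gives $(2)\Rightarrow(3)$. Conversely, given a free, inversion-free action of $F$ on a tree $T$, the freeness forbids fixed points and the no-inversion hypothesis guarantees that the quotient $X = T/F$ is again an honest graph; the quotient map $T\to X$ is then a covering, and since $T$ is simply connected it is the universal cover, so $F$ is isomorphic to the deck group $\pi_1(X)$. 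This yields $(3)\Rightarrow(2)$.

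The main obstacle is not conceptual but a matter of bookkeeping: making precise, within the purely combinatorial setup, that the quotient of a tree by a free action without inversions is a covering of graphs (and, dually, that the deck transformations of the universal cover act in exactly this way). The no-inversion hypothesis is precisely what rules out quotient ``half-edges'' and is therefore unavoidable. A secondary point requiring care is the existence of a spanning tree when $X$ is infinite, which needs a Zorn's lemma (or transfinite induction) argument; with that in hand the rose computation extends verbatim to arbitrary, possibly infinite, rank.
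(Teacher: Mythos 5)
You should first note that the paper states this theorem explicitly \emph{without} proof (``que enunciem sense demostració''), so there is no internal argument to measure your proposal against; what follows compares it with the ingredients the paper does provide and with the references it points to. Your treatment of (a)$\Leftrightarrow$(b) coincides with the paper's own machinery: the rose realizes $\Free[A]$ as the fundamental group of a connected graph (\Cref{rem: rose free}), and conversely the fact that $\pi_{\verti}(\Ati)$ is free on the edges outside a spanning tree is exactly what \Cref{def: fundamental group} records and what \Cref{prop: S_T}.\ref{item: S_T lliure} proves combinatorially --- indeed, labelling the edges of your graph $X$ by pairwise distinct letters produces a trivially deterministic automaton, so you could even replace the collapse-is-a-homotopy-equivalence argument by that purely combinatorial one. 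Your covering-theoretic treatment of (b)$\Leftrightarrow$(c) is the standard argument that the paper delegates to \cite[capítol 3]{clay_office_2017}: it is correct, and the two ``bookkeeping'' points you isolate are precisely the genuine technical content (the quotient of a tree by a free, inversion-free action is a covering because the induced maps on stars of vertices are bijective, injectivity being exactly freeness at vertices; and spanning trees of infinite graphs require Zorn's lemma). One caveat deserves emphasis: in (c)$\Rightarrow$(b) you must read ``actua lliurement'' as freeness on \emph{vertices} (trivial vertex stabilizers), as your argument implicitly does; freeness on the edge set alone is strictly weaker (a finite cyclic group rotating a star acts freely on its edges yet is not free), so the literal phrasing of item (c) yields a true statement only under this standard convention.
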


Tot i la indubtable rellevància d'aquest resultat, hem preferit fer una presentació `algebraica' del grup lliure, amb l'esperança que el lector la trobi més natural. Comencem notant que és immediat de la definició de grup que els factors trivials $\trivial$ i els productes $g g^{-1}$ (anomenats \defin{cance\l.lacions}) són superflus en qualsevol producte $g_1 \cdots g_n$ d'elements d'un grup $G$ (per exemple, $h\trivial k=hk$ i $hgg^{-1}k=hk$). Un producte en el qual no apareixen factors trivials ni cance\l.lacions s'anomena \defin{producte reduït} (incloent el producte buit, que representa l'element neutre --- també anomenat \defin{element trivial} en aquest context). És clar que, eliminant repetidament factors trivials i cance\l.lacions, tot producte es pot convertir en un producte reduït, amb el mateix resultat dins de $G$. 

\begin{defn}\label{def: base}
Sigui $A\subseteq G$. Diem que $A$ és un subconjunt \emph{lliure} (o \emph{independent}) en~$G$ si dos productes reduïts diferents d'elements de $A^{\pm}=A\cup A^{-1}$ sempre donen resultats diferents a~$G$ (o, equivalentment, si el buit és l'únic producte reduït que dóna l'element trivial de~$G$). Diem que $A$ \emph{genera} (o que és un \defin{conjunt de generadors} per a) $G$ si tot element de $G$ és igual a un producte (que podem suposar reduït) d'elements d'$A^{\pm}$. Finalment, diem que $A$ és una \emph{base} de $G$ i que $G$ és un \emph{grup lliure} (sobre~$A$) si $A$ és lliure en $G$ i genera $G$. Farem servir la notació $\Free$ per referir-nos als grups lliures. 
\end{defn}

\begin{defn}
S'anomena \defin{rang} d'un grup $G$, designat per $\rk(G)$, a la mínima cardinalitat d'un conjunt de generadors per $G$. Si un grup admet un conjunt de generadors finit
direm que és \defin{finitament generat}.
\end{defn}

\begin{exm}
Considerem el grup dels enters $\mathbb{Z}$ amb notació additiva.\footnote{En notació additiva, $1$ és el generador natural del grup $(\ZZ,+)$ i no pas el neutre, designat per $0$.} Els conjunts~$\set{1}$ i $\set{-1}$ en són bases ja que, clarament, generen $\mathbb{Z}$ i cap expressió de la forma 
 \begin{equation*}
1+1+ \overset{n)}{\cdots} + 1 \qquad \text{ ó }\qquad (-1)+(-1)+\overset{n)}{\cdots}+(-1)  
 \end{equation*}
amb $n\neq 0$ dóna igual a zero. En particular, $\set{1}$ és un conjunt de generadors (òbviament mínim) de $\ZZ$, i per tant $\rk (\ZZ) = 1$.
\end{exm}

\begin{exm}
En canvi, $\set{\overline{1}}=\set{1+n\ZZ}$ \emph{no} és una base de $\mathbb{Z}/n\mathbb{Z}$, per a $n\geq 2$, ja que tot i ser-ne generador, no és un subconjunt lliure, degut a la \emph{relació} $\overline{1} + \oset{n\scriptscriptstyle{)}}{\cdots}+\overline{1}=\overline{0}$. De fet, \emph{cap} subconjunt $A\subseteq \mathbb{Z}/n\mathbb{Z}$ és base ja que el subconjunt buit no genera, i qualsevol element $\overline{a}\in \mathbb{Z}/n\mathbb{Z}$ compleix la relació: \smash{$\overline{a}+ \oset{n\scriptscriptstyle{)}}{\cdots}+\overline{a} =\overline{0}$}; per tant, el grup $\mathbb{Z}/n\mathbb{Z}$ no és lliure.
\end{exm}

Noteu que el grup trivial és lliure (amb base $\emptyset$ i, per tant, amb rang $0$) i, com hem vist a l'exemple anterior, el grup dels enters $\mathbb{Z}$ també és lliure (i té dues bases, $\{1\}$ i~$\{-1\}$). És immediat de la definició que aquests són els únics grups lliures abelians. D'altra banda, hem vist que $\mathbb{Z}/n\mathbb{Z}$ no és lliure per a cap $n\geq 2$. Amb el mateix argument es pot demostrar que cap grup finit és lliure, excepte el trivial. 

\begin{rem} \label{rem: base => gen minimal}
Si $A\subseteq \Free$ és base d'un grup (lliure) $\Free$, aleshores $A$ és un sistema de generadors \emph{minimal} de $\Free$; és a dir, $\gen{A}=\Free$, però $\gen{A\setmin S}\neq \Free$ per a tot $\emptyset \neq S\subseteq A$.
\end{rem}

\begin{proof}
Si existís un $S \neq \emptyset$ tal que $A\setmin S$ generés tot $\Free$ aleshores, per qualsevol $s\in S$, tindríem $s\in \Free=\gen{A\setmin S}$, i l'element $s\in G$ el podríem obtenir com una lletra de $A$ i, a l'hora, com un producte reduït en $A\setmin S$, contradient la hipòtesi que $A$ és una família lliure. 
\end{proof}

Tot seguit construirem grups lliures amb bases de cardinal arbitrari.

\begin{defn}
Sigui $A$ un conjunt (finit o infinit), que anomenarem \emph{alfabet}, o conjunt de \emph{lletres} elementals. Una \emph{paraula} sobre $A$ és una seqüència ordenada i finita de lletres, $a_1 a_2\cdots a_n$, on $n\geqslant 0$, i $a_i\in A$, amb possibles repeticions. Al nombre total de lletres $n$ d'una paraula se l'anomena \emph{longitud}, i escrivim $|a_1 a_2\cdots a_n|=n$. Com a conveni, designarem per 1 l'única paraula de longitud zero, o \defin{paraula buida}. Al conjunt de totes les paraules sobre $A$ el designem per $A^*$.
\end{defn}

\begin{rem}
Per a tot alfabet $A$, el conjunt $A^*$ és un monoide amb l'operació de concatenació, $u\cdot v=uv$, $u,v\in A^*$. A més, $|uv|=|u|+|v|$ i, per tant, l'únic element invertible és el neutre, 1.
\end{rem}


Aprofitant la notació exponencial per a productes successius d'un element amb sí mateix, $u^n=u\oset{n\scriptscriptstyle{)}}{\cdots}u$, podem abreujar les paraules de $A^*$ tot escrivint la corresponent potència cada vegada que una lletra apareix diverses vegades consecutivament; per exemple, $aabaaabbab=a^2ba^3b^2ab$. Si fem això agrupant al màxim, cadascuna de les potències de lletres que apareixen s'anomena una \emph{sí\l.laba}; per exemple, la paraula anterior té longitud $2+1+3+2+1+1=10$, i 6 sí\l.labes.


Observem que, sobre un alfabet d'una sola lletra, $A=\{a\}$, el monoide $A^*$ és isomorf al dels nombres naturals (en efecte, $A^*=\{1, a, a^2, a^3, \ldots\}$ i $a^n\cdot a^m =a^{n+m}$, per $n,m\geqslant 0$). En canvi, sobre un alfabet $A$ de dues o més lletres, el monoide $A^*$ és força més complicat; en particular, com que $ab\neq ba$, no és un monoide commutatiu.

Anem en la bona direcció per aconseguir que $A$ sigui una base de $A^*$: efectivament $A$ genera $A^*$, i cap producte de longitud $n\neq 0$ d'elements de $A$ dóna mai igual a~1. El problema és que $A^*$ només és un monoide (de fet, un monoide lliure); però és molt lluny de ser un grup ja que cap element, llevat del neutre 1, té invers. Per convertir-lo en un grup, haurem d'introduir els inversos de tots els elements. Com veurem, serà suficient amb introduir `inversos formals' per a les lletres elementals, $A^{-1} = \set{a^{-1} \st a\in A}$. És a dir, doblem l'alfabet $A$ amb una lletra nova per cadascuna de les antigues, i el designem per $A^{\pm}=A\sqcup A^{-1}=\{a,\, a^{-1} \mid a\in A\}$. La notació usada suggereix que cada $a^{-1}$ és l'invers del corresponent $a$, i així serà en acabar la construcció, però de moment és només una qüestió notacional: $a$ i $a^{-1}$ són simplement dues lletres diferents del nou alfabet $A^{\pm}$ (inverses formals, si voleu).

Considerem el monoide $(A^{\pm})^*$, és a dir, el conjunt de totes les paraules sobre l'alfabet $A^{\pm}$ amb l'operació de concatenació. Per exemple, si $A=\{a,b\}$ tindrem $A^{\pm}=\{a, a^{-1}, b, b^{-1}\}$ i $(A^{\pm})^*$ conté exactament $4^2=16$ paraules de longitud 2, a saber $a^2$, $aa^{-1}$, $ab$, $ab^{-1}$, $a^{-1}a$, $a^{-2}$, $a^{-1}b$, $a^{-1}b^{-1}$, $ba$, $ba^{-1}$, $b^2$, $bb^{-1}$, $b^{-1}a$, $b^{-1}a^{-1}$, $b^{-1}b$, i $b^{-2}$ (on $x^{-n}=(x^{-1})^n$, per $n\geqslant 0$). Per aconseguir que, per cada lletra $a\in A$, l'element $a^{-1}$ sigui realment l'invers de $a$, cal fer alguna cosa per forçar que $aa^{-1}=1=a^{-1}a$. Una manera d'aconseguir-ho és fent el conjunt quocient per la relació d'equi\-va\-lèn\-cia apropiada.

\begin{defn}
Anomenem \emph{reducció elemental}, designada per $\rightsquigarrow$, a la transformació consistent en eliminar una cance\l.lació dins d'una paraula; és a dir, si~$a\in A$ i $u,v \in (A^{\pm})^*$, aleshores 
 \begin{equation}
uaa^{-1}v\rightsquigarrow uv\quad \text{i} \quad ua^{-1}av\rightsquigarrow uv \,.
 \end{equation}
A la transformació inversa l'anomenem \emph{inserció elemental}, i a la seva clausura simètrica (designada per $\leftrightsquigarrow$), \defin{transformació elemental}. És a dir,
 \begin{equation}
w\leftrightsquigarrow w'\quad \Leftrightarrow \quad w\rightsquigarrow w'\  \text{ ó }\  w'\rightsquigarrow w.
 \end{equation}
Òbviament, si $w\leftrightsquigarrow w'$ llavors $|w|=|w'|\pm 2$. Finalment, definim $\sim$ com la clausura reflexo-transitiva de $\leftrightsquigarrow$; és a dir, per a tot $w,w' \in (A^{\pm})^*$
 \begin{enumerate}[ind]
\item $w \sim w $ \ i
\item $w\sim w' \ \Leftrightarrow \ \exists w_0=w, w_1, \ldots ,w_n=w' \ \text{t.q.}\  w_0 \leftrightsquigarrow w_1 \leftrightsquigarrow \cdots \leftrightsquigarrow w_n$,
 \end{enumerate}
és a dir, $w\sim w'$ si es pot passar de $w$ a $w'$ fent una quantitat finita de reduccions i/o insercions elementals, entenent que això inclou el cas amb $n=0$ passes. Per exemple, $a^2a^{-1}bb^{-1}a^{-2}b\sim a^2c^{-1}ca^{-3}b$ ja que
 $$
a^2a^{-1}bb^{-1}a^{-2}b\rightsquigarrow abb^{-1}a^{-2}b \rightsquigarrow aa^{-2}b\leftsquigarrow a^2a^{-3}b \leftsquigarrow a^2c^{-1}ca^{-3}b.
 $$
\end{defn}

És immediat de la definició que $\sim$ és una relació d'equivalència al conjunt $\IM{A}$. Podem considerar doncs el conjunt quocient, que designem per $\Free[A] = \IM{A}/\sim$. És clar que a la classe $[w]\in \Free[A]$, hi ha, precisament totes les paraules `iguals' a~${w\in \IM{A}}$ mòdul les igualtats elementals desitjades $aa^{-1}=1=a^{-1}a$, $a\in A$.
Finalment, definim a  $\Free[A]=\IM{A}/\sim$ una operació binària (clarament ben definida) adaptant de manera natural la concatenació de $\IM{A}$: per a tot $u,v \in \IM{A}$,
 \begin{equation} \label{eq: op free}
[u]\cdot [v]=[uv].
 \end{equation}

\begin{prop}
El conjunt $\Free[A]$ amb l'operació \eqref{eq: op free} és un grup.
\end{prop}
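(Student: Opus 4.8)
The plan is to verify the four group axioms on $\Free[A] = \IM{A}/{\sim}$, leaning throughout on the fact that $\IM{A}$ is already a monoid under concatenation and that $\sim$ interacts well with it. First I would make precise the assertion that the binary operation $[u]\cdot[v]=[uv]$ is well defined, i.e.\ that $\sim$ is a \emph{congruence} on the monoid $\IM{A}$: if $w \rightsquigarrow w'$ is an elementary reduction, then prepending or appending any fixed word preserves it, so $u \sim u'$ forces $uv \sim u'v$ (replay the very same reductions and insertions, which never touch the suffix $v$), and symmetrically $u'v \sim u'v'$; chaining the two gives $uv \sim u'v'$ whenever $u \sim u'$ and $v \sim v'$. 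Hence the product of classes does not depend on the chosen representatives.

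Associativity and the neutral element are then inherited directly from $\IM{A}$. For associativity, $([u]\cdot[v])\cdot[w] = [(uv)w] = [u(vw)] = [u]\cdot([v]\cdot[w])$, since concatenation in $\IM{A}$ is associative. For the identity, the class $[1]$ of the empty word satisfies $[1]\cdot[u] = [1\cdot u] = [u]$ and $[u]\cdot[1] = [u\cdot 1] = [u]$, so $[1]$ is a two-sided neutral element.

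The only step with genuine content is the existence of inverses, which is where the relation $\sim$ is actually used. Given $w = a_1 a_2 \cdots a_n$ with each $a_i \in A^{\pm}$, I would define its formal reverse-inverse $\overline{w} = a_n^{-1} \cdots a_2^{-1} a_1^{-1}$, reading $(a^{-1})^{-1} = a$ so that $\overline{w} \in \IM{A}$ as well, and claim $[\overline{w}] = [w]^{-1}$. To see $w\,\overline{w} \sim 1$ I would induct on $n$: the base case $n = 0$ is trivial, and for $n \geq 1$ the central factor $a_n a_n^{-1}$ is a cancellation (whether $a_n = a$ or $a_n = a^{-1}$, it has one of the two forms allowed by $\rightsquigarrow$), so $w\,\overline{w} = a_1\cdots a_n a_n^{-1}\cdots a_1^{-1} \rightsquigarrow a_1 \cdots a_{n-1} a_{n-1}^{-1}\cdots a_1^{-1} = w'\,\overline{w'}$, where $w' = a_1 \cdots a_{n-1}$; by the inductive hypothesis $w'\,\overline{w'} \sim 1$, and transitivity of $\sim$ closes the step. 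The identity $\overline{w}\,w \sim 1$ follows symmetrically. Thus $[w]\cdot[\overline{w}] = [w\,\overline{w}] = [1]$ and $[\overline{w}]\cdot[w] = [1]$, so every class is invertible.

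I do not expect a serious obstacle here: once $\sim$ is known to be a congruence, associativity and the unit are purely formal, and the inverse is an explicit construction. The one point worth stating carefully, and the closest thing to a subtlety, is the congruence property itself --- the compatibility of $\sim$ with concatenation --- since it is exactly what lets the operation descend to the quotient; the excerpt already declares the operation \enquote{clarament ben definida}, so the short induction establishing inverses is really the heart of the argument.
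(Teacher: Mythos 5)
Your proof is correct and follows essentially the same route as the paper: associativity and the neutral element $[1]$ are inherited from the monoid $\IM{A}$, and the inverse of a class is the class of the reversed word with inverted letters. The paper simply asserts these facts (having already declared the operation \enquote{clarament ben definida} beforehand), whereas you usefully spell out the two details it leaves implicit --- that $\sim$ is a congruence for concatenation, and the short induction showing $w\,\overline{w}\sim 1$.
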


\begin{proof}
La propietat associativa és conseqüència immediata de l'associativitat de la concatenació en $\IM{A}$; l'element neutre és $[1]$; i l'invers d'una classe $[a_{i_1}^{\epsilon_1}\cdots a_{i_n}^{\epsilon_n}]\in \Free[A]$ (on $\epsilon_j = \pm 1$) és la classe $[a_{i_n}^{-\epsilon_n}\cdots a_{i_1}^{-\epsilon_1}]$. Per tant, $\Free[A]$ amb l'operació~\eqref{eq: op free} té estructura de grup.
\end{proof}

Ja tenim el grup $\Free[A]$ construït. Vegem ara que $\Free[A]$ és lliure amb base $A$. La manera natural de veure $A$ dins de $\Free[A]$ és com el conjunt de classes de les paraules positives de longitud 1, és a dir $\{[a] \st a\in A\}$. Però hi ha un petit problema tècnic aquí: estem segurs que dues lletres positives diferents estan sempre en classes diferents? en altres paraules, estem segurs que l'aplicació $\iota_A\colon A\to \Free[A]$, $a\mapsto [a]$, és injectiva? Intuïtivament sembla clar que, si $a,b\in A$ són dues lletres diferents, llavors $a\nsim b$ ja que no sembla possible transformar la lletra $a$ en una de diferent $b$, simplement usant reduccions i insercions elementals. Per demostrar-ho rigorosament usarem la proposició següent, que és important també per a altres qüestions.

\begin{defn}
Una paraula $w\in \IM{A}$ és \emph{reduïda} si no conté cap parell de lletres consecutives mútuament inverses formals. És a dir, $w=a_{i_1}^{\epsilon_1}a_{i_2}^{\epsilon_2}\cdots a_{i_n}^{\epsilon_n}$ és reduïda si sempre que dues lletres consecutives coincideixin, $a_{i_j}=a_{i_{j+1}}$, els seus signes també, $\epsilon_j=\epsilon_{j+1}$. Al conjunt de paraules reduïdes en $A$ el designarem per $R(A)\subseteq \IM{A}$.
\end{defn}

\begin{prop}\label{prop: unicitat reduida}
Tota classe d'equivalència $[w]\in \Free[A]$ conté una i només una paraula reduïda (designada per $\red{w}$).
\end{prop}

\begin{proof}
És clar que tota classe conté alguna paraula reduïda, ja que podem prendre un representant qualsevol $w\in [w]$ i (si no és ja reduït) reduir-lo aplicant successives reduccions elementals fins que no en quedi cap de disponible. Aquest procés sempre acaba en un nombre finit de passos, ja que $|w|$ és finit i a cada pas la longitud disminueix en dues unitats.

Per veure la unicitat, suposem que dues paraules reduïdes diferents ${w,w'\in R(A)}$ pertanyen a la mateixa classe i busquem una contradicció. Sigui $w=w_0 \leftrightsquigarrow
w_1 \leftrightsquigarrow
\cdots
\leftrightsquigarrow w_{n-1}
\leftrightsquigarrow w_n=w'$ una successió de transformacions elementals minimitzant $N=\sum_{i=0}^n |w_i|$. Com que $w\neq w'$ i ambdues són reduïdes, no pot ser ni $n=0$, ni $n=1$, \mbox{ni $n=2$}; per tant, $n\geqslant 3$. A més, $|w_0|<|w_1|$ i $|w_{n-1}|>|w_n|$ i, per tant, hi ha un ${j \in [1,n-1]}$ tal que $|w_{j-1}|<|w_j|>|w_{j+1}|$. Fixem-nos ara en les reduccions elementals $w_{j-1} \leftsquigarrow w_j \rightsquigarrow w_{j+1}$, i en les dues lletres de $w_j$ afectades per cadascuna d'aquestes reduccions. Si n'hi ha una (o dues) en comú, aleshores $w_{j-1}=w_{j+1}$ contradient la minimalitat de~$N$. I si els parells de lletres eliminades en ambdues reduccions són disjunts, aleshores $w_j =xa^{\epsilon}a^{-\epsilon}y b^{\delta}b^{-\delta}z$, per a certes paraules $x,y,z\in \IM{A}$, certes lletres $a,b\in A$, i certs signes $\epsilon, \delta =\pm 1$; canviant $w_{j-1} \leftsquigarrow w_j \rightsquigarrow w_{j+1}$ per $w_{j-1}=xyb^{\delta}b^{-\delta}z \rightsquigarrow xyz \leftsquigarrow xa^{\epsilon}a^{-\epsilon}yz=w_{j+1}$ ó $w_{j-1}=xa^{\epsilon}a^{-\epsilon}yz \rightsquigarrow xyz\leftsquigarrow xyb^{\delta}b^{-\delta}z=w_{j+1}$ segons convingui, reduïm el valor de $N$, en contradicció també amb la seva minimalitat. 
\end{proof}

\begin{rem}
Donat que a tota classe $[w] \in \Free[A]$ hi ha una única paraula reduïda $\red{w}\in R(A)$ (tal que $[w]=[\red{w}]$), podem pensar el grup $\Free[A]$, alternativament, com el conjunt de paraules reduïdes $R(A)$ amb l'operació $u\cdot v=\red{uv}$. Sovint aplicarem aquesta interpretació i escriurem simplement $w\in \Free[A]$ per referir-nos a l'element $[w]=[\red{w}]\in \Free[A]$.
\end{rem}

Com que les lletres (les paraules de llargada $1$, per ser precisos) són clarament reduïdes, el resultat següent és immediat de la \Cref{prop: unicitat reduida}.

\begin{cor}
L'aplicació $\iota_A\colon A\to \Free[A]$, $a\mapsto [a]$ és injectiva. \qed
\end{cor}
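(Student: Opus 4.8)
El pla és reduir l'enunciat a la \Cref{prop: unicitat reduida}, que garanteix que cada classe d'equivalència de $\Free[A]$ conté exactament una paraula reduïda. El primer pas és observar que tota lletra $a\in A$, vista com a paraula de longitud $1$ dins de $\IM{A}$, és trivialment reduïda: una paraula amb una sola lletra no pot contenir cap parell de lletres consecutives mútuament inverses formals, senzillament perquè no té dues lletres consecutives. Per tant, el representant reduït de la classe $[a]=\iota_A(a)$ és la pròpia lletra, \ie $\red{a}=a$.

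A continuació, per establir la injectivitat, prendria dues lletres $a,b\in A$ amb $\iota_A(a)=\iota_A(b)$, és a dir amb $[a]=[b]$, i demostraria que necessàriament $a=b$. En efecte, de $[a]=[b]$ es dedueix que $a$ i $b$ són dues paraules reduïdes (pel primer pas) que pertanyen a una mateixa classe d'equivalència; la unicitat de la paraula reduïda dins de cada classe (\Cref{prop: unicitat reduida}) força aleshores $a=b$. Això prova que $\iota_A$ és injectiva.

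No espero cap obstacle substancial: tota la feina essencial ja està continguda a la \Cref{prop: unicitat reduida}, i l'únic detall que cal verificar --- que les paraules de longitud $1$ són reduïdes --- és immediat de la definició de paraula reduïda. Aquesta és, precisament, la raó per la qual l'enunciat és un corol·lari directe de la proposició anterior i no requereix cap argument addicional.
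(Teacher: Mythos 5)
La teva proposta és correcta i segueix exactament el mateix camí que l'article: observar que les paraules de longitud $1$ són reduïdes i aplicar la unicitat de la paraula reduïda de la \Cref{prop: unicitat reduida} per concloure que $[a]=[b]$ implica $a=b$. De fet, l'article presenta aquest mateix argument de manera encara més breu, com a observació immediatament anterior al corol·lari.
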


Ara, pensant $A\subseteq \Free[A]$ via $\iota_A$, és clar que tota paraula reduïda és producte d'elements de $A^{\pm}$ (és a dir, $A$ genera $\Free[A]$), i que el buit és l'únic producte reduït d'elements de $A^{\pm}$ que dóna $1\in \Free[A]$ (és a dir, $A$ és un subconjunt lliure de $\Free[A]$). 

\begin{cor}
Per a tot alfabet $A$, el grup $\Free[A]$ és lliure amb base $A$. \qed
\end{cor}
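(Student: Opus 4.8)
The plan is to verify directly the two defining conditions of a basis from \Cref{def: base}: that $A$ generates $\Free[A]$, and that $A$ is free (independent) in $\Free[A]$. Throughout I identify $A$ with its image $\iota_A(A) \subseteq \Free[A]$, which is legitimate by the injectivity of $\iota_A$ established above; under this identification each letter $a \in A$ is the class $[a]$, and $A^{\pm}$, viewed inside $\Free[A]$, consists of the classes $[a]$ together with $[a^{-1}] = [a]^{-1}$ for $a \in A$.

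For generation, I would observe that every element of $\Free[A]$ is, by construction, a class $[w]$ with $w = a_{i_1}^{\epsilon_1}\cdots a_{i_n}^{\epsilon_n} \in \IM{A}$ and $\epsilon_j = \pm 1$. Since the operation \eqref{eq: op free} is compatible with concatenation, $[w] = [a_{i_1}^{\epsilon_1}]\cdots[a_{i_n}^{\epsilon_n}]$ is a product of elements of $A^{\pm}$. Hence $A$ generates $\Free[A]$; this step is entirely routine and carries no real content.

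The substance of the statement lies in independence, and here \Cref{prop: unicitat reduida} does all the work. A reduced product of elements of $A^{\pm}$ is, read as a string, exactly an element of $R(A)$, i.e.\ a reduced word. Thus, if two distinct reduced products $u,v$ of elements of $A^{\pm}$ were equal in $\Free[A]$, then $u$ and $v$ would be two distinct reduced words lying in the same class $[u] = [v]$, contradicting the uniqueness of the reduced representative asserted by \Cref{prop: unicitat reduida}. In particular, the empty product is the only reduced product representing the trivial element $[1]$, which is precisely the independence condition of \Cref{def: base}.

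Having checked both conditions, I would conclude that $A$ is a basis of $\Free[A]$ and hence that $\Free[A]$ is free over $A$. There is no genuinely hard step: the only substantive ingredient is the uniqueness of reduced representatives (\Cref{prop: unicitat reduida}), already in hand. The sole point worth flagging, minor though it is, is the bookkeeping observation that ``reduced product of elements of $A^{\pm}$'' and ``reduced word in $R(A)$'' denote the same objects, so that uniqueness of reduced words transfers verbatim into the independence of $A$.
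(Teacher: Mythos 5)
Your proof is correct and follows essentially the same route as the paper: the paper also deduces the corollary directly from \Cref{prop: unicitat reduida}, noting that every class is represented by a reduced word (hence a product of elements of $A^{\pm}$, giving generation) and that the empty word is the only reduced product representing $1$ (giving independence). Your only cosmetic difference is phrasing independence via ``two distinct reduced products give distinct elements'' rather than via the trivial element, but these are stated as equivalent in \Cref{def: base}.
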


Hem provat l'existència de grups lliures amb bases de qualsevol cardinal. La pregunta natural ara és quan dos d'aquests grups, $\Free[A]$ i $\Free[A\!']$, són isomorfs (com a grups). I és força raonable pensar que serà, precisament, quan $A$ i $A\!'$ tinguin el mateix cardinal. Efectivament, així és; i per demostrar-ho ens serà de molta utilitat la caracterització següent de ``grup lliure'' que, en realitat, n'és la definició estàndard en termes categòrics.

\begin{prop}\label{def: grup lliure cat}
Siguin $F$ un grup, $A\subseteq F$, i designem per $\iota_A\colon A\into F$ la inclusió natural. Aleshores, $F$ és lliure amb base $A$ si i només si, per a tot grup $G$, i tota aplicació (de conjunts) $\varphi \colon A\to G$, existeix un únic morfisme de grups $\widetilde{\varphi}\colon F\to G$ tal que $\iota_A \widetilde{\varphi} = \varphi$.
\begin{figure}[H]
\centering
\begin{tikzcd}
A \arrow[d, "\iota_A"', hook] \arrow[r, "\varphi"]     & G \\ F \arrow[ru, "\exists ! \widetilde{\varphi}"', dashed] &  
\end{tikzcd}
\caption{Definició categòrica de grup lliure $F$}
\end{figure}
\end{prop}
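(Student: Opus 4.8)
El pla és separar les dues implicacions i recolzar-se en la unicitat de la paraula reduïda. Per a la implicació directa ($\Rightarrow$), suposaria que $F$ és lliure amb base $A$ i estendria primer $\varphi$ a les lletres inverses posant $(a^{-1})\varphi := ((a)\varphi)^{-1}$, cosa que dóna un morfisme de monoides $\Phi\colon (A^{\pm})^* \to G$ (la concatenació va a parar al producte). El fet clau seria observar que $\Phi$ és invariant per reduccions elementals, ja que $(uaa^{-1}v)\Phi = (u)\Phi\,(a)\varphi\,((a)\varphi)^{-1}\,(v)\Phi = (uv)\Phi$; per tant, $\Phi$ pren el mateix valor sobre totes les paraules d'una mateixa classe.

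A continuació, com que $A$ és base, tot $g\in F$ té una única paraula reduïda associada $\red{g}$ sobre $A^{\pm}$ (existència perquè $A$ genera, unicitat perquè $A$ és lliure, cf.\ la \Cref{prop: unicitat reduida} i la \Cref{def: base}), i definiria $(g)\widetilde{\varphi} := (\red{g})\Phi$. Aquesta aplicació compleix $\iota_A\widetilde{\varphi}=\varphi$ perquè una lletra ja és reduïda, i és un morfisme: si $w=\red{g}$ i $w'=\red{h}$, el valor de $ww'$ és $gh$ i, per la invariància de $\Phi$, $(\red{ww'})\Phi = (ww')\Phi = (w)\Phi\,(w')\Phi$, d'on $(gh)\widetilde{\varphi}=(g)\widetilde{\varphi}\,(h)\widetilde{\varphi}$. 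La unicitat seria immediata: qualsevol morfisme que estengui $\varphi$ queda determinat sobre els productes d'elements de $A^{\pm}$, i aquests exhaureixen $F$ perquè $A$ el genera.

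Per a la implicació recíproca ($\Leftarrow$), suposaria que el parell $(F,A)$ satisfà la propietat universal i el compararia amb el grup lliure $\Free[A]$ construït abans, que ja sabem que és lliure amb base $A$ i que, per la implicació directa, també satisfà la propietat universal. Aplicant la propietat universal de $F$ a la inclusió $A\to\Free[A]$ i la de $\Free[A]$ a la inclusió $A\to F$, obtindria morfismes $f\colon F\to\Free[A]$ i $g\colon\Free[A]\to F$, tots dos la identitat sobre $A$. Aleshores $fg$ i $\mathrm{id}_F$ coincideixen sobre $A$, i la unicitat de la propietat universal de $F$ força $fg=\mathrm{id}_F$; anàlogament $gf=\mathrm{id}_{\Free[A]}$. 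Així $f$ seria un isomorfisme que fixa $A$ punt a punt, i en transportaria les propietats: $A=(A)g$ genera $(\Free[A])g=F$, i cap producte reduït no trivial d'elements de $A^{\pm}$ pot valer $1$ a $F$, perquè per $f$ en valdria $1$ a $\Free[A]$, contradient que $A$ hi és lliure.

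L'obstacle principal és la bona definició de $\widetilde{\varphi}$ a la implicació directa: cal garantir que el valor $(\red{g})\Phi$ no depèn de res més que de $g$, i això descansa críticament en la unicitat de la paraula reduïda, és a dir, en el fet que $A$ sigui lliure. A la recíproca, el punt a vigilar és no confondre els dos papers de $A$ —com a subconjunt de $F$ i com a base del model $\Free[A]$— i aplicar amb cura la unicitat de la propietat universal per tancar l'argument d'isomorfisme.
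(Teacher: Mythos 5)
La teva demostració és correcta, però en tots dos sentits pren un camí apreciablement diferent del de l'article, i val la pena comparar-los. A la implicació directa, l'article defineix $\widetilde{\varphi}$ directament sobre les expressions reduïdes (la bona definició ve de la llibertat de $A$) i en comprova la multiplicativitat amb un càlcul explícit que segueix les cance\l.lacions entre les expressions reduïdes de $x$ i de $y$; tu, en canvi, factoritzes a través del monoide lliure $(A^{\pm})^*$ i observes que el morfisme de monoides $\Phi$ és invariant per reduccions elementals, de manera que la multiplicativitat surt quasi de franc. Aquest plantejament estalvia tota la comptabilitat de cance\l.lacions; l'únic punt delicat (que tu mateix assenyales) és identificar $\red{ww'}$ amb la paraula reduïda de $gh$, fet garantit per la llibertat de $A$ dins $F$. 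A la implicació recíproca, la part de llibertat és essencialment la mateixa en ambdós textos: el teu $f$ és exactament el $\widetilde{\varphi}\colon F\to \Free[A]$ de l'article, i tots dos arguments desemboquen en la \Cref{prop: unicitat reduida}. La diferència real és a la generació: l'article la demostra amb el truc del subgrup $H=\gen{A}$, aplicant dues vegades la propietat universal de $F$ i sense fer servir $\Free[A]$ per a res, mentre que tu construeixes morfismes mútuament inversos $f$ i $g$ entre $F$ i $\Free[A]$ que fixen $A$ (l'argument estàndard d'unicitat dels objectes universals) i transportes la generació mitjançant $g$; això requereix, a més, la propietat universal de $\Free[A]$ --- és a dir, la implicació directa ja demostrada ---, dependència que fas explícita correctament. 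Observa, de fet, que per a la generació només et cal $fg=\id_F$ (que ja dóna $g$ exhaustiu), i la igualtat $gf=\id_{\Free[A]}$ és prescindible. En resum, la ruta de l'article és una mica més econòmica i autocontinguda, i la teva és més conceptual i amb verificacions més netes.
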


\begin{proof} Suposem que $F$ és lliure amb base $A$. Donats $G$ i $\varphi\colon A\to G$, el morfisme $\widetilde{\varphi}\colon F\to G$ que busquem ha de complir $a\widetilde{\varphi}=a\varphi$ per tot element $a\in A$ i, per tant, $(a_{i_1}^{\epsilon_1}\cdots a_{i_n}^{\epsilon_n})\widetilde{\varphi}=(a_{i_1} \widetilde{\varphi})^{\epsilon_1}\cdots (a_{i_n}\widetilde{\varphi})^{\epsilon_n}= (a_{i_1}\varphi)^{\epsilon_1}\cdots (a_{i_n}\varphi)^{\epsilon_n}$, per a tot producte reduït d'elements de $A$, $a_{i_1}^{\epsilon_1}\cdots a_{i_n}^{\epsilon_n}$. Com que $A$ genera $F$, el possible morfisme $\widetilde{\varphi}$ queda completament determinat per $\varphi$, la qual cosa ens dóna la unicitat. D'altra banda, com que $A$ és un subconjunt lliure de $F$ (\ie cap element de $F$ admet dues expressions diferents com a producte reduït d'elements de $A$) és clar que la igualtat anterior per a $\widetilde{\varphi}$ ens dóna una aplicació $F\to G$ ben definida. Finalment, veiem que $\widetilde{\varphi}$ és morfisme de grups: donats $x,y\in F$, considerem les seves (úniques) expressions reduïdes,
 \begin{equation*}
x=a_{i_n}^{\epsilon_n}\cdots a_{i_2}^{\epsilon_2} a_{i_1}^{\epsilon_1} \qquad \text{ i }\qquad y=b_{j_1}^{\delta_1} b_{j_2}^{\delta_2}\cdots b_{j_m}^{\delta_m},
\end{equation*}
amb $a_{i_1},\ldots ,a_{i_n}, b_{j_1},\ldots ,b_{j_m}\in A$ i $\epsilon_1,\ldots ,\epsilon_n, \delta_1, \ldots ,\delta_m=\pm 1$. Fixant-nos en les $0\leq r\leq \min\{n,m\}$ cance\l.lacions que apareixen en el producte $xy$, tenim que $a_{i_1}^{\epsilon_1}=b_{j_1}^{-\delta_1},\ldots, a_{i_r}^{\epsilon_r}=b_{j_r}^{-\delta_r}$ i $a_{i_{r+1}}^{\epsilon_{r+1}}\neq b_{j_{r+1}}^{-\delta_{r+1}}$. En aquesta situació, l'(única) expressió reduïda per a l'element $xy$ en termes de $A$ és $xy=a_{i_n}^{\epsilon_n}\cdots a_{i_{r+1}}^{\epsilon_{r+1}} b_{j_{r+1}}^{\delta_{r+1}} \cdots b_{j_m}^{\delta_m}$ i tenim 
 \begin{align*}
(xy)\widetilde{\varphi} &=( a_{i_n}^{\epsilon_n}\cdots a_{i_{r+1}}^{\epsilon_{r+1}} b_{j_{r+1}}^{\delta_{r+1}} \cdots b_{j_m}^{\delta_m} )\widetilde{\varphi} \\ &\,=\,(a_{i_n}\varphi)^{\epsilon_n}\cdots (a_{i_{r+1}}\varphi)^{\epsilon_{r+1}}\cdot (b_{j_{r+1}}\varphi)^{\delta_{r+1}}\cdots (b_{j_m}\varphi)^{\delta_m} \\
&\,=\,\big[ (a_{i_n}\varphi)^{\epsilon_n}\cdots (a_{i_{r+1}}\varphi)^{\epsilon_{r+1}}\big] \cdot \big[ (a_{i_r}\varphi)^{\epsilon_r}\cdots (a_{i_1}\varphi)^{\epsilon_1}\big] \cdot \\
& \hspace{2cm} \cdot \big[ (b_{j_1}\varphi)^{\delta_1}\cdots (b_{j_r}\varphi)^{\delta_r} \big] \cdot \big[ (b_{j_{r+1}}\varphi)^{\delta_{r+1}}\cdots (b_{j_m}\varphi)^{\delta_m} \big] \\ &\,=\,(a_{i_n}^{\epsilon_n}\cdots a_{i_{r+1}}^{\epsilon_{r+1}}\cdot a_{i_r}^{\epsilon_r}\cdots a_{i_1}^{\epsilon_1})\widetilde{\varphi} \cdot (b_{j_1}^{\delta_1}\cdots b_{j_r}^{\delta_r}\cdot b_{j_{r+1}}^{\delta_{r+1}}\cdots b_{j_m}^{\delta_m})\widetilde{\varphi} \\ &\,=\,(x)\widetilde{\varphi}\cdot (y)\widetilde{\varphi}.
 \end{align*}
 
Per veure l'altra implicació, suposem certa la propietat universal per a $F$.

Considerem $H=\langle A\rangle\leqslant F$, el subgrup de $F$ generat per $A$, i designem per $i\colon H\into F$ la inclusió. Aplicant la propietat universal a la inclusió $\varphi\colon A\into H$, obtenim un morfisme $\widetilde{\varphi}\colon F\to H$ complint $\iota_A \widetilde{\varphi}=\varphi$. Però ara, tornem a aplicar la propietat universal a la composició $\iota_A =\varphi i\colon A\into H\into F$: el morfisme $\widetilde{\varphi} i$ compleix $\iota_A (\widetilde{\varphi}i) =(\iota_A \widetilde{\varphi})i =\varphi i$, i la identitat $\id_F \colon F\to F$ també ho compleix $\iota_A \id_F =\iota_A =\varphi i$. Per tant, per la unicitat, $\id_F= \widetilde{\varphi} i$, d'on deduïm que $F=\im (\id_F)=\im (\widetilde{\varphi} i)=\im \widetilde{\varphi}\leqslant H=\langle A\rangle$ i, per tant, que $A$ genera $F$. 

\begin{figure}[H]
\centering
\begin{tikzcd}
A \arrow[d, "\iota_A"', hook] \arrow[r, "\varphi", hook]     & H \\
F \arrow[ru, "\widetilde{\varphi}"', dashed] &  
\end{tikzcd}
\hspace{40pt}
\begin{tikzcd}[column sep=scriptsize]
A \arrow[d, "\iota_A"', hook] \arrow[r, "\varphi", hook] & H \arrow[r, "i", hook] & F \\ F \arrow[rru, "\widetilde{\varphi} i \,=\, \id_{F}"', dashed] &  
\end{tikzcd}
\end{figure}

Per altra banda, podem pensar en el conjunt $A$ com un alfabet formal i construir el grup $G=\Free[A]$; aplicant la propietat universal a la inclusió $\varphi\colon A \hookrightarrow \Free[A]$, $a\mapsto [a]$, existeix un morfisme $\widetilde{\varphi}\colon F\to \Free[A]$ complint $\iota_A \widetilde{\varphi}=\varphi$, és a dir, complint $(a_{i_1}^{\epsilon_1}\cdots a_{i_n}^{\epsilon_n})\widetilde{\varphi}= (a_{i_1}\varphi)^{\epsilon_1}\cdots (a_{i_n}\varphi)^{\epsilon_n}= [a_{i_1}]^{\epsilon_1}\cdots [a_{i_n}]^{\epsilon_n} =[a_{i_1}^{\epsilon_1}\cdots a_{i_n}^{\epsilon_n}]$, per a cada producte reduït $a_{i_1}^{\epsilon_1}\cdots a_{i_n}^{\epsilon_n}$ d'elements de $A$. Per tant, si dos productes reduïts d'elements de $A$, $a_{i_1}^{\epsilon_1}\cdots a_{i_n}^{\epsilon_n}$ i $b_{j_1}^{\delta_1}\cdots b_{j_m}^{\delta_m}$ donen el mateix resultat a $F$, $a_{i_1}^{\epsilon_1}\cdots a_{i_n}^{\epsilon_n} =_F b_{j_1}^{\delta_1}\cdots b_{j_m}^{\delta_m}$, llavors $[a_{i_1}^{\epsilon_1}\cdots a_{i_n}^{\epsilon_n}]=_{\Free[A]} [b_{j_1}^{\delta_1}\cdots b_{j_m}^{\delta_m}]$, i la~\Cref{prop: unicitat reduida}, ens assegura que són la mateixa expressió reduïda: $n=m$, $a_{i_1}=b_{j_1},\ldots ,a_{i_n}=b_{j_n}$, i $\epsilon_1=\delta_1,\ldots ,\epsilon_n=\delta_n$. Per tant, $A$ és un subconjunt lliure de $F$. \end{proof}

\begin{prop} \label{prop: isom iff =card}
Siguin $\Free$ un grup lliure amb base $A\subseteq \Free$, $\Free'$ un grup lliure amb base $A\!'\subseteq \Free'$, 
i designem per $\iota_A$ i $\iota_{A\!'}$ les respectives inclusions. Llavors, $\Free$ i $\Free'$ són grups isomorfs si i només si $A$ i $A\!'$ tenen el mateix cardinal: $\Free\simeq \Free' \Leftrightarrow \card A=\card A\!'$.
\end{prop}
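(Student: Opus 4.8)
The plan is to prove the two implications separately, the right-to-left one being routine and the left-to-right one carrying all the content.

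For the easy direction ($\card A=\card A' \Rightarrow \Free\simeq\Free'$) I would fix a bijection $\sigma\colon A\to A'$ and read it, composing with the natural inclusion, as a set map $A\to\Free'$; the universal property (\Cref{def: grup lliure cat}) then extends it to a group morphism $\widetilde{\sigma}\colon\Free\to\Free'$. Doing the same with $\sigma^{-1}$ yields $\widetilde{\sigma^{-1}}\colon\Free'\to\Free$. The composite $\widetilde{\sigma}\,\widetilde{\sigma^{-1}}$ restricts to the identity on $A$, and since $\id_{\Free}$ does too, the uniqueness clause of the universal property forces $\widetilde{\sigma}\,\widetilde{\sigma^{-1}}=\id_{\Free}$; symmetrically for the other composite. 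Hence $\widetilde{\sigma}$ is an isomorphism.

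For the hard direction I would produce an isomorphism invariant that recovers the cardinal of the base. The natural candidate, again furnished by the universal property, is the number of morphisms into a fixed finite group: restriction to $A$ gives a bijection between $\mathrm{Hom}(\Free[A],\ZZ/2\ZZ)$ and the set of maps $A\to\ZZ/2\ZZ$, so this Hom-set has cardinality $2^{\card A}$. Being defined purely group-theoretically, its cardinality is preserved by any isomorphism; in particular $A$ is finite iff $2^{\card A}$ is finite iff $A'$ is finite, so the two bases are simultaneously finite or infinite. In the finite case we are then done immediately, since $2^{\card A}=2^{\card A'}$ with both exponents finite forces $\card A=\card A'$.

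The main obstacle is the infinite case, and it is genuine: for infinite cardinals the equality $2^{\kappa}=2^{\lambda}$ does \emph{not} imply $\kappa=\lambda$, so the Hom-count alone cannot finish the job. I would instead recover $\card A$ as the cardinality of the group itself. Counting reduced words (\Cref{prop: unicitat reduida}), the elements of $\Free[A]$ of a given length $n$ number at most $(2\card A)^n$, and for infinite $A$ one has $(2\card A)^n=\card A$ for each $n\geq 1$, whence $\card\bigl(\bigcup_{n\geq 0}\cdots\bigr)=\aleph_0\cdot\card A=\card A$; together with the obvious injection $A\hookrightarrow\Free[A]$ this gives $\card\Free[A]=\card A$. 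Since a group isomorphism is in particular a bijection of underlying sets, for infinite bases $\card A=\card\Free=\card\Free'=\card A'$, completing the proof.
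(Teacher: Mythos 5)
Your proposal is correct and takes essentially the same route as the paper: the right-to-left implication via the uniqueness clause of the universal property, and the converse by combining the count $\card \operatorname{Hom}(\Free[A],\ZZ/2\ZZ)=2^{\card A}$ (which settles the finite case) with the identity $\card \Free[A]=\card A$ for infinite $A$ (which settles the infinite case). The only differences are expository --- the paper dispatches the infinite case first with an unproved remark and never says why the Hom-count alone is insufficient, whereas you sketch the word-counting proof of that remark and correctly point out the genuine obstacle, namely that $2^{\kappa}=2^{\lambda}$ need not imply $\kappa=\lambda$ for infinite cardinals.
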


\begin{proof}
Per la implicació cap a l'esquerra, suposem $\card A=\card A\!'$ i prenem una aplicació bijectiva $\eta\colon A\to A\!'$. Aplicant la propietat universal de $\Free$ a $\eta \iota_{A\!'}$ obtenim un morfisme $\alpha\colon \Free\to \Free'$ complint $\iota_A \alpha=\eta \iota_{A\!'}$; anàlogament, aplicant la propietat universal de $\Free'$ a $\eta^{-1} \iota_A$, obtenim un morfisme $\beta\colon \Free'\to \Free$ complint $\iota_{A\!'}\beta =\eta^{-1} \iota_A$; vegeu la part superior de~\eqref{eq: =card}. Però ara, apliquem la propietat universal de $\Free$ a la inclusió $\iota_A$: òbviament la identitat compleix $\iota_A \id_{\Free}=\iota_A$; però el morfisme $\alpha\beta\colon \Free\to \Free'\to \Free$ també: $\iota_A \alpha\beta =\eta \iota_{A\!'}\beta =\eta \eta^{-1} \iota_A=\iota_A$. Per unicitat, deduïm que $\alpha\beta=\id_{\Free}$. I amb un argument simètric, $\beta\alpha=\id_{\Free'}$. Per tant, $\Free\simeq \Free'$.
 \begin{equation} \label{eq: =card}
\setlength{\tabcolsep}{20pt}
\renewcommand{\arraystretch}{5}
\begin{tabular}{cc}
\begin{tikzcd}[column sep=scriptsize]
A \arrow[d, "\iota_A"', hook] \arrow[r, "\eta", hook] & A\!' \arrow[r, "\iota_{A\!'}", hook]& \Free' \\ \Free \arrow[rru, "\alpha"', dashed] &  
\end{tikzcd}
&
\begin{tikzcd}[column sep=scriptsize]
A\!' \arrow[d, "\iota_{A\!'}"', hook] \arrow[r, "\eta^{-1}", hook] & A \arrow[r, "\iota_A", hook]& \Free \\ \Free' \arrow[rru, "\beta"', dashed] &  
\end{tikzcd}\\[5pt]
\begin{tikzcd}
A \arrow[d, "\iota_A"', hook] \arrow[r, "\iota_A", hook] & \Free \\
\Free \arrow[ru, "\alpha \beta\,=\,\id_{\Free}"', dashed] &  
\end{tikzcd}
&
\begin{tikzcd}
A\!' \arrow[d, "\iota_{A\!'}"', hook] \arrow[r, "\iota_{A\!'}", hook] & \Free' \\
\Free' \arrow[ru, "\beta \alpha\,=\,\id_{\Free'}"', dashed] &  
\end{tikzcd}
\end{tabular}
 \end{equation}

Per la implicació contrària farem un argument de naturalesa diferent. És fàcil veure que, si $\card A\geqslant \aleph_0$, llavors $\card \Free[A]=\card A$. Per tant, si $\card A,\, \card A\!'\geqslant \aleph_0$ el resultat és evident. Restringim-nos, doncs, al cas $\card A<\infty$.

Observeu que, per cada grup $G$, la propietat universal de $\Free$ ens dóna una bijecció,  $\operatorname{Map}(A,G)\to \operatorname{Hom}(\Free, G)$, $\varphi\mapsto \widetilde{\varphi}$, entre el conjunt d'aplicacions de $A$ a $G$, i el conjunt de morfismes de grups de $\Free$ a $G$ (la inversa és la restricció, $\phi_{|A}\mapsfrom \phi$). Per tant, aquests dos conjunts tenen el mateix cardinal, $\card \operatorname{Map}(A,G)=\card \operatorname{Hom}(\Free,G)$. Si $\Free$ i $\Free'$ són grups isomorfs llavors, mirant aplicacions i morfismes a $\ZZ/2\ZZ$, tenim
 \begin{align*}
2^{\card A} &=\card \operatorname{Map}(A,\ZZ/2\ZZ) =\card \operatorname{Hom}(\Free,\ZZ/2\ZZ) \\ &=\card \operatorname{Hom}(\Free',\ZZ/2\ZZ) =\card \operatorname{Map}(A\!',\ZZ/2\ZZ) =2^{\card A\!'},
 \end{align*}
d'on es dedueix que $\card A=\card A\!'$. 
\end{proof}




\begin{rem} 
Un grup lliure $\Free$, doncs, pot ser-ho sobre diversos subconjunts seus $A, A\!'\subseteq \Free$; però, en aquest cas, tots hauran de tenir el mateix cardinal $\card A=\card A\!'$. Donat que les bases de $\Free$ són sistemes de generadors minimals, veiem ara que, de fet, són sistemes de generadors de cardinal mínim: si $A$ és base de $\Free$, aleshores $\rk(\Free)=\card A$. Escriurem $\Free[\kappa]$ per designar el grup lliure de rang $\kappa$ (si no volem fer referència a cap base concreta).
\end{rem}

Aquesta situació és anàloga a la de l'àlgebra lineal: un $K$-espai vectorial $E$ sobre un cos $K$ pot tenir diverses bases, però totes han de tenir sempre el mateix cardinal (la $K$-dimensió de $E$). Com ja hem vist, hi ha grups lliures de qualsevol rang, de la mateixa manera que hi ha $K$-espais vectorials de qualsevol dimensió. Una diferència important, però, és que en àlgebra lineal tot $K$-espai vectorial té bases (i, per tant, dimensió), mentre que hi ha molts grups que no tenen cap base, és a dir que \emph{no són lliures} sobre cap subconjunt (els grups finits no-trivials, per exemple).

Algebraicament, la família dels grups lliures és d'una gran rellevància. Com que, llevat d'isomorfia, n'hi ha un de sol per a cada rang $\kappa$, el designarem per $\Free[\kappa]$ si volem obviar la referència a l'alfabet concret $A$ usat per la seva construcció. Així tindrem:
 \begin{equation*}
     \Free[0]\simeq 1,\,\, \Free[1]\simeq \ZZ,\,\, \Free[2],\,\, \Free[3],\,\, \ldots \,\, ,\Free[\aleph_0],\,\, \overset{\textstyle{\footnotemark}}{\ldots}
 \end{equation*}
 \footnotetext{Qui vulgui que siguin\ldots\ vegeu \url{https://ca.wikipedia.org/wiki/Hipòtesi_del_continu}}
Clarament, els únics grups commutatius són els dos primers, força especials dins d'aquesta família. La rellevància d'aquests grups rau en el fet que, d'alguna manera,
contenen \emph{tota} la informació possible sobre \emph{tots} els grups possibles. Això es concreta a través del resultat clàssic següent, força senzill de demostrar, i alhora de fonamental importància per a la Teoria de Grups.

\begin{thm} \label{th: G = F/N}
Tot grup $G$ és quocient d'un grup lliure. És a dir, per a tot grup $G$ existeix un cardinal $\kappa$ i un subgrup normal $N\normaleq \Free[\kappa]$ tals que $G\simeq \Free[\kappa]/N$.
\end{thm}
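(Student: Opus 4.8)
The plan is to realize $G$ as a homomorphic image of a suitably large free group and then apply the first isomorphism theorem. The only genuine choice is the generating set, and since an arbitrary group need not come equipped with an economical one, the safest option is to take the alphabet to be the whole underlying set $A = G$ (forgetting the group structure) and set $\kappa = \card G$. This guarantees that a generating set exists for every $G$, with no finiteness or other hypotheses.

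First I would form the free group $\Free[A] = \Free[\kappa]$ on the alphabet $A = G$, as constructed above, together with the canonical inclusion $\iota_A \colon A \into \Free[A]$. Then I consider the map of sets $\varphi \colon A \to G$ given by the identity $g \mapsto g$; this is the step where the group structure of $G$ is used, now as the \emph{target}. By the universal property (\Cref{def: grup lliure cat}) there exists a unique group homomorphism $\widetilde{\varphi} \colon \Free[A] \to G$ with $\iota_A \widetilde{\varphi} = \varphi$.

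Next I would observe that $\widetilde{\varphi}$ is surjective. Indeed, $\im \widetilde{\varphi} \leqslant G$ is a subgroup containing $(A)\varphi = (A)\iota_A\widetilde{\varphi}$, and since $A$ generates $G$ (trivially so when $A = G$), we get $\im \widetilde{\varphi} = G$. Setting $N = \ker \widetilde{\varphi} \normaleq \Free[A]$, the first isomorphism theorem yields $\Free[\kappa]/N = \Free[A]/N \simeq \im \widetilde{\varphi} = G$, which is exactly the claim.

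There is essentially no hard step here: the substantive content is entirely packaged into the universal property established in the previous section. The one point deserving a moment's care is simply the existence of a generating set for a completely arbitrary group, which is handled uniformly (if wastefully) by the choice $A = G$; a sharper statement would take $A$ to be any generating set, giving $\kappa = \rk(G)$ when that cardinal is finite. The only ingredient not built up in the preceding sections is the first isomorphism theorem, which I would invoke as a classical fact.
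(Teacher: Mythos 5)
Your proposal is correct and follows essentially the same route as the paper's proof: realize $G$ as the image of $\Free[A]$ via the universal property applied to (the inclusion of) a generating set, and conclude with the first isomorphism theorem. The only cosmetic difference is that you fix $A=G$ from the start, while the paper takes an arbitrary generating set $A\subseteq G$ and notes that $A=G$ always works.
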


\begin{proof}
Sigui $A\subseteq G$ un conjunt de generadors de $G$ (sempre es pot prendre $A=G$), i sigui $\kappa =\card A$ el seu cardinal. Pensem $A$ com un conjunt abstracte, i considerem el grup lliure $\Free[A]$. Per la propietat universal aplicada a la inclusió $\varphi\colon A\to G$, existeix un (únic) morfisme de grups $\widetilde{\varphi}\colon \Free[A] \to G$ tal que $[a]\widetilde{\varphi}=a$, per a tot $a\in A$. Com que $A$ genera $G$, $\widetilde{\varphi}$ és exhaustiu i, pel primer teorema d'isomorfia, $N=\ker \widetilde{\varphi}$ és un subgrup normal de $\Free[A]$ complint $\Free[A]/N \simeq \operatorname{Im}(\widetilde{\varphi})=G$.
\end{proof}

Si al \Cref{th: G = F/N} fixem una base per $\Free[\kappa ]$, i una família de generadors de $N$ \emph{com a subgrup normal} de $\Free[\kappa ]$, aleshores obtenim el concepte de presentació, essencial en teoria de grups.

\begin{defn} \label{def: presentacio}
Sigui $G$ un grup. Una \defin{presentació} per a $G$ és un parell $(A,R)$ on~$A$ és un conjunt de símbols, $R$ és un subconjunt de $\Free[A]$, i $G\isom \Free[A]/\ncl{R}$.\footnote{La \defin{clausura normal} $\ncl{R}$ és el subgrup (normal) generat per tots els conjugats d'elements de $R$.} Abusant lleugerament del llenguatge, se sol escriure $G=\pres{A}{R}$, i es diu que els elements de $A$ (\resp $R$) són els \defin{generadors}\footnote{Formalment, els generadors de $G$ donats per la presentació $G=\pres{A}{X}$ són les imatges~$[a]\widetilde{\varphi}$ dels elements $a \in A$ per la seqüència d'homomorfismes exhaustius $(A^{\pm})^*\onto \Free[A] \onto G$.} (\resp \defin{relators}) de $G$ donats per la presentació $\pres{A}{R}$, i que $w \in (A^{\pm})^*$ és una \defin{paraula en els generadors} de $G$.
\end{defn}

\begin{cor}
El grup lliure amb base $A$ admet la presentació $\Free[A]=\pres{A}{-}$. 
\end{cor}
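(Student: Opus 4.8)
El pla és aplicar directament la \Cref{def: presentacio}: la presentació $\pres{A}{-}$ correspon a prendre el conjunt de relators $R=\emptyset$, de manera que només cal comprovar que $\Free[A]\isom \Free[A]/\ncl{\emptyset}$. Tot es redueix, doncs, a identificar la clausura normal del conjunt buit.

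Primer observaria que $\ncl{\emptyset}=\set{\trivial}$ és el subgrup trivial. En efecte, per definició $\ncl{\emptyset}$ és el subgrup (normal) generat per tots els conjugats d'elements d'$\emptyset$; com que aquest conjunt és buit, no hi ha cap conjugat a considerar, i el subgrup generat pel conjunt buit és, per conveni, el trivial. Tot seguit, via la projecció canònica al quocient pel subgrup trivial, conclouria que $\Free[A]/\ncl{\emptyset}=\Free[A]/\set{\trivial}\isom \Free[A]$, la qual cosa estableix la igualtat $\Free[A]=\pres{A}{-}$.

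Aquest argument no amaga cap dificultat real; l'únic punt a tenir present és el conveni que el subgrup generat (i, en particular, la clausura normal) del conjunt buit és el subgrup trivial. De fet, es pot veure com el cas més senzill del \Cref{th: G = F/N}: prenent $G=\Free[A]$ amb la base $A$ com a sistema de generadors, el morfisme exhaustiu que la propietat universal associa a la inclusió $A\into \Free[A]$ és la identitat $\id_{\Free[A]}$, el nucli de la qual és trivial, i el primer teorema d'isomorfia retorna $\Free[A]/\set{\trivial}\isom \Free[A]$.
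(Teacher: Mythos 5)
La teva proposta és correcta i segueix essencialment el mateix camí que el paper, que enuncia aquest resultat sense demostració com a conseqüència immediata de la \Cref{def: presentacio}. El que fas és explicitar-ne el detall clau ---que $\ncl{\emptyset}=\set{\trivial}$ i, per tant, $\Free[A]\isom\Free[A]/\ncl{\emptyset}$ satisfà la definició amb $R=\emptyset$--- juntament amb la connexió natural amb el \Cref{th: G = F/N} via la propietat universal, sense cap pas dubtós.
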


Noteu que el \Cref{th: G = F/N} ens diu que tot grup admet una (de fet, infinites) presentacions. Diem que una presentació $\pres{A}{R}$ és \defin{finita}
si tant $A$ com $R$ són conjunt finits.
Es diu que un grup és \defin{finitament 
presentat} si admet una presentació finita.

El \Cref{th: G = F/N} admet dues interpretacions contraposades. Per un costat, implica que, co\lgem ectivament, els reticles de subgrups (normals) dels grups lliures contenen tota la informació algebraica possible sobre qualsevol grup concebible; això els converteix en uns candidats naturals i molt concrets on buscar informació sobre qualsevol grup. Però, per altra banda, per aquesta mateixa raó, aquests reticles han de ser extraordinàriament complicats, i és utòpic pretendre entendre'ls completament.

Les presentacions constitueixen la manera natural de descriure els grups des del punt de vista de la teoria combinatòria i algorísmica de grups. La contrapartida geomètrica més elemental és l'anomenat dígraf de Cayley, definit a continuació.

\begin{defn}
Siguin $G$ un grup i $S\subseteq G$ un conjunt de generadors per $G$. Aleshores, el \defin{dígraf de Cayley de $G$ respecte $S$}, designat per $\cayley(G,S)$, és el graf dirigit (dígraf) amb conjunt de vèrtexs $G$, i un arc $g\xarc{s\,} gs$ per a cada element $g\in G$ i cada generador $s\in S^{\pm}$.
\end{defn}

Observeu que els dígrafs de Cayley (respecte a conjunts generadors) són connexos: com que $\gen{S}=G$, qualsevol $g\in G$ es pot escriure en la forma $g=s_{i_1}^{\epsilon_1} \cdots s_{i_k}^{\epsilon_k}$, per a certs $k\geq 0$,  $s_{i_j}\in S$, i $\epsilon_j=\pm 1$ i, llavors, el camí
 \begin{equation}
1 \! \xarc{\raisebox{0.6ex}{$\scriptstyle{s_{i_1}^{\epsilon_1}}$}} s_{i_1}^{\epsilon_1} \xarc{\raisebox{0.6ex}{$\scriptstyle{s_{i_2}^{\epsilon_2}}$}} s_{i_1}^{\epsilon_1}s_{i_2}^{\epsilon_2} \xarc{\raisebox{0.6ex}{$\scriptstyle{s_{i_3}^{\epsilon_3}}$}} \ \cdots \ \xarc{\raisebox{0.6ex}{$\scriptstyle{s_{i_k}^{\epsilon_k}}$}} g
 \end{equation}
connecta el vèrtex $1\in G$ amb el vèrtex genèric $g\in G$. Per altra banda, els dígrafs de Cayley són també $(2 \,\card S$)-regulars:\footnote{Un digraf $\Ati$ és \defin{regular} si els nombres  d'arcs entrant a i sortint d'un vèrtex són independents del vèrtex  $\verti \in \Verts \Ati$.} per definició, de cada vèrtex $g\in G$ en surt exactament una aresta per cada $s\in S^{\pm}$ (anant al vèrtex $gs\in G$) i n'arriba també exactament una per cada $s\in S^{\pm}$ (venint de $gs^{-1}\in G$); en general, algunes d'aquestes arestes poden ser para\lgem eles, o llaços.  

Recalquem que $\cayley(G,S)$ depèn fortament del conjunt de generadors escollit~$S$ (vegeu \Cref{fig: Cayley(Z)}). Per tant, no és un objecte geomètric canònicament associat al grup $G$, sinó al grup $G$ \emph{i a un conjunt de generadors concret} $S$. 

\begin{exm}
El dígraf de Cayley del grup lliure $\Free[\{a\}]\simeq \ZZ$ respecte la base $\{a\}$ (és a dir, $\{1\}$ en notació additiva) és un camí dirigit amb infinits vèrtexs consecutius, connectats cadascun al següent per una $a$-aresta (i a l'anterior per una $(a^{-1})$-aresta). A la~\Cref{fig: Cayley(Z)} podeu veure dibuixats els dígrafs de Cayley $\cayley(\ZZ,\set{1})$ i $\cayley(\ZZ,\set{2,3})$ on, com és habitual amb els dígrafs involutius (vegeu la \Cref{def: positive part}), dibuixem només els arcs positius, i assenyalem amb $\bp$ el vèrtex corresponent a l'element neutre.
\end{exm}

\begin{figure}[H] 
\centering
  \begin{tikzpicture}[shorten >=1pt, node distance=1 and 1, on grid,auto,>=stealth']
  \begin{scope}
  
   \node[state,accepting] (0) {};
   \node[state] (-1) [left = of 0]{};
   \node[state] (-2) [left = of -1]{};
   \node[state] (-3) [left = of -2]{};
   \node[] (-4) [left = 1 of -3]{$\cdots$};
   \node[] (-5) [left = of -4]{};
   \node[state] (1) [right = of 0]{};
   \node[state] (2) [right = of 1]{};
   \node[state] (3) [right = of 2]{};
   \node[] (4) [right = 1 of 3]{$\cdots$};
   \node[] (5) [right = of 4]{};

    \path[->] (-4) edge[blue] (-3);        
    \path[->] (-3) edge[blue] (-2);
    \path[->] (-2) edge[blue] (-1);
    \path[->] (-1) edge[blue] (0);
    \path[->] (0) edge[blue] node[above] {\scriptsize{$1$}} (1);
    \path[->] (1) edge[blue] (2);
    \path[->] (2) edge[blue] (3);
    \path[->] (3) edge[blue] (4);
    \end{scope}

   \begin{scope}[yshift=-1.5 cm]
   
   \node[state,accepting] (0) {};
   \node[state] (-1) [left = of 0]{};
   \node[state] (-2) [left = of -1]{};
   \node[state] (-3) [left = of -2]{};
   \node[] (-4) [left = 1 of -3]{$\cdots$};
   \node[] (-5) [left = of -4]{};
   \node[] (-6) [left = of -5]{};
   \node[state] (1) [right = of 0]{};
   \node[state] (2) [right = of 1]{};
   \node[state] (3) [right = of 2]{};
   \node[] (4) [right = 1 of 3]{$\cdots$};
   \node[] (5) [right = of 4]{};
   \node[] (6) [right = of 5]{};
    
    \path[->]
        (-4) edge[Sepia, bend left = 30]
        (-2);

    \path[->]
        (-3) edge[Sepia, bend left = 30]
        (-1);
    
    \path[->]
        (-2) edge[Sepia, bend left = 30]
        (0);

    \path[->]
        (-1) edge[Sepia, bend left = 30]
        (1);

    \path[->]
        (0) edge[Sepia, bend left = 30]
            node[above] {\scriptsize{$2$}}
            (2);
            
    \path[->]
        (1) edge[Sepia, bend left = 30]
        (3);
            
    \path[->]
        (2) edge[Sepia, bend left = 30]
        (4);
    
    \path[->]
    (-4) edge[OliveGreen, bend right = 30]
    (-1);
     
    \path[->]
    (-3) edge[OliveGreen, bend right = 30]
    (0); 
     
    \path[->]
    (-2) edge[OliveGreen, bend right = 30]
    (1);   
    
    \path[->]
    (-1) edge[OliveGreen, bend right = 30]
    (2);    
        
    \path[->]
    (0) edge[OliveGreen, bend right = 30] node[below] {\scriptsize{$3$}}
    (3);
        
    \path[->]
    (1) edge[OliveGreen, bend right = 30]
    (4);
    \end{scope}
\end{tikzpicture}
\vspace{-5pt}
\caption{$\cayley(\ZZ,\set{1})$ (a dalt) i $\cayley(\ZZ,\set{2,3})$ (a sota)}
\label{fig: Cayley(Z)}
\end{figure}

\begin{exm}
És fàcil veure que el dígraf de Cayley del grup lliure $\Free[\{a,b\}]$ respecte de la base $\{a,b\}$ té l'aspecte de la \Cref{fig: Sch(F_2)}. Per convèncer-nos-en, només cal tenir en compte les dues observacions anteriors (es tracta d'un dígraf connex, i $4$-regular), juntament amb els fets d'ésser infinit ($\card \Free[\{a,b\}]=\aleph_0$) i d'ésser un arbre: un hipotètic camí tancat ens donaria dues expressions diferents d'un mateix element $g$ com a producte reduït de les lletres $a,\, a^{-1},\, b,\, b^{-1}$, contradient el fet que $\Free[\{a,b\}]$ és lliure en $\{a,b\}$. Anàlogament, el dígraf de Cayley del grup lliure $\Free[\kappa]$ de rang $\kappa$ respecte una base és el corresponent arbre infinit $2\kappa$-regular.
\end{exm}

\begin{figure}[h]
\centering
  \begin{tikzpicture}[shorten >=1pt, node distance=1.2 and 1.2, on grid,auto,>=stealth',transform shape]
  
  \node[state,accepting] (bp) {};

  \begin{scope}
  
  \newcommand{\dx}{1.6}
  \newcommand{\dy}{1.6}
  \newcommand{\dxx}{\dx*0.5}
  \newcommand{\dxxx}{\dx*0.3}
  \newcommand{\dxxxx}{\dx*0.18}
  \newcommand{\dyy}{\dy*0.5}
  \newcommand{\dyyy}{\dy*0.3}
  \newcommand{\dyyyy}{\dy*0.18}

   \node[smallstate] (1) {};
   \node[smallstate] (a) [right = \dx of 1]{};
   \node[smallstate] (aa) [right = \dxx of a]{};
   \node[tinystate] (aaa) [right = \dxxx of aa]{};
   \node[emptystate] (aaaa) [right = \dxxxx of aaa]{};
   
   \node[tinystate] (ab) [above = \dyy of a]{};
   \node[tinystate] (aab) [above = \dyyy of aa]{};
   \node[emptystate] (aabb) [above = \dyyyy of aab]{};
   \node[emptystate] (aaab) [above = \dyyyy of aaa]{};
   \node[emptystate] (aaaab) [above = \dyyyy of aaaa]{};
   
   \node[tinystate] (aB) [below = \dyy of a]{};
   \node[tinystate] (aaB) [below = \dyyy of aa]{};
   \node[emptystate] (aaBB) [below = \dyyyy of aaB]{};
   \node[emptystate] (aaaB) [below = \dyyyy of aaa]{};
   \node[emptystate] (aaaaB) [below = \dyyyy of aaaa]{};
   
   \node[tinystate] (aba) [right = \dxxx of ab]{};
   \node[tinystate] (abA) [left = \dxxx of ab]{};
   \node[emptystate] (aaba) [right = \dxxxx of aab]{};
   \node[emptystate] (aabA) [left = \dxxxx of aab]{};

   \node[tinystate] (aBa) [right = \dxxx of aB]{};
   \node[tinystate] (aBA) [left = \dxxx of aB]{};
   \node[emptystate] (aaBa) [right = \dxxxx of aaB]{};
   \node[emptystate] (aaBA) [left = \dxxxx of aaB]{};

   \node[tinystate] (abb) [above = \dyyy of ab]{};
   \node[tinystate] (aBB) [below = \dyyy of aB]{};
   
   \node[emptystate] (abbA) [left = \dxxxx of abb]{};
   \node[emptystate] (abbb) [above = \dyyyy of abb]{};
   \node[emptystate] (abba) [right = \dxxxx of abb]{};
   
   \node[emptystate] (aBBA) [left = \dxxxx of aBB]{};
   \node[emptystate] (aBBB) [below = \dyyyy of aBB]{};
   \node[emptystate] (aBBa) [right = \dxxxx of aBB]{};
   
   \node[emptystate] (abAA) [left = \dxxxx of abA]{};
   \node[emptystate] (abAb) [above = \dyyyy of abA]{};
   \node[emptystate] (abAB) [below = \dxxxx of abA]{};
   
   \node[emptystate] (aBAA) [left = \dxxxx of aBA]{};
   \node[emptystate] (aBAb) [above = \dyyyy of aBA]{};
   \node[emptystate] (aBAB) [below = \dxxxx of aBA]{};
   
   \node[emptystate] (abaa) [right = \dxxxx of aba]{};
   \node[emptystate] (abab) [above = \dyyyy of aba]{};
   \node[emptystate] (abaB) [below = \dxxxx of aba]{};
   
   \node[emptystate] (aBaa) [right = \dxxxx of aBa]{};
   \node[emptystate] (aBab) [above = \dyyyy of aBa]{};
   \node[emptystate] (aBaB) [below = \dxxxx of aBa]{};
     
   \path[->] (a) edge[red] (ab);
   \path[->] (aa) edge[red] (aab);
   \path[->,densely dotted] (aab) edge[red] (aabb);
   \path[->,densely dotted] (aaa) edge[red] (aaab);
   
   \path[->] (aB) edge[red] (a);
   \path[->] (aaB) edge[red] (aa);
   \path[->,densely dotted] (aaBB) edge[red] (aaB);
   \path[->,densely dotted] (aaaB) edge[red] (aaa);
   
   \path[->] (ab) edge[red] (abb);
   \path[->] (aBB) edge[red] (aB);
   
   \path[->] (1) edge[blue] node[pos = 0.45,above] {\scriptsize{$a$}}(a);
   \path[->] (a) edge[blue] (aa);
   \path[->] (aa) edge[blue] (aaa);
   \path[->,densely dotted] (aaa) edge[blue] (aaaa);
   
   \path[->] (abA) edge[blue] (ab);
   \path[->] (ab) edge[blue] (aba);
   \path[->] (ab) edge[blue] (aba);
   
   \path[->,densely dotted] (aabA) edge[blue] (aab);
   \path[->,densely dotted] (aab) edge[blue] (aaba);
   
   \path[->] (aBA) edge[blue] (aB);
   \path[->] (aB) edge[blue] (aBa);
   \path[->] (aB) edge[blue] (aBa);
   
   \path[->,densely dotted] (aaBA) edge[blue] (aaB);
   \path[->,densely dotted] (aaB) edge[blue] (aaBa);
   
   \path[->,densely dotted] (abb) edge[red] (abbb);
   \path[->,densely dotted] (abbA) edge[blue] (abb);
   \path[->,densely dotted] (abb) edge[blue] (abba);
   
   \path[->,densely dotted] (aBBB) edge[red] (aBB);
   \path[->,densely dotted] (aBBA) edge[blue] (aBB);
   \path[->,densely dotted] (aBB) edge[blue] (aBBa);
   
   \path[->,densely dotted] (abA) edge[red] (abAb);
   \path[->,densely dotted] (abAA) edge[blue] (abA);
   \path[->,densely dotted] (abAB) edge[red] (abA);
   
   \path[->,densely dotted] (aBA) edge[red] (aBAb);
   \path[->,densely dotted] (aBAA) edge[blue] (aBA);
   \path[->,densely dotted] (aBAB) edge[red] (aBA);
   
   \path[->,densely dotted] (aba) edge[red] (abab);
   \path[->,densely dotted] (aba) edge[blue] (abaa);
   \path[->,densely dotted] (abaB) edge[red] (aba);
   
   \path[->,densely dotted] (aBa) edge[red] (aBab);
   \path[->,densely dotted] (aBa) edge[blue] (aBaa);
   \path[->,densely dotted] (aBaB) edge[red] (aBa);
  \end{scope}
  
  \begin{scope}[rotate=90]
  
  \newcommand{\dx}{1.6}
  \newcommand{\dy}{1.6}
  \newcommand{\dxx}{\dx*0.5}
  \newcommand{\dxxx}{\dx*0.3}
  \newcommand{\dxxxx}{\dx*0.18}
  \newcommand{\dyy}{\dy*0.5}
  \newcommand{\dyyy}{\dy*0.3}
  \newcommand{\dyyyy}{\dy*0.18}

   \node[smallstate] (1) {};
   \node[smallstate] (a) [right = \dx of 1]{};
   \node[smallstate] (aa) [right = \dxx of a]{};
   \node[tinystate] (aaa) [right = \dxxx of aa]{};
   \node[emptystate] (aaaa) [right = \dxxxx of aaa]{};
   
   \node[tinystate] (ab) [above = \dyy of a]{};
   \node[tinystate] (aab) [above = \dyyy of aa]{};
   \node[emptystate] (aabb) [above = \dyyyy of aab]{};
   \node[emptystate] (aaab) [above = \dyyyy of aaa]{};
   \node[emptystate] (aaaab) [above = \dyyyy of aaaa]{};
   
   \node[tinystate] (aB) [below = \dyy of a]{};
   \node[tinystate] (aaB) [below = \dyyy of aa]{};
   \node[emptystate] (aaBB) [below = \dyyyy of aaB]{};
   \node[emptystate] (aaaB) [below = \dyyyy of aaa]{};
   \node[emptystate] (aaaaB) [below = \dyyyy of aaaa]{};
   
   \node[tinystate] (aba) [right = \dxxx of ab]{};
   \node[tinystate] (abA) [left = \dxxxx of ab]{};
   \node[emptystate] (aaba) [right = \dxxxx of aab]{};
   \node[emptystate] (aabA) [left = \dxxxx of aab]{};

   \node[tinystate] (aBa) [right = \dxxx of aB]{};
   \node[tinystate] (aBA) [left = \dxxx of aB]{};
   \node[emptystate] (aaBa) [right = \dxxxx of aaB]{};
   \node[emptystate] (aaBA) [left = \dxxxx of aaB]{};

   \node[tinystate] (abb) [above = \dyyy of ab]{};
   \node[tinystate] (aBB) [below = \dyyy of aB]{};
   
   \node[emptystate] (abbA) [left = \dxxxx of abb]{};
   \node[emptystate] (abbb) [above = \dyyyy of abb]{};
   \node[emptystate] (abba) [right = \dxxxx of abb]{};
   
   \node[emptystate] (aBBA) [left = \dxxxx of aBB]{};
   \node[emptystate] (aBBB) [below = \dyyyy of aBB]{};
   \node[emptystate] (aBBa) [right = \dxxxx of aBB]{};
   
   \node[emptystate] (abAA) [left = \dxxxx of abA]{};
   \node[emptystate] (abAb) [above = \dyyyy of abA]{};
   \node[emptystate] (abAB) [below = \dxxxx of abA]{};
   
   \node[emptystate] (aBAA) [left = \dxxxx of aBA]{};
   \node[emptystate] (aBAb) [above = \dyyyy of aBA]{};
   \node[emptystate] (aBAB) [below = \dxxxx of aBA]{};
   
   \node[emptystate] (abaa) [right = \dxxxx of aba]{};
   \node[emptystate] (abab) [above = \dyyyy of aba]{};
   \node[emptystate] (abaB) [below = \dxxxx of aba]{};
   
   \node[emptystate] (aBaa) [right = \dxxxx of aBa]{};
   \node[emptystate] (aBab) [above = \dyyyy of aBa]{};
   \node[emptystate] (aBaB) [below = \dxxxx of aBa]{};
     
   \path[->] (ab) edge[blue] (a);
   \path[->] (aab) edge[blue] (aa);
   \path[->,densely dotted] (aabb) edge[blue] (aab);
   \path[->,densely dotted] (aaab) edge[blue] (aaa);
   
   \path[->] (a) edge[blue] (aB);
   \path[->] (aa) edge[blue] (aaB);
   \path[->,densely dotted] (aaB) edge[blue] (aaBB);
   \path[->,densely dotted] (aaa) edge[blue] (aaaB);
   
   \path[->] (abb) edge[blue] (ab);
   \path[->] (aB) edge[blue] (aBB);
   
   \path[->] (1) edge[red]  node[pos = 0.45,below] {\rotatebox[origin=c]{-90}{\scriptsize{$b$}}} (a);
   \path[->] (a) edge[red] (aa);
   \path[->] (aa) edge[red] (aaa);
   \path[->,densely dotted] (aaa) edge[red] (aaaa);
   
   \path[->] (abA) edge[red] (ab);
   \path[->] (ab) edge[red] (aba);
   \path[->] (ab) edge[red] (aba);
   
   \path[->,densely dotted] (aabA) edge[red] (aab);
   \path[->,densely dotted] (aab) edge[red] (aaba);
   
   \path[->] (aBA) edge[red] (aB);
   \path[->] (aB) edge[red] (aBa);
   \path[->] (aB) edge[red] (aBa);
   
   \path[->,densely dotted] (aaBA) edge[red] (aaB);
   \path[->,densely dotted] (aaB) edge[red] (aaBa);
   
   \path[->,densely dotted] (abbb) edge[blue] (abb);
   \path[->,densely dotted] (abbA) edge[red] (abb);
   \path[->,densely dotted] (abb) edge[red] (abba);
   
   \path[->,densely dotted] (aBB) edge[blue] (aBBB);
   \path[->,densely dotted] (aBBA) edge[red] (aBB);
   \path[->,densely dotted] (aBB) edge[red] (aBBa);
   
   \path[->,densely dotted] (abAb) edge[blue] (abA);
   \path[->,densely dotted] (abAA) edge[red] (abA);
   \path[->,densely dotted] (abA) edge[blue] (abAB);
   
   \path[->,densely dotted] (aBAb) edge[blue] (aBA);
   \path[->,densely dotted] (aBAA) edge[red] (aBA);
   \path[->,densely dotted] (aBA) edge[blue] (aBAB);
   
   \path[->,densely dotted] (abab) edge[blue] (aba);
   \path[->,densely dotted] (aba) edge[red] (abaa);
   \path[->,densely dotted] (aba) edge[blue] (abaB);
   
   \path[->,densely dotted] (aBab) edge[blue] (aBa);
   \path[->,densely dotted] (aBa) edge[red] (aBaa);
   \path[->,densely dotted] (aBa) edge[blue] (aBaB);
  \end{scope}
  
  \begin{scope}[rotate=180]
  \newcommand{\dx}{1.6}
  \newcommand{\dy}{1.6}
  \newcommand{\dxx}{\dx*0.5}
  \newcommand{\dxxx}{\dx*0.3}
  \newcommand{\dxxxx}{\dx*0.18}
  \newcommand{\dyy}{\dy*0.5}
  \newcommand{\dyyy}{\dy*0.3}
  \newcommand{\dyyyy}{\dy*0.18}

  \node[smallstate] (1) {};
  \node[smallstate] (a) [right = \dx of 1]{};
  \node[smallstate] (aa) [right = \dxx of a]{};
  \node[tinystate] (aaa) [right = \dxxx of aa]{};
  \node[emptystate] (aaaa) [right = \dxxxx of aaa]{};
   
  \node[tinystate] (ab) [above = \dyy of a]{};
  \node[tinystate] (aab) [above = \dyyy of aa]{};
  \node[emptystate] (aabb) [above = \dyyyy of aab]{};
  \node[emptystate] (aaab) [above = \dyyyy of aaa]{};
  \node[emptystate] (aaaab) [above = \dyyyy of aaaa]{};
   
  \node[tinystate] (aB) [below = \dyy of a]{};
  \node[tinystate] (aaB) [below = \dyyy of aa]{};
  \node[emptystate] (aaBB) [below = \dyyyy of aaB]{};
  \node[emptystate] (aaaB) [below = \dyyyy of aaa]{};
  \node[emptystate] (aaaaB) [below = \dyyyy of aaaa]{};
   
  \node[tinystate] (aba) [right = \dxxx of ab]{};
  \node[tinystate] (abA) [left = \dxxx of ab]{};
  \node[emptystate] (aaba) [right = \dxxxx of aab]{};
  \node[emptystate] (aabA) [left = \dxxxx of aab]{};

  \node[tinystate] (aBa) [right = \dxxx of aB]{};
  \node[tinystate] (aBA) [left = \dxxx of aB]{};
  \node[emptystate] (aaBa) [right = \dxxxx of aaB]{};
  \node[emptystate] (aaBA) [left = \dxxxx of aaB]{};

  \node[tinystate] (abb) [above = \dyyy of ab]{};
  \node[tinystate] (aBB) [below = \dyyy of aB]{};
   
  \node[emptystate] (abbA) [left = \dxxxx of abb]{};
  \node[emptystate] (abbb) [above = \dyyyy of abb]{};
  \node[emptystate] (abba) [right = \dxxxx of abb]{};
   
  \node[emptystate] (aBBA) [left = \dxxxx of aBB]{};
  \node[emptystate] (aBBB) [below = \dyyyy of aBB]{};
  \node[emptystate] (aBBa) [right = \dxxxx of aBB]{};
   
  \node[emptystate] (abAA) [left = \dxxxx of abA]{};
  \node[emptystate] (abAb) [above = \dyyyy of abA]{};
  \node[emptystate] (abAB) [below = \dxxxx of abA]{};
   
  \node[emptystate] (aBAA) [left = \dxxxx of aBA]{};
  \node[emptystate] (aBAb) [above = \dyyyy of aBA]{};
  \node[emptystate] (aBAB) [below = \dxxxx of aBA]{};
   
  \node[emptystate] (abaa) [right = \dxxxx of aba]{};
  \node[emptystate] (abab) [above = \dyyyy of aba]{};
  \node[emptystate] (abaB) [below = \dxxxx of aba]{};
   
  \node[emptystate] (aBaa) [right = \dxxxx of aBa]{};
  \node[emptystate] (aBab) [above = \dyyyy of aBa]{};
  \node[emptystate] (aBaB) [below = \dxxxx of aBa]{};
 
  \path[->] (ab) edge[red] (a);
  \path[->] (aab) edge[red] (aa);
  \path[->,densely dotted] (aabb) edge[red] (aab);
  \path[->,densely dotted] (aaab) edge[red] (aaa);
   
  \path[->] (a) edge[red] (aB);
  \path[->] (aa) edge[red] (aaB);
  \path[->,densely dotted] (aaB) edge[red] (aaBB);
  \path[->,densely dotted] (aaa) edge[red] (aaaB);
   
  \path[->] (abb) edge[red] (ab);
  \path[->] (aB) edge[red] (aBB);
   
  \path[->] (a) edge[blue] (1);
  \path[->] (aa) edge[blue] (a);
  \path[->] (aaa) edge[blue] (aa);
  \path[->,densely dotted] (aaaa) edge[blue] (aaa);
   
  \path[->] (ab) edge[blue] (abA);
  \path[->] (aba) edge[blue] (ab);
  \path[->] (aba) edge[blue] (ab);
   
  \path[->,densely dotted] (aab) edge[blue] (aabA);
  \path[->,densely dotted] (aaba) edge[blue] (aab);
   
  \path[->] (aB) edge[blue] (aBA);
  \path[->] (aBa) edge[blue] (aB);
  \path[->] (aBa) edge[blue] (aB);
   
  \path[->,densely dotted] (aaB) edge[blue] (aaBA);
  \path[->,densely dotted] (aaBa) edge[blue] (aaB);
   
  \path[->,densely dotted] (abbb) edge[red] (abb);
  \path[->,densely dotted] (abb) edge[blue] (abbA);
  \path[->,densely dotted] (abba) edge[blue] (abb);
   
  \path[->,densely dotted] (aBB) edge[red] (aBBB);
  \path[->,densely dotted] (aBB) edge[blue] (aBBA);
  \path[->,densely dotted] (aBBa) edge[blue] (aBB);
   
  \path[->,densely dotted] (abAb) edge[red] (abA);
  \path[->,densely dotted] (abA) edge[blue] (abAA);
  \path[->,densely dotted] (abA) edge[red] (abAB);
   
  \path[->,densely dotted] (aBAb) edge[red] (aBA);
  \path[->,densely dotted] (aBA) edge[blue] (aBAA);
  \path[->,densely dotted] (aBA) edge[red] (aBAB);
   
  \path[->,densely dotted] (abab) edge[red] (aba);
  \path[->,densely dotted] (abaa) edge[blue] (aba);
  \path[->,densely dotted] (aba) edge[red] (abaB);
   
  \path[->,densely dotted] (aBab) edge[red] (aBa);
  \path[->,densely dotted] (aBaa) edge[blue] (aBa);
  \path[->,densely dotted] (aBa) edge[red] (aBaB);
  \end{scope}
  
  \begin{scope}[rotate=270]
  \newcommand{\dx}{1.6}
  \newcommand{\dy}{1.6}
  \newcommand{\dxx}{\dx*0.5}
  \newcommand{\dxxx}{\dx*0.3}
  \newcommand{\dxxxx}{\dx*0.18}
  \newcommand{\dyy}{\dy*0.5}
  \newcommand{\dyyy}{\dy*0.3}
  \newcommand{\dyyyy}{\dy*0.18}

  \node[smallstate] (1) {};
  \node[smallstate] (a) [right = \dx of 1]{};
  \node[smallstate] (aa) [right = \dxx of a]{};
  \node[tinystate] (aaa) [right = \dxxx of aa]{};
  \node[emptystate] (aaaa) [right = \dxxxx of aaa]{};
   
  \node[tinystate] (ab) [above = \dyy of a]{};
  \node[tinystate] (aab) [above = \dyyy of aa]{};
  \node[emptystate] (aabb) [above = \dyyyy of aab]{};
  \node[emptystate] (aaab) [above = \dyyyy of aaa]{};
  \node[emptystate] (aaaab) [above = \dyyyy of aaaa]{};
   
  \node[tinystate] (aB) [below = \dyy of a]{};
  \node[tinystate] (aaB) [below = \dyyy of aa]{};
  \node[emptystate] (aaBB) [below = \dyyyy of aaB]{};
  \node[emptystate] (aaaB) [below = \dyyyy of aaa]{};
  \node[emptystate] (aaaaB) [below = \dyyyy of aaaa]{};
   
  \node[tinystate] (aba) [right = \dxxx of ab]{};
  \node[tinystate] (abA) [left = \dxxx of ab]{};
  \node[emptystate] (aaba) [right = \dxxxx of aab]{};
  \node[emptystate] (aabA) [left = \dxxxx of aab]{};

  \node[tinystate] (aBa) [right = \dxxx of aB]{};
  \node[tinystate] (aBA) [left = \dxxx of aB]{};
  \node[emptystate] (aaBa) [right = \dxxxx of aaB]{};
  \node[emptystate] (aaBA) [left = \dxxxx of aaB]{};

  \node[tinystate] (abb) [above = \dyyy of ab]{};
  \node[tinystate] (aBB) [below = \dyyy of aB]{};
   
  \node[emptystate] (abbA) [left = \dxxxx of abb]{};
  \node[emptystate] (abbb) [above = \dyyyy of abb]{};
  \node[emptystate] (abba) [right = \dxxxx of abb]{};
   
  \node[emptystate] (aBBA) [left = \dxxxx of aBB]{};
  \node[emptystate] (aBBB) [below = \dyyyy of aBB]{};
  \node[emptystate] (aBBa) [right = \dxxxx of aBB]{};
   
  \node[emptystate] (abAA) [left = \dxxxx of abA]{};
  \node[emptystate] (abAb) [above = \dyyyy of abA]{};
  \node[emptystate] (abAB) [below = \dxxxx of abA]{};
   
  \node[emptystate] (aBAA) [left = \dxxxx of aBA]{};
  \node[emptystate] (aBAb) [above = \dyyyy of aBA]{};
  \node[emptystate] (aBAB) [below = \dxxxx of aBA]{};
   
  \node[emptystate] (abaa) [right = \dxxxx of aba]{};
  \node[emptystate] (abab) [above = \dyyyy of aba]{};
  \node[emptystate] (abaB) [below = \dxxxx of aba]{};
   
  \node[emptystate] (aBaa) [right = \dxxxx of aBa]{};
  \node[emptystate] (aBab) [above = \dyyyy of aBa]{};
  \node[emptystate] (aBaB) [below = \dxxxx of aBa]{};
 
  \path[->] (a) edge[blue] (ab);
  \path[->] (aa) edge[blue] (aab);
  \path[->,densely dotted] (aab) edge[blue] (aabb);
  \path[->,densely dotted] (aaa) edge[blue] (aaab);
   
  \path[->] (aB) edge[blue] (a);
  \path[->] (aaB) edge[blue] (aa);
  \path[->,densely dotted] (aaBB) edge[blue] (aaB);
  \path[->,densely dotted] (aaaB) edge[blue] (aaa);
   
  \path[->] (ab) edge[blue] (abb);
  \path[->] (aBB) edge[blue] (aB);
   
  \path[->] (a) edge[red] (1);
  \path[->] (aa) edge[red] (a);
  \path[->] (aaa) edge[red] (aa);
  \path[->,densely dotted] (aaaa) edge[red] (aaa);
   
  \path[->] (ab) edge[red] (abA);
  \path[->] (aba) edge[red] (ab);
  \path[->] (aba) edge[red] (ab);
   
  \path[->,densely dotted] (aab) edge[red] (aabA);
  \path[->,densely dotted] (aaba) edge[red] (aab);
   
  \path[->] (aB) edge[red] (aBA);
  \path[->] (aBa) edge[red] (aB);
  \path[->] (aBa) edge[red] (aB);
   
  \path[->,densely dotted] (aaB) edge[red] (aaBA);
  \path[->,densely dotted] (aaBa) edge[red] (aaB);
   
  \path[->,densely dotted] (abb) edge[blue] (abbb);
  \path[->,densely dotted] (abb) edge[red] (abbA);
  \path[->,densely dotted] (abba) edge[red] (abb);
   
  \path[->,densely dotted] (aBBB) edge[blue] (aBB);
  \path[->,densely dotted] (aBB) edge[red] (aBBA);
  \path[->,densely dotted] (aBBa) edge[red] (aBB);
   
  \path[->,densely dotted] (abA) edge[blue] (abAb);
  \path[->,densely dotted] (abA) edge[red] (abAA);
  \path[->,densely dotted] (abAB) edge[blue] (abA);
   
  \path[->,densely dotted] (aBA) edge[blue] (aBAb);
  \path[->,densely dotted] (aBA) edge[red] (aBAA);
  \path[->,densely dotted] (aBAB) edge[blue] (aBA);
   
  \path[->,densely dotted] (aba) edge[blue] (abab);
  \path[->,densely dotted] (abaa) edge[red] (aba);
  \path[->,densely dotted] (abaB) edge[blue] (aba);
   
  \path[->,densely dotted] (aBa) edge[blue] (aBab);
  \path[->,densely dotted] (aBaa) edge[red] (aBa);
  \path[->,densely dotted] (aBaB) edge[blue] (aBa);
  \end{scope}
   
\end{tikzpicture}
\caption{El dígraf de Cayley de $\Free_{\set{a,b}}$}
\label{fig: Sch(F_2)}
\end{figure}

Com veurem a la secció següent, el dígraf de Cayley dels grups lliures respecte una base, tindrà protagonisme en la teoria d'autòmats de Stallings. Concretament el subgraf definit a continuació.

\begin{defn}
Sigui $A$ un alfabet, i $a\in A^{\pm}$. Anomenem \defin{$a$-branca de Cayley} de $\Free[A]$ a 
la component connexa de $\cayley(\Free[A],A) \setmin \Edgii$ contenint $\bp$, on $\Edgii$ és el conjunt  d'arcs incidents a $\bp$ excepte \smash{$\bp\! \xarc{\,a\ }\! \bullet$} (i el seu invers); vegeu la \Cref{fig: branca}.
\end{defn}

\begin{figure}[h] 
\centering
  \begin{tikzpicture}[shorten >=1pt, node distance=1.2 and 1.2, on grid,auto,>=stealth',transform shape]
  
  \node[state,accepting] (bp) {};
  \node[blue] (l) [above left = 0.13 and 0.8 of 1] {\scriptsize{$a$}};

  \begin{scope}[rotate=180]
  \newcommand{\dx}{1.6}
  \newcommand{\dy}{1.6}
  \newcommand{\dxx}{\dx*0.6}
  \newcommand{\dxxx}{\dx*0.3}
  \newcommand{\dxxxx}{\dx*0.18}
  \newcommand{\dyy}{\dy*0.5}
  \newcommand{\dyyy}{\dy*0.3}
  \newcommand{\dyyyy}{\dy*0.18}

  \node[smallstate] (1) {};
  \node[smallstate] (a) [right = \dx of 1]{};
  \node[smallstate] (aa) [right = \dxx of a]{};
  \node[tinystate] (aaa) [right = \dxxx of aa]{};
  \node[emptystate] (aaaa) [right = \dxxxx of aaa]{};
   
  \node[tinystate] (ab) [above = \dyy of a]{};
  \node[tinystate] (aab) [above = \dyyy of aa]{};
  \node[emptystate] (aabb) [above = \dyyyy of aab]{};
  \node[emptystate] (aaab) [above = \dyyyy of aaa]{};
  \node[emptystate] (aaaab) [above = \dyyyy of aaaa]{};
   
  \node[tinystate] (aB) [below = \dyy of a]{};
  \node[tinystate] (aaB) [below = \dyyy of aa]{};
  \node[emptystate] (aaBB) [below = \dyyyy of aaB]{};
  \node[emptystate] (aaaB) [below = \dyyyy of aaa]{};
  \node[emptystate] (aaaaB) [below = \dyyyy of aaaa]{};
   
  \node[tinystate] (aba) [right = \dxxx of ab]{};
  \node[tinystate] (abA) [left = \dxxx of ab]{};
  \node[emptystate] (aaba) [right = \dxxxx of aab]{};
  \node[emptystate] (aabA) [left = \dxxxx of aab]{};

  \node[tinystate] (aBa) [right = \dxxx of aB]{};
  \node[tinystate] (aBA) [left = \dxxx of aB]{};
  \node[emptystate] (aaBa) [right = \dxxxx of aaB]{};
  \node[emptystate] (aaBA) [left = \dxxxx of aaB]{};

  \node[tinystate] (abb) [above = \dyyy of ab]{};
  \node[tinystate] (aBB) [below = \dyyy of aB]{};
   
  \node[emptystate] (abbA) [left = \dxxxx of abb]{};
  \node[emptystate] (abbb) [above = \dyyyy of abb]{};
  \node[emptystate] (abba) [right = \dxxxx of abb]{};
   
  \node[emptystate] (aBBA) [left = \dxxxx of aBB]{};
  \node[emptystate] (aBBB) [below = \dyyyy of aBB]{};
  \node[emptystate] (aBBa) [right = \dxxxx of aBB]{};
   
  \node[emptystate] (abAA) [left = \dxxxx of abA]{};
  \node[emptystate] (abAb) [above = \dyyyy of abA]{};
  \node[emptystate] (abAB) [below = \dxxxx of abA]{};
   
  \node[emptystate] (aBAA) [left = \dxxxx of aBA]{};
  \node[emptystate] (aBAb) [above = \dyyyy of aBA]{};
  \node[emptystate] (aBAB) [below = \dxxxx of aBA]{};
   
  \node[emptystate] (abaa) [right = \dxxxx of aba]{};
  \node[emptystate] (abab) [above = \dyyyy of aba]{};
  \node[emptystate] (abaB) [below = \dxxxx of aba]{};
   
  \node[emptystate] (aBaa) [right = \dxxxx of aBa]{};
  \node[emptystate] (aBab) [above = \dyyyy of aBa]{};
  \node[emptystate] (aBaB) [below = \dxxxx of aBa]{};
 
  \path[->] (ab) edge[red] (a);
  \path[->] (aab) edge[red] (aa);
  \path[->,densely dotted] (aabb) edge[red] (aab);
  \path[->,densely dotted] (aaab) edge[red] (aaa);
   
  \path[->] (a) edge[red] (aB);
  \path[->] (aa) edge[red] (aaB);
  \path[->,densely dotted] (aaB) edge[red] (aaBB);
  \path[->,densely dotted] (aaa) edge[red] (aaaB);
   
  \path[->] (abb) edge[red] (ab);
  \path[->] (aB) edge[red] (aBB);
   
  \path[->] (a) edge[blue] (1);
  \path[->] (aa) edge[blue] (a);
  \path[->] (aaa) edge[blue] (aa);
  \path[->,densely dotted] (aaaa) edge[blue] (aaa);
   
  \path[->] (ab) edge[blue] (abA);
  \path[->] (aba) edge[blue] (ab);
  \path[->] (aba) edge[blue] (ab);
   
  \path[->,densely dotted] (aab) edge[blue] (aabA);
  \path[->,densely dotted] (aaba) edge[blue] (aab);
   
  \path[->] (aB) edge[blue] (aBA);
  \path[->] (aBa) edge[blue] (aB);
  \path[->] (aBa) edge[blue] (aB);
   
  \path[->,densely dotted] (aaB) edge[blue] (aaBA);
  \path[->,densely dotted] (aaBa) edge[blue] (aaB);
   
  \path[->,densely dotted] (abbb) edge[red] (abb);
  \path[->,densely dotted] (abb) edge[blue] (abbA);
  \path[->,densely dotted] (abba) edge[blue] (abb);
   
  \path[->,densely dotted] (aBB) edge[red] (aBBB);
  \path[->,densely dotted] (aBB) edge[blue] (aBBA);
  \path[->,densely dotted] (aBBa) edge[blue] (aBB);
   
  \path[->,densely dotted] (abAb) edge[red] (abA);
  \path[->,densely dotted] (abA) edge[blue] (abAA);
  \path[->,densely dotted] (abA) edge[red] (abAB);
   
  \path[->,densely dotted] (aBAb) edge[red] (aBA);
  \path[->,densely dotted] (aBA) edge[blue] (aBAA);
  \path[->,densely dotted] (aBA) edge[red] (aBAB);
   
  \path[->,densely dotted] (abab) edge[red] (aba);
  \path[->,densely dotted] (abaa) edge[blue] (aba);
  \path[->,densely dotted] (aba) edge[red] (abaB);
   
  \path[->,densely dotted] (aBab) edge[red] (aBa);
  \path[->,densely dotted] (aBaa) edge[blue] (aBa);
  \path[->,densely dotted] (aBa) edge[red] (aBaB);
  \end{scope}
   
\end{tikzpicture}
\caption{La $(a^{-1})$-branca de Cayley de $\Free_{\set{a,b}}$}
\label{fig: branca}
\end{figure}

Dedicarem la resta d'aquest article a fer una primera aproximació a l'estudi del reticle de subgrups de $\Free[n]$, amb $n\geqslant 2$ finit. I, tal com intuíem més amunt, ens adonarem de seguida que, efectivament, és extremadament complicat. Per una banda, presenta certs comportaments que recorden patrons de l'àlgebra lineal i suggereixen que, potser, alguns aspectes no seran tan intricats; però gairebé sempre acaba apareixent algun comportament distintiu, sovint de tipus fractal, que el distancia radicalment d'ambients algebraics més benignes. 

Acabem aquesta secció proposant un parell de problemes sobre el reticle de subgrups de $\Free[2]=\Free[\{a,b\}]$ indicatius de la complexitat a la que ens referim. Es tracta de problemes molt naturals, anàlegs a problemes típics de l'àlgebra lineal, que en aquell context es resolen fàcilment plantejant i resolent sistemes d'equacions. Només cal endinsar-se una mica en ambdós problemes, com farem a continuació, per copsar la dificultat extra que suposa el fet d'estar tractant amb grups fortament no commutatius. Convidem el lector a intentar resoldre'ls abans de continuar llegint, per tal de percebre la complexitat a la que ens referim. Tot i que en un primer apropament poden semblar difícils de resoldre, veurem a la \Cref{sec: aplicacions} que hi ha una manera elegant, eficient, i sorprenent senzilla de tractar-los: usar la teoria dels autòmats de Stallings, que expliquem en detall a la \Cref{sec: Stallings bijection}.

\begin{exm}\label{ex: membership}
Sigui $\Free[2]$ el grup lliure sobre $A=\{a,b\}$ i considerem el subgrup $H=\langle v_1, v_2, v_3 \rangle\leqslant \Free[2]$ generat pels elements $v_1=baba^{-1}$, $v_2=aba^{-1}$, i $v_3=aba^2$. És cert que l'element $u=b^2aba^{-1}b^7a^{-2}b^{-1}a^2\in \Free[2]$ pertany a $H$? I, en cas afirmatiu, sabem expressar-lo com a producte de $v_1, v_2, v_3$ i els seus inversos?

Per certes paraules, decidir la pertinença a $H$ és fàcil. Per exemple, per veure que $a^3$ pertany a $H$, és suficient observar que $v_2^{-1}v_3= (ab^{-1}a^{-1})(aba^2)=a^3$. Un altre argument senzill ens permet deduir que l'element $a$ \emph{no} pertany a $H$: tant se val com multipliquem $v_1, v_2, v_3$ o els seus inversos, l'$a$-exponent (és a dir, el nombre total de $a$'s obtingut sumant les d'exponent positiu i restant les d'exponent negatiu) al resultat sempre serà múltiple de $3$. En efecte, els $a$-exponents dels generadors són $|v_1|_a=1-1=0$, $|v_2|_a=1-1=0$, i $|v_3|_a=1+2=3$; com que $|w_1 w_2|_a =|w_1|_a +|w_2|_a$ (ja que la possible cance\l.lació entre $w_1$ i $w_2$ elimina la mateixa quantitat de $a$'s positives que de $a$'s negatives), tots els elements del subgrup $H=\langle v_1, v_2, v_3\rangle$ han de tenir forçosament $a$-exponent múltiple de 3. I, com que $|a|_a =1$ no és múltiple de 3, deduïm que $a\not\in H$.

Però en el cas de l'element $u=b^2aba^{-1}b^7a^{-2}b^{-1}a^2$, l'$a$-exponent total és $|u|_a =0$ per tant, no podem descartar-lo usant l'argument anterior. Òbviament, això tampoc ens assegura que hi hagi de pertànyer: no tenim, a priori, cap indicació de com podem multiplicar entre sí les paraules $v_1, v_2, v_3$ i els seus inversos, per tal d'aconseguir $u$; ni de si això és realment possible, o no. Si els elements $v_1, v_2, v_3$ commutessin, el problema es reduiria a decidir si $u$ és de la forma $v_1^{\alpha}v_2^{\beta}v_3^{\gamma}$ amb $\alpha, \beta,\gamma\in \ZZ$, és a dir, si $u$ és una \emph{combinació lineal} de $v_1, v_2, v_3$ amb coeficients enters. Com es pot percebre, però, la no commutativitat dels elements $v_1, v_2, v_3$ complica molt el planteig i la resolució d'aquest problema.   

La resposta en aquest cas és afirmativa, però trobar un producte adequat (que doni $u$) ja no és tan fàcil a simple vista:
 \begin{align*}
&\phantom{\ =\ }v_1v_2^{-1}v_1 (v_1v_2^{-1})^7 v_3^{-1}v_2^{-1}v_3 =\\
&\,=\,baba^{-1}  (aba^{-1})^{-1} baba^{-1}  (baba^{-1} (aba^{-1})^{-1})^7  (aba^2)^{-1} (aba^{-1})^{-1} aba^2 \\
&\,=\,baba^{-1}  ab^{-1}a^{-1} baba^{-1}  (baba^{-1}  ab^{-1}a^{-1})^7  a^{-2}b^{-1}a^{-1}  ab^{-1}a^{-1}  aba^2 \\
&\,=\,b baba^{-1}  b^7  a^{-2}b^{-1} a^2 \,=\,b^2aba^{-1}b^7a^{-2}b^{-1}a^2\,=\,u\,.
 \end{align*}
Evidentment, fer els càlculs per comprovar si un determinat producte dels generadors dóna $u$ és fàcil. Però trobar, o més encara, decidir l'existència d'un producte de $v_1^{\pm 1}, v_2^{\pm 1}, v_3^{\pm 1}$ que doni la paraula candidata $u$, no ho sembla tant.\footnote{Almenys a (la majoria de) els humans (vegeu \url{https://ca.wikipedia.org/wiki/P_versus_NP}).} És fàcil adonar-se que la complicació deriva de com de recargolada sigui l'eventual expressió \emph{(de llargada finita però arbitràriament gran)} del candidat $u$ en termes dels generadors. A la secció \Cref{ssec: MP} explicarem una manera sistemàtica de resoldre aquest problema.

Preguntes afins amb evident importància algebraica són: és aquesta l'única manera d'obtenir $u$ a partir de $v_1, v_2, v_3$? Si n'hi ha d'altres, podem descriure-les totes? Podem calcular la paraula més curta (potser no única) que doni $u$?\ldots
\end{exm}

Observeu que el problema anàleg en àlgebra lineal seria del tipus següent: ``pertany el vector $(1,2,3)$ al subespai vectorial $\big\langle (-1,2,1),\, (0,1,-3)\big\rangle$ de $\mathbb{R}^3$?''. Per respondre, només hem de mirar si el sistema d'equacions $(1,2,3)=\alpha (-1,2,1)+\beta (0,1,-3)$ té o no solucions a $\mathbb{R}$. En l'ambient no commutatiu en què ens trobem no coneixem, de moment, cap eina que pugui fer un paper semblant al que fan els sistemes d'equacions en l'àlgebra lineal. Com veurem a continuació, en el cas dels grups lliures, els autòmats de Stallings jugaran aquest paper. 

Per acabar la secció, us proposem un altre problema involucrant interseccions de subgrups del grup lliure.

\begin{exm}\label{exe: interseccio}
Sigui $\Free[2]$ el grup lliure sobre $A=\{a,b\}$, i considerem els subgrups $H=\langle u_1, u_2, u_3\rangle \leqslant \Free[2]$ i $K=\langle v_1, v_2, v_3\rangle \leqslant \Free[2]$ donats per les paraules $u_1=b$, $u_2=a^3$, $u_3=a^{-1}bab^{-1}a$, i $v_1=ab$, $v_2=a^3$, i $v_3=a^{-1}ba$. Com veurem al \Cref{cor: ranks of subgroups}, els subgrups del grup lliure no tenen perquè ser finitament generats. Ho és la intersecció  $H\cap K$? i, en tal cas, podeu calcular-ne un conjunt de generadors?

A simple vista ja es veu que l'element $a^3$ és present a ambdós subgrups: ${a^3=u_2\in H}$, i ${a^3=v_2\in K}$. També se'n veu fàcilment un altre:
 \begin{equation*}
H\ni u_1^{-1} u_2 u_1
\,=\, b^{-1}a^3 b
\,=\, v_1^{-1}v_2 v_1 \in K. 
 \end{equation*}
I amb una mica més d'imaginació (o de treball de càlcul) aquí tenim un tercer element de la intersecció: 
 \begin{equation*}
H\ni u_3^3 =a^{-1}ba^3b^{-1}a = v_3 v_2 v_3^{-1} \in K. 
 \end{equation*}
Com que $H\cap K$ és un subgrup de $\Free[2]$, deduïm que $\langle a^3,\, b^{-1}a^3b,\, a^{-1}ba^3b^{-1}a\rangle$ està contingut a ${H\cap K}$. Seguir trobant nous elements a simple vista no sembla fàcil; però la pregunta contrària encara sembla més complicada: és cert que $H\cap K=\langle a^3,\, b^{-1}a^3b,\, a^{-1}ba^3b^{-1}a\rangle$? o cal trobar i afegir-hi més generadors?
\end{exm}

El lector interessat pot provar de respondre totes aquestes preguntes per sí sol. Nosaltres, de moment, ho deixarem aquí, i tornarem a tractar aquests exemples a la \Cref{sec: aplicacions}, on ja disposarem d'eines adequades per respondre totes aquestes preguntes\ldots\ i més!

\section{Autòmats de Stallings} \label{sec: Stallings bijection}

\begin{defn} \label{def: automat}
Sigui $A$ un alfabet. Un \defin{$A$-autòmat} $\Ati$ (o simplement \emph{autòmat}, si l'alfabet és clar pel context) és un graf dirigit amb els arcs etiquetats en $A$ i un vèrtex distingit; més formalment, és una tupla $\Ati =(\Verts, \Edgs, \init, \term, \ell, \bp)$, on $\Verts$ i $\Edgs$ són conjunts, $\init,\term \colon \Edgs \to \Verts$  i $\ell\colon \Edgs \to A$ són funcions, i $\bp \in \Verts$. Els conjunts $\Verts =\Verts \Ati$ i $\Edgs =\Edgs \Ati$ s'anomenen conjunt de \defin{vèrtexs} i d'\defin{arcs} (o \defin{arestes dirigides}) de~$\Ati$, respectivament, i les funcions $\init,\term$ i $\ell$ assignen a cada arc $\edgi \in \Edgs$ el seu \defin{origen} $\init(\edgi) \in \Verts$, el seu \defin{final} $\term(\edgi)\in \Verts$, i la seva \defin{etiqueta} $\ell(\edgi) \in A$, respectivament. Si $\init \edgi =\verti$, $\term\edgi =\vertii$ i $\ell(\edgi) =a$, aleshores diem que $\edgi$ és un \defin{$a$-arc de $\verti$ a $\vertii$}, i escrivim $\smash{\edgi \equiv \verti \xarc{\,a\ }\vertii}$. El vèrtex distingit~$\bp$ s'anomena \defin{vèrtex base} o \defin{punt base} del $A$-autòmat. El \defin{dígraf subjacent} d'un $A$-autòmat $\Ati$ és el dígraf resultat d'ignorar-ne la funció $\ell$ i el punt base $\bp$.  
\end{defn}

Observeu que, segons la nostra definició, un autòmat  no és necessàriament connex, i admet la possibilitat tant d'arcs amb el mateix origen i final (\defin{llaços}), com també de múltiples arcs amb el mateix origen i el mateix final (\defin{arcs para\l.lels}).

\begin{defn}
Sigui $\Ati$ un $A$-autòmat. Un vèrtex $\verti$ de $\Ati$ s'anomena \defin{saturat} si per a cada lletra $a \in A$ hi ha (almenys) un $a$-arc amb origen $\verti$. En cas contrari, diem que $\verti$ és insaturat (o \defin{$a$-deficient}, si volem referir-nos a l'etiqueta que falta). El \defin{$a$-dèficit} de $\Ati$, designat per $\defc[a]{\Ati}$, és el nombre (cardinal) de vèrtexs $a$-deficients a $\Ati$. Un $A$-autòmat $\Ati$ s'anomena \defin{saturat} si tots els seus vèrtexs ho són, \ie si $\defc[a]{\Ati}=0$ per a tot $a\in A$. És clar que tot $A$-autòmat saturat és $\card A$-regular. 
\end{defn}

\begin{defn}
Un \defin{camí (dirigit)} en un $A$-autòmat $\Ati$ és una seqüència alternada finita $\walki =\verti_0 \edgi_1 \verti_1 \cdots \edgi_{l} \verti_{l}$, on $\verti_i \in \Verts \Ati $, $\edgi_i \in \Edgs \Ati $, $\init{\edgi_i} = \verti_{i-1}$, i $\term{\edgi_i}=\verti_{i}$ per a $i=1,\dots,l$. Aleshores, $\verti_0$ i $\verti_l$ s'anomenen \defin{origen} i \defin{final} de $\walki$ respectivament, i escrivim $\verti_0 = \init(\walki)$ i $\verti_l = \term(\walki)$. Diem que $\walki$ és un camí de  $\verti_0$ a $\verti_l$, i ho designem per $\walki \colon \smash{\verti_0 \xwalk{\ } \verti_l}$. Si l'origen i el final de $\walki$ coincideixen, diem que $\walki$ és un \defin{camí tancat}.
Un camí tancat de $\verti$ a $\verti$ s'anomena \defin{$\verti$-camí}. La \defin{longitud} d'un camí $\walki$, designada per $|\walki|$, és el nombre d'arcs a $\walki$ (comptant possibles repeticions). Els camins de longitud $0$ s'anomenen \defin{camins trivials} i corresponen als vèrtexs de $\Ati$. 
\end{defn}

Recordem que $A^{\pm} = A \sqcup A^{-1}$, on $A^{-1}$ és el conjunt d'inversos formals de $A$.

\begin{defn} \label{def: automat involutiu}
Un \defin{$A$-autòmat involutiu} és un $A^{\pm}$-autòmat amb una involució ${\edgi \mapsto \edgi^{-1}}$ als seus arcs tal que si $\smash{\edgi \equiv \verti \xarc{\,a\ }\vertii}$ aleshores $\smash{\edgi^{-1} \equiv \verti \xcra{\,a^{\text{-}1}\!} \vertii}$. Aleshores, diem que (l'arc etiquetat) $\edgi^{-1}$ és l'\defin{invers} de $\edgi$. Observeu que $(\edgi^{-1})^{-1} = \edgi$; és a dir, en un autòmat involutiu, els arcs etiquetats apareixen aparellats amb els seus respectius inversos (vegeu la \Cref{fig: involutive arc}).
\begin{figure}[H]
\centering
\begin{tikzpicture}[shorten >=3pt, node distance=.3cm and 2cm, on grid,auto,>=stealth']
   \node[state] (0) {};
   \node[state] (1) [right = of 0] {};
   \path[->]
        ([yshift=0.3ex]0.east) edge[]
            node[pos=0.5,above=-.2mm] {$a$}
            ([yshift=0.3ex]1)
   ([yshift=-0.3ex]1.west) edge[dashed]
            node[pos=0.5,below=-.2mm] {$a^{-1}$}
            ([yshift=-0.3ex]0);
\end{tikzpicture}
\vspace{-5pt}
\caption{Un arc en un autòmat involutiu amb el seu invers puntejat}
\label{fig: involutive arc}
\end{figure}
\end{defn}

\begin{defn} \label{def: positive part}
Un arc en un $A$-autòmat involutiu $\Ati$ es diu \defin{positiu} (\resp \defin{negatiu}) si la seva etiqueta pertany a $A$ (\resp $A^{-1}$). S'anomena \defin{part positiva} de $\Ati$, designada per $\Ati^{+}$, l'autòmat obtingut després d'eliminar de $\Ati$ tots els arcs negatius. És clar que tot autòmat involutiu queda determinat per la seva part positiva, que habitualment farem servir per descriure l'autòmat, amb el conveni tàcit que els arcs positius $\smash{\verti \xarc{\,a\ }\vertii}$ es poden travessar cap enrere (\ie de $\vertii$ a $\verti$) llegint la lletra inversa $a^{-1}$. Observeu que, identificant els parells d'arcs mútuament inversos (en una \defin{aresta}, no dirigida), recuperem la noció estàndard de graf \emph{no dirigit}, que anomenarem \defin{graf (no dirigit) subjacent} de $\Ati$. Farem servir aquesta correspondència per traslladar a l'àmbit dels autòmats algunes nocions bàsiques de teoria de grafs. Per exemple, direm que un autòmat $\Ati$ és \defin{connex} si ho és el seu graf subjacent,
direm que $T$ és un \defin{arbre d'expansió}\footnote{\emph{spanning tree}, en anglès.} de $\Ati$ si ho és del seu graf subjacent, i definirem el \defin{grau} d'un vèrtex $\verti$, designat per $\deg(\verti)$, com el nombre d'arestes incidents a $\verti$ en el graf subjacent, i el \defin{rang} de $\Ati$, designat per $\rk(\Gamma)$, com el rang del seu graf subjacent (vegeu \Cref{def: fundamental group}). En particular, si $\Ati$ és un autòmat involutiu, connex i finit, aleshores $\rk(\Ati) = 1 - \card \Verts\Ati + \card \Edgs\Ati^{+}$.

Si $\Ati$ és un $A$-autòmat involutiu, aleshores el $A^{\pm}$-etiquetatge de $\Ati$ s'estén de forma natural a un $\IM{A}$-etiquetatge dels camins, tot concatenant les corresponents etiquetes: si $\walki =\verti_0 \edgi_1 \verti_1 \cdots \edgi_{l} \verti_{l}$ és un camí, definim la seva \defin{etiqueta}, i la seva \defin{etiqueta reduïda} com $\ell(\walki) =\ell(\edgi_1) \cdots \ell(\edgi_{l})\in \IM{A}$, i  $\rlab(\walki) =\red{\lab(\walki)}\in \Free[A]$, respectivament. En ambdós casos, s'entén que l'etiqueta d'un camí trivial és l'element trivial $1$. Si $\lab(\walki) =w\in \IM{A}$ diem que el camí $\walki$ \defin{llegeix} $w$, o que la paraula $w$ \defin{etiqueta} el camí $\walki$, i escrivim $\walki\colon \verti_0 \xwalk{_{\scriptstyle{w}}} \verti_l$.

Es diu que un camí presenta \defin{retrocés}\footnote{\emph{backtracking}, en anglès.} si té dos arcs successius inversos l'un de l'altre. Un camí sense retrocés es diu \defin{camí reduït}. És obvi que si $\lab(\walki)$ és una paraula reduïda, aleshores el camí $\walki$ ha de ser reduït, però el recíproc no és cert en general, ja que es poden donar les situacions (no deterministes) de la \Cref{fig: no determinista}. Observeu també que si $T$ és un arbre d'expansió d'un $A$-autòmat connex $\Ati$, aleshores per a cada parell de vèrtexs $\verti,\vertii$ a $\Ati$ existeix un únic camí \emph{reduït} de $\verti$ a $\vertii$ usant només arcs de $T$; l'anomenem  $T$-geodèsica de $\verti$ a $\vertii$, i el designem per $\walki_{_{T}}[\verti,\vertii]$, o simplement per \smash{$\verti \xwalk{\scriptscriptstyle{T}} \vertii$}.
\begin{figure}[H]
\centering
  \begin{tikzpicture}[shorten >=1pt, node distance=.5cm and 1.5cm, on grid,auto,>=stealth']
   \node[state,semithick, fill=gray!20, inner sep=2pt, minimum size = 10pt] (1) {$\scriptstyle{\verti}$};
   
   \begin{scope}
   \node[state] (2) [above right = of 1] {};
   \node[state] (3) [below right = of 1] {};

   \path[->]
        (1) edge[]
            node[pos=0.45,above] {$a$}
            (2)
            edge[]
            node[pos=0.45,below] {$a$}
            (3);
\end{scope}

\begin{scope}[xshift=3cm] 
\node[state,semithick, fill=gray!20, inner sep=2pt, minimum size = 10pt] (1) {$\scriptstyle{\verti}$};
   \node[state] (2) [right = 1.5 of 1] {};

   \path[->]
        (1) edge[loop above,min distance=9mm,in= 90-35,out=90+35]
            node[] {\scriptsize{$a$}}
            (1)
            edge
            node[pos=0.5,below] {$a$}
            (2);
\end{scope}

\begin{scope}[xshift=5.75cm] 
\node[state,semithick, fill=gray!20, inner sep=2pt, minimum size = 10pt] (1) {$\scriptstyle{\verti}$};
   \node[state] (2) [right = of 1] {};

   \path[->]
        (1) edge[bend right]
            node[pos=0.5,below] {$a$}
            (2)
        (1) edge[bend left]
            node[pos=0.5,above] {$a$}
            (2);
\end{scope}

\begin{scope}[xshift=9cm] 
\node[state,semithick, fill=gray!20, inner sep=2pt, minimum size = 10pt] (1) {$\scriptstyle{\verti}$};

   \path[->]
        (1) edge[loop right,min distance=9mm,in=35,out=-35]
            node[] {\scriptsize{$a$}}
            (1)
             edge[loop left,min distance=9mm,in=180-35,out=180+35]
            node[] {\scriptsize{$a$}}
            (1);
\end{scope}
\end{tikzpicture}
\caption{Situacions no deterministes al vèrtex $\verti$}
\label{fig: no determinista}
\end{figure}

Si $\walki =\verti_0 \edgi_1 \verti_1 \cdots \edgi_{l} \verti_{l}$ és un camí en un $A$-autòmat involutiu $\Ati$ (llegint $\lab(\walki)= \lab(\edgi_1) \cdots \lab(\edgi_l)$), aleshores s'anomena \defin{camí invers} de $\walki$ a $\walki^{-1} =\verti_l \edgi_l^{-1} \verti_{l-1} \cdots \edgi_{1}^{-1} \verti_{0}$ (llegint $\lab(\walki)^{-1}=\lab(\edgi_l)^{-1}\cdots\lab(\edgi_0)^{-1}$). És clar que, també, $\rlab(\walki^{-1})=\rlab(\walki)^{-1}$. 
\end{defn}

És el moment de començar a relacionar els $A$-autòmats amb els grups. Potser la manera més natural de fer-ho és través de la idea topològica de grup fonamental d'un graf. Com veurem, si admetem etiquetes a les arestes, aquesta idea desemboca en la de subgrup reconegut, que, com es mostra a continuació, és un subgrup concret de $\Free[A]$, enlloc d'un grup abstracte. 

\begin{defn} \label{def: fundamental group}
Siguin $\Ati$ un graf no dirigit i connex, i $\verti\in \Verts\Ati$. El \defin{grup fonamental de $\Ati$ al vèrtex $\verti$}, designat per $\pi_{\verti}(\Ati)$, és el conjunt de classes de $\verti$-camins a $\Ati$ mòdul retrocés, o mòdul reducció de camins (que, en el cas dels grafs vistos com a espais topològics, equival a dir mòdul homotopia), juntament amb l'operació de concatenació de $\verti$-camins. És ben sabut que, independentment del vèrtex $\verti$ triat, $\pi_{\verti}(\Ati)$ és un grup lliure de rang igual al nombre (cardinal) d'arestes fora d'un arbre d'expansió $T$ de $\Ati$; a aquest valor, que no depèn de l'arbre $T$, l'anomenem el \defin{rang (gràfic)} de $\Ati$, designat per $\rk (\Ati)$; és fàcil veure que si $\Ati$ és finit, aleshores 
 \begin{equation}\label{eq: rang-degree}
\rk (\Ati)-1
\,=\,
\card\Edgs \Ati -\card\Verts \Ati 
\,=\,
\frac{1}{2}\sum\nolimits_{\vertii\in \Verts\Ati} \left( \deg(\vertii)-2\right),
  \end{equation}
on $\card \Verts \Ati$ és el nombre de vèrtexs i $\card \Edgs\Ati$ el nombre d'arestes (no dirigides) de $\Ati$. 
\end{defn}

\begin{defn} \label{def: subgrup reconegut}
Sigui $\Ati$ un $A$-autòmat involutiu i connex, i $\verti \in \Verts\Ati$.
És fàcil veure que 
el conjunt d'etiquetes reduïdes de $\verti$-camins a $\Ati$,
 \begin{equation} \label{eq: rk graph}
\gen{\Ati}_{\verti}=\{\rlab(\walki )\st \walki\colon \verti \xwalk{\,\,\,} \verti \}\,,
 \end{equation}
és un subgrup de $\Free[A]$; l'anomenem \defin{subgrup reconegut per $\Ati$ al vèrtex $\verti$}. Al subgrup~${\gen{\Ati}_{\bp}}$ el designem simplement per $\gen{\Ati}$.
\end{defn}

\begin{rem}
En general, el subgrup reconegut per un $A$-autòmat $\Ati$ i el grup fonamental de (el subgraf subjacent de) $\Ati$ en un vèrtex $\verti$ són conceptes diferents, però  estretament lligats per l'aplicació \emph{llegir etiquetes reduïdes}: 
 \begin{equation} \label{eq: pi -> gen}
\begin{array}{rcl}
\widetilde{\ell}_{\Ati} \colon \pi_{\verti} (\Ati) & \onto & \gen{\Ati}_{\verti}\leqslant\Free[A]. \\ \textnormal{$[\walki]$} & \mapsto & \rlab(\walki) 
\end{array}
 \end{equation}
És clar que $\widetilde{\ell}_{\Ati}$ està ben definit i és un homomorfisme de grups exhaustiu \emph{i no injectiu, en general}. Més endavant veurem una condició sobre $\Ati$, sota la qual $\widetilde{\ell}_{\Ati}$ serà injectiu.
\end{rem}

La \Cref{def: subgrup reconegut} (prenent $\verti = \bp$) determina l'aplicació següent, que relaciona els $A$-autòmats amb el reticle de subgrups de $\Free[A]$: 
 \begin{equation}\label{eq: Ati ->> <Ati>}
\begin{array}{rcl}
\Set{\text{(classes d'isomorfia de) $A$-autòmats}} & \to & \Set{\text{subgrups de $\Free[A]$}}. \\ \Ati & \mapsto & \gen{\Ati}
\end{array}
 \end{equation}
És fàcil veure que aquesta aplicació és exhaustiva; és a dir, tot subgrup $H\leqslant \Free[A]$ és el subgrup reconegut per algun $A$-autòmat. En efecte, si $S=\set{w_i}_{i\in I} \subseteq \Free[A]$ és un conjunt de paraules reduïdes generant~$H$, aleshores considerem, per a cada $w_i =a_{i_1}a_{i_2}\cdots a_{i_l}\in S$ ($a_j \in A^{\pm}$), el cicle orientat llegint~$w_i$ (ó $w_i^{-1}$ en direcció contrària); anomenat \defin{pètal} associat a $w_i$, i designat per $\flower(w_i)$ (vegeu la~\Cref{fig: petal}).
\begin{figure}[H]
\centering
\begin{tikzpicture}[shorten >=1pt, node distance=0.2 and 1.5, on grid,auto,>=stealth']
    \node[state, accepting] (0) {};
    \node[state] (a) [above right =  of 0] {};
    \node[state] (1) [above right =  of a] {};
    \node[state] (b) [right =  of 1] {};
    \node[state] (d) [below right =  of 0] {};
    \node[state] (4) [below right =  of d] {};
    \node[state] (c) [right =  of 4] {};

    \path[->]
        (0) edge[]
            node[above] {$a_{i_1}$}
            (a);

     \path[->]
        (a) edge[]
            node[above left] {\scriptsize{$a_{i_2}$}}
            (1);

     \path[->]
        (1) edge[]
            node[above] {$a_{i_3}$}
            (b);

     \path[->,dashed]
        (b) edge[bend left,out=90,in=90,min distance=10mm]
            (c);

    \path[->]
        (c) edge[]
            node[midway,below] {$a_{i_{l-2}}$}
            (4);

    \path[->]
        (4) edge[]
            node[below] {$a_{i_{l-1}}$}
            (d);

    \path[->]
        (d) edge[]
            node[below] {$a_{i_l}$}
            (0);
\end{tikzpicture}
\vspace{-5pt}
\caption{El pètal $\flower(w_i)$}
\label{fig: petal}
 \end{figure}
\noindent A continuació definim l'\defin{autòmat flor} $\flower(S)$, com l'autòmat obtingut després d'identificar els punts base dels pètals associats als elements de $S$ (vegeu la~\Cref{fig: flower}).
 \vspace{-20pt}
\begin{figure}[H]
\centering
\begin{tikzpicture}[shorten >=1pt, node distance=2cm and 2cm, on grid,auto,>=stealth',
decoration={snake, segment length=2mm, amplitude=0.5mm,post length=1.5mm}]
  \node[state,accepting] (1) {};
  \path[->,thick]
        (1) edge[loop,out=160,in=200,looseness=8,min distance=25mm,snake it]
            node[left=0.2] {$w_1$}
            (1);
            (1);
  \path[->,thick]
        (1) edge[loop,out=140,in=100,looseness=8,min distance=25mm,snake it]
            node[left=0.15] {$w_2$}
            (1);
  \path[->,thick]
        (1) edge[loop,out=20,in=-20,looseness=8,min distance=25mm,snake it]
            node[right=0.2] {$w_p$}
            (1);
\foreach \n [count=\count from 0] in {1,...,3}{
      \node[dot] (1\n) at ($(1)+(45+\count*15:0.75cm)$) {};}
\end{tikzpicture}
\vspace{-15pt}
\caption{L'autòmat flor $\flower(w_1,w_2,\ldots,w_p)$}
\label{fig: flower}
\end{figure}
És clar que les etiquetes reduïdes dels $\bp$-camins de $\flower(S)$ descriuen precisament el subgrup $H$; \ie $\gen{\flower(S)}=\gen{S}=H\leqslant \Free[A]$. Per tant, l'aplicació~\eqref{eq: Ati ->> <Ati>} és exhaustiva. Observeu també que \eqref{eq: Ati ->> <Ati>} està lluny de ser injectiva ja que, per exemple, diferents famílies de generadors per a $H$ proporcionen diferents autòmats flor, tots preimatge de $H$ per~\eqref{eq: Ati ->> <Ati>}. 

El resultat clau de Stallings a \cite{stallings_topology_1983} ens diu que es pot eliminar tota la redundància de la representació donada per~\eqref{eq: Ati ->> <Ati>} (assolint un únic representant $\Ati$ per a cada subgrup $H\leqslant \Free[A]$) simplement imposant un parell de condicions força naturals sobre els autòmats involucrats: ésser determinista, i ésser cor.

\begin{defn}
Un $A$-autòmat $\Ati$ es diu \defin{determinista} si de cap dels seus vèrtexs surten dos arcs diferents amb la mateixa etiqueta; és a dir, si per a cada vèrtex $\verti \in \Verts \Ati$, i per a cada parell d'arcs $\edgi,\edgi'$ sortint de $\verti$, $\lab(\edgi) = \lab(\edgi')$ implica $\edgi =\edgi'$. (Noteu que un autòmat involutiu és determinista si i només si de cap vèrtex surten i a cap vèrtex arriben arcs diferents amb la mateixa etiqueta positiva.)
\end{defn}

\begin{defn}
Un autòmat involutiu es diu \defin{cor}\footnote{\defin{core}, en anglès.} si tot vèrtex (i, per tant, tot arc) apareix en algun $\bp$-camí reduït. El \defin{cor d'un autòmat involutiu} $\Ati$, designat per $\core(\Ati)$, és el màxim subautòmat cor de $\Ati$ contenint el punt base. És clar que $\core (\Ati)$ és un subautòmat induït\footnote{Un subautòmat
és \defin{induit} si conté tots els arcs (de l'autòmat original) incidents als seus vèrtexs.} de $\Ati$ (de fet, de la component connexa de $\Ati$ contenint~$\bp$), connex, i que reconeix el mateix subgrup, \ie $\gen{\core (\Ati)}=\gen{\Ati}$.

Observeu que $\core (\Ati)$ pot tenir encara un vèrtex de grau $1$ que, en cas d'existir, ha de ser necessàriament el punt base a l'\emph{extrem d'un `pèl' penjant de $\core(\Ati)$}. Eliminant aquest eventual pèl obtenim el que anomenem \defin{cor restringit} de $\Ati$, designat per $\core^*(\Ati)$ (formalment, el digraf etiquetat obtingut després d'eliminar successivament tots el vèrtexs de grau $1$ de $\core(\Ati)$ i ignorar el vèrtex base).
\end{defn}

\begin{rem} \label{rem: <Ati> = <core> = <core*>}
És fàcil veure que $\rk (\core^* (\Ati)) = \rk (\core (\Ati)) = \rk (\Ati) $ (vegeu la~\Cref{ssec: SIP}), i que  $\gen{\core^*(\Ati)}_{\verti}$ és conjugat a $\gen{\core (\Ati)} = \gen{\Ati}$, per a tot $\verti \in \Verts (\core^*(\Ati))$ (vegeu la~\Cref{ssec: conj and normal}).
\end{rem}

\begin{defn}
Un $A$-autòmat involutiu es diu \defin{reduït} si és determinista i cor. 
\end{defn}

\begin{exm}
    Principals variants d'autòmat involutiu i determinista:

    \begin{figure}[H]
        \centering
    \begin{tikzpicture}[baseline=(0.base),shorten >=1pt, node distance= 0.5 cm and 1cm, on grid,auto,>=stealth']
        
        \newcommand{\dx}{1.2}
        \newcommand{\dy}{1.2}
        \node[state, accepting] (1) {};
    
        \node[state, right = \dx of 1] (2) {};    
        \node[state, above right = \dy/2 and \dx of 2] (21) {};
        \node[state, above right = \dy/3 and \dx/1.5 of 21] (211) {};
        \node[state, below right = \dy/2 and \dx of 2] (22) {};
        \node[below right = \dy and \dx/3 of 2] (st) {$\Ati$};
        
                
        \path[->]
            (1) edge[red]
                    node[pos=0.5,above=-.3mm] {$b$}
                (2);
                
        \path[->]
            (2) edge[blue]
                    node[pos=0.5,above left=-0.05] {$a$}
                (21);
                
        \path[->]
            (21) edge[red]
                    node[pos=0.5,above=-.3mm] {}
                (211);
                
        \path[->]
            (21) edge[blue, bend right]
                (22);
                
        \path[->]
            (22) edge[red, bend right]
                (21);
                
        \path[->]
            (22) edge[blue]
                (2);
                    
    \end{tikzpicture}
\qquad
\begin{tikzpicture}[baseline=(0.base),shorten >=1pt, node distance= 0.5 cm and 1cm, on grid,auto,>=stealth']
        
        \newcommand{\dx}{1.2}
        \newcommand{\dy}{1.2}
        \node[state, accepting] (1) {};
    
        \node[state, right = \dx of 1] (2) {};    
        \node[state, above right = \dy/2 and \dx of 2] (21) {};
        \node[state, below right = \dy/2 and \dx of 2] (22) {};
        \node[below = 1*\dy of 2] (st) {$\core(\Ati)$};
        
                
        \path[->]
            (1) edge[red]
                (2);
                
        \path[->]
            (2) edge[blue]
                (21);
                
        \path[->]
            (21) edge[blue, bend right]
                (22);
                
        \path[->]
            (22) edge[red, bend right]
                (21);
                
        \path[->]
            (22) edge[blue]
                (2);
                    
        \end{tikzpicture}
            \qquad \quad
        \begin{tikzpicture}[baseline=(0.base),shorten >=1pt, node distance= 0.5 cm and 1cm, on grid,auto,>=stealth']
        
        \newcommand{\dx}{1.2}
        \newcommand{\dy}{1.2}
    
        \node[state] (2) {};
        \node[below right = 1*\dy and 0.5*\dx of 2] (st) {$\core^*(\Ati)$};
        \node[state, above right = \dy/2 and \dx of 2] (21) {};
        \node[state, below right = \dy/2 and \dx of 2] (22) {};
                
        \path[->]
            (2) edge[blue]
                (21);
                
        \path[->]
            (21) edge[blue, bend right]
                (22);
                
        \path[->]
            (22) edge[red, bend right]
                (21);
                
        \path[->]
            (22) edge[blue]
                (2);
                
        \end{tikzpicture}
        \hspace{20pt}
        
        \label{fig: Stallings variants}
        \caption{D'esquerra a dreta, un autòmat determinista no reduït $\Ati$ (amb dos pèls), el seu cor $\core(\Ati)$, i el seu cor restringit $\core^*(\Ati)$.}
    \end{figure}
    \end{exm}

Per veure que cada fibra de \eqref{eq: Ati ->> <Ati>} admet un únic representant reduït adaptarem al nostre context un resultat clàssic de teoria de llenguatges.

\begin{defn}
Siguin $\Ati =(\Verts, \Edgs, \init,\term,\lab,\bp)$ i $\Ati'=(\Verts', \Edgs', \init',\term',\lab',\bp')$ $A$-autòmats. Un \defin{homomorfisme (de $A$-autòmats)} de $\Ati$ a $\Ati'$ és una funció $\theta \colon \Verts \to \Verts'$ que envia el punt base al punt base ($\bp \theta =\bp'$) i, per a cada $\verti,\vertii \in \Verts$ i cada $a\in A$, si $\verti \xarc{a\,} \vertii$ aleshores $\verti \theta \xarc{a\,} \vertii \theta$. Habitualment, abusem lleugerament del llenguatge i el designem per $\theta \colon \Ati \to \Ati'$. 
\end{defn}

\begin{prop}
Siguin $\Ati$ i $\Ati'$ $A$-autòmats reduïts. Aleshores, $\gen{\Ati} \leqslant \gen{\Ati'}$ si i només si existeix un homomorfisme d'autòmats $\Ati \to \Ati'$ i, en aquest cas, l'homomorfisme és únic.
\end{prop}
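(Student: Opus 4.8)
The plan is to prove the two implications separately: the backward one is routine, while the forward one rests on a single combinatorial lemma about reading words in a deterministic automaton. Throughout, for a vertex $x$ of a reduced automaton and a word $w\in\IM{A}$, I write $x\cdot w$ for the endpoint of the path starting at $x$ and reading $w$, whenever it exists; determinism guarantees there is at most one such path, so this is a well-defined partial operation. For the easy direction, suppose a homomorphism $\theta\colon\Ati\to\Ati'$ exists. Since $\theta$ preserves positive arcs and both automata are involutive, it also preserves inverse arcs (the inverse of $\verti\xarc{a}\vertii$ maps to the inverse of $\theta(\verti)\xarc{a}\theta(\vertii)$); hence $\theta$ carries every path to a path with the \emph{same} label, and every $\bp$-camí to a $\bp'$-camí because $\bp\theta=\bp'$. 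Thus each $\rlab(\walki)$ with $\walki$ a $\bp$-camí equals $\rlab(\walki\theta)\in\gen{\Ati'}$, giving $\gen{\Ati}\leqslant\gen{\Ati'}$.

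For the main direction I first record three facts about a reduced automaton. (i) If $x\cdot w$ is defined, then $x\cdot\red{w}$ is defined and equals it, since in a deterministic involutive automaton a cancellation $aa^{-1}$ in a label can only arise from backtracking; in particular a reduced path reads a reduced word. (ii) Hence, if $g\in\gen{\Ati'}$ is reduced then $\bp'\cdot g=\bp'$: take a $\bp'$-camí with reduced label $g$, delete its backtrackings to get a reduced closed path with the same reduced label, and apply (i). (iii) The label map is multiplicative and involution-compatible: $x\cdot(uv)=(x\cdot u)\cdot v$, and $x\cdot u=y$ iff $y\cdot u^{-1}=x$. Now fix a spanning tree $T$ of $\Ati$ at $\bp$ and let $w_\verti$ be the (reduced, by (i)) label of the $T$-geodesic $\bp\xwalk{\scriptscriptstyle{T}}\verti$; I would \emph{define} $\theta(\verti)=\bp'\cdot w_\verti$.

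Everything then reduces to the \textbf{key lemma}, which I expect to be the main obstacle: \emph{if $u,u'$ are reduced words with $\red{u(u')^{-1}}\in\gen{\Ati'}$ and $\bp'\cdot u$ is defined, then $\bp'\cdot u'$ is defined and $\bp'\cdot u=\bp'\cdot u'$.} I would prove it by induction on $|u|+|u'|$. If one of them is empty, the other lies in $\gen{\Ati'}$ and fact (ii) applies. If $u,u'$ end in the same letter $c$, write $u=u_1c$ and $u'=u'_1c$, note $\red{u_1(u'_1)^{-1}}=\red{u(u')^{-1}}$, apply the inductive hypothesis to $u_1,u'_1$, and append the unique (by determinism) $c$-arc. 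If $u,u'$ end in different letters, then $u(u')^{-1}$ is already reduced, hence readable as a closed $\bp'$-camí by (ii); so $(\bp'\cdot u)\cdot(u')^{-1}=\bp'$ by (iii), and the involutive part of (iii) gives $\bp'\cdot u'=\bp'\cdot u$.

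Granting the lemma, the construction goes through. \emph{Definedness:} since $\Ati$ is cor, $\verti$ lies on a reduced $\bp$-camí, so some word $u$ reading $\bp\xwalk{\,}\verti$ is a prefix of a reduced element of $\gen{\Ati}\leqslant\gen{\Ati'}$, whence $\bp'\cdot u$ is defined by (ii); applying the lemma to $u$ and $w_\verti$ (their quotient is a reduced $\bp$-camí label, so it lies in $\gen{\Ati}$) shows $\bp'\cdot w_\verti$ is defined. \emph{Homomorphism:} for an arc $\verti\xarc{a}\vertii$, the word $w_\verti a$ reads $\bp\xwalk{\,}\vertii$ in $\Ati$, so $\red{w_\verti a}$ and $w_\vertii$ reach $\vertii$; the lemma applied to $w_\vertii$ and $\red{w_\verti a}$ gives $\bp'\cdot(w_\verti a)=\bp'\cdot w_\vertii=\theta(\vertii)$ (using (i)), and since $\bp'\cdot(w_\verti a)=\theta(\verti)\cdot a$ this exhibits the arc $\theta(\verti)\xarc{a}\theta(\vertii)$ in $\Ati'$. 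Finally, \emph{uniqueness} is immediate: any homomorphism must send the $T$-geodesic to $\verti$ onto a path from $\bp'$ reading $w_\verti$, and determinism of $\Ati'$ forces its endpoint to be $\bp'\cdot w_\verti$; as $\Ati$ is connected, this pins down the value at every vertex, so $\theta$ is unique.
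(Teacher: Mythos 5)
Your proof is correct, and while it shares the overall strategy of the paper's proof (both directions handled the same way, with $\theta$ built by reading labels in $\Ati'$ and uniqueness forced by its determinism), the technical core is genuinely different. The paper defines the image of $\verti$ from an \emph{arbitrary} reduced $\bp$-camí $\bp \xwalk{u} \verti \xwalk{v} \bp$ (which exists because $\Ati$ is cor): determinism of $\Ati$ makes $uv$ a reduced word of $\gen{\Ati}\leqslant\gen{\Ati'}$, hence the label of a reduced $\bp'$-camí $\bp' \xwalk{u} \verti' \xwalk{v} \bp'$ in $\Ati'$, and the image of $\verti$ is declared to be the midpoint $\verti'$. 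The price is well-definedness, which the paper settles by a geometric analysis of the backtracking of the mixed path $\bp\xwalk{u}\verti\xwalk{z}\bp$ obtained from two different choices, invoking determinism of $\Ati'$ to identify the candidate images. You avoid that analysis by fixing a canonical choice (the $T$-geodesic label $w_\verti$) and concentrating all the difficulty in a single inductive, purely word-combinatorial key lemma: two reduced words whose quotient reduces into $\gen{\Ati'}$, one of them readable from $\bp'$, are both readable from $\bp'$ with the same endpoint. Totality of $\theta$ and the homomorphism property then fall out as corollaries (well-definedness is automatic from the canonical choice), and the induction on $|u|+|u'|$ with the case split on last letters is easy to verify line by line, whereas the paper's backtracking picture is easier to visualize but fiddlier to write out in full. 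The two arguments are of comparable length; yours is more algebraic and modular, the paper's more geometric.

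One step deserves more care than your parenthetical ``(using (i))'' suggests. In the homomorphism verification, the equality $\bp'\cdot(w_\verti a)=\bp'\cdot\red{w_\verti a}$ is \emph{not} an instance of fact (i), because (i) presupposes that $\bp'\cdot(w_\verti a)$ is defined --- and when $w_\verti$ ends in $a^{-1}$, that definedness is exactly the existence of the arc you are trying to produce, so quoting (i) there is circular. The repair costs two lines and uses only tools you already set up: if $w_\verti a$ is reduced, apply the key lemma to $w_\vertii$ and $w_\verti a$ directly, getting $\theta(\verti)\cdot a=\bp'\cdot(w_\verti a)=\theta(\vertii)$; if instead $w_\verti=w'a^{-1}$, then $\red{w_\verti a}=w'$, the lemma gives $\bp'\cdot w'=\theta(\vertii)$, hence $\theta(\verti)=(\bp'\cdot w')\cdot a^{-1}=\theta(\vertii)\cdot a^{-1}$, and fact (iii) (involution) converts this into the arc $\theta(\verti)\xarc{\,a\ }\theta(\vertii)$. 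With that case distinction made explicit, the proof is complete.
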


\begin{proof}
La unicitat és conseqüència del determinisme de $\Ati'$. En efecte, suposem que $\theta_1,\theta_2 \colon \Ati \to \Ati'$ són homomorfismes, i $\verti \in \Verts \Ati$ un vèrtex tal que $\verti \theta_1 \neq \verti \theta_2$. Aleshores les imatges per $\theta_1$ i $\theta_2$ de qualsevol camí $\walki \colon \bp \xwalk{\ } \verti$ a $\Ati$ seran camins a $\Ati'$ amb el mateix origen, $\bp'$, i la mateixa etiqueta, $\lab(\walki)$, però diferent final, $\verti \theta_1 \neq \verti \theta_2$, en contradicció amb el determinisme de $\Ati'$.

$[\Leftarrow]$ la implicació cap a l'esquerra és certa sense cap hipòtesis addicional sobre $\Ati, \Ati'$: si $\theta \colon \Ati \to \Ati$ és un homomorfisme d'autòmats, aleshores la imatge de qualsevol $\bp$-camí a $\Ati$ és un $\bp'$-camí a $\Ati'$ amb la mateixa etiqueta, i per tant, la mateixa etiqueta reduïda; per tant, $\gen{\Ati} \leqslant \gen{\Ati '}$.

$[\Rightarrow]$ Suposem ara que $\Ati$ i $\Ati'$ són $A$-autòmats reduïts amb $\gen{\Ati} \leqslant \gen{\Ati'}$ i anem a construir un homomorfisme $\theta \colon \Ati \to \Ati'$. Òbviament, prenem $\bp \theta =\bp'$. Per a tot vèrtex $\verti \neq \bp$, considerem un $\bp$-camí reduït $\walki$ a $\Gamma$ que passi per $\verti$:
 \begin{equation*}
\walki \colon \bp \xwalk{u} \verti \xwalk{v} \bp
 \end{equation*}
(sabem que n'existeix almenys un perquè $\Ati$ és cor). A més, com que $\Ati$ és també determinista, tenim que tant $u$ com $v$ són paraules reduïdes, i sense cance\l.lació entre elles. En particular, $uv \in \gen{\Ati}$ i per tant $uv \in \gen{\Ati'}$. Això vol dir que $uv$ és l'etiqueta reduïda d'un $\bp'$-camí a $\Ati'$ i, com que $\Ati'$ és determinista, $uv$ és també l'etiqueta d'un $\bp'$-camí reduït $\walki'$ a $\Ati'$, que podem descompondre com
 \begin{equation*}
\walki' \colon \bp' \xwalk{u} \verti' \xwalk{v} \bp' \,.
 \end{equation*}
Definim $\verti\theta =\verti'$. Per veure que està ben definit, sigui $\bp \xwalk{w} \verti \xwalk{z} \bp$ un altre $\bp$-camí reduït de $\Ati$ passant per $\verti$, considerem $\bp' \xwalk{w} \verti'' \xwalk{z} \bp'$ el corresponent $\bp'$-camí reduït a $\Gamma'$, i vegem que $\verti'=\verti''$. Observeu que $\bp \xwalk{_{\scriptstyle{u}}} \verti \xwalk{_{\scriptstyle{z}}} \bp$ és un \mbox{$\bp$-camí} a $\Gamma$ \emph{amb possible retrocés a $\verti$}; si anomenem $\vertii$ el vèrtex on acaba aquest retrocés, aleshores tenim
\begin{equation*}
  \begin{tikzpicture}[shorten >=1pt, node distance=1cm and 2cm, on grid, decoration={snake, segment length=2mm, amplitude=0.5mm,post length=1.5mm},>=stealth']

  \node[state,accepting] (0) {};
  \node[] (q) [above right = 0.75 and 1.75 of 0] {$\vertii$};
  \node[] (p) [below right = 0.75 and 1.75 of 0] {$\verti$};
  \node[state,accepting] (0') [below right = 0.75 and 1.75 of q] {};
  \node[] (c) [right = 0.25 of 0'] {,};
   
    \path[->]
        (0) edge[snake it,bend left, out = 20,in= 170]
            node[pos=0.5,above = 0.1] {$u_1$}
            (q);
            
    \path[->]
        (q) edge[snake it,bend left, out = 10,in= 160]
            node[pos=0.5,above = 0.1] {$z_2$}
            (0');
            
    \path[->]
        (0) edge[snake it,bend left, out = -20,in= 190]
            node[pos=0.5,below = 0.1] {$w$}
            (p);
            
    \path[->]
        (p) edge[snake it,bend left, out = -10,in= 200]
            node[pos=0.5,below = 0.1] {$v$}
            (0');
            
    \path[->]
        (q) edge[snake it]
            node[pos=0.5,left = 0.1] {$u_2$}
            node[pos=0.5,right = 0.1] {$z_1^{-1}$}
            (p);
\end{tikzpicture}
\vspace{-5pt}
 \end{equation*}
on $u = u_1 u_2$, $z = z_1 z_2$, $u_2 = z_1^{-1}$, i el camí $\bp \xwalk{u_1\,} \vertii \xwalk{z_2\,} \bp$ és reduït. Per tant, $uv = u_1 u_2 v$, $wz=w z_1 z_2$ i  $u_1 z_2$ han de ser llegibles (sense cance\l.lació entre sí\l.labes) com a etiquetes de $\bp'$-camins reduïts a $\Ati'$:
\begin{equation*}
  \begin{tikzpicture}[shorten >=1pt, node distance=1cm and 2cm, on grid, decoration={snake, segment length=2mm, amplitude=0.5mm,post length=1.5mm},>=stealth']

  \node[state,accepting] (0) {};
  \node[] () [above right = 0.1 and 0.1 of 0] {$'$};
  \node[] (q') [above right = 0.75 and 1.5 of 0] {$\vertii'$};
  \node[] (p') [right = 1.5 of q'] {$\verti'$};
  \node[] (q''') [right = 3 of 0] {$\vertii'''$};
  \node[] (p'') [below right = 0.75 and 3 of 0] {$\verti''$};
  \node[] (q'') [right = 1.5 of p''] {$\vertii''$};
  \node[state,accepting] (0') [right =  6 of 0]{};
  \node[] () [above right = 0.1 and 0.1 of 0'] {$'$};
  \node[] (c) [right = 0.25 of 0'] {.};

  \path[->]
        (0) edge[snake it,bend left, out = 20,in= 170]
            node[pos=0.5,above=0.1] {$u_1$}
            (q');
                   
    \path[->]
        (0) edge[snake it, bend left, out = -20,in= 190]
            node[pos=0.5,below = 0.1] {$w$}
            (p'');
            
    \path[->]
        (p') edge[snake it, bend left, out = 10,in= 150]
            node[pos=0.5,above = 0.1] {$v$}
            (0');
            
    \path[->]
        (q') edge[snake it]
            node[pos=0.52,above = 0.1] {$u_2$}
            (p');
            
    \path[->]
        (p'') edge[snake it]
            node[pos=0.52,below = 0.1] {$z_1$}
            (q'');
            
    \path[->]
        (0) edge[snake it]
            node[pos=0.5,above = 0.05] {$u_1$}
            (q''');
            
    \path[->]
        (q''') edge[snake it]
            node[pos=0.5,above = 0.05] {$z_2$}
            (0');
            
    \path[->]
        (q'') edge[snake it,bend left, out = -10,in= 200]
            node[pos=0.5,below=0.1] {$z_2$}
            (0');
\end{tikzpicture}
\vspace{-5pt}
 \end{equation*}
Finalment, pel determinisme de $\Ati'$, és clar, primer que $\vertii'= \vertii'''=\vertii''$, i després (tenint en compte que $u_2 =z_1 ^{-1}$) que $\verti'= \verti''$, tal i com reclamàvem. Per tant, $\theta\colon \Verts\Ati \to \Verts\Ati'$ està ben definit.

Per acabar, vegem que $\theta$ és un homomorfisme d'autòmats entre $\Ati$ i $\Ati'$. En efecte, com que $\Ati$ és cor, donat un arc $\edgi \equiv \verti \xarc{a\,} \vertii$ a $\Ati$, sabem que existeix un $\bp$-camí reduït a $\Ati$ que usa l'arc $\edgi$:
 \begin{equation*}
\bp \xwalk{u} \verti \xarc{\,a\ } \vertii \xwalk{v} \bp \,.
 \end{equation*}
Per tant, $uav \in \gen{\Ati} \leqslant \gen{\Ati'}$ (sense cance\l.lació entre sí\l.labes). I, per la definició de $\theta$, a $\Ati'$ hi tenim el $\bp'$-camí reduït
 \begin{equation*}
\bp' \xwalk{u} \verti \theta \xarc{\,a\ } \vertii\theta \xwalk{v} \bp' \,.
 \end{equation*}
En particular, tenim un arc $\verti \theta \xarc{a\, } \vertii \theta$ a $\Ati'$, i per tant $\theta$ és un homomorfisme d'autòmats de $\Ati$ a $\Ati'$, tal i com volíem demostrar.
\end{proof}

Els cor\l.laris següents són immediats, i capturen exactament la propietat que perseguim.

\begin{cor}
Si $\Ati$ és un autòmat reduït, aleshores l'únic homomorfisme $\Ati \to \Ati$ és la identitat. \qed
\end{cor}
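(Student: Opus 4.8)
The plan is to read this off directly from the uniqueness clause of the preceding proposition, taking $\Ati' = \Ati$. First I would check that a homomorphism $\Ati \to \Ati$ always exists, and in fact that we already have an obvious candidate: the identity map $\id \colon \Verts\Ati \to \Verts\Ati$. Indeed, $\id$ fixes the base point ($\bp\,\id = \bp$), and for every arc $\verti \xarc{\,a\ } \vertii$ of $\Ati$ its image is the same arc $\verti\,\id \xarc{\,a\ } \vertii\,\id$, which trivially belongs to $\Ati$ with the same label. So $\id$ satisfies the definition of automaton homomorphism, and at least one self-homomorphism exists.

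Next I would invoke the proposition with $\Ati' = \Ati$. Since both automata are now reduced (by hypothesis on $\Ati$) and the containment $\gen{\Ati} \leqslant \gen{\Ati}$ holds trivially, the proposition applies and, crucially, its uniqueness clause tells us that there is \emph{at most one} homomorphism $\Ati \to \Ati$. Combining this with the previous paragraph, the identity is forced to be that unique homomorphism, which is precisely the statement of the corollary.

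There is essentially no obstacle here, as the corollary is just the diagonal special case $\Ati' = \Ati$ of the proposition plus the triviality that $\id$ is always a valid witness. The only point worth flagging is that the argument uses \emph{only} the uniqueness half of the proposition, and that half rested solely on the determinism of the target automaton; so the real substance — the ``$[\Rightarrow]$'' construction of a homomorphism — is not needed at all for this corollary. Determinism of $\Ati$ alone already suffices to conclude that the identity is the unique self-homomorphism.
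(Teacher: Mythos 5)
Your proof is correct and is exactly the paper's intended argument: the corollary is the diagonal case $\Ati' = \Ati$ of the preceding proposition, where the identity map witnesses existence and the uniqueness clause forces every self-homomorphism to coincide with it.

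One caveat about your closing remark, though: the uniqueness half of the proposition does not rest on determinism of the target \emph{alone}. Its proof also needs every vertex $\verti$ of the source to be joined to $\bp$ by some path, and this is precisely where the core hypothesis (hence connectivity) enters. A deterministic automaton with vertices unreachable from $\bp$ can admit several self-homomorphisms: for instance, take $\bp$ carrying an $a$-loop together with a separate connected component consisting of a single $a$-arc $\verti \xarc{\,a\ } \vertii$; then that arc can be mapped either identically or onto the loop at $\bp$. So determinism of $\Ati$ by itself does not suffice, and the core half of ``reduït'' cannot be dropped from the statement.
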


\begin{cor} \label{cor: gen iff isom}
Si dos $A$-autòmats reduïts reconeixen el mateix subgrup, aleshores són isomorfs. És a dir, si $\Ati$ i $\Ati'$ són $A$-autòmats reduïts, aleshores
 \begin{equation*}
\gen{\Ati} = \gen{\Ati'} \,\Leftrightarrow\, \Ati \isom \Ati' \,.
 \end{equation*}
\end{cor}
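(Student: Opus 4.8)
El pla és deduir aquest cor\l.lari directament dels dos resultats immediatament anteriors, sense tornar a raonar amb camins ni etiquetes: la proposició que caracteritza les inclusions $\gen{\Ati}\leqslant\gen{\Ati'}$ entre autòmats reduïts mitjançant l'existència (i unicitat) d'un homomorfisme $\Ati\to\Ati'$, i el cor\l.lari que afirma que l'únic endomorfisme d'un autòmat reduït és la identitat.

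La implicació $[\Leftarrow]$ és essencialment immediata. Primer observaria que tot isomorfisme d'$A$-autòmats $\Ati\to\Ati'$ és, en particular, una bijecció entre vèrtexs i arcs que preserva etiquetes i punt base; per tant posa en bijecció els $\bp$-camins de $\Ati$ amb els $\bp'$-camins de $\Ati'$ conservant-ne l'etiqueta (i, en conseqüència, l'etiqueta reduïda), d'on $\gen{\Ati}=\gen{\Ati'}$.

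Per a la implicació $[\Rightarrow]$, que és la part substancial, suposaria $\gen{\Ati}=\gen{\Ati'}$, és a dir $\gen{\Ati}\leqslant\gen{\Ati'}$ i $\gen{\Ati'}\leqslant\gen{\Ati}$ alhora. Aplicant la proposició anterior a cadascuna de les dues inclusions obtindria sengles homomorfismes (únics) $\theta\colon\Ati\to\Ati'$ i $\theta'\colon\Ati'\to\Ati$. Aleshores consideraria les composicions $\theta\theta'\colon\Ati\to\Ati$ i $\theta'\theta\colon\Ati'\to\Ati'$, que són de nou homomorfismes entre autòmats reduïts; pel cor\l.lari precedent (l'únic endomorfisme d'un autòmat reduït és la identitat) conclouria $\theta\theta'=\id_{\Ati}$ i $\theta'\theta=\id_{\Ati'}$.

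Per tancar, aquestes dues igualtats diuen que $\theta$ és una bijecció entre els vèrtexs de $\Ati$ i els de $\Ati'$, amb inversa $\theta'$. El pas que requereix una mica d'atenció —i que identifico com el principal (tot i que modest) obstacle— és comprovar que aquesta bijecció de vèrtexs s'aixeca a un isomorfisme d'autòmats, \ie que també posa en bijecció els arcs respectant $\init$, $\term$ i $\lab$. Això se seguiria del determinisme: com que $\theta$ és homomorfisme, envia cada $a$-arc de $\Ati$ a un $a$-arc de $\Ati'$, i com que $\theta'=\theta^{-1}$ també ho és, la correspondència inversa fa el mateix en sentit contrari; fixats origen i etiqueta, el determinisme (i el caràcter involutiu) garanteixen la unicitat de l'arc, de manera que $\theta$ i $\theta'$ indueixen bijeccions mútuament inverses sobre els arcs. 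Per tant $\theta$ és un isomorfisme i $\Ati\isom\Ati'$, tal com volíem.
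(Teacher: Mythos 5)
La teva proposta és correcta i segueix essencialment el mateix camí que el text: allò que l'article despatxa com a ``immediat'' és exactament l'argument que has explicitat (aplicar la proposició a les dues inclusions per obtenir $\theta\colon\Ati\to\Ati'$ i $\theta'\colon\Ati'\to\Ati$, i deduir $\theta\theta'=\id_{\Ati}$ i $\theta'\theta=\id_{\Ati'}$ del fet que l'únic endomorfisme d'un autòmat reduït és la identitat). La teva observació final --- que el determinisme garanteix que la bijecció de vèrtexs indueix també una bijecció d'arcs --- és una precisió correcta d'un detall que el text deixa tàcit.
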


Per tant, d'entre tots els $A$-autòmats reconeixent un cert subgrup $H \leqslant \Free[A]$, serà suficient triar-ne un de reduït per tal d'obtenir un representant únic, i restringir~\eqref{eq: Ati ->> <Ati>} a una bijecció. Vegem a continuació que aquest representant reduït sempre existeix, i com es relaciona amb un altre autòmat naturalment associat al subgrup $H$ corresponent.

\begin{defn}\label{def: Sch}
Siguin $G$ un grup, $H$ un subgrup de $G$, i $S\subseteq G$ un conjunt de generadors per a $G$. Aleshores, l'\defin{autòmat de Schreier (per la dreta) de $H$ respecte a~$S$}, designat per $\schreier_G(H,S)$, és el $S$-autòmat amb conjunt de vèrtexs $H\backslash G$ (el conjunt de classes laterals per la dreta de $G$ mòdul $H$), un arc $Hg \xarc{s\,} Hgs$ per a cada classe lateral $Hg\in H\backslash G$ i cada element $s\in S^{\pm}$, i la classe lateral $H$ com a vèrtex base.
\end{defn}

\begin{rem}
Noteu que, si $H\normaleq G$, llavors $\schreier_G(H,S)=\cayley(G/H,S)$. En particular, $\schreier_G(\Trivial,S)=\cayley(G,S)$. 
\end{rem}

Donat que d'ara endavant sempre considerarem $G=\Free[A]$, prendrem habitualment la base $A$ com a conjunt de generadors i escriurem simplement $\schreier_{\Free[A]}(H,A)=\schreier(H,A)$. El $A$-autòmat $\schreier(H,A)$ té les propietats següents. 

\begin{prop}\label{prop: propietats Sch}
Sigui $H$ un subgrup de $\Free[A]$. Aleshores, 
\begin{enumerate} [ind]
\item \label{item: Sch inv,det,sat} $\schreier(H,A)$ és un $A$-autòmat involutiu, determinista, i saturat;
\item \label{item: Sch connected} $\schreier(H,A)$ és connex;
\item \label{item: <Sch>=H} $\gen{\schreier(H,A)}=H$;
\item \label{item: Sch hanging trees} cada arc $\edgi$ no pertanyent a $\core (\schreier(H,A))$ és un pont de $\schreier(H,A)$; és a dir $\schreier(H,A)\setminus \{\edgi\}$ té dues components connexes i, a més, la que no conté $\bp$ és un arbre infinit i saturat \emph{excepte al vèrtex incident a $\edgi$};
\item \label{item: Sch no core} $\schreier(H,A)$ no és necessàriament cor.
\end{enumerate}
\end{prop}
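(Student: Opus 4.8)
El pla és verificar directament els apartats \ref{item: Sch inv,det,sat}, \ref{item: Sch connected} i \ref{item: <Sch>=H} a partir de la \Cref{def: Sch}, i concentrar l'esforç en l'apartat \ref{item: Sch hanging trees}. Per a \ref{item: Sch inv,det,sat}, la involució $\edgi \mapsto \edgi^{-1}$ que aparella l'arc $Hg \xarc{a\,} Hga$ (amb $a\in A$) amb l'arc $Hga \xarc{a^{-1}} Hg$ fa involutiu l'autòmat; és determinista perquè de cada vèrtex $Hg$ en surt exactament un $s$-arc per a cada $s\in A^{\pm}$ (el que arriba a $Hgs$), i és saturat perquè aquest arc existeix, en particular, per a tot $a\in A$. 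Per a \ref{item: Sch connected}, com que $A$ genera $\Free[A]$, tot vèrtex $Hg$ es connecta amb $\bp=H$ llegint (una paraula que representi) $g$, exactament igual que en el cas dels dígrafs de Cayley. Per a \ref{item: <Sch>=H}, observaria que en un autòmat determinista i saturat tota paraula $w\in (A^{\pm})^*$ es llegeix des de $\bp=H$ per un únic camí, que acaba al vèrtex $Hw$; per tant aquest camí és tancat si i només si $Hw=H$, és a dir, si i només si $w\in H$. Prenent etiquetes reduïdes i aplicant la \Cref{prop: unicitat reduida} es dedueix $\gen{\schreier(H,A)}=H$.

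L'apartat \ref{item: Sch hanging trees} és el nucli de la proposició, i el tractaria amb l'eina geomètrica natural: la projecció $p\colon \cayley(\Free[A],A)\to \schreier(H,A)$, $g\mapsto Hg$. Com que $\Free[A]$ és lliure, $T:=\cayley(\Free[A],A)$ és un arbre, i $p$ és un isomorfisme local (tant $T$ com $\schreier(H,A)$ són deterministes i saturats); en particular els camins s'aixequen de manera única i, en ser $p$ isomorfisme local, els camins reduïts s'aixequen a camins reduïts, és a dir, a geodèsiques de $T$. El subgrup $H$ actua per l'esquerra sobre $T$ de forma lliure i sense inversions (vegeu la \Cref{thm: free geom}.\ref{item: free iff tree}), amb $\schreier(H,A)=H\backslash T$. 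El pas decisiu serà identificar el cor: afirmo que $\core(\schreier(H,A))=p(Y)$, on $Y\subseteq T$ és la càpsula convexa (el menor subarbre) de l'òrbita $H\cdot 1=H$. En efecte, tot $\bp$-camí reduït s'aixeca a una geodèsica de $T$ des de $1$ fins a un element $h\in H$, és a dir, a un segment $[1,h]\subseteq Y$; recíprocament, tota aresta de $Y$ jau en algun segment $[h,h']$ ($h,h'\in H$), que es projecta en un $\bp$-camí reduït. Així, un arc pertany al cor si i només si el seu aixecament és una aresta de $Y$.

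Fixat ara un arc $\edgi\notin \core(\schreier(H,A))$, el seu aixecament $\tilde\edgi$ queda fora de $Y$. Com que $Y$ és convex, hi ha un únic vèrtex $c\in Y$ més proper a $\tilde\edgi$, i $\tilde\edgi$ viu dins la \emph{branca} $B_c$ formada pels punts la geodèsica dels quals cap a $Y$ arriba per $c$ (de manera que $B_c\cap Y=\set{c}$). El cor de l'argument és que $p$ restringida a $B_c$ és injectiva: si $hx=y$ amb $x,y\in B_c$ i $h\in H$, aleshores $h$ duu la geodèsica de $x$ a $Y$ a la de $y$ a $Y$, d'on $hc=c$ i, per la llibertat i l'absència d'inversions de l'acció, $h=1$. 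A més, tot vèrtex interior de $B_c$ té tots els veïns dins $B_c$, de manera que $p(B_c)$ és un subarbre de $\schreier(H,A)$ enganxat a la resta \emph{únicament} pel vèrtex $p(c)$. Com que $\edgi$ és una aresta d'aquest subarbre, eliminar-lo el desconnecta; la component que no conté $p(c)$ (ni, doncs, $\bp$) és un subarbre, resulta infinit perquè conté una branca completa de l'arbre $2\card A$-regular $T$, i és saturat tret del vèrtex incident a $\edgi$, que ha perdut precisament aquesta aresta. Finalment, \ref{item: Sch no core} se segueix d'un únic exemple: per a $H=\Trivial$ es té $\schreier(\Trivial,A)=\cayley(\Free[A],A)=T$, un arbre infinit (si $\card A\geq 1$) en el qual l'únic $\bp$-camí reduït és el trivial, de manera que $\core(\schreier(\Trivial,A))=\set{\bp}\neq \schreier(\Trivial,A)$.

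El principal obstacle que preveig és justament la identificació $\core(\schreier(H,A))=p(Y)$ i la injectivitat de $p$ sobre les branques $B_c$: tots dos fets depenen de manera essencial de la llibertat i l'absència d'inversions de l'acció de $H$ sobre l'arbre $T$, i obliguen a manejar amb cura la correspondència entre camins reduïts a l'autòmat i geodèsiques a $T$. Un cop establerts aquests dos fets, la propietat de pont i la descripció de la component penjant com a arbre infinit saturat (tret del vèrtex d'enganxament) són conseqüències gairebé immediates de l'estructura d'arbre de $T$.
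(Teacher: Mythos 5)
La teva proposta és correcta, i per als apartats \ref{item: Sch inv,det,sat}, \ref{item: Sch connected} i \ref{item: <Sch>=H} coincideix essencialment amb l'argument de l'article; ara bé, a l'apartat \ref{item: Sch hanging trees} --- el nucli de la proposició --- segueixes una ruta genuïnament diferent. L'article treballa directament dins de $\schreier(H,A)$ empalmant camins: si els dos extrems de $\edgi\notin\core(\schreier(H,A))$ fossin accessibles des de $\bp$ evitant $\edgi$, aleshores $\walki\edgi\walkii^{-1}$ seria un $\bp$-camí reduït passant per $\edgi$ (contradient la definició de cor); el mateix empalmament en la forma $\walki\edgi\walkiii\edgi^{-1}\walki^{-1}$ mostra que la component que no conté $\bp$ és un arbre, i la seva infinitud surt de la saturació (un arbre finit tindria vèrtexs de grau $1$). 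Tu, en canvi, puges a l'arbre de Cayley $T=\cayley(\Free[A],A)$ via la projecció $p\colon g\mapsto Hg$: identifiques $\core(\schreier(H,A))=p(Y)$, on $Y$ és la càpsula convexa de l'òrbita $H$, i reconeixes les components penjants com a branques de $T$ fora de $Y$, que s'injecten al quocient per la llibertat de l'acció de $H$. Els dos punts delicats que assenyales (la igualtat $\core(\schreier(H,A))=p(Y)$ i la injectivitat de $p$ sobre cada branca $B_c$) són exactament els que calen, i els arguments que en dónes (aixecament de $\bp$-camins reduïts a geodèsiques $[1,h]$ amb $h\in H$, i $hY=Y$ juntament amb la unicitat del punt més proper per forçar $hc=c$ i, per llibertat, $h=1$) són vàlids. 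Pel que fa a què aporta cada enfocament: el de l'article és més curt, elemental i autocontingut; el teu és més estructural, ja que explica d'entrada per què les components penjants són branques de Cayley senceres --- fet que l'article només obté al final com a conseqüència de la saturació --- i deixa pràcticament demostrat l'apartat \ref{item: Sch = St + Cay branches} de la \Cref{prop: propietats St}. Finalment, per a \ref{item: Sch no core} uses l'exemple $H=\Trivial$ (on l'autòmat de Schreier és tot l'arbre de Cayley i el cor es redueix a $\set{\bp}$) enlloc del $\gen{b}\leqslant\Free[\set{a,b}]$ de l'article; ambdós exemples serveixen igualment.
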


\begin{proof}
L'apartat \ref{item: Sch inv,det,sat} és immediat de la definició d'autòmat de Schreier (\Cref{def: Sch}). La connexió de $\schreier(H,A)$ és conseqüència directa del fet que tot vèrtex $Hw$ està connectat amb el punt base: en efecte, posant $w=a_{i_1}^{\epsilon_1} a_{i_2}^{\epsilon_2} \cdots a_{i_l}^{\epsilon_l}$ com a paraula en $A^{\pm}$, tenim el camí
 \begin{equation} \label{eq: Sch connex}
\walki \colon \bp \! \xarc{\raisebox{0.6ex}{$\scriptstyle{a_{i_1}^{\epsilon_1}}$}} Ha_{i_1}^{\epsilon_1} \xarc{\raisebox{0.6ex}{$\scriptstyle{a_{i_2}^{\epsilon_2}}$}} Ha_{i_1}^{\epsilon_1}a_{i_2}^{\epsilon_2}  \xarc{\raisebox{0.6ex}{$\scriptstyle{a_{i_3}^{\epsilon_3}}$}} \ \cdots \ \xarc{\raisebox{0.6ex}{$\scriptstyle{a_{i_l}^{\epsilon_l}}$}} Hw \,.
 \end{equation}
Això demostra~\ref{item: Sch connected}. L'apartat~\ref{item: <Sch>=H} se segueix del fet que el camí~\eqref{eq: Sch connex} és tancat si i només si $Hw=\bp=H$, és a dir, si i només si $\lab(\walki)=w\in H$.

Per veure~\ref{item: Sch hanging trees} considerem un arc $\edgi$ de $\schreier(H,A)$ no pertanyent a $\core (\schreier(H,A))$. Si hi haguessin camins, i per tant camins reduïts, $\walki$ i $\walkii$ a $\schreier(H,A)\setminus \{\edgi\}$, de $\bp$ a $\iota \edgi$, i de $\bp$ a $\tau \edgi$, respectivament, llavors $\walki \edgi \walkii^{-1}$ seria un $\bp$-camí reduït contradient que $\edgi$ no forma part del cor. Per tant, $\edgi$ és un pont a $\schreier(H,A)$. Canviant $\edgi$ per $\edgi^{-1}$ si cal, podem suposar que hi ha un camí $\walki$ de $\bp$ a $\iota \edgi$, que no passa per l'arc $\edgi$. Llavors, si la component connexa de $\schreier(H,A)\setminus \{\edgi\}$ contenint $\tau \edgi$ no fos arbre, hi hauria un $\tau \edgi$-camí reduït no trivial $\walkiii$, i $\walki \edgi\walkiii \edgi^{-1}\walki^{-1}$ seria un $\bp$-camí reduït passant per $\edgi$ i contradient altre cop el fet que $\edgi$ no pertany al cor. Per tant, la component connexa de $\schreier(H,A)\setminus \{\edgi\}$ contenint $\tau \edgi$ és un arbre; i haurà de ser forçosament infinit ja que sino, tindria vèrtexs de grau 1, contradient el fet que $\schreier(H,A)$ és saturat. Com que cada vèrtex és saturat, aquest arbre és necessàriament una branca de Cayley (vegeu la~\Cref{fig: Sch(<b>)}).

Finalment, per veure \ref{item: Sch no core}
només cal observar l'autòmat $\schreier(\gen{b}, \{a,b\})$, dibuixat a la~\Cref{fig: Sch(<b>)}: un $b$-llaç al punt base $\bp$, i dues branques de Cayley  naixent dels dos arcs disponibles al punt base. 
\end{proof}

\begin{figure}[H]\label{fig: Stallings <<b>>}
\centering
  \begin{tikzpicture}[shorten >=1pt, node distance=1.2 and 1.2, on grid,auto,>=stealth',transform shape]
  
  \node[state,accepting] (bp) {};
  
  \path[->] (bp)
            edge[red,loop right,min distance=10mm,in=90+35,out=90-35]
            node[above] {\scriptsize{$b$}}
            (bp);
  
  \begin{scope}
  
  \newcommand{\dx}{1.6}
  \newcommand{\dy}{1.6}
  \newcommand{\dxx}{\dx*0.55}
  \newcommand{\dxxx}{\dx*0.3}
  \newcommand{\dxxxx}{\dx*0.18}
  \newcommand{\dyy}{\dy*0.55}
  \newcommand{\dyyy}{\dy*0.3}
  \newcommand{\dyyyy}{\dy*0.18}

   \node[smallstate] (1) {};
   \node[smallstate] (a) [right = \dx of 1]{};
   \node[smallstate] (aa) [right = \dxx of a]{};
   \node[tinystate] (aaa) [right = \dxxx of aa]{};
   \node[emptystate] (aaaa) [right = \dxxxx of aaa]{};
   
   \node[tinystate] (ab) [above = \dyy of a]{};
   \node[tinystate] (aab) [above = \dyyy of aa]{};
   \node[emptystate] (aabb) [above = \dyyyy of aab]{};
   \node[emptystate] (aaab) [above = \dyyyy of aaa]{};
   \node[emptystate] (aaaab) [above = \dyyyy of aaaa]{};
   
   \node[tinystate] (aB) [below = \dyy of a]{};
   \node[tinystate] (aaB) [below = \dyyy of aa]{};
   \node[emptystate] (aaBB) [below = \dyyyy of aaB]{};
   \node[emptystate] (aaaB) [below = \dyyyy of aaa]{};
   \node[emptystate] (aaaaB) [below = \dyyyy of aaaa]{};
   
   \node[tinystate] (aba) [right = \dxxx of ab]{};
   \node[tinystate] (abA) [left = \dxxx of ab]{};
   \node[emptystate] (aaba) [right = \dxxxx of aab]{};
   \node[emptystate] (aabA) [left = \dxxxx of aab]{};

   \node[tinystate] (aBa) [right = \dxxx of aB]{};
   \node[tinystate] (aBA) [left = \dxxx of aB]{};
   \node[emptystate] (aaBa) [right = \dxxxx of aaB]{};
   \node[emptystate] (aaBA) [left = \dxxxx of aaB]{};

   \node[tinystate] (abb) [above = \dyyy of ab]{};
   \node[tinystate] (aBB) [below = \dyyy of aB]{};
   
   \node[emptystate] (abbA) [left = \dxxxx of abb]{};
   \node[emptystate] (abbb) [above = \dyyyy of abb]{};
   \node[emptystate] (abba) [right = \dxxxx of abb]{};
   
   \node[emptystate] (aBBA) [left = \dxxxx of aBB]{};
   \node[emptystate] (aBBB) [below = \dyyyy of aBB]{};
   \node[emptystate] (aBBa) [right = \dxxxx of aBB]{};
   
   \node[emptystate] (abAA) [left = \dxxxx of abA]{};
   \node[emptystate] (abAb) [above = \dyyyy of abA]{};
   \node[emptystate] (abAB) [below = \dxxxx of abA]{};
   
   \node[emptystate] (aBAA) [left = \dxxxx of aBA]{};
   \node[emptystate] (aBAb) [above = \dyyyy of aBA]{};
   \node[emptystate] (aBAB) [below = \dxxxx of aBA]{};
   
   \node[emptystate] (abaa) [right = \dxxxx of aba]{};
   \node[emptystate] (abab) [above = \dyyyy of aba]{};
   \node[emptystate] (abaB) [below = \dxxxx of aba]{};
   
   \node[emptystate] (aBaa) [right = \dxxxx of aBa]{};
   \node[emptystate] (aBab) [above = \dyyyy of aBa]{};
   \node[emptystate] (aBaB) [below = \dxxxx of aBa]{};
     
   \path[->] (a) edge[red] node[pos = 0.4,left] {\scriptsize{$b$}} (ab);
   \path[->] (aa) edge[red] (aab);
   \path[->,densely dotted] (aab) edge[red] (aabb);
   \path[->,densely dotted] (aaa) edge[red] (aaab);
   
   \path[->] (aB) edge[red] (a);
   \path[->] (aaB) edge[red] (aa);
   \path[->,densely dotted] (aaBB) edge[red] (aaB);
   \path[->,densely dotted] (aaaB) edge[red] (aaa);
   
   \path[->] (ab) edge[red] (abb);
   \path[->] (aBB) edge[red] (aB);
   
   \path[->] (1) edge[blue] node[pos = 0.4,above] {\scriptsize{$a$}}(a);
   \path[->] (a) edge[blue] (aa);
   \path[->] (aa) edge[blue] (aaa);
   \path[->,densely dotted] (aaa) edge[blue] (aaaa);
   
   \path[->] (abA) edge[blue] (ab);
   \path[->] (ab) edge[blue] (aba);
   \path[->] (ab) edge[blue] (aba);
   
   \path[->,densely dotted] (aabA) edge[blue] (aab);
   \path[->,densely dotted] (aab) edge[blue] (aaba);
   
   \path[->] (aBA) edge[blue] (aB);
   \path[->] (aB) edge[blue] (aBa);
   \path[->] (aB) edge[blue] (aBa);
   
   \path[->,densely dotted] (aaBA) edge[blue] (aaB);
   \path[->,densely dotted] (aaB) edge[blue] (aaBa);
   
   \path[->,densely dotted] (abb) edge[red] (abbb);
   \path[->,densely dotted] (abbA) edge[blue] (abb);
   \path[->,densely dotted] (abb) edge[blue] (abba);
   
   \path[->,densely dotted] (aBBB) edge[red] (aBB);
   \path[->,densely dotted] (aBBA) edge[blue] (aBB);
   \path[->,densely dotted] (aBB) edge[blue] (aBBa);
   
   \path[->,densely dotted] (abA) edge[red] (abAb);
   \path[->,densely dotted] (abAA) edge[blue] (abA);
   \path[->,densely dotted] (abAB) edge[red] (abA);
   
   \path[->,densely dotted] (aBA) edge[red] (aBAb);
   \path[->,densely dotted] (aBAA) edge[blue] (aBA);
   \path[->,densely dotted] (aBAB) edge[red] (aBA);
   
   \path[->,densely dotted] (aba) edge[red] (abab);
   \path[->,densely dotted] (aba) edge[blue] (abaa);
   \path[->,densely dotted] (abaB) edge[red] (aba);
   
   \path[->,densely dotted] (aBa) edge[red] (aBab);
   \path[->,densely dotted] (aBa) edge[blue] (aBaa);
   \path[->,densely dotted] (aBaB) edge[red] (aBa);
  \end{scope}

  \begin{scope}[rotate=180]
  \newcommand{\dx}{1.6}
  \newcommand{\dy}{1.6}
  \newcommand{\dxx}{\dx*0.55}
  \newcommand{\dxxx}{\dx*0.3}
  \newcommand{\dxxxx}{\dx*0.18}
  \newcommand{\dyy}{\dy*0.55}
  \newcommand{\dyyy}{\dy*0.3}
  \newcommand{\dyyyy}{\dy*0.18}

  \node[smallstate] (1) {};
  \node[smallstate] (a) [right = \dx of 1]{};
  \node[smallstate] (aa) [right = \dxx of a]{};
  \node[tinystate] (aaa) [right = \dxxx of aa]{};
  \node[emptystate] (aaaa) [right = \dxxxx of aaa]{};
   
  \node[tinystate] (ab) [above = \dyy of a]{};
  \node[tinystate] (aab) [above = \dyyy of aa]{};
  \node[emptystate] (aabb) [above = \dyyyy of aab]{};
  \node[emptystate] (aaab) [above = \dyyyy of aaa]{};
  \node[emptystate] (aaaab) [above = \dyyyy of aaaa]{};
   
  \node[tinystate] (aB) [below = \dyy of a]{};
  \node[tinystate] (aaB) [below = \dyyy of aa]{};
  \node[emptystate] (aaBB) [below = \dyyyy of aaB]{};
  \node[emptystate] (aaaB) [below = \dyyyy of aaa]{};
  \node[emptystate] (aaaaB) [below = \dyyyy of aaaa]{};
   
  \node[tinystate] (aba) [right = \dxxx of ab]{};
  \node[tinystate] (abA) [left = \dxxx of ab]{};
  \node[emptystate] (aaba) [right = \dxxxx of aab]{};
  \node[emptystate] (aabA) [left = \dxxxx of aab]{};

  \node[tinystate] (aBa) [right = \dxxx of aB]{};
  \node[tinystate] (aBA) [left = \dxxx of aB]{};
  \node[emptystate] (aaBa) [right = \dxxxx of aaB]{};
  \node[emptystate] (aaBA) [left = \dxxxx of aaB]{};

  \node[tinystate] (abb) [above = \dyyy of ab]{};
  \node[tinystate] (aBB) [below = \dyyy of aB]{};
   
  \node[emptystate] (abbA) [left = \dxxxx of abb]{};
  \node[emptystate] (abbb) [above = \dyyyy of abb]{};
  \node[emptystate] (abba) [right = \dxxxx of abb]{};
   
  \node[emptystate] (aBBA) [left = \dxxxx of aBB]{};
  \node[emptystate] (aBBB) [below = \dyyyy of aBB]{};
  \node[emptystate] (aBBa) [right = \dxxxx of aBB]{};
   
  \node[emptystate] (abAA) [left = \dxxxx of abA]{};
  \node[emptystate] (abAb) [above = \dyyyy of abA]{};
  \node[emptystate] (abAB) [below = \dxxxx of abA]{};
   
  \node[emptystate] (aBAA) [left = \dxxxx of aBA]{};
  \node[emptystate] (aBAb) [above = \dyyyy of aBA]{};
  \node[emptystate] (aBAB) [below = \dxxxx of aBA]{};
   
  \node[emptystate] (abaa) [right = \dxxxx of aba]{};
  \node[emptystate] (abab) [above = \dyyyy of aba]{};
  \node[emptystate] (abaB) [below = \dxxxx of aba]{};
   
  \node[emptystate] (aBaa) [right = \dxxxx of aBa]{};
  \node[emptystate] (aBab) [above = \dyyyy of aBa]{};
  \node[emptystate] (aBaB) [below = \dxxxx of aBa]{};
 
  \path[->] (ab) edge[red] (a);
  \path[->] (aab) edge[red] (aa);
  \path[->,densely dotted] (aabb) edge[red] (aab);
  \path[->,densely dotted] (aaab) edge[red] (aaa);
   
  \path[->] (a) edge[red] (aB);
  \path[->] (aa) edge[red] (aaB);
  \path[->,densely dotted] (aaB) edge[red] (aaBB);
  \path[->,densely dotted] (aaa) edge[red] (aaaB);
   
  \path[->] (abb) edge[red] (ab);
  \path[->] (aB) edge[red] (aBB);
   
  \path[->] (a) edge[blue] (1);
  \path[->] (aa) edge[blue] (a);
  \path[->] (aaa) edge[blue] (aa);
  \path[->,densely dotted] (aaaa) edge[blue] (aaa);
   
  \path[->] (ab) edge[blue] (abA);
  \path[->] (aba) edge[blue] (ab);
  \path[->] (aba) edge[blue] (ab);
   
  \path[->,densely dotted] (aab) edge[blue] (aabA);
  \path[->,densely dotted] (aaba) edge[blue] (aab);
   
  \path[->] (aB) edge[blue] (aBA);
  \path[->] (aBa) edge[blue] (aB);
  \path[->] (aBa) edge[blue] (aB);
   
  \path[->,densely dotted] (aaB) edge[blue] (aaBA);
  \path[->,densely dotted] (aaBa) edge[blue] (aaB);
   
  \path[->,densely dotted] (abbb) edge[red] (abb);
  \path[->,densely dotted] (abb) edge[blue] (abbA);
  \path[->,densely dotted] (abba) edge[blue] (abb);
   
  \path[->,densely dotted] (aBB) edge[red] (aBBB);
  \path[->,densely dotted] (aBB) edge[blue] (aBBA);
  \path[->,densely dotted] (aBBa) edge[blue] (aBB);
   
  \path[->,densely dotted] (abAb) edge[red] (abA);
  \path[->,densely dotted] (abA) edge[blue] (abAA);
  \path[->,densely dotted] (abA) edge[red] (abAB);
   
  \path[->,densely dotted] (aBAb) edge[red] (aBA);
  \path[->,densely dotted] (aBA) edge[blue] (aBAA);
  \path[->,densely dotted] (aBA) edge[red] (aBAB);
   
  \path[->,densely dotted] (abab) edge[red] (aba);
  \path[->,densely dotted] (abaa) edge[blue] (aba);
  \path[->,densely dotted] (aba) edge[red] (abaB);
   
  \path[->,densely dotted] (aBab) edge[red] (aBa);
  \path[->,densely dotted] (aBaa) edge[blue] (aBa);
  \path[->,densely dotted] (aBa) edge[red] (aBaB);
  \end{scope}
   
\end{tikzpicture}
\caption{L'autòmat de Schreier de $\gen{b} \leqslant \Free_{\set{a,b}}$}
\label{fig: Sch(<b>)}
\end{figure}
De la \Cref{prop: propietats Sch} és clar que la propietat d'esser cor és l'única que li falta a $\schreier(H,A)$ per ser un autòmat reduït que reconeix $H$.

\begin{defn} \label{def: St}
Sigui $H$ un subgrup de $\Free[A]$. Anomenem \defin{autòmat de Stallings} de $H$ respecte a $A$, designat per $\stallings(H,A)$, al cor de $\schreier(H,A)$; és a dir $\stallings(H,A)=\core (\schreier(H,A))$. També definim l'\defin{autòmat restringit de Stallings} de $H$ respecte a $A$ com el dígraf $\stallings^*(H,A)=\core^*(\stallings(H,A))$. Si la base de referència $A$ és clara pel context, habitualment l'ometrem i escriurem simplement $\schreier(H)$, $\stallings(H)$, i $\stallings^*(H)$.
\end{defn}

\begin{exm}
(La part positiva de) l'autòmat de Stallings $\stallings(\Free[A],A)$ té un únic vèrtex i un llaç etiquetat per cada $a \in A$. Aquest autòmat s'anomena \defin{rosa} (de $\card A$ pètals), i es designa per $\bouquet_A$ ó $\bouquet_n$, on $n=\card A$:
\begin{figure}[H]
    \centering
    \begin{tikzpicture}[shorten >=1pt, node distance=.5cm and 1.5cm, on grid,auto,>=stealth']
   \node[state,accepting] (1) {};

   \path[->]
        (1) edge[blue,loop right,min distance=12mm,in=330+35,out=330-35]
            node[right] {\scriptsize{$a$}}
            (1)
            edge[red,loop right,min distance=12mm,in=90+35,out=90-35]
            node[above] {\scriptsize{$b$}}
            (1)
            edge[black,loop right,min distance=12mm,in=210+35,out=210-35]
            node[left] {\scriptsize{$c$}}
            (1);
\end{tikzpicture} 
\vspace{-15pt}
\caption{$\bouquet_{3}$, una rosa de tres pètals}
\label{fig: bouquet}
\end{figure}
\end{exm}
\begin{rem}\label{rem: rose free}
    És clar que, per a qualsevol cardinal $n$, el grup fonamental de la rosa de $n$ pètals és el grup lliure de rang $n$; és a dir $\pi_{\bp}(\bouquet_{n}) \isom \Fn$.
\end{rem}

A continuació resumim les principals propietats de l'autòmat de Stallings i la seva relació amb el de Schreier.

\begin{prop}\label{prop: propietats St}
Sigui $H$ un subgrup de $\Free[A]$. Aleshores, 
\begin{enumerate}[ind]
\item \label{item: St inv,det,cor} $\stallings(H,A)$ és un $A$-autòmat involutiu, determinista, i cor (per tant reduït);
\item \label{item: <St>=H} $\gen{\stallings(H,A)}=H$;
\item \label{item: St sat iff Sch cor}
$\stallings(H,A) \text{ és saturat}
\ \Leftrightarrow\  \stallings(H,A)=\schreier(H,A) 
\ \Leftrightarrow\  \schreier(H,A) \text{ és cor;}$
\item \label{item: St no sat} $\stallings(H,A)$ no és necessàriament saturat;
\item \label{item: Sch = St + Cay branches} $\schreier(H,A)$ consisteix en $\stallings(H,A)$ amb una $a$-branca de Stallings adjacent a cada vèrtex $a$-deficient de $\stallings(H,A)$, on $a \in A^\pm$.
\item \label{item: fi iff St} L'índex $\ind{H}{\Free[A]}$ és finit si i només si $\stallings(H,A)$ és saturat i $\card\Verts \,\stallings(H,A) < \infty$.
\end{enumerate}
\end{prop}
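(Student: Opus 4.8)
The plan is to derive each of the six claims from the properties of the Schreier automaton already established in \Cref{prop: propietats Sch}, together with the defining identity $\stallings(H,A) = \core(\schreier(H,A))$ and the elementary facts about cores. Items 1 and 2 are essentially immediate. Since $\core(\Ati)$ is by construction a core induced subautomaton, it inherits both the involution of $\schreier(H,A)$ (if an arc lies on a reduced base-path, so does its inverse, on the reversed path) and its determinism (an induced subautomaton of a deterministic automaton cannot acquire two coincident out-arcs); being deterministic and core, it is reduced, which is item 1. For item 2 I would invoke the fact recorded when the core was defined, namely $\gen{\core(\Ati)} = \gen{\Ati}$, so that $\gen{\stallings(H,A)} = \gen{\schreier(H,A)} = H$ by \Cref{prop: propietats Sch}.

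For item 3 the plan is to close the chain of equivalences going around the cycle. The equivalence between $\stallings(H,A) = \schreier(H,A)$ and $\schreier(H,A)$ being core is immediate from $\stallings = \core(\schreier)$, since an automaton coincides with its core exactly when it is itself core. The implication $\stallings(H,A) = \schreier(H,A) \Rightarrow \stallings(H,A)$ saturated is just the recollection that $\schreier(H,A)$ is always saturated. The remaining implication, $\stallings(H,A)$ saturated $\Rightarrow \schreier(H,A)$ core, is where the structural description in \Cref{prop: propietats Sch} does the work: every edge of $\schreier(H,A)$ outside the core is a bridge whose removal detaches an infinite tree attached at a core vertex, and such an attachment forces that core vertex to be deficient in $\stallings(H,A)$; hence saturation of $\stallings(H,A)$ rules out any edge outside the core.

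Item 4 I would settle with the explicit example $H = \gen{b} \leqslant \Free[\{a,b\}]$ of \Cref{fig: Sch(<b>)}, whose Stallings automaton is the single base vertex carrying a $b$-loop, which is $a$-deficient and hence not saturated. Item 5 is the structural statement that underlies item 3, and I would prove it by the same bridge analysis: by \Cref{prop: propietats Sch} the vertices and arcs outside $\core(\schreier(H,A)) = \stallings(H,A)$ organise into infinite saturated trees hanging off the core through bridges; since $\schreier(H,A)$ is saturated, at an $a$-deficient vertex $v$ of $\stallings(H,A)$ there must nonetheless be an $a$-arc in $\schreier(H,A)$, and that arc necessarily leaves the core, so it is precisely the bridge feeding the corresponding Cayley branch. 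Conversely each hanging tree is a Cayley branch rooted at a deficient core vertex, which yields the asserted decomposition.

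Finally, for item 6 I would use that the vertices of $\schreier(H,A)$ are exactly the right cosets $H\backslash \Free[A]$, so $\ind{H}{\Free[A]} = \card\Verts\,\schreier(H,A)$. If $\stallings(H,A)$ is saturated and $\card\Verts\,\stallings(H,A) < \infty$, then item 3 gives $\stallings(H,A) = \schreier(H,A)$, whence the index equals $\card\Verts\,\stallings(H,A) < \infty$. Conversely a finite index forces $\schreier(H,A)$ finite; but by item 5 any deficiency of $\stallings(H,A)$ would attach an infinite Cayley branch, contradicting finiteness, so $\stallings(H,A)$ is saturated (and equal to $\schreier(H,A)$), hence finite. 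The main obstacle throughout is item 3, equivalently item 5: everything hinges on correctly exploiting the bridge-and-branch structure of \Cref{prop: propietats Sch} to translate between saturation of $\stallings(H,A)$ and the presence of infinite hanging trees in $\schreier(H,A)$; once that dictionary is established, the rest is bookkeeping.
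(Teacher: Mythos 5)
Your proposal is correct and follows essentially the same route as the paper's proof: items (i) and (ii) from the definition of $\stallings(H,A)=\core(\schreier(H,A))$ and the remark $\gen{\core(\Ati)}=\gen{\Ati}$, item (iii) from the interplay of ``$\schreier$ saturated, $\stallings$ core'', item (iv) via the example behind \Cref{prop: propietats Sch}\ref{item: Sch no core}, item (v) as a reinterpretation of \Cref{prop: propietats Sch}\ref{item: Sch hanging trees}, and item (vi) from $\ind{H}{\Free[A]}=\card\Verts\,\schreier(H,A)$ together with the previous items. The only difference is cosmetic: you spell out the bridge-and-branch argument for the implication ``$\stallings(H,A)$ saturated $\Rightarrow$ $\schreier(H,A)$ core'' (and for (vi)) in more detail than the paper, which leaves it implicit.
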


\begin{proof}
L'apartat \ref{item: St inv,det,cor} és immediat de la definició d'autòmat de Stallings (\Cref{def: St}), mentre que \ref{item: <St>=H} se segueix de la mateixa definició i l'\Cref{rem: <Ati> = <core> = <core*>}. 

Per veure \ref{item: St sat iff Sch cor} només cal recordar que $\schreier(H)$ és sempre saturat, $\stallings(H)$ és sempre cor, i $\stallings(H)=\core(\schreier(H))$. Per tant,  \ref{item: St no sat} és conseqüència de la \Cref{prop: propietats Sch}\ref{item: Sch no core}.

\ref{item: Sch = St + Cay branches} Si $\verti$ és un vèrtex $a$-deficient de $\stallings(H)$, l'arc de $\schreier(H)$ llegint $a$ des de $\verti$ no pertany a $\stallings(H)$ i aleshores el resultat reclamat no és més que una reinterpretació de la \Cref{prop: propietats Sch}\ref{item: Sch hanging trees}.

\ref{item: fi iff St} Recordem que $\ind{H}{\Free[A]}=\card\Verts\, \schreier(H,A)$. Si $\stallings(H)$ és saturat i té un nombre finit de vèrtexs, aleshores $\ind{H}{\Free[A]}=\card\Verts\, \schreier(H,A)=\card\Verts\, \stallings(H,A) <\infty$, on hem usat l'apartat \ref{item: St sat iff Sch cor} a la segona igualtat. Recíprocament, suposem $\ind{H}{\Free[A]}<\infty$: per una banda $\card\Verts \, \stallings(H,A) \leq \card\Verts \, \schreier(H,A)=\ind{H}{\Free[A]}<\infty$; i per altra, si $\stallings(H,A)$ fos insaturat, \ref{item: Sch = St + Cay branches} implicaria que $\schreier(H,A)$ té arbres infinits penjant de tots els vèrtexs insaturats, contradient que $\card\Verts \,\schreier(H,A)= \ind{H}{\Free[A]} < \infty$. 
\end{proof}

És convenient tenir present que, com a conseqüència de la teoria desenvolupada, les propietats (i) i (ii) de la \Cref{prop: propietats Sch}, i la propietat \ref{item: St inv,det,cor} de la \Cref{prop: propietats St}, caracteritzen, de fet, els  respectius tipus d'autòmats.

\begin{cor}\label{cor: SchSt}
Sigui $\Ati$ un $A$-autòmat involutiu i determinista. Aleshores:
 \begin{enumerate}[ind]
\item si $\Ati$ és saturat i connex, aleshores $\Ati \isom \schreier(\gen{\Ati},A)$;
\item si $\Ati$ és cor, aleshores $\Ati \isom \stallings(\gen{\Ati},A)$.\qed
 \end{enumerate}
\end{cor}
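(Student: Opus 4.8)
The plan is to handle the two parts separately: part (ii) drops out almost immediately from the uniqueness of reduced representatives already established, while part (i) requires a direct construction of the isomorphism via an orbit--stabiliser argument.

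Part (ii) I would dispatch first. Setting $H = \gen{\Ati}$, the automaton $\Ati$ is reduced by hypothesis (involutive, deterministic and core), and $\stallings(H,A)$ is reduced by \Cref{prop: propietats St}\ref{item: St inv,det,cor}; moreover $\gen{\stallings(H,A)} = H = \gen{\Ati}$ by \Cref{prop: propietats St}\ref{item: <St>=H}. Two reduced automata recognising the same subgroup are isomorphic by \Cref{cor: gen iff isom}, so $\Ati \isom \stallings(\gen{\Ati},A)$ at once.

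For part (i) I would argue directly, since a saturated automaton is not core in general and so \Cref{cor: gen iff isom} does not apply. Again put $H = \gen{\Ati}$. Because $\Ati$ is involutive, deterministic and saturated (as an $A^{\pm}$-automaton, i.e.\ every vertex has exactly one outgoing arc for each label $s \in A^{\pm}$), reading any word from any vertex is possible and unambiguous: for $\verti \in \Verts\Ati$ and $w \in \IM{A}$ there is a unique path from $\verti$ reading $w$, and I denote its endpoint by $\verti\cdot w$. Since reading $aa^{-1}$ returns to the start (the return arc is forced by involution together with determinism), $\verti\cdot w$ depends only on $\red{w}$, so this is a genuine right action of $\Free[A]$ on $\Verts\Ati$, with the arcs recovered as $\verti \xarc{a\,}\verti\cdot a$. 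Connectedness makes the action transitive ($\verti = \bp\cdot w$ for some $w$, by following a path), and the stabiliser of $\bp$ is exactly $H$: a reduced word $g$ satisfies $\bp\cdot g = \bp$ iff it labels a reduced $\bp$-path, i.e.\ iff $g \in \gen{\Ati} = H$. The orbit--stabiliser correspondence then yields the map $\bp\cdot g \mapsto Hg$, which is well defined and injective since $\bp\cdot g_1 = \bp\cdot g_2 \Leftrightarrow \bp\cdot(g_1g_2^{-1}) = \bp \Leftrightarrow g_1g_2^{-1}\in H \Leftrightarrow Hg_1 = Hg_2$, surjective because every coset representative can be read off from $\bp$ (here full saturation is used), sends $\bp$ to the base coset $H$, and carries the $a$-arc $\bp\cdot g \xarc{a\,} \bp\cdot ga$ to the $a$-arc $Hg \xarc{a\,} Hga$ of $\schreier(H,A)$. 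A label-preserving vertex bijection between two saturated deterministic involutive automata is automatically an isomorphism (determinism and saturation force the inverse vertex map to respect arcs), so $\Ati \isom \schreier(\gen{\Ati},A)$.

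The main obstacle is conceptual rather than computational, and it lives entirely in part (i): one must read ``saturated'' for an involutive automaton as saturation over the full alphabet $A^{\pm}$ (one outgoing arc per $s\in A^{\pm}$), not merely over $A$. This is exactly what guarantees that $\verti\cdot w$ is everywhere defined and that the map above is surjective. Without it the statement is false, as the one-sided line $0 \xarc{a\,} 1 \xarc{a\,} 2 \xarc{a\,} \cdots$ over $A=\set{a}$ shows: it is involutive, deterministic, connected and saturated \emph{for} $A$, it recognises the trivial subgroup, yet it is not isomorphic to $\schreier(\Trivial,\set{a}) = \cayley(\ZZ,\set{1})$, the bi-infinite line. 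Once saturation is understood over $A^{\pm}$, the transitivity and stabiliser computations are routine.
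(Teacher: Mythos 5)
Your proof is correct. Part (ii) is exactly the argument the paper intends: $\Ati$ and $\stallings(\gen{\Ati},A)$ are both reduced and recognise the same subgroup, so \Cref{cor: gen iff isom} finishes it. For part (i), however, you take a genuinely different (and more self-contained) route. The paper states the corollary with no explicit proof, and its implicit derivation of (i) would lean on the structural results already proved: the core of $\Ati$ is reduced, hence isomorphic to $\stallings(\gen{\Ati},A)$ by part (ii), and then one argues via \Cref{prop: propietats Sch}\ref{item: Sch hanging trees} and \Cref{prop: propietats St}\ref{item: Sch = St + Cay branches} that a connected, deterministic, involutive, saturated automaton is uniquely recovered from its core by attaching Cayley branches at deficient vertices — pinning $\Ati$ down as $\schreier(\gen{\Ati},A)$. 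You instead rebuild the isomorphism from scratch: saturation plus determinism make reading any word everywhere defined and unambiguous, involution plus determinism make the endpoint invariant under free reduction, so you get a right action of $\Free[A]$ on $\Verts\Ati$ that is transitive by connectedness and has stabiliser $\gen{\Ati}$ at $\bp$; the orbit–stabiliser bijection with $H\backslash\Free[A]$ then matches arcs label by label, and determinism plus saturation upgrade the vertex bijection to an automaton isomorphism. This is essentially a covering-space argument that reproves, rather than reuses, the content of \Cref{prop: propietats Sch}; it buys independence from those structural lemmas at the cost of some length. Your remark that ``saturat'' must be read over the full alphabet $A^{\pm}$ is also right and consistent with the paper's conventions (an involutive $A$-automaton is by \Cref{def: automat involutiu} an $A^{\pm}$-automaton, so saturation quantifies over $A^{\pm}$), and your one-sided line over $\set{a}$ correctly shows the statement would fail under the weaker reading.
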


\medskip

Disposem ja de tots els ingredients necessaris per a establir la bijecció que buscàvem entre $A$-autòmats i subgrups de $\Free[A]$, tot restringint~\eqref{eq: Ati ->> <Ati>} al domini adequat.

\begin{thm}[\citenr{stallings_topology_1983}]\label{thm: Stallings bijection}
La funció
 \begin{equation} \label{eq: Stallings bijection1}
\begin{array}{rcl}
\Set{\text{(classes d'isomorfia de) $A$-autòmats reduïts}} & \to & \Set{\text{subgrups de $\Free[A]$}} \\ \Ati & \mapsto & \gen{\Ati}
\end{array}
 \end{equation}
és una bijecció, amb inversa $\stallings(H,A) \mapsfrom H$. 
\end{thm}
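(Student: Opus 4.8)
The plan is to recognise that the map \eqref{eq: Stallings bijection1} is nothing but the restriction of the surjection \eqref{eq: Ati ->> <Ati>} to the subclass of \emph{reduced} $A$-automata, and that essentially all the technical work has already been carried out in the preceding results; what remains is to assemble them. Concretely, I would prove that $\Ati \mapsto \gen{\Ati}$ and $H \mapsto \stallings(H,A)$ are mutually inverse, since a pair of mutually inverse maps are automatically bijections, and this is exactly the claim (bijection together with the identification of the inverse).

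First I would check that $H \mapsto \stallings(H,A)$ lands in the domain of \eqref{eq: Stallings bijection1}: by \Cref{prop: propietats St}\ref{item: St inv,det,cor} the automaton $\stallings(H,A)$ is involutive, deterministic and core, hence reduced, so it represents a legitimate isomorphism class. Next I would establish the first composition identity, namely $\gen{\stallings(H,A)} = H$ for every $H \leqslant \Free[A]$; this is precisely \Cref{prop: propietats St}\ref{item: <St>=H}, so the composite $H \mapsto \stallings(H,A) \mapsto \gen{\stallings(H,A)}$ returns $H$ on the nose.

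For the reverse composition I would take a reduced automaton $\Ati$ and show $\stallings(\gen{\Ati},A) \isom \Ati$. Since $\Ati$ is reduced it is in particular involutive, deterministic and core, so the second item of \Cref{cor: SchSt} applies directly and yields the required isomorphism; at the level of isomorphism classes this is the identity. Having both composites equal to the identity, the two maps are mutually inverse bijections. (Equivalently, one could read injectivity of \eqref{eq: Stallings bijection1} straight off \Cref{cor: gen iff isom} and surjectivity off the pair \ref{item: St inv,det,cor}--\ref{item: <St>=H} of \Cref{prop: propietats St}, reaching the same conclusion with slightly different bookkeeping.) The only genuinely delicate point — and the reason the domain is taken to be isomorphism classes rather than automata — is that the reverse composite is an \emph{isomorphism} and not a strict equality, so one must work throughout with classes up to isomorphism; this is exactly what \Cref{cor: SchSt} is designed to deliver, so no further obstacle arises and the proof is a short synthesis of the earlier statements.
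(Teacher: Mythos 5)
La teva proposta és correcta i segueix essencialment el mateix camí que la demostració de l'article: comprovar que les dues aplicacions estan ben definides (via la \Cref{prop: propietats St}\ref{item: St inv,det,cor}), obtenir $\gen{\stallings(H,A)}=H$ de la \Cref{prop: propietats St}\ref{item: <St>=H}, i deduir $\stallings(\gen{\Ati},A)\isom\Ati$ per a $\Ati$ reduït de la teoria d'unicitat prèvia. L'única diferència és cosmètica: tu cites el \Cref{cor: SchSt}(ii) per a la composició inversa mentre que l'article cita directament el \Cref{cor: gen iff isom}, però el primer és una conseqüència immediata del segon, de manera que es tracta del mateix argument.
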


\begin{proof}
Les dues aplicacions estan ben definides: en efecte, per a tot $\Ati$, $\gen{\Ati}$ és un subgrup de $\Free[A]$; i, gràcies a la \Cref{prop: propietats St}\ref{item: St inv,det,cor}, per tot subgrup $H\leqslant \Free[A]$, $\stallings(H,A)$ és un $A$-autòmat reduït. I són una inversa de l'altra: per \ref{prop: propietats St}\ref{item: <St>=H}, $\gen{\stallings(H,A)}=H$ i, com que $\Ati$ és reduït, $\stallings(\gen{\Ati},A)=\Ati$, a conseqüència del \Cref{cor: gen iff isom}.
\end{proof}

\medskip

El pas següent és garantir el caràcter computacional de la bijecció~\eqref{eq: Stallings bijection1}, que serà fonamental per als nostres interessos. Veurem que els subgrups \emph{finitament generats} corresponen precisament als $A$-autòmats reduïts \emph{finits} i que la conversió entre ambdós és algorísmica. Trobarem mètodes eficients per a computar les dues direccions de la bijecció \eqref{eq: Stallings bijection1}:
\begin{enumerate*}[dep]
\item 
donat un $A$-autòmat reduït i finit $\Ati$, calcularem explícitament un sistema de generadors finit de $\gen{\Ati}$ (millor encara, en calcularem una base, demostrant de pas que $\gen{\Ati}$ sempre torna a ser un grup lliure); i
\item donat un sistema de generadors finit de $\gen{\Ati}$, construirem el seu autòmat (finit) de Stallings $\stallings(H,A)$. 
\end{enumerate*}

\begin{prop} \label{prop: S_T}
Sigui $\Ati$ un $A$-autòmat involutiu i connex, sigui $T$ és un arbre d'expansió de $\Ati$, i considerem 
 \begin{equation}
S_T=\{w_{\edgi} \st \edgi\in \Edgs^+\Ati \setmin \Edgs T\}\subseteq \gen{\Ati}\leqslant \Free[A],
  \end{equation}
on $w_{\edgi}=\red{\lab}(\bp \xwalk{\scriptscriptstyle{T}} \! \bullet \! \xarc{\,\edgi\ } \!\bullet\! \xwalk{\scriptscriptstyle{T}} \bp)$. Llavors,
\begin{enumerate}[ind]
\item $S_T$ és un sistema de generadors per $\gen{\Ati}$;
\item \label{item: S_T lliure} si $\Ati$ és determinista, aleshores $\gen{\Ati}$ és un grup lliure i $S_T$ n'és una base; en particular, el rang algebraic $\rk(\gen{\Ati})$ i el rang gràfic $\rk(\Ati)$ coincideixen;
\item si $\Ati$ és reduït, aleshores $\gen{\Ati}$ és finitament generat si i només si $\Ati$ és finit; i en aquest cas, $\rk(\gen{\Ati})=1- \card\Verts\Ati+\card\Edgs^{+}\Ati$.
\end{enumerate}
\end{prop}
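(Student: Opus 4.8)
The whole argument hinges on the surjective homomorphism $\widetilde{\ell}_{\Ati}\colon \pi_{\bp}(\Ati)\onto\gen{\Ati}$ from~\eqref{eq: pi -> gen} and on the structure of $\pi_{\bp}(\Ati)$ recalled in \Cref{def: fundamental group}. First I would record the standard fact that $\pi_{\bp}(\Ati)$ is freely generated by the classes of the loops $\gamma_{\edgi}\colon \bp \xwalk{\scriptscriptstyle{T}} \iota\edgi \xarc{\,\edgi\ } \tau\edgi \xwalk{\scriptscriptstyle{T}} \bp$, one for each $\edgi\in\Edgs^{+}\Ati\setmin\Edgs T$. Setting $\gamma_{\edgi}$ trivial whenever $\edgi\in\Edgs T$ (then $\walki_{_{T}}[\bp,\iota\edgi]\,\edgi$ reduces to $\walki_{_{T}}[\bp,\tau\edgi]$, so the loop collapses inside $T$), any closed path $\verti_0\edgi_1\cdots\edgi_l\verti_l$ at $\bp$ is equal, modulo backtracking, to $\gamma_{\edgi_1}\cdots\gamma_{\edgi_l}$, because the consecutive $T$-geodesic detours $\tau\edgi_{k}\xwalk{\scriptscriptstyle{T}}\bp\xwalk{\scriptscriptstyle{T}}\iota\edgi_{k+1}$ (with $\tau\edgi_k=\iota\edgi_{k+1}$) all backtrack away, leaving $\edgi_1\cdots\edgi_l$. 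Applying the surjection $\widetilde{\ell}_{\Ati}$, which sends $[\gamma_{\edgi}]\mapsto w_{\edgi}$, the set $S_T$ generates $\gen{\Ati}$, proving~(i).

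For~(ii) the only thing to add is that determinism forces $\widetilde{\ell}_{\Ati}$ to be \emph{injective}, hence an isomorphism. The key lemma is that in an involutive \emph{deterministic} automaton a path is reduced (no backtracking) if and only if its label is a reduced word: if two consecutive labels cancel, $\lab(\edgi_{k+1})=\lab(\edgi_k)^{-1}$, then $\edgi_k^{-1}$ and $\edgi_{k+1}$ are two arcs issuing from the common vertex $\tau\edgi_k$ with the same label, so determinism gives $\edgi_{k+1}=\edgi_k^{-1}$, i.e.\ genuine backtracking (the converse being immediate). Consequently a nontrivial class in $\pi_{\bp}(\Ati)$, represented by a reduced closed path of length $\geq 1$, carries a nonempty reduced label, so its image $\rlab(\walki)\neq 1$ in $\Free[A]$; thus $\ker\widetilde{\ell}_{\Ati}$ is trivial. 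Then $\widetilde{\ell}_{\Ati}$ carries the free basis $\{[\gamma_{\edgi}]\}$ bijectively onto $S_T$, so $\gen{\Ati}$ is free with basis $S_T$ and $\rk(\gen{\Ati})=\card S_T=\card(\Edgs^{+}\Ati\setmin\Edgs T)=\rk(\Ati)$.

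For~(iii), $\Ati$ reduced gives freeness and the value of the rank from~(ii); if $\Ati$ is finite then $S_T$ is finite and $\gen{\Ati}$ is finitely generated, while the rank formula follows from $\card\Edgs T=\card\Verts\Ati-1$ for a spanning tree. The substantive direction — and the main obstacle — is the converse: assuming $\gen{\Ati}$ finitely generated (so the number $r=\card(\Edgs^{+}\Ati\setmin\Edgs T)$ of non-tree positive edges is finite), deduce that $\Ati$ is finite. Here the \emph{core} hypothesis is essential, since an infinite tree could otherwise carry arbitrarily many vertices while recognizing the trivial subgroup. I would let $\Ati_0$ be the finite subautomaton consisting of the $r$ non-tree edges together with the (finitely many, each of finite length) $T$-geodesics $\bp\xwalk{\scriptscriptstyle{T}}\iota\edgi$ and $\bp\xwalk{\scriptscriptstyle{T}}\tau\edgi$. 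Every vertex $\verti$ of $\Ati$ lies, by the core property, on some reduced closed path $\walki$ at $\bp$; decomposing $\walki$ at its non-tree edges, each maximal $T$-subpath is a $T$-geodesic between two such endpoints, and the tree inclusion $\walki_{_{T}}[x,y]\subseteq \walki_{_{T}}[\bp,x]\cup\walki_{_{T}}[\bp,y]$ places it inside $\Ati_0$; hence $\walki\subseteq\Ati_0$ and $\verti\in\Ati_0$. Therefore $\Ati=\Ati_0$ is finite, and the formula $\rk(\gen{\Ati})=1-\card\Verts\Ati+\card\Edgs^{+}\Ati$ follows as above.
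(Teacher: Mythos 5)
Your proof is correct, and it reaches the same three conclusions through a noticeably different organization. For (i) and (ii) the paper never invokes the fact that $\pi_{\bp}(\Ati)$ is free on the loops $\gamma_{\edgi}$: it works directly with words, proving (i) by replacing a reduced $\bp$-path by the ``expanded'' path that detours through $\bp$ along $T$ between consecutive non-tree arcs (the same decomposition as yours, but performed on labels rather than on homotopy classes), and proving (ii) not by showing $\widetilde{\ell}_{\Ati}$ injective, but by checking directly that every nonempty reduced word in $S_T$ is the label of a nonempty reduced $\bp$-path, hence nontrivial by determinism. The key lemma is identical in both treatments (in a deterministic involutive automaton, reduced paths carry reduced labels); your version additionally establishes, en route, the injectivity of $\widetilde{\ell}_{\Ati}$ for deterministic $\Ati$, which the paper only states separately as a remark without proof. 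Your packaging buys structural clarity at the price of invoking the standard free-basis fact for the fundamental group of a graph (which the paper also treats as known, so there is no circularity). The most substantial difference is in (iii): the paper dispatches the direction ``finite rank and core imply finite'' with a one-line parenthetical (adding finitely many arcs to an infinite tree never yields a core), whereas you actually prove it, via the observation that each maximal $T$-subpath of a reduced $\bp$-path is a $T$-geodesic contained in $\walki_{_{T}}[\bp,x]\cup\walki_{_{T}}[\bp,y]$, so the whole core lies in the finite subautomaton $\Ati_0$. That argument is sound (a reduced path in a tree is the geodesic between its endpoints, and in a tree $\walki_{_{T}}[x,y]$ passes through the median of $x$, $y$, $\bp$), and it fills in a step the paper leaves to the reader.
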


\begin{proof}

(i). Sigui $w\in \gen{\Ati}$, i expressem-lo $w=\rlab(\walki)$, per a cert $\bp$-camí reduït $\walki$ a $\Ati$. Si distingim les visites de $\walki$ a arcs fora de $T$, podem escriure
 \begin{equation*}
\walki \colon \bp \xwalk{T} \!\!\bullet\!\!  \xarc{\raisebox{0.6ex}{$\scriptstyle{\edgi_1^{\epsilon_1}}$}} \!\!\bullet\!\! \xwalk{T}  \!\!\bullet\!\!  \xarc{\raisebox{0.6ex}{$\scriptstyle{\edgi_2^{\epsilon_2}}$}} \!\!\bullet\!\! \xwalk{T} \!\!\bullet \ \cdots \ \bullet\!\! \xwalk{T} \!\!\bullet\!\! \xarc{\raisebox{0.6ex}{$\scriptstyle{\edgi_l^{\epsilon_l}}$}}\!\!\bullet\!\! \xwalk{T} \bp,
 \end{equation*}
on $\edgi_1,\ldots ,\edgi_l \in \Edgs^+\Ati \setmin \Edgs T$ (amb possibles repeticions) i $\epsilon_j =\pm 1$. Observem ara que $\walki$ té la mateixa etiqueta reduïda que el camí ampliat
 \begin{equation} \label{eq: cami ampliat}
\walki' \colon \bp \! \xwalk{T} \!\! \bullet \!\! \xarc{\raisebox{0.6ex}{$\scriptstyle{\edgi_1^{\epsilon_1}}$}} \!\!\bullet \!\! \xwalk{T} \bp \xwalk{T}\!\! \bullet \!\! \xarc{\raisebox{0.6ex}{$\scriptstyle{\edgi_2^{\epsilon_2}}$}} \!\!\bullet\!\! \xwalk{T} \bp \ \cdots \ \bp \xwalk{T} \!\!\bullet\!\! \xarc{\raisebox{0.6ex}{$\scriptstyle{\edgi_l^{\epsilon_l}}$}}\!\!\bullet\!\! \xwalk{T} \bp \,.
 \end{equation}
Per tant, $w=\rlab(\walki)=\rlab(\walki')=w_{\edgi_1}^{\epsilon_1} w_{\edgi_2}^{\epsilon_2} \cdots w_{\edgi_l}^{\epsilon_l}\in \gen{S_T}$. 

(ii). Suposem ara, a més, que $\Ati$ és determinista. Demostrarem que $S_T$ és, de fet, una base de $\gen{\Ati}$ i que, per tant, $\gen{\Ati}$ és un grup lliure. És suficient veure que qualsevol paraula reduïda no buida en $S_T$ representa un element no trivial de $\Free[A]$. En efecte, sigui 
$w= w_{\edgi_1}^{\epsilon_1} w_{\edgi_2}^{\epsilon_2} \! \cdots w_{\edgi_l}^{\epsilon_l}$, amb $l\geq 1$, $\edgi_1,\ldots ,\edgi_l \in \Edgs^+\Ati \setmin \Edgs T$ (amb possibles repeticions), i tal que no hi ha dos factors successius inversos l'un de l'altre, \ie $\edgi_i=\edgi_{i+1}$ implica $\epsilon_i=\epsilon_{i+1}$. Aleshores,
 \begin{align*} 
\red{w} & =\red{w_{\edgi_1}^{\epsilon_1} w_{\edgi_2}^{\epsilon_2} \! \cdots w_{\edgi_l} ^{\epsilon_l}} \notag \\[3pt]
& =\rlab\,\big{(} \bp \! \xwalk{T} \!\! \bullet \!\! \xarc{\raisebox{0.5ex}{$\scriptstyle{\edgi_1^{\epsilon_1}}$}} \!\!\bullet \!\! \xwalk{T} \bp \xwalk{T}\!\! \bullet \!\! \xarc{\raisebox{0.5ex}{$\scriptstyle{\edgi_2^{\epsilon_2}}$}} \!\!\bullet\!\! \xwalk{T} \bp \ \cdots \ \bp \xwalk{T} \!\!\bullet\!\! \xarc{\raisebox{0.5ex}{$\scriptstyle{\edgi_l^{\epsilon_l}}$}}\!\!\bullet\!\! \xwalk{T} \bp \big{)} \notag \\[3pt] & =\rlab\,\big{(} \bp \xwalk{T} \!\!\bullet\!\!  \xarc{\raisebox{0.5ex}{$\scriptstyle{\edgi_1^{\epsilon_1}}$}} \!\!\bullet\!\! \xwalk{T}  \!\!\bullet\!\!  \xarc{\raisebox{0.5ex}{$\scriptstyle{\edgi_2^{\epsilon_2}}$}} \!\!\bullet\!\! \xwalk{T} \!\!\bullet \ \cdots \ \bullet\!\! \xwalk{T} \!\!\bullet\!\! \xarc{\raisebox{0.5ex}{$\scriptstyle{\edgi_l^{\epsilon_l}}$}}\!\!\bullet\!\! \xwalk{T} \bp \big{)} \,.
  \end{align*}
Però aquest últim $\bp$-camí és no buit ($l\geq 1$), i reduït (donat que els arcs $\edgi_i^{\epsilon_i}$ estan tots fora de~$T$, i dos de consecutius mai presenten retrocès). Com que $\Ati$ és determinista, això significa que $\red{w} \neq \trivial$, com volíem demostrar. Per tant, $S_T$ és una base de $\gen{\Ati}$ i el rang algebraic i el rang gràfic coincideixen, $\rk(\gen{\Ati}) = \card S_T =\rk(\Ati)$. 

(iii). Finalment, suposem que $\Ati$ és reduït. Si $\Ati$ és finit llavors $\card (\Edgs^+\Ati \setmin \Edgs T)<\infty$ i $\gen{\Ati}$ és finitament generat. Recíprocament, si $\Ati=\core(\Ati)$ té rang finit aleshores és finit (ja que afegint un nombre finit d'arcs a un arbre infinit no resulta mai un cor). I en aquest cas, $\rk(\gen{\Ati})=\rk(\Ati)=1- \card\Verts\Ati+\card\Edgs^{+}\Ati$. La demostració està completa.
\end{proof}

\begin{exm}
    Considereu l'autòmat determinista següent, i l'arbre d'expansió~$T$ format pels arcs horitzontals (representats amb traç més gruixut).
    \begin{figure}[H]
        \centering
        \begin{tikzpicture}[shorten >=1pt, node distance=1.2cm and 2cm, on grid,auto,auto,>=stealth']
        
        \newcommand{\dx}{1.3}
        \newcommand{\dy}{1.2}

        \node[state,accepting] (a1) [right = \dy-1/3 of 0] {};
        \node[state] (a2) [right = \dx of a1] {};
        \node[state] (a3) [right = \dx of a2] {};
        \node[state] (a4) [right = \dx of a3] {};

        \path[->]
     (a1) edge[red,loop above,min distance=10mm,in=205,out=155]
     node[above = 0.1] {$b$}
     (a1)
     (a1) edge[blue,thick] node[below] {$a$} (a2)
     (a2) edge[blue,thick] (a3)
     (a3) edge[blue,bend right=30] (a1)
     (a3) edge[red,thick] (a4)
     (a4) edge[blue,loop above,min distance=10mm,in=25,out=-25] (a4);
    \end{tikzpicture}
\end{figure}

\vspace{-15pt}
D'acord amb la \Cref{prop: S_T}.\ref{item: S_T lliure}, el subgrup $\gen{\Ati}$ reconegut per $\Ati$ és lliure amb base el conjunt $S_T = \set{\,b, a^3,a^2 b a b^{-1} a^{-2}\,}$ i, per tant, de rang $3$.
\end{exm}

\medskip
El següent resultat fonamental, demostrat primer per \citeauthor{nielsen_om_1921}, per a subgrups finitament generats, i uns anys més tard per \citeauthor{schreier_untergruppen_1927}, amb total generalitat, se segueix ara de manera transparent de la teoria de Stallings.

\begin{thm}[Teorema de Nielsen--Schreier, \cite{nielsen_om_1921,schreier_untergruppen_1927}] \label{thm: Nielsen-Schreier}
Tot subgrup d'un grup lliure és lliure. 
\end{thm}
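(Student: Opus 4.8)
The plan is to obtain Nielsen--Schreier as an essentially immediate consequence of the Stallings machinery just developed, which already does all the substantial work. First I would reduce to the standard model: since every free group is, up to isomorphism, of the form $\Free[A]$ for a suitable alphabet $A$ (by the explicit construction and \Cref{prop: isom iff =card}), it suffices to prove that an arbitrary subgroup $H \leqslant \Free[A]$ is free.

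Given such an $H$, the key observation is that its Stallings automaton $\stallings(H,A) = \core(\schreier(H,A))$ is exactly the type of object to which \Cref{prop: S_T} applies. Indeed, by \Cref{prop: propietats St}\ref{item: St inv,det,cor} it is involutive and deterministic, and being a core it is connected. Hence, after fixing any spanning tree $T$ of $\stallings(H,A)$, \Cref{prop: S_T}\ref{item: S_T lliure} guarantees that the recognised subgroup $\gen{\stallings(H,A)}$ is free, with an explicit basis $S_T$ read off from the positive arcs lying outside $T$.

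It then remains only to identify this recognised subgroup with $H$ itself, which is precisely the content of \Cref{prop: propietats St}\ref{item: <St>=H}, namely $\gen{\stallings(H,A)} = H$. Chaining the two facts gives that $H$ is free, which finishes the argument.

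I do not anticipate a genuine obstacle, since the whole theory has been arranged so that the theorem falls out of the Stallings bijection. The single point deserving emphasis is \emph{generality}: \Cref{prop: S_T}\ref{item: S_T lliure} imposes no finiteness hypothesis on the automaton, and a spanning tree of $\stallings(H,A)$ exists even when that automaton is infinite (by a Zorn's lemma argument). This is exactly what lets the proof cover subgroups of arbitrary rank, recovering Schreier's full statement rather than merely Nielsen's finitely generated case.
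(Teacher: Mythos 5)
Your proposal is correct and follows essentially the same route as the paper: the paper's proof also combines the fact that $H=\gen{\stallings(H,A)}$ (via the Stallings bijection, which rests on \Cref{prop: propietats St}) with \Cref{prop: S_T} to conclude freeness. Your unpacking of the hypotheses of \Cref{prop: S_T} and the remark that spanning trees of infinite automata exist by Zorn's lemma are worthwhile clarifications of details the paper leaves implicit, but they do not change the argument.
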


\begin{proof}
Pel \Cref{thm: Stallings bijection}, tot subgrup $H\leqslant \Free[A]$ és de la forma $H=\gen{\stallings(H,A)}$; i, per la \Cref{prop: S_T}, és lliure. 
\end{proof}

\medskip

Finalment, situem-nos en el cas finit, i busquem un algorisme eficient per computar els dos sentits de la bijecció~\eqref{eq: Stallings bijection1}. El càlcul d'una base del subgrup reconegut per un $A$-autòmat reduït i finit $\Ati$ és directe de la proposició anterior: calculem un arbre d'expansió $T$ de $\Ati$ i, per a cada arc $\edgi\in \Edgs^+\Ati \setmin \Edgs T$ (n'hi ha un nombre finit), calculem la paraula reduïda $w_{\edgi}=\red{\lab}(\bp \xwalk{\scriptscriptstyle{T}} \! \bullet \! \xarc{\,\edgi\ } \!\bullet\! \xwalk{\scriptscriptstyle{T}} \bp)$. Per la \Cref{prop: S_T}(ii), $S_T=\{w_{\edgi} \st \edgi\in \Edgs^+\Ati \setmin \Edgs T\}$ és una base de $\gen{\Ati}\leqslant \Free[A]$. 

Per computar la direcció contrària, donat un subgrup $H\leqslant \Free[A]$ (mitjançant una família finita $S$ de generadors, que podem suposar paraules reduïdes) hem de trobar una manera efectiva de calcular $\stallings(H,A)$; o, el que és el mateix, una manera de calcular un $A$-autòmat $\Ati$ que sigui involutiu, reduït, finit, i que reconegui $\gen{\Ati}=H$. Ja coneixem un $A$-autòmat, molt fàcil de construir, i que gairebé compleix totes aquestes condicions: l'autòmat flor $\flower(S)$ és involutiu, cor, finit, reconeix $\gen{\flower(S)}=\gen{S}=H$, i és determinista \emph{excepte potser al vèrtex base $\bp$} (ja que els diferents elements de $S$ poden començar o acabar amb la mateixa lletra de $A^{\pm}$). Per a `reparar' l'eventual no determinisme de $\flower(S)$, necessitem un procediment que converteixi un $A$-autòmat finit en determinista (sense perdre les altres propietats); el desenvolupem a continuació. 

\begin{defn}\label{def: foldings}
Sigui $\Ati$ un $A$-autòmat involutiu; i siguin $\edgi_1$ i $\edgi_2$ dos arcs de $\Ati$ violant el determinisme, \ie complint $\iota\edgi_1 =\iota \edgi_2$, $\lab(\edgi_1 )=\lab(\edgi_2 )$, però $\edgi_1\neq \edgi_2$. 
Anomenem \defin{plegament (elemental) de Stallings}\footnote{\emph{Stallings folding}, en anglès.}, i el designem per \smash{$\Ati\! \xtr{\!}\! \Ati'$}, a la transformació consistent en identificar els arcs $\edgi_1$ i $\edgi_2$ (i els seus corresponents inversos) a $\Ati$.
Si els arcs $\edgi_1$ i $\edgi_2$ no són para\l.lels (\ie si $\tau \edgi_1\neq \tau \edgi_2$), direm que és un \defin{plegament obert}; en cas contrari, direm que és un \defin{plegament tancat} (vegeu la \Cref{fig: foldings}).
\end{defn}

\begin{figure}[H] 
\centering
\begin{tikzpicture}[shorten >=1pt, node distance=1cm and 1.5cm, on grid,>=stealth']
\begin{scope}
   \node[state] (1) {};
   \node[state] (2) [above right = 0.5 and 1 of 1] {};
   \node[] (21) [above right = 0.3 and 0.7 of 2] {};
   \node[] (22) [below right = 0.3 and 0.7 of 2] {};
   \node[state] (3) [below right = 0.5 and 1 of 1] {};
   \node[] (31) [above right = 0.3 and 0.7 of 3] {};
   \node[] (32) [below right = 0.3 and 0.7 of 3] {};
   \node[] (neq) [right = 1 of 1]{\rotatebox[origin=c]{90}{$\neq$}};

   \path[->]
        (1) edge[]
            node[pos=0.52,above left] {$a$}
            (2)
            edge[]
            node[pos=0.5,below left] {$a$}
            (3)
        (2) edge[gray]
            (21)
            edge[gray]
            (22)
        (3) edge[gray!80]
            (31)
            edge[gray!80]
            (32);

    \node[] (i) [right = 1.8 of 1]{};
    \node[] (f) [right = 0.8 of i] {};
    \path[->]
        (i) edge[bend left] (f);

   \node[state] (1') [right= 0.3 of f] {};
   \node[state] (N) [right =  1.2 of 1'] {};
   \node[] (21') [above right = 0.7 and 0.3 of N] {};
   \node[] (22') [above right = 0.3 and 0.7 of N] {};
   \node[] (32') [below right = 0.3 and 0.7 of N] {};
   \node[] (33') [below right = 0.7 and 0.3 of N] {};
   \path[->]
        (1') edge
            node[pos=0.48,above] {$a$}
            (N)
        (N) edge[gray]
            (21')
            edge[gray]
            (22')
            edge[gray!80]
            (32')
            edge[gray!80]
            (33');

\foreach \n [count=\count from 0] in {1,...,3}{
       \node[dot,gray] (2d\n) at ($(2)+(-10+\count*10:0.4cm)$) {};}

\foreach \n [count=\count from 0] in {1,...,3}{
       \node[dot,gray!80] (2d\n) at ($(3)+(-10+\count*10:0.4cm)$) {};}

\foreach \n [count=\count from 0] in {1,...,3}{
       \node[dot,gray] (2d\n) at ($(N)+(34+\count*10:0.4cm)$) {};}

\foreach \n [count=\count from 0] in {1,...,3}{
       \node[dot,gray!80] (2d\n) at ($(N)+(-53+\count*10:0.4cm)$) {};}
  \end{scope}
  
 \begin{scope}[xshift=6.5cm]
  \node[state] (1)  {};
   \node[state] (2) [right = 1.2 of 1] {};
  \node[] (21) [above right = 0.7 and 0.3 of 2] {};
  \node[] (22) [above right = 0.3 and 0.7 of 2] {};
  \node[] (32) [below right = 0.3 and 0.7 of 2] {};
  \node[] (33) [below right = 0.7 and 0.3 of 2] {};
  \path[->]
        (2) edge[gray]
            (22)
            edge[gray]
            (32);

  \path[->]
        (1) edge[bend left]
            node[pos=0.5,above] {$a$}
            (2);
  \path[->]
        (1) edge[bend right]
            node[pos=0.5,below] {$a$}
            (2);

    \node[] (i) [right = 2 of 1] {};
    \node[] (f) [right = 0.8 of i] {};
    \path[->]
        (i) edge[bend left] (f);

  \node[state] (1') [right= 0.3 of f] {};
  \node[state] (N) [right =  1.2 of 1'] {};
  \node[] (21') [above right = 0.7 and 0.3 of N] {};
  \node[] (22') [above right = 0.3 and 0.7 of N] {};
  \node[] (32') [below right = 0.3 and 0.7 of N] {};
  \node[] (33') [below right = 0.7 and 0.3 of N] {};
  \path[->]
        (1') edge
            node[pos=0.48,above] {$a$}
            (N)
        (N) edge[gray]
            (22')
            edge[gray]
            (32');

\foreach \n [count=\count from 0] in {1,...,3}{
      \node[dot,gray] (2d\n) at ($(2)+(-12+\count*10:0.4cm)$) {};}

\foreach \n [count=\count from 0] in {1,...,3}{
      \node[dot,gray] (2d\n) at ($(N)+(-12+\count*10:0.4cm)$) {};} 
 \end{scope}
\end{tikzpicture}
 \vspace{-5pt}
\caption{Un plegament obert (esquerra) i un de tancat (dreta)}
\label{fig: foldings}
\end{figure}
Un senzill recompte de vèrtexs i arcs proporciona el resultat següent, que serà rellevant més endavant.

\begin{rem} \label{rem: rank folding}
Si $\Ati$ és finit i $\Ati \xtr{} \Ati'$ és un plegament elemental de Stallings, aleshores:
 \begin{equation}
  \rk(\Ati')
  \,=\,
  \bigg\{ \begin{array}{lll}
  \rk(\Ati) & & \text{si $\Ati \xtr{} \Ati'$ és obert,} \\
  \rk(\Ati)-1 & & \text{si $\Ati \xtr{} \Ati'$ és tancat.}
  \end{array}     
 \end{equation}
\end{rem}

És clar que els plegaments de Stallings no modifiquen el subgrup reconegut.

\begin{lem}\label{lem: grrec}
Si $\Ati \!\xtr{}\! \Ati'$ és un plegament de Stallings, aleshores $\gen{\Ati'}=\gen{\Ati}$.\qed
\end{lem}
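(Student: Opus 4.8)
The plan is to establish the two inclusions $\gen{\Ati} \leqslant \gen{\Ati'}$ and $\gen{\Ati'} \leqslant \gen{\Ati}$ by exploiting the \emph{folding map} $\theta \colon \Ati \to \Ati'$ that sends each vertex of $\Ati$ to its class under the identification, and each arc to its image. By construction $\theta$ sends $\bp$ to $\bp'$ and carries every arc $\verti \xarc{a\,} \vertii$ to an arc $\verti\theta \xarc{a\,} \vertii\theta$, so it is an $A$-automaton homomorphism.

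The inclusion $\gen{\Ati} \leqslant \gen{\Ati'}$ is the easy half, and it follows exactly as the $[\Leftarrow]$ direction of the preceding proposition on automaton homomorphisms (which, as noted there, holds without any reducedness hypothesis on $\Ati$ or $\Ati'$): the $\theta$-image of any $\bp$-path in $\Ati$ is a $\bp'$-path in $\Ati'$ reading the same word, hence with the same reduced label, so $\rlab(\walki) \in \gen{\Ati'}$ for every $\bp$-path $\walki$.

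For the reverse inclusion I would lift closed paths from $\Ati'$ back to $\Ati$ without altering their reduced label. Write $\edgi_1, \edgi_2$ for the two folded arcs, with common origin $\iota\edgi_1 = \iota\edgi_2 = p$ and common label $a = \lab(\edgi_1) = \lab(\edgi_2)$. In a \emph{closed} folding one has $\tau\edgi_1 = \tau\edgi_2$, so no vertices are identified and $\theta$ merely deletes the redundant arc; lifting any arc of $\Ati'$ back to $\Ati$ (choosing, say, $\edgi_1$ over $\edgi_2$) is immediate and reads the same word, giving $\gen{\Ati'} \leqslant \gen{\Ati}$ at once. In an \emph{open} folding the endpoints $q_1 = \tau\edgi_1$ and $q_2 = \tau\edgi_2$ are distinct in $\Ati$ but become a single vertex $q$ of $\Ati'$; the only obstruction to lifting a $\bp'$-path arc by arc is that, upon reaching $q$, the incoming arc may lift into the $q_1$-copy while the next arc must leave from the $q_2$-copy (or vice versa). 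The key point is that $\Ati$ contains the backtracking detour from $q_1$ to $q_2$ obtained by traversing $\edgi_1$ backwards and then $\edgi_2$ forwards, reading the word $a^{-1}a \sim 1$. Hence, whenever such a switch of copies is forced, I insert this detour: the outcome is a genuine $\bp$-path $\walki$ in $\Ati$ whose unreduced label differs from that of the original $\bp'$-path only by inserted blocks $a^{-1}a$, so that $\rlab(\walki)$ coincides with the reduced label of the $\bp'$-path. This places every element of $\gen{\Ati'}$ inside $\gen{\Ati}$.

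The main obstacle is the bookkeeping in the open case: one must check that the insertions can be made coherently along the whole path — choosing, at each visit to $q$, the copy dictated by the arc to be traversed next, and noticing that no switch is needed when the relevant arc is the folded one $\edgi$, since then either copy may be selected — and that the inserted segments never create cancellations that would change the \emph{reduced} label, which is guaranteed precisely because each inserted block $a^{-1}a$ reduces on its own to $1$. Putting the two inclusions together yields $\gen{\Ati'} = \gen{\Ati}$, as claimed.
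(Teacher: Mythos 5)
Your proposal is correct and takes essentially the same approach as the paper: the forward inclusion via the folding homomorphism is the intended immediate step, and the reverse inclusion via lifting $\bp'$-paths back to $\Ati$ is exactly why the paper marks the lemma as clear. Indeed, your lifting construction --- inserting the backtracking detours $\edgi_1^{-1}\edgi_2$ reading $a^{-1}a$ at each switch between the two identified endpoints in an open folding, and lifting verbatim in the closed case --- is precisely the ``elevation'' procedure that the paper spells out in detail in \Cref{ssec: MP}, including the degenerate situation where the base point is one of the merged vertices.
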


Llavors, si $\Ati$ és un $A$-autòmat involutiu, finit reconeixent $H$, el \defin{procés de Stallings} per convertir $\Ati$ en determinista és com segueix: si $\Ati$ ja és determinista, no cal fer res; en cas contrari, detectem successivament possibles plegaments (oberts o tancats) i els realitzem fins que no en quedi cap de disponible, és a dir, fins a obtenir un autòmat determinista. Observeu que realitzant un plegament podem provocar-ne de nous i fins i tot \emph{augmentar} el nombre de plegaments visibles a fer. Malgrat això, està garantit que arribarem a una situació determinista amb un nombre finit de passes ja que l'autòmat inicial $\Ati$ és finit, i en cada plegament el nombre d'arcs decreix exactament en una unitat. Un cop arribem a un $A$-autòmat determinista, prenem el cor i designem per $\Ati'$ el $A$-autòmat reduït resultant. Observeu que, pel \Cref{lem: grrec} i l'\Cref{rem: <Ati> = <core> = <core*>}, $\gen{\Ati'}=\gen{\Ati}$. I pel \Cref{cor: gen iff isom}, $\Ati'$ ha de ser (isomorf a) l'autòmat de Stallings $\stallings(H,A)$. El procés descrit s'anomena (una) \defin{seqüència de plegaments de Stallings} per a $\Ati$:
 \begin{equation*}
\Ati = \Ati_{\hspace{-1pt}0} \xtr{\!\!\! \phi_1 \!\!}\Ati_{\hspace{-1pt}1} \xtr{\!\!\! \phi_2 \!\!} \cdots \xtr{\!\!\! \phi_p \!\!} \Ati_{\hspace{-1pt}p} \xtr{\!\! \core} \!\core(\Ati_{\hspace{-1pt}p}) \isom \stallings(H,A).
 \end{equation*}
Observeu que hi pot haver, en principi, diferents plegaments --- i per tant diferents seqüències de plegaments--- disponibles a $\Ati$. Ara bé, com que l'autòmat final és reduït, el \Cref{cor: gen iff isom} ens garanteix que l'objecte final obtingut serà sempre el mateix (mòdul isomorfisme), \emph{independentment de la seqüència de plegaments seguida}. 

Construint una seqüència de plegaments de Stallings per a l'autòmat flor $\flower(S)$, obtenim una manera efectiva de calcular $\stallings(H,A)$ a partir de qualsevol conjunt de generadors finit $S$ per a $H$. No és difícil demostrar que en aplicar una seqüència de plegaments de Stallings a l'autòmat flor $\flower(S)$ d'un conjunt $S$ de paraules \emph{reduïdes}, cap dels plegaments de la seqüència trenca el caràcter cor de l'autòmat inicial i, per tant, no és necessari realitzar el pas final de prendre el cor: 
 \begin{equation*}
\flower(S) = \Ati_{\hspace{-1pt}0} \xtr{\!\!\! \phi_1 \!\!}\Ati_{\hspace{-1pt}1} \xtr{\!\!\! \phi_2 \!\!} \cdots \xtr{\!\!\! \phi_p \!\!} \Ati_{\hspace{-1pt}p} =\stallings(H,A).
 \end{equation*}
Observeu finalment, que el resultat de la seqüència de plegaments, $\stallings(H,A)$, depèn canònicament del subgrup $H$ i, per tant, \emph{també és independent del sistema de generadors finit $S$} de $H$ amb què haguem començat el procés (mentre que $\flower(S)$ depèn fortament de $S$). Això conclou la computació de la bijecció~\eqref{eq: Stallings bijection1}.

Incorporant a l'enunciat del \Cref{thm: Stallings bijection} els resultats que acabem de demostrar, obtenim el teorema principal d'aquesta secció.

\begin{thm}[\citenr{stallings_topology_1983}]\label{thm: Stallings bijection2}
La funció
 \begin{equation}\label{eq: Stallings bijection}
\begin{array}{rcl} \operatorname{St}\colon \set{\text{subgrups de } \Free[A] } & \to & \set{\text{(classes d'isomorfia de) $A$-autòmats reduïts}} \\ H & \mapsto & \stallings(H,A)
\end{array}
 \end{equation}
és una bijecció, amb inversa $\gen{\Ati} \mapsfrom \Ati$. A més, els subgrups finitament generats corresponen precisament als autòmats reduïts finits i, en aquest cas, la bijecció és computable en ambdós sentits. \qed
\end{thm}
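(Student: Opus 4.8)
The plan is to read each clause of the statement off the results already established, assembling them rather than proving anything substantially new. The bijective claim is exactly \Cref{thm: Stallings bijection} read in the opposite direction: that theorem asserts that $\Ati \mapsto \gen{\Ati}$ is a bijection from (isomorphism classes of) reduced $A$-automata onto the subgroups of $\Free[A]$, with inverse $H \mapsto \stallings(H,A)$. Since the map $\operatorname{St}$ of the present statement is precisely this inverse, it is itself a bijection and its own inverse is $\Ati \mapsto \gen{\Ati}$. Nothing further is required for this part.

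For the finiteness correspondence I would invoke \Cref{prop: S_T}. Given any $H \leqslant \Free[A]$, the automaton $\stallings(H,A)$ is reduced by \Cref{prop: propietats St}\ref{item: St inv,det,cor} and recognizes $H$ by \ref{item: <St>=H} of the same proposition. Part (iii) of \Cref{prop: S_T} states that, for a reduced automaton $\Ati$, the subgroup $\gen{\Ati}$ is finitely generated if and only if $\Ati$ is finite. Applying this to $\Ati = \stallings(H,A)$ shows that $H = \gen{\stallings(H,A)}$ is finitely generated exactly when $\stallings(H,A)$ is finite, so finitely generated subgroups correspond under $\operatorname{St}$ precisely to finite reduced automata.

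Computability of the two directions is then settled by the algorithms described above. To pass from a finite reduced $\Ati$ to a base of $\gen{\Ati}$, compute a spanning tree $T$ (a finite-graph operation), enumerate the finitely many edges $\edgi \in \Edgs^+\Ati \setmin \Edgs T$, and for each form the reduced word $w_{\edgi}$; by \Cref{prop: S_T}\ref{item: S_T lliure} the resulting set $S_T$ is a base. Conversely, from a finite set $S$ of reduced generators, build the flower automaton $\flower(S)$ — finite, involutive, core, recognizing $H$, and deterministic except possibly at $\bp$ — and apply elementary Stallings foldings until determinism is reached. Each folding preserves the recognized subgroup by \Cref{lem: grrec} and strictly decreases the number of arcs, so the procedure terminates; by \Cref{cor: gen iff isom} the reduced automaton it produces does not depend on the sequence of foldings chosen, and is therefore $\stallings(H,A)$.

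The only point that is not an outright citation, and hence the main thing to check, is that when the folding process starts from the flower of \emph{reduced} words the core property is never destroyed, so that the final $\core$ step of the general folding sequence is not needed. I would verify this by a short induction on the folding steps: a folding of a core automaton can fail to remain core only by creating a vertex of degree one other than $\bp$, and examining the open versus closed folding cases, together with the fact that the petals read reduced words (so no cancellation occurs at the base point), rules this out. Everything else reduces to a direct combination of \Cref{thm: Stallings bijection}, \Cref{prop: S_T}, and the termination and confluence remarks already made.
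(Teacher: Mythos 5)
Your proposal is correct and takes essentially the same route as the paper, which obtains this theorem precisely by ``incorporating into \Cref{thm: Stallings bijection} the results just established'': bijectivity from \Cref{thm: Stallings bijection}, the finiteness correspondence from \Cref{prop: S_T}, and computability of both directions via the spanning-tree basis $S_T$ and the folding process (\Cref{lem: grrec}, \Cref{cor: gen iff isom}). The one point you single out as requiring verification --- that folding the flower of \emph{reduced} words never destroys the core property --- is in fact dispensable for the theorem: even without it, taking the core of the finite deterministic automaton obtained at the end of the folding sequence is itself algorithmic, so computability is unaffected (the paper, accordingly, mentions that refinement only as an aside).
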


\begin{exm}\label{ex: Stallings}
Siguin $\Free[2]$ el grup lliure sobre $A=\{a,b\}$, $S=\set{u_1,u_2,u_3} \subseteq \Free[2]$ i ${H=\gen{S}\leqslant \Free[2]}$ el subgrup generat pels elements $u_1=a^3$, $u_2=abab^{-1}$, i $u_3=a^{-1}bab^{-1}$. Per obtenir una base (i, per tant, el rang) de $H$, calcularem primer $\stallings(H,A)$. Comencem construint l'autòmat flor $\Ati_0=\flower(S)$, i anem fent plegaments fins a obtenir l'autòmat de Stallings $\stallings(H,A)$: a la \Cref{fig: Stallings sequence} teniu una possible seqüència de Stallings $\Ati_{\hspace{-1pt} 0} \xtr{\,} \Ati_{\hspace{-1pt} 1} \xtr{\,} \Ati_{\hspace{-1pt} 2} \xtr{\,} \Ati_{\hspace{-1pt} 3} \xtr{\,} \Ati_{\hspace{-1pt} 4} \xtr{\,} \Ati_{\hspace{-1pt} 5} \xtr{\,} \Ati_{\hspace{-1pt} 6}=\stallings(H)$, tot indicant, en cada pas, amb traç gruixut, els arcs que es pleguen al pas següent (observeu que el primer pas són en realitat tres plegaments fets simultàniament):
\begin{figure}[H]
\centering
\begin{tikzpicture}[shorten >=1pt, node distance=1cm and 1cm, on grid,auto,auto,>=stealth']
\begin{scope}[rotate=90]
\newcommand{\rad}{0.85}
\newcommand{\Rad}{1.45}
\node[state,accepting] (bp)  {};

\foreach \x in {0,...,5}
{
\node[state] (\x) at ($(90+\x*360/6:\rad cm)$) {};
}

\node[state] (34) at ($(180+120:\Rad cm)$) {};
\node[state] (50) at ($(180+2400:\Rad cm)$) {};
\node[] (l0) [above = 1.2 of bp] {$\Ati_{\hspace{-1pt}0} \!=\! \flower(S)$};

\draw[->,blue] (bp) edge (1);
\draw[->,blue] (1) edge (2);
\draw[->,blue] (2) edge (bp);

\draw[->,blue] (bp) edge node[above,pos=0.45] {\scriptsize{$a$}}  (0);
\draw[->,red] (0) edge[thick] (50);
\draw[->,blue] (50) edge[thick] (5);
\draw[->,red] (bp) edge[thick] node[right,pos=0.5] {\scriptsize{$b$}} (5);

\draw[->,red] (bp) edge[thick] (4);
\draw[->,blue] (34) edge[thick] (4);
\draw[->,red] (3) edge[thick] (34);
\draw[->,blue] (3) edge (bp);

\node[] (i) [right = 1.4 of bp]{};
    \node[] (f) [right = 0.9 of i] {};
    \path[->]
        (i) edge[bend left] (f);
\end{scope}

\begin{scope}[xshift=3.5cm]         

\newcommand{\rad}{0.85}
\newcommand{\Rad}{1.45}
\node[state,accepting] (0)  {};
\node[state] (1) [right = \rad of 0] {};
\node[state] (2) [right = \rad of 1] {};
\node[state] (3) [right = \rad of 2] {};
\node[state] (5) at ($(180+30:\rad cm)$) {};
\node[state] (6) at ($(180-30:\rad cm)$) {};
\node[] (l2) [above right = 0.85 and \rad/2 of 1] {$\Ati_{\hspace{-1pt}1}$};

\draw[->,blue] (0) edge[thick] (6);
\draw[->,blue] (6) edge (5);
\draw[->,blue] (5) edge (0);
\draw[->,red] (0) edge (1);
\draw[->,blue] (2) edge (1);
\draw[->,red] (3) edge (2);
\draw[->,blue,bend left] (0) edge[thick] (3);
\draw[->,blue,bend left] (3) edge (0);

\node[] (i) [right = 3 of 0]{};
    \node[] (f) [right = 0.9 of i] {};
    \path[->]
        (i) edge[bend left] (f);
\end{scope}

\begin{scope}[xshift=8cm]         

\newcommand{\rad}{0.85}
\newcommand{\Rad}{1.45}
\node[state,accepting] (0)  {};
\node[state] (1) [right = \rad of 0] {};
\node[state] (2) [right = \rad of 1] {};
\node[state] (3) [right = \rad of 2] {};
\node[state] (4) [below right = \rad and \rad/2 of 1] {};
\node[] (l2) [above right = 0.85 and \rad/2 of 1] {$\Ati_{\hspace{-1pt}2}$};

\draw[->,red] (0) edge (1);
\draw[->,blue] (2) edge (1);
\draw[->,red] (3) edge (2);
\draw[->,blue,bend left] (0) edge (3);
\draw[->,blue,bend left] (3) edge[thick] (0);
\draw[->,blue,bend left] (3) edge (4);
\draw[->,blue,bend left] (4) edge[thick] (0);

\node[] (i) [below right = \rad and \rad/2 of 2]{};
    \node[] (f) [below = 0.9 of i] {};
    \path[->]
        (i) edge[bend left] (f);
\end{scope}

\begin{scope}[xshift=8cm,yshift=-2.3cm]         

\newcommand{\rad}{0.85}
\newcommand{\Rad}{1.45}
\node[state,accepting] (0)  {};
\node[state] (1) [right = \rad of 0] {};
\node[state] (2) [right = \rad of 1] {};
\node[state] (3) [right = \rad of 2] {};
\node[] (l2) [above right = 0.75 and \rad/2 of 1] {$\Ati_{\hspace{-1pt}3}$};

\draw[->,red] (0) edge (1);
\draw[->,blue] (2) edge (1);
\draw[->,red] (3) edge (2);
\draw[->,blue,bend left] (0) edge (3);
\draw[->,blue,bend left] (3) edge[thick] (0);
\draw[->,blue,loop right,min distance=8 * \rad mm,in=30,out=-30] (3) edge[thick] (3);

\node[] (i) [left = 0.7 of 0]{};
    \node[] (f) [left = 0.9 of i] {};
    \path[->]
        (i) edge[bend right] (f);
\end{scope}

\begin{scope}[xshift=4cm,yshift=-2.3cm]         

\newcommand{\rad}{0.85}
\newcommand{\Rad}{1.45}
\node[state,accepting] (0)  {};
\node[state] (1) [right = \rad of 0] {};
\node[state] (2) [right = \rad of 1] {};
\node[] (l2) [above = 0.55 of 1] {$\Ati_{\hspace{-1pt}4}$};
\draw[->,blue,loop right,min distance=8 * \rad mm,in=160,out=100] (0) edge[thick] (0);
\draw[->,blue,loop right,min distance=8 * \rad mm,in=-160,out=-100] (0) edge[thick] (0);

\draw[->,red] (0) edge (1);
\draw[->,blue] (2) edge (1);
\draw[->,red,bend left] (0) edge (2);

\node[] (i) [left = 0.8 of 0]{};
    \node[] (f) [left = 0.9 of i] {};
    \path[->]
        (i) edge[bend right] (f);
\end{scope}

\begin{scope}[xshift=-0.3cm,yshift=-2.3cm]         

\newcommand{\rad}{0.85}
\newcommand{\Rad}{1.45}
\node[state,accepting] (0)  {};
\node[state] (1) [right = \rad of 0] {};
\node[state] (2) [right = \rad of 1] {};
\node[] (l2) [above = 0.55 of 1] {$\Ati_{\hspace{-1pt}5}$};
\draw[->,blue,loop right,min distance=8 * \rad mm,in=150,out=210] (0) edge (0);

\draw[->,red] (0) edge[thick] (1);
\draw[->,blue] (2) edge (1);
\draw[->,red,bend left] (0) edge[thick] (2);

\node[] (i) [below = 0.4 of 0]{};
    \node[] (f) [below = 0.9 of i] {};
    \path[->]
        (i) edge[bend right] (f);
\end{scope}

\begin{scope}[xshift=0.3cm,yshift=-4cm]         

\newcommand{\rad}{0.85}
\newcommand{\Rad}{1.45}
\node[state,accepting] (0)  {};
\node[state] (1) [right = \rad of 0] {};
\node[] (l6) [above left= 0.5 and 0.3 of 1] {$\Ati_{\hspace{-1pt}6} \!=\! \stallings(H)$};

\draw[->,blue,loop right,min distance=8 * \rad mm,in=150,out=210] (0) edge node[left,pos=0.5] {\scriptsize{$a$}} (0);
\draw[->,blue,loop right,min distance=8 * \rad mm,in=-30,out=30] (1) edge (1);

\draw[->,red] (0) edge node[above,pos=0.45] {\scriptsize{$b$}} (1);
\end{scope}
\end{tikzpicture}
\vspace{-5pt}
\caption{Una seqüència de plegaments de Stallings pel subgrup $H=\gen{a^3,\, abab^{-1},\, a^{-1}bab^{-1}}\leqslant \Free_{\set{a,b}}$} 
\label{fig: Stallings sequence}
\end{figure}
Finalment, prenent ${\bp \R{\xarc{ \R{b}\, }} \bullet}$ com a arbre d'expansió de $\stallings(H)$, la \Cref{prop: S_T}(ii) ens diu que $\set{a, b a b^{-1}}$ és una base del subgrup $H$; en particular, $\rk (H)=2$. 
\end{exm}

Tornem per un moment al morfisme $\widetilde{\ell}_{\Ati}$ definit a \eqref{eq: pi -> gen}. Dèiem més amunt que no era un morfisme injectiu, en general. I, efectivament, pel $A$-autòmat $\Ati_0=\flower(S)$ de l'exemple anterior (al vèrtex base $\bp$) no ho és ja que $\pi_{\bp}(\Ati_0)$ és un grup lliure de rang $\rk(\Ati_0)=1-\card\Verts\Ati_0+\card\Edgs^{+}\Ati_0=1-9+11=3=\card S$, mentre que $\gen{\Ati_0}=\gen{S}=H
$ és un (sub)grup (de $\Free[A]$) lliure de rang 2. Això vol dir que els tres generadors inicials $u_1, u_2, u_3$ de $H$ \emph{no} són base de $H$, és a dir, no són independents; per tant hi ha d'haver un producte reduït i no buit d'ells i els seus inversos que doni l'element trivial, $w(u_1, u_2, u_3)=1$. Això és, exactament, la contrapartida algebraica del fet gràfic que $\widetilde{\ell}_{\Ati}$ no sigui injectiu: hi ha un element $1\neq [\walki]\in \pi_{\bp} (\Ati_0)$ (\ie un $\bp$-camí reduït i no buit al $A$-autòmat flor $\Ati_0$) tal que $\rlab(\walki)=1$ (\ie que llegeixi l'etiqueta reduïda trivial). Fent càlculs veiem, per exemple, $u_2 u_3^{-1}u_1^{-1}u_2 u_3^{-1}u_1^{-1}u_2 u_3^{-1}=1$. Podem anar encara una mica més enllà en aquest para\lgem elisme entre àlgebra i geometria, i quantificar la no injectivitat del morfisme $\widetilde{\ell}_{\Ati}\colon \pi_{\bp} (\Ati) \onto \gen{\Ati} \leqslant\Free[A]$, $[\walki] \mapsto \rlab(\walki)$, des dels dos punts de vista: una manera algebraica de fer-ho és mirant el nombre mínim de generadors de $\ker (\widetilde{\ell}_{\Ati})$ com a subgrup normal de $\pi_{\bp} (\Ati)$; com veurem a continuació, la contrapartida gràfica és el nombre de plegaments tancats que apareixen en una (i per tant en qualsevol) seqüència de plegaments de Stallings de~$\Ati$ a $\stallings(H,A)$. 

\begin{rem}
Si $\Ati$ és determinista aleshores $\widetilde{\ell}_{\Ati}$ és injectiu.
\end{rem}

És clar que la pèrdua de rang gràfic en qualsevol seqüència de Stallings per un $A$-autòmat finit $\Ati$ és precisament el nombre de plegaments tancats, ja que els plegaments oberts no alteren el rang, i els tancats el redueixen en una unitat (vegeu l'\Cref{rem: rank folding}). Es pot usar un argument geomètric-algebraic per demostrar que aquest nombre també coincideix amb la quantitat mínima de generadors de $\ker (\widetilde{\ell}_{\Ati})$ com a subgrup normal de $\pi_{\bp} (\Ati)$ (per raons d'espai, ometem els detalls tècnics d'aquesta demostració).

\begin{prop}\label{prop: loss}
Sigui $\Ati$ un $A$-autòmat involutiu, connex, i finit, reconeixent el subgrup $H\leqslant \Free[A]$, i sigui  
  \begin{equation} \label{eq: seq loss}
  \Ati  \xtr{\!\!\! \phi_1 \!\!}\Ati_{\hspace{-1pt}1} \xtr{\!\!\! \phi_2 \!\!} \cdots \xtr{\!\!\! \phi_p \!\!} \Ati_{\hspace{-1pt}p}
  \xtr{ \!\core } \core(\Ati_{\hspace{-1pt}p}) =\stallings(H,A)  
  \end{equation}
una seqüència de plegaments de Stallings per a $\Ati$. Aleshores, els tres nombres següents coincideixen:
 \begin{enumerate}[dep]
\item \label{item: card closed} el nombre de plegaments tancats a \eqref{eq: seq loss};
\item \label{item: rk diff} $\rk(\Ati) - \rk(\stallings(H,A))$;
\item \label{item: min gen ker} el nombre mínim de generadors de $\ker (\widetilde{\ell}_{\Ati})$ \emph{com a subgrup normal} de $\pi_{\bp} (\Ati)$. \qed
 \end{enumerate}
\end{prop}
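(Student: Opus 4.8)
The plan is to establish the three-way equality by first settling the equivalence of the first two quantities, which is pure bookkeeping, and then identifying both with the third through the fundamental group of the folding sequence. Write $c$ for the number of closed foldings in~\eqref{eq: seq loss}. By \Cref{rem: rank folding} an open folding leaves $\rk$ unchanged while a closed one lowers it by exactly $1$, and by \Cref{rem: <Ati> = <core> = <core*>} passing to the core preserves the rank; hence along~\eqref{eq: seq loss} the graphic rank drops by exactly $c$, giving $\rk(\Ati)-\rk(\stallings(H,A)) = c$. This is the equality of \ref{item: card closed} and \ref{item: rk diff}. Set $m=\rk(\Ati)=\rk(\pi_{\bp}(\Ati))$ and $n=\rk(\stallings(H,A))=\rk(H)$, so that $c=m-n$, with $m\ge n$ since $\widetilde{\ell}_{\Ati}$ is onto.

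Next I would reduce \ref{item: min gen ker} to a kernel computation. Each elementary folding identifies only equally-labelled arcs and so preserves the labels, hence the reduced labels, of $\bp$-paths; passing to the core merely deletes hanging trees, which again does not alter the reduced label of any $\bp$-path. Consequently every map in~\eqref{eq: seq loss} is compatible with reading reduced labels (recall $\gen{\Ati_i}=H$ throughout, by \Cref{lem: grrec}), and the induced maps on fundamental groups assemble into a single epimorphism $\Phi\colon \pi_{\bp}(\Ati)\onto \pi_{\bp}(\stallings(H,A))$ with $\widetilde{\ell}_{\Ati}=\widetilde{\ell}_{\stallings(H,A)}\circ\Phi$. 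Since $\stallings(H,A)$ is deterministic, $\widetilde{\ell}_{\stallings(H,A)}$ is injective, hence an isomorphism; therefore $\ker(\widetilde{\ell}_{\Ati})=\ker(\Phi)$, and it suffices to count the minimal number of normal generators of $\ker(\Phi)$.

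For the upper bound I would analyse each folding on $\pi_1$ via a spanning tree. For an open folding one can keep \emph{both} folded arcs inside the chosen spanning tree; the induced map then carries the tree-basis bijectively onto a tree-basis of the target, so it is an isomorphism. For a closed folding, choosing the spanning tree to contain one of the two parallel arcs, say $\edgi_1$, and to exclude the other, $\edgi_2$, the basis element $\gamma_{\edgi_2}$ associated with $\edgi_2$ is sent to the trivial loop while the remaining tree-basis elements go bijectively to a basis; thus the induced map is the quotient by $\ncl{\gamma_{\edgi_2}}$, the normal closure of a single loop. Composing, and using that the preimage of a one-element normal closure under an epimorphism $\rho$ equals $\ncl{\ker\rho,\,\tilde g}$ for any lift $\tilde g$, an induction along~\eqref{eq: seq loss} shows that $\ker(\Phi)$ is the normal closure in $\pi_{\bp}(\Ati)$ of exactly one element per closed folding. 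Hence $\ker(\widetilde{\ell}_{\Ati})$ is normally generated by $c$ elements.

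For the reverse inequality I would abelianise. The epimorphism $\widetilde{\ell}_{\Ati}\colon \pi_{\bp}(\Ati)\onto H$ induces an epimorphism $\ZZ^{m}\onto\ZZ^{n}$ of free abelian groups, whose kernel is free abelian of rank $m-n$. If $\ker(\widetilde{\ell}_{\Ati})$ were the normal closure of $k$ elements $g_1,\dots,g_k$, then its image in $\pi_{\bp}(\Ati)^{\mathrm{ab}}=\ZZ^{m}$, which is precisely that kernel $\ZZ^{m-n}$, would be \emph{generated} as an abelian group by the classes of $g_1,\dots,g_k$, because abelianisation turns a normal closure into the ordinary subgroup generated by the images. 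A subgroup of rank $m-n$ cannot be generated by fewer than $m-n$ elements, so $k\ge m-n=c$; with the previous paragraph this forces the minimal number of normal generators of $\ker(\widetilde{\ell}_{\Ati})$ to be exactly $c$, completing the equality. The routine parts are the first two paragraphs; the real content is the splitting into a geometric upper bound and a homological lower bound, and I expect the two delicate points to be the spanning-tree verification that a closed fold contributes exactly one normal generator, and the abelianisation argument certifying that fewer than $c$ normal generators cannot suffice.
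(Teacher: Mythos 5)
Your overall strategy is sound, and it supplies details that the paper deliberately omits: the paper only records the equality of \ref{item: card closed} and \ref{item: rk diff} via \Cref{rem: rank folding}, attributes the equality with \ref{item: min gen ker} to an unspecified ``argument geomètric-algebraic'' whose details are skipped, and states the key per-folding fact (open foldings induce isomorphisms on $\pi_{\bp}$, closed ones have kernel normally generated by a single element) only later, in \Cref{ssec: bases and rk}, without proof. Your decomposition --- rank bookkeeping for \ref{item: card closed}$\,=\,$\ref{item: rk diff}, the per-folding kernel analysis for the upper bound on \ref{item: min gen ker}, and the abelianisation argument for the lower bound --- is precisely the kind of argument being alluded to, and the homological half is the part the paper never makes explicit; it is correct: by right-exactness of abelianisation the image of $\ker(\widetilde{\ell}_{\Ati})$ in $\ZZ^m$ is exactly the kernel of $\ZZ^m \onto \ZZ^n$, a free abelian group of rank $m-n$, and conjugates die in the abelianisation, so no set of fewer than $m-n=c$ elements can normally generate.

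There is, however, one concrete gap in your spanning-tree analysis: it ignores loops. A closed folding may identify two loops at the same vertex, and then \emph{neither} arc can be placed in a spanning tree, so your prescription ``choose the spanning tree to contain one of the two parallel arcs'' is impossible; note that the unique closed folding in the paper's running example, $\Ati_{\hspace{-1pt}4} \xtr{} \Ati_{\hspace{-1pt}5}$ of \Cref{fig: Stallings sequence}, is exactly of this type (two $a$-loops identified into one). Similarly, an open folding may involve one loop and one non-loop (as in the step $\Ati_{\hspace{-1pt}3} \xtr{} \Ati_{\hspace{-1pt}4}$ of the same figure), and then you cannot keep both folded arcs inside a spanning tree. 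The conclusions survive in both cases, but with modified arguments: for the open case with $\edgi_1$ a loop, take $T$ containing the non-loop arc $\edgi_2$; then $T\setmin\set{\edgi_2}$ maps onto a spanning tree of the target (the two endpoints of $\edgi_2$ get identified, reconnecting the two components), non-tree arcs still correspond bijectively, and the induced map is again an isomorphism. For the closed folding of two loops, both arcs are non-tree arcs, the induced map identifies the two associated basis elements $\gamma_{\edgi_1},\gamma_{\edgi_2}$ and is bijective on the rest, so its kernel is $\ncl{\gamma_{\edgi_1}\gamma_{\edgi_2}^{-1}}$ --- still the normal closure of a single element, which is all your induction needs. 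With these two cases added, your proof is complete.
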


A aquest valor --- independent de la seqüència concreta de plegaments i només depenent de $\Ati$ --- se l'anomena la \defin{pèrdua} de $\Ati$, designada per $\loss(\Ati)$. Podem ampliar aquesta definició per a cobrir $A$-autòmats infinits de la manera següent:

\begin{defn}
Sigui $\Ati$ un $A$-autòmat involutiu i connex. Definim la \defin{pèrdua} de~$\Ati$, designada per $\loss(\Ati)$, com el suprem (que pot ser $\infty$, si $\Ati$ és infinit) del nombre de plegaments tancats, pres sobre el conjunt de totes les seqüències (finites) de plegaments de Stallings començant a $\Ati$. 
\end{defn}

Observeu que si $\Ati$ és finit les tals seqüències de llargada maximal són, precisament, les que acaben en un $\Ati_{\hspace{-1pt}p}$ determinista; i com que $\core(\Ati_p)=\stallings(\gen{\Ati})$, totes elles tenen el mateix nombre de plegaments tancats, per la \Cref{prop: loss}. Per tant, en el cas finit, el suprem a la definició de $\loss(\Ati)$ és un màxim, i es pren sobre totes les seqüències de plegaments de Stallings començant a $\Ati$. 

\medskip

Acabem la secció amb un exemple per i\lgem ustrar que el canvi de la base de referència pot canviar dràsticament l'aspecte de l'autòmat de Stallings. De fet entendre com es comporta la funció $\stallings(H,A) \mapsto \stallings(H,B)$ (on $A,B$ són bases de $\Free$) és un dels grans reptes encara per aclarir de la Teoria de Stallings.

\begin{exm}[Example 2.2 a \cite{miasnikov_algebraic_2007}]
Sigui $\Free[3]$ un grup lliure amb base $A=\set{a,b,c}$ i sigui $H=\gen{ab,acba}$. És fàcil veure que $B=\set{a',b',c'}$ és també una base de $\Free[3]$, on $a'=a$, $b'=ab$, i $c'=acba$. Els autòmats de Stallings de~$H$ respecte de $A$ i $B$ estan representats a la figura següent.
\begin{figure}[H] 
\centering
  \begin{tikzpicture}[shorten >=1pt, node distance=1 and 2, on grid,auto,>=stealth']
  \begin{scope}
   \node[state,accepting] (0) {};
   \node[state] (1) [right = of 0]{};
   \node[state] (2) [below = of 1]{};
   \node[state] (3) [below = of 0]{};

    \path[->]
        (0) edge[blue, bend left = 20]
            node[above] {\scriptsize{$a$}}
            (1);
            
    \path[->]
        (1) edge[red, bend left = 20]
            node[below] {\scriptsize{$b$}}
            (0);
            
    \path[->]
        (1) edge[black]
            node[right] {\scriptsize{$c$}}
            (2);
            
    \path[->] (2) edge[red] (3);
    \path[->] (3) edge[blue] (0);
    \end{scope}
    
    \begin{scope}[xshift=5 cm,yshift=-0.5 cm]
    \node[state,accepting] (0) {};
    
    \path[->]
        (0) edge[Sepia,loop left,min distance=15mm,in=-35,out=35]
            node[right] {\scriptsize{$b'$}}
            (0);
            
    \path[->]
        (0) edge[OliveGreen,loop left,min distance=15mm,in=180-35,out=180+35]
            node[left] {\scriptsize{$c'$}}
            (0);            
    \end{scope}
\end{tikzpicture}
\vspace{-10pt}
\end{figure}
\end{exm}




\section{Aplicacions} \label{sec: aplicacions}

En aquesta secció presentem algunes de les aplicacions més rellevants de la teoria dels autòmats de Stallings, desenvolupada a la \Cref{sec: Stallings bijection}. Començarem amb alguns teoremes clàssics sobre els subgrups del grup lliure que emergeixen de forma natural --- de vegades com a simples reinterpretacions --- dels resultats de la Teoria de Stallings, i anirem progressant cap a aplicacions més elaborades, com ara el problema de la intersecció, font d'una de les qüestions més famoses del darrer segle en teoria geomètrica de grups.

\subsection{Estructura dels subgrups} \label{ssec: Nielsen-Schreier}
Tal i com hem vist a la \Cref{sec: Stallings bijection}, una de les primeres aplicacions
del Teorema de Stallings (\Cref{thm: Stallings bijection2}) és el Teorema de Nielsen--Schreier (\Cref{thm: Nielsen-Schreier}).
Fixeu-vos que la bijecció~\eqref{eq: Stallings bijection} ens permet no només garantir que tots els subgrups d'un grup lliure són, de nou, lliures  sinó també descriure exactament (les classes d'isomorfia de) els subgrups d'un grup lliure donat: com que d'autòmats de Stallings sobre un alfabet de dues lletres n'hi ha de qualsevol rang fins a numerable (inclòs), la descripció següent se segueix immediatament.

\begin{cor} \label{cor: ranks of subgroups}
Sigui $\Free[\kappa]$ el grup lliure de rang $\kappa \in [2,\aleph_0]$. Aleshores, per a tot cardinal $\mu \in [0,\aleph_0]$ existeix un subgrup $H\leqslant \Free[\kappa]$ tal que $H\simeq \Free[\mu]$. \qed
\end{cor}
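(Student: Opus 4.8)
The plan is to use the Stallings bijection to convert the statement into the purely combinatorial task of producing, for each prescribed cardinal $\mu \in [0,\aleph_0]$, a connected reduced $\{a,b\}$-automaton whose rank is exactly $\mu$; the recognized subgroup will then be free of rank $\mu$ by \Cref{prop: S_T}\ref{item: S_T lliure}.

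First I would reduce to the case $\kappa = 2$. Fixing a basis $A$ of $\Free[\kappa]$ with $\card A = \kappa \geq 2$ and choosing two distinct letters $a,b \in A$, the subset $\{a,b\} \subseteq A$ is again free in $\Free[\kappa]$, since any reduced product over $\{a,b\}^{\pm}$ is in particular a reduced product over $A^{\pm}$, so freeness passes to subsets directly from \Cref{def: base}. Hence $\gen{a,b} \isom \Free[\{a,b\}] = \Free[2] \leqslant \Free[\kappa]$, and every subgroup of $\Free[2]$ is a subgroup of $\Free[\kappa]$. It therefore suffices to realize each $\mu \in [0,\aleph_0]$ as the rank of a subgroup of $\Free[2]$. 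By \Cref{thm: Stallings bijection2} together with \Cref{prop: S_T}\ref{item: S_T lliure}, for any connected deterministic involutive $\{a,b\}$-automaton $\Ati$ the subgroup $\gen{\Ati}$ is free with $\rk(\gen{\Ati}) = \rk(\Ati)$, so I only need a connected reduced automaton $\Ati_{\mu}$ of graphic rank $\mu$.

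Next I would exhibit the automata explicitly. For finite $\mu = n \in [0,\aleph_0)$, take the ``comb'' $\Ati_n$ with vertices $\bp = v_0, v_1, \dots, v_n$, a $b$-spine $v_{i-1} \xarc{\,b\,} v_i$ for $i = 1,\dots,n$, and one $a$-loop $v_i \xarc{\,a\,} v_i$ at each of $v_1,\dots,v_n$ (the case $n=0$ being the single vertex $\bp$). For $\mu = \aleph_0$, take the infinite comb with vertices $v_0, v_1, v_2, \dots$, the infinite $b$-ray $v_{i-1}\xarc{\,b\,} v_i$, and an $a$-loop at every $v_i$ with $i \in \NN_{\geq 1}$. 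In each case the automaton is involutive by construction, connected (the $b$-spine joins all vertices), and deterministic (each vertex carries at most one outgoing and one incoming edge of each label; the only vertex without an $a$-loop is $\bp$, and each $v_i$ carries exactly one). It is core because the $b$-spine is the unique reduced geodesic from $\bp$ and the loop at $v_i$ lies on the reduced $\bp$-path reading $b^{i}ab^{-i}$; the sole degree-one vertex is $\bp$ itself, which is permitted. Taking the $b$-spine as spanning tree $T$, the edges outside $T$ are precisely the $a$-loops, so $\rk(\Ati_\mu) = \card(\Edgs^{+}\Ati_\mu \setminus \Edgs T) = \mu$ and, by \Cref{prop: S_T}\ref{item: S_T lliure}, $\gen{\Ati_\mu} = \gen{\,b^{i}ab^{-i} \st 1 \leq i, \ v_i \in \Verts\Ati_\mu\,} \isom \Free[\mu]$.

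The construction is elementary, so there is no deep obstacle; the only points requiring care are the verifications of the reduced-ness (determinism plus core, noting that the degree-one base point does not violate the core condition) and the rank count. The one genuinely important subtlety is the case $\mu = \aleph_0$: here I must rely on \Cref{prop: S_T}\ref{item: S_T lliure} being stated without any finiteness hypothesis, so that the infinite comb legitimately recognizes a free group of countably infinite rank. With these checks in place, $\gen{\Ati_\mu} \isom \Free[\mu]$ is a subgroup of $\Free[2] \leqslant \Free[\kappa]$ for every $\mu \in [0,\aleph_0]$, which is exactly the claim.
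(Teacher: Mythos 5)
Your proposal is correct and takes essentially the same route as the paper: the corollary there is deduced immediately from the bijection of \Cref{thm: Stallings bijection2} together with the observation that reduced automata over a two-letter alphabet exist of every rank in $[0,\aleph_0]$, which is exactly what your comb automata make explicit (including the legitimate use of \Cref{prop: S_T}\ref{item: S_T lliure} without finiteness hypotheses for the case $\mu=\aleph_0$). Your added reduction $\Free[2]\leqslant\Free[\kappa]$ and the verifications of determinism, core-ness and the rank count are all fine, just spelled out in more detail than the paper bothers to do.
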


En particular, subgrups de qualsevol rang finit i, fins i tot, de rang infinit numerable, poden aparèixer com a subgrups de $\Free[2]$; a continuació presentem dos exemples clàssics d'aquest fet. Aquest és un comportament radicalment diferent del de l'àlgebra lineal (un $K$-espai vectorial de dimensió $n$ \emph{només} conté subespais de dimensions  $0,1,\ldots,n$), que confereix al reticle de subgrups d'un grup lliure un cert caràcter fractal.

\begin{exm}
Sigui $\Free_{\set{a,b}} = \pres{a,b}{-}$ un grup lliure de rang $2$. Aleshores, la clausura normal de $b$, designada per $\normalcl{b}$, és un subgrup de $\Free_{\set{a,b}}$ de rang infinit amb autòmat de Stallings (de rang infinit) $\stallings(\normalcl{b})$ representat a la \Cref{fig: Stallings <<b>>}. Prenent com a arbre d'expansió el marcat amb traç gruixut (de fet, l'únic possible) obtenim la base $\set{a^kba^{-k}\st k\in \ZZ}$.
\begin{figure}[H] 
\centering
  \begin{tikzpicture}[shorten >=1pt, node distance=1.2 and 1.2, on grid,auto,>=stealth']
   \node[state,accepting] (0) {};
   \node[state] (1) [right = of 0]{};
   \node[state] (2) [right = of 1]{};
   \node[state] (3) [right = of 2]{};
   \node[] (4) [right = 1.5 of 3]{$\cdots$};
   \node[] (c) [right = .5 of 4]{};

   \node[state] (-1) [left = of 0]{};
   \node[state] (-2) [left = of -1]{};
   \node[state] (-3) [left = of -2]{};
   \node[] (-4) [left = 1.5 of -3]{$\cdots$};

   \path[->]
        (0) edge[loop above,red,min distance=10mm,in=55,out=125]
            node[] {\scriptsize{$b$}}
            (0)
            edge[blue,thick]
            node[below] {\scriptsize{$a$}}
            (1);

    \path[->]
        (1) edge[loop above,red,min distance=10mm,in=55,out=125]
            (1)
            edge[blue,thick]
            (2);

    \path[->]
        (2) edge[loop above,red,min distance=10mm,in=55,out=125]
            (2)
            edge[blue,thick]
            (3);

    \path[->]
        (3) edge[loop above,red,min distance=10mm,in=55,out=125]
            (3)
            edge[blue,thick]
            (4);

    \path[->]
        (-1) edge[loop above,red,min distance=10mm,in=55,out=125]
            (-1)
            edge[blue,thick]
            (0);

    \path[->]
        (-2) edge[loop above,red,min distance=10mm,in=55,out=125]
            (-2)
            edge[blue,thick]
            (-1);

    \path[->]
        (-3) edge[loop above,red,min distance=10mm,in=55,out=125]
            (-3)
            edge[blue,thick]
            (-2);

    \path[->]
        (-4) edge[blue,thick]
            (-3);
\end{tikzpicture}
\caption{L'autòmat de Stallings de $\normalcl{b} \leqslant \Free_{\set{a,b}}$}
\label{fig: Stallings <<b>>}
\end{figure}
\end{exm}

\begin{exm}
Un altre exemple típic és el commutador $H=\Comm{\Free_{\set{a,b}}}\normaleq  \Free_{\set{a,b}}$, \ie el subgrup normal de $\Free_{\set{a,b}}$ generat per tots els elements de la forma $[v,w]=v^{-1}w^{-1}vw$, amb $v,w\in \Free[\set{a,b}]$. El subgrup $H$ té rang infinit ja que, com tots els subgrups normals, el seu autòmat de Stallings $\stallings(H)=\schreier(H)$ és el dígraf de Cayley del grup quocient $\Free_{\set{a,b}}/H\simeq \mathbb{Z}^2$, respecte dels generadors $\{[a],\, [b]\}$, \ie respecte la base canònica de $\ZZ^2$: la graella bidimensional infinita, amb els arcs horitzontals etiquetats per $a$, i els arcs verticals etiquetats per $b$; vegeu la \Cref{fig: commutator}. Proposem al lector descriure una base del commutador $H$ a partir del vostre arbre d'expansió preferit.
\end{exm}

\vspace{-10pt}
\begin{figure}[H]
\centering
\begin{tikzpicture}[shorten >=1pt, node distance=1.2cm and 2cm, on grid,auto,auto,>=stealth']

\newcommand{\dx}{0.75}
\newcommand{\dy}{0.7}
\node[] (0)  {};

\foreach \x in {-2,...,2}
\foreach \y in {-2,...,2} 
{
\node[state] (\x!\y) [above right = \y*\dy and \x*\dx of 0] {};
}
\node[state,accepting] (0!0){};

 \foreach \x in {-3,...,3} 
{
\node (\x!3) [above right = 3*\dy and \x*\dx of 0] {};
\node (\x!-3) [above right = -3*\dy and \x*\dx of 0] {};
}

\foreach \y in {-2,...,2} 
{
\node (3!\y) [above right = \y*\dy and 3*\dx of 0] {};
\node (-3!\y) [above right = \y* \dy and -3*\dx of 0] {};
}

\draw[draw=none,red] (0!0) edge node[left,pos=0.45] {$b$} (0!1);
\draw[draw=none,blue] (0!0) edge node[above,pos=0.45] {$a$} (1!0);

\foreach \x [evaluate = \x as \xx using int(\x+1)] in {-3,...,2}
  \foreach \y in {-2,...,2}
{
\ifthenelse{\x=-3 \OR \x=2 \OR \y=-3 \OR \y=3}
{\draw[->,blue,dotted] (\x!\y) edge (\xx!\y);}
{\draw[->,blue] (\x!\y) edge (\xx!\y);}
}

\foreach \y [evaluate = \y as \yy using int(\y+1)] in {-3,...,2}
  \foreach \x in {-2,...,2}
{
\ifthenelse{\x=-3 \OR \x=3 \OR \y=-3 \OR \y=2}
{\draw[->,red,dotted] (\x!\y) edge (\x!\yy);}
{\draw[->,red] (\x!\y) edge (\x!\yy);}
}

\end{tikzpicture}
\vspace{-10pt}
\caption{L'autòmat de Stallings del commutador $\Comm{\Free_{\set{a,b}}}$}
\label{fig: commutator}
\end{figure}

\subsection{El problema de la pertinença} \label{ssec: MP}

En aquesta secció estudiarem i donarem una solució general per al problema de la pertinença en grups lliures. A continuació l'enunciem en general per a un grup finitament presentat~$G = \pres{A}{R}$.

\begin{named}[Problema de la pertinença a $G=\pres{A}{R}$, $\MP(G)$]
Donada una família finita $u,v_1,\ldots,v_k$ de paraules en els generadors de $G$, decidir si (l'element $g\in G$ representat per) $u$ pertany al subgrup $\gen{v_1,\ldots,v_k}_G\leqslant G$; i, en cas afirmatiu, trobar una expressió per a $u$ com a producte dels $v_i^{\pm}$'s.
\end{named}

Situem-nos al grup lliure $\Free[A]$; els elements $u,v_1,\ldots,v_k$ són paraules (que podem suposar reduïdes, ja que la reducció és trivialment algorísmica) en l'alfabet $A$, i hem de decidir si $u\in H=\langle v_1,\ldots,v_k \rangle$. 

A tal efecte, primer calculem l'autòmat de Stallings $\stallings(H)$ tal com hem vist a la secció anterior: construïm l'autòmat flor, $\flower(S)$, de pètals $S=\set{v_1,\ldots,v_k}$, i anem fent plegaments successius (en qualsevol ordre) fins a obtenir $\stallings(H)$, 
 \begin{equation}\label{torre}
 \flower(S) = \Ati_{\hspace{-1pt}0} \xtr{} \Ati_{\hspace{-1pt}1} \xtr{} \cdots \xtr{} \Ati_{\hspace{-1pt}p-1} \xtr{} \Ati_{\hspace{-1pt}p} =\stallings(H).
 \end{equation}
Per les propietats (i) i (ii) a la \Cref{prop: propietats St}, sabem que $\stallings(H)$ és un autòmat determinista que reconeix el subgrup $H$. Per tant, $u\in \gen{H}$ si i només si podem llegir la paraula reduïda $u$ com a etiqueta d'algun $\bp$-camí a $\stallings(H)$. Però, gràcies al determinisme de $\stallings(H)$, això és molt fàcil de comprovar: començant a $\bp$, anem resseguint l'únic possible camí etiquetat amb les lletres consecutives de $u=a_{i_1}^{\epsilon_1} a_{i_2}^{\epsilon_2} \cdots a_{i_l}^{\epsilon_l}$; si en un determinat punt no podem llegir la lletra següent, deduïm que $u\not\in H$; si podem completar la lectura de $u$ amb un camí que no acaba a $\bp$ deduïm igualment que $u\not\in H$; i finalment, si completem la lectura de $u$ mitjançant un camí tancat que retorna a $\bp$ (\ie un $\bp$-camí), concloem que $u\in H$. Donat que aquest procediment cobreix tots els desenllaços possibles, això ens permet decidir algorísmicament si~$u\in H$. 

La segona part de l'algorisme consisteix en suposar que la resposta és afirmativa (\ie que $u\in H$ i tenim aquesta paraula realitzada com l'etiqueta d'un $\bp$-camí reduït $\walki$ a l'autòmat $\stallings(H)$), i buscar una manera efectiva d'expressar $u$ en termes dels generadors originals $v_1,\ldots,v_k$. De pas, entendrem quan aquesta expressió és única i quan no. La idea és la següent: anomenem $\walki_{p}=\walki$ i l'anem \emph{elevant enrere}, plegament a plegament, al llarg de la seqüència de plegaments usada per calcular $\stallings(H)$, fins a obtenir un $\bp$-camí reduït $\walki_{ 0}$ a l'autòmat flor $\flower(S) = \Ati_{\hspace{-1pt} 0}$: 
 \begin{equation} \label{eq: elevation}
\begin{array}{p{32pt}ccccccccl}
    $\flower(S) \, =$ & \Ati_{\hspace{-1pt} 0} & \xtr{} & \Ati_{\hspace{-1pt} 1} & \xtr{} & \cdots &  \xtr{} & \Ati_{\hspace{-1pt}p-1} & \xtr{} & \Ati_{\hspace{-1pt} p} =\,\stallings(H) \\
&\walki_{ 0} & \mapsfrom & \walki_{ 1} & \mapsfrom & \cdots & \mapsfrom & \walki_{p-1} &  \mapsfrom& \walki_{ p} =\walki. 
\end{array}
 \end{equation}
Per una banda, gràcies a la forma de l'autòmat $\flower(S)$, aquest últim camí $\walki_{ 0}$ no és res més que una successió de pètals o pètals inversos, és a dir, un camí reduït amb etiqueta no necessàriament reduïda a $(A^{\pm})^*$, \emph{però igual a una paraula reduïda en~$v_1^{\pm 1},\ldots,v_k^{\pm 1}$}. Per altra banda, farem l'elevació $\walki_{ i+1} \mapsto \walki_{ i}$ a través de cada plegament elemental de manera que el resultat sigui un camí reduït amb etiqueta $\lab(\walki_{ i})=u_i$ no necessàriament reduïda, però tal que $\red{u_i} = \red{u}$; és a dir, representant el mateix element que $u$ a $\Free[A]$. Per tant, el procés d'elevació de $\walki$ produirà una seqüència de paraules $u=u_p, u_{p-1},\ldots ,u_1, u_0\in \IM{A}$, totes amb etiqueta reduïda igual a $\red{u}\in \Free[A]$, introduint una o diverses cance\lgem acions a cada pas, de manera que $u_0$ sigui l'etiqueta d'un $\bp$-camí reduït de $\flower(S)$, és a dir, un cert producte dels $v_i^{\pm 1}$'s, que és el que estem buscant.

Fixem-nos doncs en un dels plegaments elementals $\Ati_{\hspace{-1pt} i} \xtr{\, } \Ati_{\hspace{-1pt} i+1}$ de la seqüència~\eqref{torre}, considerem un $\bp$-camí reduït $\walki_{ i+1}$ a $\Ati_{\hspace{-1pt} i+1}$ (amb $u_{i+1}=\ell(\walki_{ i+1})\in \IM{A}$), i estudiem com fer-ne l'elevació a $\Ati_{\hspace{-1pt} i}$. Permutant i invertint lletres de $A$ si cal, podem suposar que el plegament elemental en qüestió consisteix en la identificació de dos arcs diferents, diguem-ne $\edgi_1$ i $\edgi_2$, tals que $\iota \edgi_1 =\iota \edgi_2$ i $\ell(\edgi_1)=\ell(\edgi_2)=a$; vegeu la~\Cref{fig: foldings}. 

Observeu que els autòmats $\Ati_{\hspace{-1pt} i}$ i $\Ati_{\hspace{-1pt} i+1}$ són idèntics excepte en els dos arcs $\edgi_1$ i $\edgi_2$ (que són diferents a $\Ati_{\hspace{-1pt} i}$ i estan identificats a $\Ati_{\hspace{-1pt} i+1}$), i excepte potser en els vèrtexs $\vertii_1=\tau \edgi_1$ i $\vertii_2=\tau \edgi_2$ (que són iguals a $\Ati_{\hspace{-1pt} i+1}$, i \emph{potser} diferents a $\Ati_{\hspace{-1pt} i}$). Si $\vertii_1\neq_{\Ati_{\hspace{-1pt}i}} \vertii_2$ el plegament és obert, i en cas contrari tancat. Això és independent del fet que el vèrtex $\verti=\iota \edgi_1=\iota \edgi_2$ coincideixi o no amb $\vertii_1$ i/ó amb $\vertii_2$ (és a dir, que $\edgi_1$ i/ó $\edgi_2$ siguin o no llaços); aquesta possibilitat complica la visualització del procediment que farem, però no afecta els arguments. Distingim dos casos. 
\begin{itemize}
\item \emph{Cas 1: $\Ati_{\hspace{-1pt} i} \xtr{\,} \Ati_{\hspace{-1pt} i+1}$ és obert (\ie $\vertii_1\neq_{\Ati_{\hspace{-1pt}i}} \vertii_2$)}. Sigui $k\geq 0$ el nombre de visites del camí reduït $\walki_{ i+1}$ al vèrtex identificat $\vertii_1=\vertii_2$; cadascuna d'aquestes visites s'anomena \emph{crítica} si $\walki_{ i+1}$ arriba a $\vertii_1=\vertii_2$ per un arc incident a $\vertii_1$ en $\Ati_{\hspace{-1pt} i}$ i en surt per un d'incident a $\vertii_2$ (o viceversa). És evident que cada segment $\walki$ de $\walki_{ i+1}$ entre visites crítiques consecutives a $\vertii_1=\vertii_2$ s'eleva al camí idèntic $\walki$ de $\Ati_{\hspace{-1pt} i}$ (entenent que cada visita a l'arc plegat $\edgi_1=\edgi_2$ cal substituir-la per $\edgi_1$ ó $\edgi_2$ segons quins siguin els arcs adjacents a $\walki_{i+1}$; observeu que només hi ha una possibilitat). Això ens dóna una elevació de $\walki_{i+1}$ a un ``$\bp$-camí'' de $\Ati_{\hspace{-1pt} i}$ amb la mateixa etiqueta $u_{i+1}$, però amb una discontinuïtat (saltant de $\vertii_1$ a $\vertii_2$, ó viceversa) per cada visita crítica de $\walki_{i+1}$ al vèrtex $\vertii_1=\vertii_2$. Aquest problema el podem solventar fàcilment intercalant el segment $\edgi_1^{-1}\edgi_2$ (ó $\edgi_2^{-1}\edgi_1$) per cadascuna de les visites crítiques, obtenint així el $\bp$-camí reduït $\walki_{ i}$ a $\Ati_{\hspace{-1pt} i}$. A més, com que l'etiqueta d'aquests segments afegits és sempre $a^{-1}a$, l'etiqueta $u_i=\ell(\walki_{ i})$ serà $\ell(\walki_{ i+1})=u_{i+1}$ amb $k'$ segments $a^{-1}a$ intercalats en les posicions de les $k'\leq k$ visites crítiques a $\vertii_1=\vertii_2$; en particular, $\red{u_i}=\red{u_{i+1}}\in \Free_A$.

Cal esmentar una possibilitat degenerada que pot ocórrer quan $\bp=\vertii_1=\vertii_2$ a $\Ati_{\hspace{-1pt} i+1}$ (i, posem per cas, $\bp=\vertii_1$ a $\Ati_{\hspace{-1pt} i}$). En aquesta situació, el $\bp$-camí $\walki_{ i+1}$ pot començar (resp., acabar) amb un arc $\edgi$ incident a $\vertii_2$: en aquests casos, cal afegir el segment $\edgi_1^{-1}\edgi_2$ (resp., $\edgi_2^{-1}\edgi_1$) al començament (resp., final) de $\walki_{ i}$ per tal que l'elevació realment comenci (resp., acabi) a $\bp\in \Ati_{\hspace{-1pt} i}$. 

\item \emph{Cas 2: $\Ati_{\hspace{-1pt} i} \xtr{\,} \Ati_{\hspace{-1pt} i+1}$ és tancat (\ie $\vertii_1 =_{\Ati_{\hspace{-1pt}i}} \vertii_2$)}. La noció de visita crítica al vèrtex $\vertii_1=\vertii_2$ no té sentit ja que ara $\vertii_1=\vertii_2$ també a $\Ati_{\hspace{-1pt} i}$, i l'elevació de $\walki_{ i+1}$ es pot fer d'una tirada (com si tot $\walki_{ i+1}$ fos un sol segment entre visites crítiques): només cal considerar el mateix $\bp$-camí a $\Ati_{\hspace{-1pt} i}$ utilitzant només un qualsevol dels arcs plegats. És més, hi ha infinites possibles elevacions de $\walki_{ i+1}$ (a $\bp$-camins reduïts de $\Ati_{\hspace{-1pt} i}$ llegint $\rlab(\walki_{ i+1})$): per cada factorització $\walki_{ i+1}=\walki'\vertiii\walki''$, i per cada camí reduït $\walki'''$ de $\vertiii$ a $\verti=\iota \edgi_1=\iota \edgi_2$ (resp., a $\vertii=\tau \edgi_1=\tau \edgi_2$), podem elevar $\walki_{ i+1}$ com a $\walki'\vertiii \walki''' \verti (\edgi_1 \vertii \edgi_2^{-1})^k \verti (\walki''')^{-1} \vertiii \walki''$ (resp., $\walki'\vertiii \walki''' \vertii (\edgi_1^{-1} \verti \edgi_2)^k \vertii (\walki''')^{-1} \vertiii \walki''$), on $k\in \ZZ\setmin \{0\}$. Es pot veure que aquesta és la família de totes les elevacions possibles de $\walki_{i+1}$ a $\bp$-camins reduïts de $\Ati_{i}$ llegint $\overline{u_{i+1}}\in \Free[A]$. 
\end{itemize}

Tot aquest procediment demostra el resultat següent.

\begin{thm}
El problema de la pertinença per a grups lliures, $\MP(\Free[A])$, és resoluble. \qed
\end{thm}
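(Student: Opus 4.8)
The plan is to reduce the membership question to a purely mechanical path-reading on a finite deterministic automaton, using the effective direction $H \mapsto \stallings(H,A)$ of the Stallings bijection (\Cref{thm: Stallings bijection2}). Since free reduction is algorithmic, I may assume the inputs $u, v_1, \ldots, v_k$ are reduced words; I set $H = \gen{v_1, \ldots, v_k}$ and $S = \set{v_1, \ldots, v_k}$. First I would compute $\stallings(H,A)$: build the (finite) flower automaton $\flower(S)$ and run a Stallings folding sequence $\flower(S) = \Ati_{\hspace{-1pt}0} \xtr{} \cdots \xtr{} \Ati_{\hspace{-1pt}p} = \stallings(H)$. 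This halts after finitely many steps because $\flower(S)$ is finite and each elementary folding strictly decreases the arc count; by \Cref{lem: grrec} every folding preserves the recognized subgroup, and by \Cref{prop: propietats St} the outcome is an involutive, deterministic, core, finite automaton recognizing exactly $H$.

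Next comes the decision step, which hinges on determinism. Because $\stallings(H,A)$ is deterministic, at most one path issues from $\bp$ reading any given prefix of $u = a_{i_1}^{\epsilon_1} \cdots a_{i_l}^{\epsilon_l}$, so I simply trace this unique path letter by letter. The outcomes are exhaustive: if at some stage the needed arc is absent, or if the reading finishes at a vertex other than $\bp$, then $u$ labels no $\bp$-path and $u \notin H$; if instead the reading closes up at $\bp$, then $u = \rlab(\walki)$ for that $\bp$-path $\walki$, whence $u \in \gen{\stallings(H,A)} = H$. As $u$ is reduced, $u \in H$ iff some $\bp$-path has reduced label $u$, which is precisely what the trace tests, so membership is decided.

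For the constructive half I would lift the closing path $\walki = \walki_{p}$ backwards through the folding sequence, $\walki_{0} \mapsfrom \cdots \mapsfrom \walki_{p}$, maintaining at each stage a reduced $\bp$-path with reduced label $\red{u}$, exactly as in the Case~1 and Case~2 analyses above; the terminal lift $\walki_{0}$ is a $\bp$-path in $\flower(S)$, hence spells an explicit product of the $v_i^{\pm 1}$ equal to $u$ in $\Free[A]$. The genuine difficulty lies not in any single step but in checking that this whole pipeline is effective end to end: termination of the foldings (secured by the decreasing arc count) and unambiguity of the trace (secured by determinism) are routine, whereas the delicate part is the backward lifting across a \emph{closed} folding, where the lift is far from unique and the degenerate configurations with $\bp = \vertii_1 = \vertii_2$ force the careful insertion of the segments $\edgi_1^{-1}\edgi_2$ needed to keep the elevated path reduced and correctly based.
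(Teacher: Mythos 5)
Your proposal is correct and follows essentially the same route as the paper: compute $\stallings(H,A)$ by folding the flower automaton, decide membership by tracing the reduced word $u$ through the deterministic automaton (with the same exhaustive three-outcome case analysis), and recover an explicit expression in the $v_i^{\pm 1}$ by lifting the closed path backwards through the folding sequence. The only slip is one of attribution in your last sentence: in the paper the insertion of segments $\edgi_1^{-1}\edgi_2$ at critical visits and the degenerate configuration $\bp=\vertii_1=\vertii_2$ arise in the \emph{open} foldings (Case 1), whereas the closed foldings (Case 2) are where the lift fails to be unique; this mix-up does not affect the validity of the argument.
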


Podeu veure i\lgem ustrat aquest mètode gràfic amb la resolució de l'exemple següent. Proposem al lector el seguiment d'aquest mateix mètode per deduir la paraula $v_1v_2^{-1}v_1 (v_1v_2^{-1})^7 v_3^{-1}v_2^{-1}v_3$, que a l'\Cref{ex: membership} ens ha permès d'expressar $u$ en termes de $v_1, v_2, v_3$ (sense explicar-ne l'origen).

\begin{exm}\label{ex: membeship}
Sigui $\Free[2]$ el grup lliure sobre $A=\{a,b\}$ i considerem el subgrup $H=\langle v_1, v_2, v_3 \rangle\leqslant \Free[2]$ generat pels elements $v_1=a^3$, $v_2=abab^{-1}$, i $v_3=a^{-1}bab^{-1}$. Decidiu si l'element $u=a\in \Free[2]$ pertany a $H$ i, en cas afirmatiu, expresseu-lo en termes de $v_1, v_2, v_3$.
\end{exm}

Per començar, construïm l'autòmat $\stallings(H)$, a partir del conjunt de generadors $\{v_1, v_2, v_3\}$; això està fet a l'\Cref{ex: Stallings}; vegeu la \Cref{fig: Stallings sequence}. Observeu que el primer pas ${\mathcal F} \xtr{} \Ati_{\hspace{-1pt} 1}$ són en realitat tres plegaments elementals simultanis (no ens caldrà distinguir-los entre ells). Noteu també que tots els plegaments d'aquesta seqüència són oberts excepte un, $\Ati_{\hspace{-1pt} 4} \xtr{} \Ati_{\hspace{-1pt} 5}$, que és tancat perquè identifica dos $a$-llaços en un de sol. D'aquest procés deduïm que $H=\gen{v_1,v_2,v_3}=\gen{a, bab^{-1}}$; o, millor encara, que $\{a, bab^{-1}\}$ és una base de $H$ (corresponent a l'arbre maximal $\bp \R{\xarc{\ }} \bullet$ de $\Ati_{\hspace{-1pt} 6}$). 

D'aquí veiem clarament que $a\in H$, ja que és l'etiqueta del $\bp$-camí $\walki_6 =a_1$ de $\Ati_{\hspace{-1pt} 6}=\stallings(H)$, assenyalat al dibuix amb una línia discontínua de color gris:
\vspace{-10pt}
\begin{figure}[H]
\centering
\begin{tikzpicture}[shorten >=1pt, node distance=1cm and 1cm, on grid,auto,auto,>=stealth']       
\newcommand{\rad}{1.2}
\node[state,accepting] (0)  {};
\node[state] (1) [right = \rad of 0] {};

\draw[->,blue,loop right,min distance=12 * \rad mm,in=150,out=210] (0) edge 
node[pos=0.5,right=-.3mm] {$a_1$} (0);
\draw[->,gray,loop right,min distance= 16 * \rad mm,in=140,out=220] (0) edge[dashed,thick]
node[left] {\scriptsize{$\walki_6$}} (0);
\draw[->,blue,loop right,min distance= 12 * \rad mm,in=-30,out=30] (1) edge (1);

\draw[->,red] (0) edge (1);
\end{tikzpicture}
\end{figure}
\vspace{-25pt}
A l'$a$-llaç incident a $\bp$ (\ie l'únic arc travessat per $\walki_6$) l'hem anomenat $a_1$; ens referirem als diversos arcs dels autòmats $\Ati_{\hspace{-1pt} i}$ de la seqüència de plegaments amb la lletra de les seves respectives etiquetes ($a$ per les blaves, i $b$ per les vermelles) i uns certs subíndexs que anirem definint dinàmicament; als arcs no usats no els hi assignarem cap  nom. 

Com a primer pas, elevem el camí $\walki_6$ a través de l'últim plegament $\Ati_{\hspace{-1pt} 5} \xtr{\,} \Ati_{\hspace{-1pt} 6}$: com que $\walki_6$ no visita el vèrtex identificat, s'eleva al $\bp$-camí $\walki_5 =a_1$ de $\Ati_{\hspace{-1pt} 5}$ (on, amb un petit abús de llenguatge, li tornem a dir $a_1$ a l'arc corresponent de $\Ati_{\hspace{-1pt} 5}$):
\vspace{-10pt}
\begin{figure}[H]
\centering
\begin{tikzpicture}[shorten >=1pt, node distance=1.2cm and 1.2cm, on grid,auto,auto,>=stealth']       
\newcommand{\rad}{1.2}
\node[state,accepting] (0)  {};
\node[state] (1) [right = \rad of 0] {};
\node[state] (2) [right = \rad of 1] {};

\draw[->,blue,loop right,min distance=12 * \rad mm,in=150,out=210] (0) edge node[pos=0.5,right=-.3mm] {$a_1$} (0);
\draw[->,gray,loop right,min distance= 16 * \rad mm,in=140,out=220] (0) edge[dashed]
node[left] {\scriptsize{$\walki_5$}} (0);

\draw[->,red] (0) edge (1);
\draw[->,blue] (2) edge (1);
\draw[->,red,bend left] (0) edge (2);

\end{tikzpicture}
\end{figure}
\vspace{-25pt}
Observeu ara el plegament $\Ati_{\hspace{-1pt} 4} \xtr{\,} \Ati_{\hspace{-1pt} 5}$, que és tancat; per mantenir una notació coherent, anomenem $a_{11}$ i $a_{12}$ als dos $a$-llaços de $\Ati_{\hspace{-1pt} 4}$ que es pleguen en $a_1$. Per elevar $\walki_5$ a l'autòmat $\Ati_{\hspace{-1pt} 4}$, la manera més senzilla és usant un dels dos $a$-llaços plegats, per exemple, $\walki_4=a_{11}$:
\vspace{-15pt}
\begin{figure}[H]
\centering
\begin{tikzpicture}[shorten >=1pt, node distance=1cm and 1cm, on grid,auto,auto,>=stealth']       
\newcommand{\rad}{1.2}
\node[state,accepting] (0)  {};
\node[state] (1) [right = \rad of 0] {};
\node[state] (2) [right = \rad of 1] {};
\node[] (00) [left = 0.2 of 0] {.};

\draw[->,gray,loop left,min distance= 16 * \rad mm,in=170,out=90] (0) edge[dashed]
node[above left] {\scriptsize{$\walki_4$}} (0);

\draw[->,blue,loop right,min distance=12 * \rad mm,in=160,out=100] (0) edge node[pos=0.5, below right = 0pt and -3pt] {$a_{1\!1}$} (0);
\draw[->,blue,loop right,min distance=12 * \rad mm,in=-160,out=-100] (0) edge node[pos=0.5,above right = 1pt and -3pt] {$a_{1\!2}$} (0);

\draw[->,red] (0) edge (1);
\draw[->,blue] (2) edge (1);
\draw[->,red,bend left] (0) edge (2);

\end{tikzpicture}
\end{figure}
\vspace{-10pt}
(Hi ha, però, altres maneres de fer-ho, $\walki_4=a_{12}$, $\walki_4=a_{11}a_{12}a_{11}^{-1}$, $\walki_4=a_{11}a_{12}^{-1}a_{11}$, etc.) El pas següent és l'elevació a $\Ati_{\hspace{-1pt} 3}$: designem per $a_{121}$ i $a_{122}$ els dos $a$-arcs de $\Ati_{\hspace{-1pt} 3}$ que es pleguen sobre $a_{12}$, i elevem el camí $\walki_4$ a $\walki_3=a_{11}a_{122}^{-1}a_{121}$:
\begin{figure}[H]
\centering
\begin{tikzpicture}[shorten >=1pt, node distance=1cm and 1cm, on grid,auto,auto,>=stealth']       
\newcommand{\rad}{1.2}
\node[state,accepting] (0)  {};
\node[state] (1) [right = \rad of 0] {};
\node[state] (2) [right = \rad of 1] {};
\node[state] (3) [right = \rad of 2] {};
\node[inner sep=0pt] (33) [above = 0.2 of 3]{};
\node[inner sep=0pt] (333) [below = 0.2 of 3]{};

\draw[->,red] (0) edge (1);
\draw[->,blue] (2) edge (1);
\draw[->,red] (3) edge (2);
\draw[->,blue,bend left] (0) edge node[pos=0.5, below] {$a_{1\!1}$} (3);
\draw[->,gray,bend left, in =135,out=45] (0) edge[dashed] node[above = 0.1] {\scriptsize{$\walki_3$}} (33);

\draw[->,blue,bend left] (3) edge node[pos=0.5, above] {$a_{1\!2\!1}$} (0);
\draw[->,gray,bend left, in =135,out=45] (333) edge[dashed] (0);

\draw[->,blue,loop right,min distance=13 * \rad mm,in=30,out=-30] (3) edge node[pos=0.5, left] {$a_{1\!2\!2}$} (3);

\draw[->,gray,loop left,min distance= 17 * \rad mm,in=-30,out=30] (33) edge[dashed] (333);

\end{tikzpicture}
\end{figure}
\vspace{-10pt}
Anomenant $a_{1211}$ i $a_{1212}$ els dos arcs de $\Ati_{\hspace{-1pt} 2}$ que es pleguen sobre $a_{121}$, el camí $\walki_3$ s'eleva a $\Ati_{\hspace{-1pt} 2}$ com a $\walki_2=a_{11}a_{1211}a_{1212}^{-1} a_{122}^{-1}a_{1211}$:
\begin{figure}[H]
\centering
\begin{tikzpicture}[shorten >=1pt, node distance=1cm and 1cm, on grid,auto,auto,>=stealth']       
\newcommand{\rad}{1.2}
\node[state,accepting] (0)  {};
\node[state] (1) [right = \rad of 0] {};
\node[state] (2) [right = \rad of 1] {};
\node[state] (3) [right = \rad of 2] {};
\node[state] (4) [below right = \rad and \rad/2 of 1] {};
\node[inner sep=0pt] (00) [right = 0.1 of 0]{};
\node[inner sep=0pt] (33) [left = 0.1 of 3]{};
\node[inner sep=0pt] (44) [below = 0.1 of 4]{};
\node[inner sep=0pt] (333) [below = 0.1 of 3]{};

\draw[->,red] (0) edge (1);
\draw[->,blue] (2) edge (1);
\draw[->,red] (3) edge (2);
\draw[->,blue,bend left] (0) edge node[pos=0.5, below] {$a_{1\!1}$} (3);
\draw[->,blue,bend left] (3) edge node[pos=0.5, above] {$a_{1\!2\!1\!1}$} (0);
\draw[->,blue,bend left] (3) edge node[sloped, anchor=center, above] {$a_{1\!2\!2}$} (4);
\draw[->,blue,bend left] (4) edge node[sloped, anchor=center, above] {$a_{1\!2\!1\!2}$} (0);

\draw[->,darkgray!40,bend left, in =90,out=80,max distance= 9 mm] (0) edge[dashed] node[above = 0.1] {\color{gray}\scriptsize{$\walki_2$}} (33);
\draw[->,darkgray!55,bend left, in =150,out=40,max distance= 7.5 mm] (33) edge[dashed] (00);
\draw[->,darkgray!70,bend left, in =225,out=-80,max distance= 7.5 mm] (00) edge[dashed] (44);

\draw[->,darkgray!85,bend left, in =250,out=-45,max distance= 7 mm] (44) edge[dashed] (333);
\draw[->,darkgray,bend left, in =120,out=45,max distance= 9 mm] (333) edge[dashed] (00);

\end{tikzpicture}
\end{figure}
El pas següent és elevar $\walki_2$ a $\Ati_{\hspace{-1pt} 1}$ com $\walki_1=a_{111}a_{1211} a_{1212}^{-1} a_{122}^{-1} a_{112}^{-1}a_{111}a_{1211}$:
\vspace{-5pt}
\begin{figure}[H]
\centering
\begin{tikzpicture}[shorten >=1pt, node distance=1cm and 1cm, on grid,auto,auto,>=stealth']       
\newcommand{\rad}{1.2}
\node[state,accepting] (0)  {};
\node[state] (1) [right = \rad of 0] {};
\node[state] (2) [right = \rad of 1] {};
\node[state] (3) [right = \rad of 2] {};
\node[state] (5) at ($(180+30:\rad cm)$) {};
\node[state] (6) at ($(180-30:\rad cm)$) {};
\node[inner sep=0pt] (00) [below = 0.2 of 0] {};
\node[inner sep=0pt] (000) [above = 0.2 of 0] {};
\node[inner sep=0pt] (33) [right = 0.2 of 3] {};

\draw[->,blue] (0) edge node[sloped, anchor=center, above] {$a_{1\!1\!2}$} (6);
\draw[->,blue] (6) edge node[sloped, anchor=center, below] {\rotatebox{180}{$a_{1\!2\!2}$}} (5);
\draw[->,blue] (5) edge node[sloped, anchor=center, below] {$a_{1\!2\!1\!2}$} (0);
\draw[->,red] (0) edge node[above] {$b_{1}$}  (1);
\draw[->,blue] (2) edge node[sloped, anchor=center, above] {$a_{2}$} (1);
\draw[->,red] (3) edge node[above] {$b_{2}$} (2);
\draw[->,blue,bend left, out=40,in=140] (0) edge node[sloped, anchor=center, below] {$a_{1\!1\!1}$} (3);
\draw[->,blue,bend left] (3) edge node[sloped, anchor=center, above] {$a_{1\!2\!1\!1}$} (0);

\draw[->,darkgray!30,bend left, in =110,out=65,max distance= 11 mm] (0) edge[dashed] node[above = 0.3] {\color{gray}\scriptsize{$\walki_1$}} (3);

\draw[->,darkgray!40,bend left, in = 130,out=75,max distance= 8 mm] (3) edge[dashed]  (00);

\draw[->,darkgray!50,bend left, in = 120,out=50,max distance= 9 mm] (00) edge[dashed] (5);

\draw[->,darkgray!60,bend left, in = 120,out=65,max distance= 9 mm] (5) edge[dashed] (6);

\draw[->,darkgray!70,bend left, in = 150,out=65,max distance= 9 mm] (6) edge[dashed] (000);

\draw[->,darkgray!80,bend left, in =110,out=65,max distance= 12 mm] (000) edge[dashed] (33);

\draw[->,darkgray,bend left, in = 80,out=80,max distance= 11 mm] (33) edge[dashed]  (0);
\end{tikzpicture}
\end{figure}
\vspace{-15pt}
Finalment, si designem per $a_2$, $b_1$, i $b_2$ els arcs de $\Ati_{\hspace{-1pt} 1}$ encara no usats, i mantenim el conveni de notació per als arcs corresponents de $\Ati_{\hspace{-1pt} 0}={\mathcal F}$, podem elevar $\walki_1$ a
 \begin{equation*}
\walki_0=a_{111}b_{21}a_{21}b_{11}^{-1}b_{12}a_{22}^{-1}b_{22}^{-1}  a_{1211}a_{1212}^{-1}a_{122}^{-1}a_{112}^{-1}a_{111}b_{21}a_{21}b_{11}^{-1}b_{12}a_{22}^{-1}b_{22}^{-1}a_{1211}.
 \end{equation*}
\vspace{-15pt}
\begin{figure}[H]
\centering
\begin{tikzpicture}[shorten >=1pt, node distance=1cm and 1cm, on grid,auto,auto,>=stealth',rotate=90,transform shape]       
\newcommand{\rad}{1.5}
\newcommand{\Rad}{2.5}
\node[state,accepting] (bp)  {};

\foreach \x in {0,...,5}
{
\node[state] (\x) at ($(90+\x*360/6:\rad cm)$) {};
}

\node[state] (34) at ($(180+120:\Rad cm)$) {};
\node[state] (50) at ($(180+2400:\Rad cm)$) {};
\node[inner sep=0pt] (bp0) [above left = 0.2 of bp] {};
\node[inner sep=0pt] (00) [above left = 0.2 of 0] {};
\node[inner sep=0pt] (500) [above right = 0.2 of 50] {};
\node[inner sep=0pt] (55) [below right = 0.2 of 5] {};
\node[inner sep=0pt] (bp1) [right = 0.3 of bp] {};
\node[inner sep=0pt] (44) [right = 0.2 of 4] {};
\node[inner sep=0pt] (344) [below right= 0.25 and 0.1 of 34] {};
\node[inner sep=0pt] (33) [below left = 0.1 and 0.2 of 3] {};
\node[] (g0) [right = 1.5 of bp] {\rotatebox{-90}{\color{gray}{\scriptsize{$\walki_0$}}}};

\draw[->,blue] (bp) edge node[sloped, below, pos=0.5] {$a_{1\!1\!2}$} (1);
\draw[->,blue] (1) edge node[sloped, above, pos=0.5] {$a_{1\!2\!2}$} (2);
\draw[->,blue] (2) edge node[sloped, above, pos=0.5] {\rotatebox{180}{$a_{1\!2\!1\!2}$}} (bp);

\draw[->,blue] (bp) edge node[sloped, anchor=center, below,pos=0.55] {\rotatebox{180}{$a_{1\!1\!1}$}} (0);
\draw[->,red] (0) edge node[sloped, below, pos=0.4] {\rotatebox{180}{$b_{2\!1}$}} (50);
\draw[->,blue] (50) edge[] node[sloped, anchor=center, below] {$a_{2\!1}$} (5);
\draw[->,red] (bp) edge  node[sloped, above, pos=0.6] {$b_{1\!1}$} (5);

\draw[->,red] (bp) edge node[sloped, below, pos=0.6] {$b_{1\!2}$} (4);
\draw[->,blue] (34) edge node[sloped, above, pos=0.55] {\rotatebox{180}{$a_{2\!2}$}} (4);
\draw[->,red] (3) edge node[sloped, above, pos=0.4] {$b_{2\!2}$} (34);
\draw[->,blue] (3) edge node[sloped, below, pos=0.45] {\rotatebox{180}{$a_{1\!2\!1\!1}$}} (bp);

\draw[->,darkgray!30,bend left=15,max distance= 11 mm] (bp) edge[dashed] (0);
\draw[->,darkgray!34,bend left=15,max distance= 11 mm] (0) edge[dashed] (50);
\draw[->,darkgray!38,bend left=15,max distance= 11 mm] (50) edge[dashed] (5);
\draw[->,darkgray!42,bend left=15,max distance= 11 mm] (5) edge[dashed] (bp);
\draw[->,darkgray!46,bend left=15,max distance= 11 mm] (bp) edge[dashed] (4);
\draw[->,darkgray!50,bend left=15,max distance= 11 mm] (4) edge[dashed] (34);
\draw[->,darkgray!54,bend left=15,max distance= 11 mm] (34) edge[dashed] (3);
\draw[->,darkgray!58,bend left=15,max distance= 11 mm] (3) edge[dashed] (bp);
\draw[->,darkgray!62,bend left=15,max distance= 11 mm] (bp) edge[dashed] (2);
\draw[->,darkgray!66,bend left=15,max distance= 11 mm] (2) edge[dashed] (1);
\draw[->,darkgray!70,bend left=15,max distance= 11 mm] (1) edge[dashed] (bp0);
\draw[->,darkgray!74,bend left=15,max distance= 11 mm] (bp0) edge[dashed] (00);
\draw[->,darkgray!78,bend left=15,max distance= 11 mm] (00) edge[dashed] (500);
\draw[->,darkgray!82,bend left=15,max distance= 11 mm] (500) edge[dashed] (55);
\draw[->,darkgray!86,bend left=15,max distance= 11 mm] (55) edge[dashed] (bp1);
\draw[->,darkgray!90,bend left=15,max distance= 11 mm] (bp1) edge[dashed] (44);
\draw[->,darkgray!94,bend left=15,max distance= 11 mm] (44) edge[dashed] (344);
\draw[->,darkgray!98,bend left=15,max distance= 11 mm] (344) edge[dashed] (33);
\draw[->,darkgray,bend left=15,max distance= 11 mm] (33) edge[dashed] (bp);
\end{tikzpicture}
\end{figure}
\vspace{-5pt}
Assenyalant amb parèntesis les visites completes a cada pètal o pètal invers (marcades per les visites de $\walki_0$ a $\bp$) obtenim l'expressió desitjada de $u=a$ en termes de $v_1, v_2, v_3$:
 \begin{equation*}
a=\big( abab^{-1}\big) \big( ba^{-1}b^{-1}a\big) \big( a^{-1}a^{-1}a^{-1} \big) \big( abab^{-1} \big) \big( ba^{-1}b^{-1}a\big)
=v_2 v_3^{-1} v_1^{-1} v_2 v_3^{-1}.
 \end{equation*}
Com hem comentat més amunt, les elevacions a través de plegaments oberts són úniques, però l'elevació de $\walki_5$ a través del plegament tancat $\Ati_{\hspace{-1pt} 4} \xtr{\,} \Ati_{\hspace{-1pt} 5}$ té diverses possibilitats (que acabaran en resultats diferents, \ie en expressions alternatives de $u=a$ en termes de $v_1^{\pm 1}, v_2^{\pm 1}, v_3^{\pm 1}$). Convidem el lector a prendre $\walki_4=a_{12}$ enlloc de $\walki_4=a_{11}$, i a calcular la seva elevació fins a $\Ati_{\hspace{-1pt}0}$ per a obtenir la nova expressió $a=(a^{-1}bab^{-1}) (ba^{-1}b^{-1}a^{-1})(aaa) =v_3v_2^{-1}v_1$. 

El fet de poder obtenir $a$ en termes de $v_1^{\pm 1}, v_2^{\pm 1}, v_3^{\pm 1}$ de \emph{dues maneres diferents}, $v_2 v_3^{-1} v_1^{-1} v_2 v_3^{-1} =a= v_3v_2^{-1}v_1$, ens confirma el fet que $\{ v_1, v_2, v_3\}$ generen $H$, però \emph{no són un conjunt lliure} (vegeu la \Cref{def: base}). Dit d'una altra manera, $v_2 v_3^{-1} v_1^{-1} v_2 v_3^{-1} v_1^{-1}v_2v_3^{-1}=1$ és una relació no-trivial entre els tres generadors $v_1, v_2, v_3$ de $H$.

\subsection{Generadors, bases i rang} \label{ssec: bases and rk}
Comencem recordant que la \Cref{prop: S_T} garanteix la computabilitat d'una base per qualsevol subgrup de $\Free[A]$ a partir d'un conjunt \emph{finit} de generadors. 

\begin{cor}
Si $S$ és un conjunt finit d'elements d'un grup lliure $\Free[A]$, aleshores una base (i, per tant, el rang) de $\gen{S}\leqslant \Free[A]$ és computable. Més concretament, el rang de $\gen{S}$ és ${\rk(\gen{S})= 1-\card\Verts\Ati+\card\Edgs^+\Ati}$, on ${\Ati =\stallings(\gen{S})}$. 
\end{cor}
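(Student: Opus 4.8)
The plan is to read this statement as a purely computational repackaging of the machinery already assembled, so that no genuinely new idea is required: the entire content is to exhibit an algorithm and then quote \Cref{prop: S_T} for its correctness and for the rank formula. First I would reduce everything to the Stallings automaton of $\gen{S}$. Since $S$ is finite, $H=\gen{S}$ is finitely generated, so by \Cref{thm: Stallings bijection2} its reduced automaton $\Ati=\stallings(H,A)$ is \emph{finite} and, moreover, effectively computable: one builds the flower automaton $\flower(S)$ (immediate from the reduced words of $S$) and performs Stallings foldings until a deterministic automaton is reached. This terminates because $\flower(S)$ is finite and each folding strictly decreases the number of arcs, exactly as in the folding process described before \Cref{thm: Stallings bijection2} (and illustrated in \Cref{ex: Stallings}).

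Next, with the finite automaton $\Ati$ in hand, I would compute any spanning tree $T$ of its underlying graph; this is routine for a finite connected graph. For each positive arc $\edgi\in\Edgs^{+}\Ati\setmin\Edgs T$ — of which there are finitely many — the word $w_{\edgi}=\red{\lab}(\bp\xwalk{\scriptscriptstyle{T}}\!\bullet\!\xarc{\,\edgi\ }\!\bullet\!\xwalk{\scriptscriptstyle{T}}\bp)$ is obtained by concatenating the two $T$-geodesic labels with the label of $\edgi$ and freely reducing; each such word is computable because $T$-geodesics in a finite tree are unique and explicitly traceable, and reduction is trivially algorithmic. By \Cref{prop: S_T}(ii) the resulting set $S_T=\{w_{\edgi}\st\edgi\in\Edgs^{+}\Ati\setmin\Edgs T\}$ is a \emph{basis} of $\gen{\Ati}=H$, which settles the computability of a basis.

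Finally, the rank formula falls out immediately: by \Cref{prop: S_T}(iii), since $\Ati$ is reduced and finite, one has $\rk(H)=\card S_T=1-\card\Verts\Ati+\card\Edgs^{+}\Ati$.

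I do not expect any serious obstacle, precisely because the difficult steps have already been discharged: the existence and uniqueness of $\stallings(H,A)$ (\Cref{thm: Stallings bijection}), the fact that the recognised subgroup is free with the explicit off-tree basis (\Cref{prop: S_T}), and the effectiveness of the folding process (\Cref{thm: Stallings bijection2}). The only thing left to verify is that each individual ingredient — flower construction, folding detection, spanning-tree computation, and geodesic-label reduction — is algorithmic, which is routine for finite labelled graphs. If anything deserves a careful word, it is that the number of off-tree arcs, and hence $\card S_T$, is genuinely finite; but this is exactly the finiteness of $\Ati$ guaranteed by $H$ being finitely generated.
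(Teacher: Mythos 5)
Your proposal is correct and follows exactly the route the paper intends: compute $\stallings(\gen{S},A)$ effectively via the flower automaton and Stallings foldings (\Cref{thm: Stallings bijection2}), then extract the basis $S_T$ from a spanning tree and read off the rank formula from \Cref{prop: S_T}. This is precisely the paper's own justification, so there is nothing to add.
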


Molts altres resultats fonamentals sobre bases de (subgrups de) grups lliures es deriven fàcilment de la construcció de Stallings. L'observació següent ens portarà a la important propietat de Hopfianitat dels grups lliures. 


Sigui $\Ati$ un $A$-autòmat involutiu, finit, i connex, i sigui $\Ati \xtr{\phi\,} \Ati'$ un plegament elemental. El corresponent morfisme d'autòmats $\phi\colon \Ati \to \Ati'$ indueix un morfisme exhaustiu de grups  $\phi\colon \pi_{\bp}(\Ati)\onto \pi_{\bp}(\Ati')$, $[\walki]\mapsto [\walki\phi]$ tal que $\phi \widetilde{\ell}_{\Ati'} =\widetilde{\ell}_{\Ati}$.
A més, és fàcil veure que si el plegament és obert llavors $\phi$ és bijectiu, mentre que si és tancat (posem per cas, amb $\edgi_1, \edgi_2 \in \Edgs\Ati$ els dos arcs que s'identifiquen a $\Ati'$, $\verti=\iota\edgi_1=\iota \edgi_2$, $\vertii=\tau \edgi_1 =\tau \edgi_2$, i $\lab(\edgi_1)=\lab(\edgi_2)\in A^{\pm}$) llavors $\ker \phi$ està generat, com a subgrup normal de $\pi_{\bp}(\Ati)$, per l'element $[\eta \verti \edgi_1 \vertii \edgi_2^{-1} \verti \eta^{-1}]$, on $\eta$ és un camí qualsevol a $\Ati$ de $\bp$ a $\verti$. 

L'enunciat següent mostra com la teoria desenvolupada captura geomètricament els dos ingredients (algebraics) de la definició elemental de base (\Cref{def: base}).

\begin{prop}\label{prop: ind gen iff}
Sigui $S\subseteq \Free[A]$. Aleshores,
 \begin{enumerate} [ind]
\item \label{item: gen iff} $S$ genera $\Free[A]$ si i només si $\stallings(\gen{S})=\bouquet_{A}$;
\item \label{item: ind iff} $S$ és una família lliure de $\Free[A]$ si i només si $\loss (\flower(S))=0$; i, en aquest cas, ${\rk (\gen{S}) = \card S}$.
\end{enumerate}
Ambdues condicions són algorísmicament decidibles si $S$ és finit.
\end{prop}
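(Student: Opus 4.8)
The plan is to reduce both parts to machinery already in place: the Stallings bijection (\Cref{thm: Stallings bijection2}), the rank computation of \Cref{prop: S_T}, and the threefold description of the loss in \Cref{prop: loss}. Part (i) I would settle by pure injectivity of the Stallings correspondence. Recall $\stallings(\Free[A],A)=\bouquet_A$. Since $H\mapsto\stallings(H,A)$ is a bijection by \Cref{thm: Stallings bijection2}, we have $\gen{S}=\Free[A]$ iff $\stallings(\gen{S},A)=\stallings(\Free[A],A)=\bouquet_A$: the forward implication is immediate, and the converse is exactly injectivity of the bijection applied to $\stallings(\gen{S},A)=\bouquet_A=\stallings(\Free[A],A)$. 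No computation is needed.

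For part (ii) the strategy is to read the elementary definition of freeness (\Cref{def: base}) through the surjection $\widetilde{\ell}_{\flower(S)}$ of \eqref{eq: pi -> gen}. First I would pin down $\pi_{\bp}(\flower(S))$: taking the spanning tree of $\flower(S)$ formed by all edges of each petal but one, \Cref{def: fundamental group} shows that $\pi_{\bp}(\flower(S))$ is free of rank $\rk(\flower(S))=\card S$ (a direct vertex/edge count gives this equality), with a basis whose $i$-th element is the homotopy class of the petal reading $v_i$. Under this identification $\widetilde{\ell}_{\flower(S)}$ becomes the canonical map from the free group on the petals onto $\gen{S}$ sending the $i$-th basis element to $v_i$. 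The crucial observation is then that a reduced product of elements of $S^{\pm}$ is precisely a reduced word in this free group, so that, by \Cref{def: base}, $S$ is a free family iff the empty word is the only reduced word mapping to $\trivial$, i.e. iff $\widetilde{\ell}_{\flower(S)}$ is injective, i.e. $\ker\widetilde{\ell}_{\flower(S)}=\Trivial$.

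To close part (ii) I would invoke \Cref{prop: loss}, where $\loss(\flower(S))$ equals the minimal number of normal generators of $\ker\widetilde{\ell}_{\flower(S)}$ in $\pi_{\bp}(\flower(S))$. Hence $\loss(\flower(S))=0$ exactly when that kernel is trivial, which by the previous paragraph is equivalent to $S$ being free. In that case $\widetilde{\ell}_{\flower(S)}$ is an isomorphism $\pi_{\bp}(\flower(S))\isom\gen{S}$, so $\gen{S}$ is free of rank $\card S$; equivalently, using the rank equality of \Cref{prop: loss}, $0=\loss(\flower(S))=\rk(\flower(S))-\rk(\stallings(\gen{S},A))=\card S-\rk(\gen{S})$. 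I would stress that routing through the \emph{kernel} rather than through the bare rank identity $\rk(\gen{S})=\card S$ is what lets us avoid any appeal to Hopfianity, which is only derived \emph{afterwards} from this same machinery (so a rank-only argument for the converse would risk circularity).

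Finally, for decidability when $S$ is finite: run the folding algorithm of \Cref{thm: Stallings bijection2} to build $\stallings(\gen{S},A)$; for part (i) check whether the result is the one-vertex automaton with an $a$-loop for each $a\in A$, and for part (ii) either count the closed foldings along the sequence (this number is $\loss(\flower(S))$) or compute $\rk(\gen{S})$ from the Stallings automaton via \Cref{prop: S_T} and compare with $\card S$. The main obstacle I anticipate is not depth but bookkeeping: making watertight the identification between formal reduced products of $S^{\pm}$ and reduced words in $\pi_{\bp}(\flower(S))$. Should one also want the equivalences for infinite $S$, the extra step is to note that every finite folding sequence lives in a finite sub-flower $\flower(S_0)$, so $\loss(\flower(S))=\sup_{S_0}\loss(\flower(S_0))$ over finite $S_0\subseteq S$, whence freeness of $S$ reduces to freeness of all finite subsets.
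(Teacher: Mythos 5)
Your proposal is correct, but for the heart of part (ii) it takes a genuinely different route from the paper. You pivot on \Cref{prop: loss}\ref{item: min gen ker}: since $\loss(\flower(S))$ equals the minimal number of normal generators of $\ker\widetilde{\ell}_{\flower(S)}$, vanishing loss is equivalent to triviality of that kernel, hence to freeness of $S$. The paper never invokes that item; instead it fixes a folding sequence from $\flower(S)=\Ati_{\hspace{-1pt}0}$ to $\Ati_{\hspace{-1pt}p}=\stallings(\gen{S},A)$, forms the commutative diagram of surjections $\phi_i$ onto $\gen{S}$, and propagates bijectivity along it in both directions, using that open foldings induce isomorphisms on $\pi_{\bp}$ while closed ones have nontrivial kernel, and that $\widetilde{\ell}_{p}$ is injective by determinism. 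The ingredients are the same; indeed the paper's diagram argument is essentially the flower case of the proof of \Cref{prop: loss}\ref{item: min gen ker}, whose technical details the paper explicitly omits (``per raons d'espai''). So your argument is shorter and isolates a clean criterion ($\loss=0$ iff $\ker\widetilde{\ell}$ trivial), but it outsources the substance to a statement the paper only sketches, whereas the paper's proof of this proposition is self-contained given the open/closed folding facts stated immediately before it. Everything else matches: part (i) via injectivity of the Stallings bijection, the reduction of the infinite case to finite subflowers (note this case is not optional, since the statement allows infinite $S$, but your sketch of it is exactly the paper's argument), the decidability discussion, and the identification of freeness of $S$ with injectivity of $\widetilde{\ell}_{\flower(S)}$, which the paper also uses without comment and which you rightly flag as the main bookkeeping point. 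Your remark that routing through the kernel avoids a circular appeal to Hopfianity is also well taken: \Cref{thm: basis iff min gs} is derived in the paper \emph{from} this very proposition.
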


\begin{proof}
L'apartat \ref{item: gen iff} és clar ja que $\stallings(\Free[A]) = \bouquet_{A}$. 

Per veure l'apartat \ref{item: ind iff} distingim dos casos. Si $S$ és finit, considerem l'autòmat flor $\flower(S)$, i una seqüència qualsevol de plegaments elementals fins a obtenir $\stallings(\gen{S})$, 
 \begin{equation*}
\flower(S) = \Ati_{\hspace{-1pt}0} \xtr{\! \phi_{_1} \!} \Ati_{\hspace{-1pt}1} \xtr{\!\phi_{_2} \!} \cdots \xtr{\! \phi_{_{p-1}} \!} \Ati_{\hspace{-1pt} p-1} \xtr{\! \phi_{_p}\!} \Ati_{\hspace{-1pt}p} =\stallings(\gen{S}).
 \end{equation*}
Component els corresponents morfismes obtenim el diagrama commutatiu 

\begin{equation}
    \begin{tikzcd}[
        column sep={6em,between origins},
      ]
        \pi_{\bp}(\Ati_{\hspace{-1pt}0}) \arrow[r,"\phi_1",->>] \arrow[rrd,"\widetilde{\lab}_{0}"',->>] & \pi_{\bp}(\Ati_{\hspace{-1pt}1}) \arrow[r,"\phi_2",->>] \arrow[rd,"\widetilde{\lab}_{1}",->>] & {\cdots} \arrow[r,"\phi_{p-1}",->>] & \pi_{\bp}(\Ati_{\hspace{-1pt}p-1}) \arrow[r,"\phi_p",->>] \arrow[ld,"\widetilde{\lab}_{p-1}"',->>] & \pi_{\bp}(\Ati_{\hspace{-1pt}p}) \arrow[lld,"\widetilde{\lab}_{p}",->>] \\
                                  &                          &               \gen{S}          
        \end{tikzcd}
    \end{equation}
on $\rk(\pi_{\bp}(\Ati_0))=\card S$, amb (les classes de) els pètals com a base; i on $\widetilde{\ell}_p$ és bijectiu ja que $\Ati_{\hspace{-1pt}p}$ és determinista. Si $\loss (\flower(S))=0$, tots els plegaments són oberts, cadascun dels morfismes $\phi_1, \ldots \phi_p$ és bijectiu i, concloem que $\widetilde{\ell}_0=\phi_1\cdots \phi_p \widetilde{\ell}_p$ és també bijectiu; per tant, $\gen{S}$ està lliurement generat per les etiquetes dels pètals de $\flower(S)$, és a dir, per $S$. Recíprocament, si $S$ és una família lliure d'elements de $\Free[A]$, llavors $S$ és base de $\gen{S}\leqslant \Free[A]$, i el morfisme $\widetilde{\ell}_0$ és bijectiu; la commutativitat del diagrama anterior implica aleshores que $\phi_1, \ldots ,\phi_p$ són tots bijectius i, per tant, tots els plegaments de la seqüència són oberts; en conseqüència, $\loss(\flower(S))=0$.

Finalment, suposem que $S$ és infinit. Si $S$ no és una família lliure, llavors existeix un subconjunt finit $S_0\subset S$ que tampoc és lliure i, aplicant el cas finit, deduïm que $\loss(\flower(S))\geq \loss(\flower(S_0))\geq 1$. Recíprocament, si $S$ és una família lliure, tota subfamília finita $S_0\subseteq S$ també és lliure i, aplicant el cas finit, $\loss(\flower(S_0))=0$ ; per tant, cap seqüència finita de plegaments elementals començant a $\flower(S)$ (que involucrarà forçosament un nombre finit de pètals) pot contenir un plegament tancat; és a dir, ${\loss(\flower(S))=0}$.
\end{proof}

Noteu que d'aquest resultat se'n segueix una prova alternativa per a la \Cref{prop: isom iff =card}: si $B$ és una base de $\Free[A]$, aleshores $\loss (\flower(B))=0$ i $\stallings(\gen{B},A)=\bouquet_{A}$; per tant, $\card B=\rk (\flower(B))=\rk (\stallings(\gen{B},A))=\rk (\bouquet_{A})=\card A$. També en podem deduir la important propietat de Hopfianitat dels grups lliures finitament generats:

\begin{thm} \label{thm: basis iff min gs}
El grup lliure $\Fn$ és \emph{Hopfià}: tot endomorfisme $\varphi\colon \Fn \to \Fn$ exhaustiu és automàticament injectiu. (Equivalentment, $S\subseteq \Fn$ és base de $\Fn$ si i només si $\gen{S}=\Fn$ i $\card S=n$.) 
\end{thm}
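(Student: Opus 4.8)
The plan is to read the endomorphism $\varphi$ itself as the label map of a flower automaton, and then detect its (non)injectivity through the loss. Fix a basis $A=\{a_1,\ldots,a_n\}$ of $\Fn=\Free[A]$, put $w_i=a_i\varphi$ (reduced words), and consider the flower automaton $\flower(S)$ on the indexed family $S=(w_1,\ldots,w_n)$ of $n$ petals. The crucial observation is that, identifying $\pi_{\bp}(\flower(S))\isom\Fn$ by sending the class of the $i$-th petal to $a_i$ (the $n$ petals form a free basis of the fundamental group, which is therefore free of rank $n$), the read-reduced-labels homomorphism $\widetilde{\ell}_{\flower(S)}$ of~\eqref{eq: pi -> gen} coincides with $\varphi$, since both send $a_i\mapsto w_i$. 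Hence $\ker\widetilde{\ell}_{\flower(S)}=\ker\varphi$ and $\gen{\flower(S)}=\gen{S}=\im\varphi$, so it suffices to prove that $\widetilde{\ell}_{\flower(S)}$ is injective.

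First I would exploit surjectivity: $\im\varphi=\gen{S}=\Fn$, so by \Cref{prop: ind gen iff}\ref{item: gen iff} the Stallings automaton is the rose, $\stallings(\gen{S})=\bouquet_A$, whose rank is $n$ (\Cref{rem: rose free}). On the other hand $\flower(S)$ is finite, connected and involutive, with $\rk(\flower(S))=\card S=n$. Applying \Cref{prop: loss} to $\flower(S)$ along any folding sequence ending at $\stallings(\gen{S})$, the loss equals the rank drop:
\[
\loss(\flower(S))=\rk(\flower(S))-\rk(\stallings(\gen{S}))=n-n=0.
\]
By \Cref{prop: ind gen iff}\ref{item: ind iff} this forces $S$ to be a free family; equivalently, no nontrivial reduced word in the petals reads the trivial element of $\Free[A]$, i.e. $\widetilde{\ell}_{\flower(S)}=\varphi$ is injective. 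Together with surjectivity this makes $\varphi$ an automorphism, so $\Fn$ is Hopfian.

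For the parenthetical reformulation I would argue both implications. If $S$ is a base of $\Fn$ then it generates $\Fn$ and, by the rank remark following \Cref{prop: isom iff =card}, $\card S=\rk(\Fn)=n$. Conversely, if $\gen{S}=\Fn$ and $\card S=n$, fix a bijection $A\to S$ and let $\varphi\colon\Fn\to\Fn$ be its canonical (universal-property) extension; then $\im\varphi=\gen{S}=\Fn$, so $\varphi$ is surjective, hence bijective by the Hopfian property just proved. Therefore $S=A\varphi$ is the image of the base $A$ under an automorphism, and so $S$ is itself a base of $\Fn$.

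The step requiring the most care is the identification $\widetilde{\ell}_{\flower(S)}=\varphi$ together with the bookkeeping that $\flower(S)$ genuinely has rank $n$ even when the words $w_i$ coincide or fail to be cyclically reduced: treating $S$ as an indexed family of $n$ petals (rather than as a set) keeps $\rk(\flower(S))=n$ in every case, and the degenerate possibility $w_i=1$ is excluded a posteriori, since a free group of rank $n$ cannot be generated by fewer than $n$ elements. Beyond this, the whole argument is a direct application of \Cref{prop: loss} and \Cref{prop: ind gen iff}.
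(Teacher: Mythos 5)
Your proposal is correct and takes essentially the same route as the paper: the heart of both arguments is the computation $\loss(\flower(S))=\rk(\flower(S))-\rk(\stallings(\gen{S}))=n-n=0$ followed by \Cref{prop: ind gen iff}\ref{item: ind iff} to conclude that $S$ is a free family. The only difference is packaging: the paper proves the parenthetical form directly (leaving its equivalence with Hopfianity implicit), whereas you prove Hopfianity first, via the identification $\widetilde{\ell}_{\flower(S)}=\varphi$, and then derive the parenthetical form --- a welcome clarification (including the observation that repeated or trivial images $a_i\varphi$ are excluded because $\rk(\Fn)=n$), but not a genuinely different argument.
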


\begin{proof}
De nou, la demostració és immediata de la Teoria de Stallings: si $\gen{S}=\Fn$ i $\card S=n$, aleshores $\stallings(\gen{S})=\bouquet_n$, i $\rk (\flower(S))=n$. Per tant, 
 \begin{equation}
\loss(\flower(S))=\rk (\flower(S))-\rank (\stallings(\gen{S})) =n-n=0
 \end{equation}
i, per la \Cref{prop: ind gen iff}\ref{item: ind iff}, tenim que $S$ és una família lliure; per tant, $S$ és base de~$\Fn$.
\end{proof}

D'acord amb l'\Cref{rem: base => gen minimal}, i tal i com succeeix als $K$-espais vectorials, les bases de grups lliures $\Free$ són sempre conjunts minimals de generadors. El recíproc --- també cert als $K$-espais vectorials --- és fals als grups lliures, fins a la màxima degeneració possible.

\begin{lem}
Existeixen conjunts de generadors minimals de $\Free[n]$, $n\geq 1$ de qualsevol cardinalitat finita $m\geq n$.
\end{lem}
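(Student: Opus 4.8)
The plan is to construct, for each $n\geq 1$ and each finite $m\geq n$, an explicit generating set of $\Free[n]$ with exactly $m$ elements none of which can be removed. Since $\rk(\Free[n])=n$, every generating set has at least $n$ elements, so the restriction $m\geq n$ is forced; the content is to realize every larger value of $m$. Fixing a basis $\set{x_1,\ldots,x_n}$ of $\Free[n]$, the idea is to keep the first $n-1$ basis elements untouched and to replace the single generator $x_n$ by a \emph{redundant} family of powers $x_n^{k_1},\ldots,x_n^{k_r}$, with $r=m-n+1\geq 1$, where the exponents $k_1,\ldots,k_r$ are chosen so that $\set{k_1,\ldots,k_r}$ is itself a minimal generating set of $\ZZ$. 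Note that it suffices to rule out the removal of a single element, since deleting a larger subset only removes more generators.

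The arithmetic ingredient is the standard ``products of primes'' construction. For $r\geq 2$, pick distinct primes $p_1,\ldots,p_r$ and set $k_i=\prod_{j\neq i}p_j$; for $r=1$ set $k_1=1$. A prime $p_\ell$ divides $k_i$ exactly when $\ell\neq i$, so no prime divides all of $k_1,\ldots,k_r$, giving $\gcd(k_1,\ldots,k_r)=1$; on the other hand, for each fixed $j$ one has $p_j\mid k_i$ for every $i\neq j$ while $p_j\nmid k_j$, so $\gcd(k_i\st i\neq j)=p_j>1$. Hence $\set{k_1,\ldots,k_r}$ generates $\ZZ$, but deleting any $k_j$ leaves a set generating only $p_j\ZZ\subsetneq\ZZ$ (and for $r=1$ the only proper subset is empty, generating $\set{0}$).

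With these exponents, set
\[
S=\set{x_1,\ldots,x_{n-1},\,x_n^{k_1},\ldots,x_n^{k_r}},
\]
so that $\card S=(n-1)+r=m$. To see that $S$ generates $\Free[n]$, note that $x_n^{k_1},\ldots,x_n^{k_r}$ all lie in the abelian subgroup $\gen{x_n}\isom\ZZ$, where they generate the subgroup of index $\gcd(k_1,\ldots,k_r)=1$, namely all of $\gen{x_n}$; hence $\gen{S}\supseteq\gen{x_1,\ldots,x_{n-1},x_n}=\Free[n]$. For minimality I would use the exponent-sum retractions $\pi_i\colon\Free[n]\to\ZZ$ defined on the basis by $x_i\mapsto 1$ and $x_\ell\mapsto 0$ for $\ell\neq i$. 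If one deletes some $x_i$ with $i<n$, then every element of $S\setmin\set{x_i}$ lies in $\ker\pi_i$, whereas $\pi_i(x_i)=1$, so $x_i\notin\gen{S\setmin\set{x_i}}$ and the remaining set is not generating. If instead one deletes some $x_n^{k_j}$, then $\pi_n$ maps the remaining generators onto the proper subset $\set{k_i\st i\neq j}$ of $\set{k_1,\ldots,k_r}$, which generates a proper subgroup of $\ZZ$ by the minimality established above; since $\pi_n$ is surjective, $\gen{S\setmin\set{x_n^{k_j}}}\neq\Free[n]$. Thus $S$ is a minimal generating set of cardinality $m$.

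The only genuine difficulty is the arithmetic step, i.e.\ producing minimal generating sets of $\ZZ$ of every finite size; once these are in hand, the passage to $\Free[n]$ is automatic, since the retractions $\pi_i$ (equivalently, the abelianization $\Free[n]\to\ZZ^n$) detect the failure of generation after any deletion. It is worth remarking that this already exhibits the announced maximal degeneracy: for every $m>n$ there are minimal generating sets of $\Free[n]$ that are far from being bases, their cardinality $m$ exceeding the rank $n$, in sharp contrast with the linear-algebra situation and with the converse direction of \Cref{rem: base => gen minimal}.
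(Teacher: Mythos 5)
Your proof is correct, and it takes a genuinely different route from the paper's. You share with the paper its first ingredient --- the arithmetic construction of minimal generating sets of $\ZZ$ of every finite cardinality via the products $\prod_{i\neq j}p_i$ of distinct primes --- but from there the two arguments diverge. You bootstrap the rank-one phenomenon to every $\Free[n]$ purely algebraically: all the redundancy is concentrated inside the cyclic subgroup $\gen{x_n}$, and the exponent-sum retractions $\pi_i\colon \Free[n]\to\ZZ$ (equivalently, the abelianization) certify both that nothing can be deleted and that the full set generates. This is elementary, uniform, and actually covers the whole statement at once: every $n\geq 1$ and every finite $m\geq n$. The paper, by contrast, after the case of $\ZZ$ illustrates the phenomenon in $\Free_{\set{a,b}}$ with a family of involutive automata $\Ati_k$: each $\Ati_k$ is non-deterministic and folds down to the rose $\bouquet_{\set{a,b}}$, so the $k+1$ labels $S_k$ read off a spanning tree generate the whole group, while deleting any one generator (i.e.\ any arc outside the spanning tree) blocks the folding process at that arc and yields a deterministic automaton different from the rose, whence $\gen{S_k\setmin\set{s}}\neq\Free[\set{a,b}]$. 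What the paper's approach buys is an illustration of the Stallings machinery that is the article's theme, and examples whose redundancy is not confined to a cyclic subgroup (the generators of $S_k$ are genuinely ``spread out'', not powers of a single basis element); what yours buys is completeness and economy --- the paper's proof, read literally, only exhibits the phenomenon for $n=1$ and $n=2$, whereas your retraction argument disposes of all ranks simultaneously with no graphical input. Both proofs are sound; yours is closer in spirit to the paper's first paragraph, pushed to its full strength.
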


\begin{proof}
Aquest comportament ja es pot observar a $\ZZ=\Free[1]$ per raons purament aritmètiques: considerem un conjunt de nombre primers $\{p_1,\ldots ,p_m\}$ diferents dos a dos, i prenem $S=\{ \prod_{i\neq j} p_i \st j=1,\ldots ,m\}$; clarament $\operatorname{mcd}(S)=1$ i, per tant, $\gen{S}=\ZZ$. Però $\operatorname{mcd}(S\setminus \{\prod_{i\neq j} p_i\})=p_j$ i, per tant, $\gen{S\setminus \{\prod_{i\neq j} p_i\}}=p_j\ZZ \neq \ZZ$. 

Usant la Teoria de Stallings podem construir exemples més sofisticats. Per exemple, al grup $\Free_{\{a,b\}}$ considereu la família d'autòmats involutius $\Ati_{\hspace{-1pt} k}$, $k\geq 2$, representada a continuació:
\begin{figure}[H] 
\centering
  \begin{tikzpicture}[shorten >=1pt, node distance=0.5 and 1.2, on grid,auto,>=stealth']
   \node[state,accepting] (0) {};
   \node[state] (1) [right = of 0]{};
   \node[state] (2) [right = of 1]{};
   \node[state] (3) [right = of 2]{};
   \node[state] (4) [right = of 3]{};
   \node[state] (5) [right = of 4]{};
   \node[state] (6) [right = of 5]{};
\node[state] (1') [above right = of 0]{};
   \node[state] (2') [above right = of 1']{};
   \node[state] (3') [above right = of 2']{};
   \node[state] (4') [above right = of 3']{};
   \node[state] (5') [above right = of 4']{};
   \node[state] (6') [right = of 5']{};'

   \path[->]
        (0) edge[blue]
            node[below] {\scriptsize{$a$}}
            (1);

    \path[->] (1) edge[blue] (2);            
    \path[->] (2) edge[blue,dashed] (3);
    \path[->] (3) edge[blue] (4);
    \path[->] (4) edge[red,thick] node[below] {\scriptsize{$b$}} (5);
    \path[->] (5) edge[blue,thick] (6);
    
    \path[->] (0) edge[blue,thick] (1');
    \path[->] (1') edge[blue,thick] (2');
    \path[->] (2') edge[blue,dashed,thick] (3');
    \path[->] (3') edge[blue,thick] (4');
    \path[->] (4') edge[red,thick] (5');
    \path[->] (6') edge[blue,thick] (5');
    
    \path[->] (1') edge[red,thick] (1);
    \path[->] (2') edge[red,thick] (2);
    \path[->] (3') edge[red,thick] (3);
    \path[->] (5') edge[blue,thick] (5);
    \path[->] (6') edge[red] (6);

    \draw [decorate, 
    decoration = {brace,raise=7pt}] (0,0) --  (4.8,2) node[pos=0.75,above left = 13pt,sloped]{\small{$k$ $a$-arcs}};
\end{tikzpicture}
\end{figure}

Prenent com a arbre d'expansió el donat pels arcs representats en traç gruixut obtenim el conjunt de generadors
 \begin{equation*} \label{eq: minimal gen}
\begin{aligned}
S_k &\,=\, \set{a b a^{-1},\, a^k b a b^{-1} a^{-1} b^{-1} a^{k-1},\, a^k b a^{-1} b a^{-2} b^{-1} a^{-k}} \,\cup\\ &\hspace{150pt}\cup \, \set{a^i b a^{-1} b^{-1} a^{-(i-1)} \st i\in[2,k-1]}
\end{aligned}
 \end{equation*}
(de cardinal $k+1$) per a $\gen{\Ati_{\hspace{-1pt} k}}$. Observem però, que $\Ati_{\hspace{-1pt} k }$ no és determinista (ho seria sense un dels dos $a$-arcs sortint de $\bp$) i que, fent-li plegaments de Stallings co\l.lapsa fins a la rosa $\bouquet_{\set{a,b}}$. Per tant, $\gen{S_k}= \gen{\Ati_{\hspace{-1pt}k}}=\Free[\set{a,b}]$ i, per a tot $k\geq 2$, $S_k$ és una família de $k+1$ generadors de $\Free_{\{a,b\}}$. Ara bé, si eliminem un qualsevol dels generadors de $S_k$ (equivalentment, si eliminem de $\Ati_{\hspace{-1pt} k }$ l'arc corresponent fora de l'arbre d'expansió), el procés de plegments queda bloquejat justament a l'arc eliminat i s'obté un $A$-autòmat determinista diferent a $\bouquet_{\set{a,b}}$. Per tant $S_k$ és un sistema de generadors \emph{minimal} de $\Free[\set{a,b}]$, de cardinal $k+1$. 
\end{proof}

Fixeu-vos finalment, que la família $S_k$ també serveix com a contraexemple al grup lliure de la propietat fonamental en àlgebra lineal de que tota família de generadors conté una base.


\subsection{Conjugació i normalitat} \label{ssec: conj and normal}
Un dels conceptes fonamentals en teoria de grups és el de \defin{conjugació}: si $G$ és un grup, es diu que dos elements $g,g' \in G$ (\resp subgrups $H,H'\leqslant G$) són conjugats, i escrivim $g \sim g'$ (\resp $H \sim H'$) si existeix un element $z\in G$ tal que $z^{-1}gz=g'$ (\resp $z^{-1}Hz=H'$). Aleshores diem que $g'$ (\resp $H'$) és $g$ (\resp $H$) \defin{conjugat per}~$z$, i escrivim $g'=g^z =z^{-1}gz$ (\resp $H'=H^z =z^{-1}Hz$).

La teoria de Stallings permet caracteritzar la conjugació de subgrups de forma molt transparent.

\begin{lem} \label{lem: conj iff basepoint}
Siguin $H$ un subgrup de $\Free[A]$, i $w\in \Free[A]$. Aleshores, $\stallings(H^w,A)$ és el cor de l'autòmat $\schreier_{H\!w}(H,A)$ obtingut canviant el punt base de $\schreier(H,A)$ per $Hw$.
\end{lem}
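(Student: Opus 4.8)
The plan is to reduce everything to the characterization of Schreier automata already proved in \Cref{cor: SchSt}, the only genuinely new ingredient being a short coset computation identifying the subgroup read at the relocated base point. Write $\Ati$ for the $A$-automaton $\schreier_{Hw}(H,A)$, i.e. the labelled digraph $\schreier(H,A)$ with its base point moved from $H$ to the coset $Hw$. Since relocating the base point does not touch the underlying labelled digraph, $\Ati$ inherits from $\schreier(H,A)$ all the structural properties of \Cref{prop: propietats Sch}: it is involutive, deterministic, saturated and connected (none of these depends on which vertex is distinguished).

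The heart of the argument is to compute $\gen{\Ati}$, the subgroup recognized by $\Ati$ at its new base point $Hw$. First I would observe, by an immediate induction along the arc structure $Hg \xarc{\,s\ } Hgs$ of the Schreier automaton, that a path starting at $Hw$ and reading a word $u$ terminates at the coset $Hwu$. Hence such a path is closed at $Hw$ if and only if $Hwu = Hw$, that is $wuw^{-1}\in H$, i.e. $u \in w^{-1}Hw = H^w$ (matching the paper's convention $H^z = z^{-1}Hz$). Running over all closed paths at $Hw$ and taking reduced labels as in \Cref{def: subgrup reconegut} then yields $\gen{\Ati} = H^w$; here saturation is what guarantees the reverse inclusion, since it ensures that every element of $H^w$ is realized as the reduced label of some (closed) path from $Hw$.

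With this in hand the conclusion is formal. Because $\Ati$ is a connected, saturated, deterministic, involutive $A$-automaton recognizing $H^w$, \Cref{cor: SchSt}(i) gives an isomorphism of $A$-automata $\Ati \isom \schreier(H^w,A)$. Finally I would pass to cores on both sides: an automaton isomorphism sends base point to base point, and the core is defined purely in terms of the automaton structure together with its base point (the vertices lying on reduced $\bp$-paths), so isomorphic automata have isomorphic cores. Thus $\core(\Ati) \isom \core(\schreier(H^w,A)) = \stallings(H^w,A)$ by \Cref{def: St}, which is exactly the claimed identity, the left-hand side being by definition the core of $\schreier_{Hw}(H,A)$.

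The only place where care is needed is the coset bookkeeping of the second paragraph: getting the side of the conjugation right and checking that saturation really does realize all of $H^w$. Everything else is an invocation of \Cref{prop: propietats Sch} and \Cref{cor: SchSt}, together with the remark that taking cores commutes with isomorphism. As a more hands-on alternative to \Cref{cor: SchSt} one could exhibit the isomorphism explicitly from the outset, via the vertex map $Hg \mapsto H^w w^{-1} g$, and verify directly that it is well defined, bijective, label-preserving, and sends $Hw$ to the base point $H^w$ of $\schreier(H^w,A)$; I would keep this route in reserve, as it bypasses the recognized-subgroup computation entirely.
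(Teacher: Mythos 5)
Your proposal is correct and takes essentially the same route as the paper: the paper's proof likewise observes that $\schreier_{H\!w}(H,A)$ is an involutive, deterministic, saturated and connected $A$-automaton recognizing $H^w$ (leaving your coset computation implicit as ``clear'' from \Cref{prop: propietats Sch} and \Cref{def: subgrup reconegut}) and then invokes \Cref{cor: SchSt}. The only cosmetic difference is that the paper applies part (ii) of that corollary directly to the core, whereas you apply part (i) to get $\schreier_{H\!w}(H,A) \isom \schreier(H^w,A)$ and then pass to cores; the two are interchangeable.
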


\begin{proof}
És clar de la \Cref{prop: propietats Sch} i la \Cref{def: subgrup reconegut} que $\schreier_{H\!w}(H,A)$ és un $A$-autòmat involutiu, determinista, saturat i connex que reconeix el subgrup $H^w$. El resultat se segueix immediatament del \Cref{cor: SchSt}(ii).
\end{proof}

Com a conseqüència del lema anterior obtenim la següent caracterització gràfica de la conjugació de subgrups al grup lliure, que és òbviament decidible en cas que els subgrups siguin finitament generats.

\begin{prop}\label{prop: conj}
Siguin $H$ i $K$ dos subgrups de $\Free[A]$. Aleshores, $H$ i $K$ són conjugats si i nomes si $\stallings^*(H)=\stallings^*(K)$. 
\end{prop}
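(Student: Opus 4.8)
The plan is to prove the two implications separately, leaning on the conjugacy dictionary of \Cref{lem: conj iff basepoint} and on the conjugacy already encoded in \Cref{rem: <Ati> = <core> = <core*>}. Throughout, write $\Sigma = \schreier(H,A)$ for the labelled graph underlying the Schreier automaton of $H$ with its base point forgotten, and for a vertex $\verti$ of a connected involutive automaton let $\core_{\verti}$ denote the core computed with $\verti$ as base point. The implication $[\Leftarrow]$ is then immediate: assume $\stallings^*(H) = \stallings^*(K) =: C$ and choose any vertex $\verti \in \Verts C$ (if $C$ is empty, both $H$ and $K$ are trivial and there is nothing to prove). Applying \Cref{rem: <Ati> = <core> = <core*>} to $\Ati = \stallings(H)$ — which is already a core, and whose restricted core is $C$ — yields $\gen{C}_{\verti} \sim \gen{\stallings(H)} = H$; the same remark applied to $\stallings(K)$ gives $\gen{C}_{\verti} \sim K$. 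As conjugacy is an equivalence relation, $H \sim \gen{C}_{\verti} \sim K$.

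For $[\Rightarrow]$ write $K = H^{w}$. By \Cref{lem: conj iff basepoint}, $\stallings(K)=\stallings(H^{w})$ is the core of $\Sigma$ based at the vertex $Hw$, while $\stallings(H)$ is the core of the \emph{same} graph $\Sigma$ based at $H$. Hence $\stallings^*(H)=\core^*(\core_{H}(\Sigma))$ and $\stallings^*(K)=\core^*(\core_{Hw}(\Sigma))$, so it suffices to prove the purely combinatorial statement that the restricted core of a Schreier graph does not depend on the chosen base point, i.e.\ $\core^*(\core_{u}(\Sigma))=\core^*(\core_{v}(\Sigma))$ for any two vertices $u,v$ of $\Sigma$.

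I would establish this by characterising $\core^*(\core_{v}(\Sigma))$ intrinsically, as the subgraph $\mathcal C$ spanned by all \emph{cyclically reduced} closed paths $\walki = \verti\, \edgi_1 \cdots \edgi_l\, \verti$ of $\Sigma$ (those with $l\geq 1$ and $\edgi_1 \neq \edgi_l^{-1}$) — a description that mentions no base point. One inclusion: an edge lying on such a path $q$ based at $x$ also survives in $\core^*(\core_{v}(\Sigma))$, since joining $v$ to $x$ by a reduced path $\gamma$ and cyclically permuting $q$ so that it neither opens nor closes against $\gamma$ makes $\gamma q \gamma^{-1}$ a reduced $v$-closed path carrying that edge, and every vertex of $q$ meets two distinct edges of $q$, so it is never stripped by $\core^*$. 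Conversely, an edge that survives in $\core^*(\core_{v}(\Sigma))$ lies on a reduced $v$-closed path $p$ and off the hanging hair deleted by $\core^*$; cyclically reducing $p$ at $v$ removes exactly that hair and leaves a cyclically reduced closed path still carrying the edge. That reduced closed paths never venture into the infinite Cayley branches of $\Sigma$ (which are trees, by \Cref{prop: propietats Sch}) shows those branches are irrelevant throughout.

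The main obstacle is precisely this geometric lemma: one must check with care that $\core^*$ discards nothing beyond the base-point-dependent hair, and that cyclic reduction lands every surviving edge on a genuinely base-point-free loop (the degenerate cases where the base point sits on the hair, or where some $\edgi_i$ is a loop, need a little attention but do not affect the argument). Once $\core^*(\core_{v}(\Sigma))=\mathcal C$ is known for every $v$, the equality $\core^*(\core_{H}(\Sigma))=\core^*(\core_{Hw}(\Sigma))$ — and therefore $\stallings^*(H)=\stallings^*(K)$ — follows at once, completing $[\Rightarrow]$.
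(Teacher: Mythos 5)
Your overall route is viable and genuinely different from the paper's, but as written it has a real gap, located exactly where you yourself place the ``main obstacle'': the converse inclusion of your intrinsic characterisation. The claim that, for an edge $\edgi$ of $\core^*(\core_{v}(\Sigma))$ lying on a reduced $v$-closed path $\walki$, ``cyclically reducing $\walki$ at $v$ removes exactly the hair and leaves a cyclically reduced closed path still carrying $\edgi$'' is false: cyclic reduction strips the maximal tail of the \emph{particular} path $\walki$, which always contains the hair but may be strictly larger, and the excess may contain $\edgi$. The simplest counterexample has no hair at all: take $H=\Free[\set{a,b}]$, so that $\core_{v}(\Sigma)=\core^*(\core_{v}(\Sigma))=\bouquet_{2}$; the $a$-loop lies on the reduced $\bp$-closed path reading $aba^{-1}$, whose cyclic reduction reads $b$ and no longer traverses it. The statement you need (every edge of the restricted core lies on \emph{some} cyclically reduced closed path) is true, but it cannot be reached by cyclically reducing an arbitrary closed path through the edge; a correct proof must argue structurally: if $\edgi$ is a loop, it is itself such a path; if $\edgi$ is not a bridge, concatenate it with a reduced path from $\term(\edgi)$ to $\init(\edgi)$ avoiding $\edgi^{\pm 1}$; and if $\edgi$ is a bridge, one must produce nonempty cyclically reduced closed paths on \emph{both} sides of $\edgi$ and splice them through it --- and producing these requires the core property of $\core_{v}(\Sigma)$, not merely minimum degree $\geq 2$ (the Cayley digraph $\cayley(\Free[A],A)$ is $2\card A$-regular yet a tree, so it has no reduced closed paths at all). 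A smaller repair is needed in your direct inclusion as well: ``cyclically permuting $q$ so that it neither opens nor closes against $\gamma$'' is not available for an arbitrary $\gamma$; instead choose $\gamma$ to be a \emph{shortest} reduced path from $v$ to the vertex set of $q$, so that its last edge is not traversed by $q$ and $\gamma q' \gamma^{-1}$ is automatically reduced once $q'$ is $q$ rebased at $\term(\gamma)$.

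For comparison, the paper sidesteps this combinatorial lemma entirely, and you could too. For $[\Rightarrow]$, given $K=H^w$, it extends $\stallings(H)$ by a hair (if necessary) so that $w$ can be read from $\bp_H$, and declares the endpoint of that path the new base point; the resulting automaton $\Ati$ is still deterministic, satisfies $\core^*(\Ati)=\stallings^*(H)$ (hairs and base points are invisible to $\core^*$), and recognises $\gen{\Ati}=H^w=K$. Uniqueness of reduced representatives (\Cref{cor: gen iff isom}, or \Cref{cor: SchSt}) then forces $\core(\Ati)\isom\stallings(K)$, whence $\stallings^*(K)=\core^*(\Ati)=\stallings^*(H)$. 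In other words, the bijection theorem replaces your base-point-independence lemma at no cost; your approach, once the lemma above is honestly proved, buys a base-point-free description of $\stallings^*$ that is interesting in itself but is more work. Your $[\Leftarrow]$ direction is essentially the paper's (conjugation by the label of a path between the two base points); just note that the conjugacy claim of \Cref{rem: <Ati> = <core> = <core*>} is itself only justified in this section of the paper, so you should invoke the underlying elementary fact --- rebasing a connected automaton conjugates the recognised subgroup by the label of a connecting path --- rather than the remark, to avoid any appearance of circularity.
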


\begin{proof}
Observeu que, per construcció, $\stallings(H)$ coincideix amb $\stallings^*(H)$ reintroduint el pèl (eventualment buit) fins a $\bp_H$. 

Suposem que $\stallings^*(H)=\stallings^*(K)$. Reintroduïm a $\stallings^*(H)=\stallings^*(K)$ els dos pèls fins als vèrtexs $\bp_H$ i $\bp_K$ i sigui $w$ l'etiqueta d'un camí reduït qualsevol de $\bp_H$ a $\bp_K$. És fàcil veure que $H^w=K$; per tant, $H$ i $K$ són conjugats.

Recíprocament, suposem que $H^w=K$ per alguna paraula (reduïda) $w\in \Free[A]$. Sigui $\gamma$ l'únic possible camí a $\stallings(H)$ començant a $\bp_H$ i llegint $w$: la unicitat és clara pel determinisme de $\stallings(H)$; i si no n'existeix cap, afegim a $\stallings(H)$ el pèl necessari per poder completar $\gamma$. Declarem $\tau \gamma$ com a nou punt base, i designem per $\Ati$ l'autòmat obtingut. Per construcció, $\core^*(\Ati)=\core^*(\stallings(H))=\stallings^*(H)$ és determinista, i $\gen{\Ati}=w^{-1}\gen{\stallings(H)}w=H^w=K$. Per tant, $\core^*(\Ati)=\core^*(\stallings(K))=\stallings^*(K)$, d'on deduïm $\stallings^*(H)=\stallings^*(K)$.
\end{proof}

Observeu que, en cas que $H$ i $K$ siguin finitament generats i conjugats, la demostració anterior proporciona, a més, un mètode per a calcular efectivament un conjugador $w$. Això demostra que

\begin{cor}
El problema de la conjugació de subgrups $\SCP(\Free)$ és decidible per a grups lliures: existeix un algoritme que, donats dos subconjunts finits $S_1,S_2 \subseteq \Free$, decideix si els corresponents subgrups generats són conjugats i, en cas afirmatiu, retorna un element conjugador. \qed
\end{cor}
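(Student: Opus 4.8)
The plan is to turn the characterization of \Cref{prop: conj} into an effective procedure, using that all the Stallings automata involved are finite and computable. First I would observe that, since $S_1$ and $S_2$ are finite, only finitely many letters occur in them, so we may work with a finite base $A$ and set $H=\gen{S_1}$, $K=\gen{S_2}$, two finitely generated subgroups of $\Free[A]$. By \Cref{thm: Stallings bijection2}, I can then algorithmically compute the finite automata $\stallings(H,A)$ and $\stallings(K,A)$: build the flower automata $\flower(S_1)$ and $\flower(S_2)$, and run a sequence of Stallings foldings on each until none remains available.

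Next I would pass to the restricted cores $\stallings^*(H)$ and $\stallings^*(K)$, obtained from $\stallings(H,A)$ and $\stallings(K,A)$ by iteratively deleting the degree-$1$ vertices (the hanging hair ending at the basepoint) and then ignoring $\bp$, exactly as in \Cref{def: St}. Both are finite, connected, deterministic $A$-autòmats. The decision step is then precisely \Cref{prop: conj}: $H$ and $K$ are conjugate if and only if $\stallings^*(H) \isom \stallings^*(K)$. So it only remains to decide whether these two finite deterministic $A$-labeled digraphs are isomorphic.

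This last isomorphism test is where one must be slightly careful, and it is the main (modest) obstacle. The key point is that, because the automata are deterministic and connected, any isomorphism is rigid: once the image of a single vertex is fixed, determinism forces the image of every other vertex by following labeled arcs, and one only needs to verify that the resulting vertex-map is well defined and preserves all arcs and their labels. Hence I would fix a vertex $\verti_0$ of $\stallings^*(H)$, try in turn each of the finitely many vertices of $\stallings^*(K)$ as its candidate image, propagate along arcs, and check consistency; this search halts and decides $\stallings^*(H) \isom \stallings^*(K)$, and therefore decides conjugacy of $H$ and $K$.

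Finally, to return a conjugator in the affirmative case, I would follow the constructive part of the proof of \Cref{prop: conj}. Having matched $\stallings^*(H)$ with $\stallings^*(K)$ through the isomorphism just found, I reintroduce the two hairs, recovering the basepoints $\bp_H$ and $\bp_K$ inside one common automaton, and read off the reduced label $w=\rlab(\walki)$ of any reduced path $\walki$ from $\bp_H$ to $\bp_K$ (computable, for instance, along a spanning tree). By that proof $H^w=K$, so $w$ is the desired conjugator, which completes the algorithm.
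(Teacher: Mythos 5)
Your proposal is correct and takes essentially the same route as the paper: the corollary is derived directly from \Cref{prop: conj} together with the computability of Stallings automata (\Cref{thm: Stallings bijection2}), and the conjugator is extracted from the constructive half of the proof of \Cref{prop: conj} (glue the two hairs onto the identified restricted cores and read the label of a reduced path from $\bp_H$ to $\bp_K$), exactly as you do. The only point you make explicit that the paper leaves implicit is the isomorphism test for the two finite automata, and your rigidity argument via determinism and connectedness is a correct way to carry it out.
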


Recordem que un subgrup $H$ d'un grup $G$ es diu \defin{normal} (a $G$), designat ${H \normaleq G}$, si i només si $H^g=H$, per a tot $g\in G$. Del \Cref{lem: conj iff basepoint} també se segueix fàcilment una caracterització gràfica de normalitat en el grup lliure (i la seva decidibilitat en el cas finitament generat).

\begin{prop} \label{prop: normal iff}
Sigui $H\neq \trivial$ un subgrup de $\Free[A]$. Aleshores, els enunciats següents són equivalents:
\begin{enumerate}[dep]
\item H és normal a $\Free[A]$;
\item $\schreier(H)$ és vèrtex-transitiu;\footnote{Un $A$-autòmat $\Ati$ és \defin{vèrtex-transitiu} si per a qualsevol parell de vèrtexs, $\verti,\vertii$ de $\Ati$, existeix un automorfisme \emph{de $A$-digrafs etiquetats} (és a dir, \emph{respectant les etiquetes de les arestes} i \emph{ignorant el punt base}) de $\Ati$  que envia $\verti$ a $\vertii$. (Informalment, si `$\Ati$ té el mateix aspecte vist des de qualsevol vèrtex'.)}
\item $\schreier(H)$ és vèrtex-transitiu i cor;
\item $\stallings(H)$ és vèrtex-transitiu i saturat.
\end{enumerate}
\end{prop}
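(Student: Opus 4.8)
The plan is to establish the four statements as equivalent through the chain (i)$\Leftrightarrow$(ii), (ii)$\Leftrightarrow$(iii), (iii)$\Leftrightarrow$(iv), keeping in mind throughout that $\stallings(H)=\core(\schreier(H))$.

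For (i)$\Rightarrow$(ii) I would invoke the remark following \Cref{def: Sch}: if $H\normaleq \Free[A]$ then $\schreier(H,A)=\cayley(\Free[A]/H, A)$, and the left translations $x\mapsto qx$ by elements $q$ of the quotient are label-preserving automorphisms acting transitively on vertices, so $\schreier(H)$ is vertex-transitive. For (ii)$\Rightarrow$(i), given any $w\in\Free[A]$, vertex-transitivity supplies a label-preserving automorphism $\phi$ (ignoring the base point) with $\bp\phi=Hw$, hence an isomorphism of \emph{pointed} automata between $\schreier(H)$ based at $\bp$ and the same automaton based at $Hw$. The latter recognizes $H^w$ — this is exactly the content of \Cref{lem: conj iff basepoint}, or directly: a reduced word $u$ labels an $Hw$-closed path iff $Hwu=Hw$ iff $u\in H^w$ — while the former recognizes $H$; since isomorphic pointed automata recognize the same subgroup, $H=H^w$ for every $w$, i.e.\ $H$ is normal.

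For (ii)$\Leftrightarrow$(iii), one direction is immediate since (iii) is just (ii) together with ``$\schreier(H)$ is core''. For the converse I would show that a vertex-transitive $\schreier(H)$ with $H\neq\trivial$ is automatically core. Introduce the invariant $g(\verti)$, the length of the shortest nontrivial reduced closed path at $\verti$; vertex-transitivity forces $g$ to be constant on all vertices. Because $H\neq\trivial$, some nontrivial reduced word lies in $H$ and labels a reduced closed path at $\bp$ (reduced as a path, by determinism), so $g(\bp)<\infty$. If $\schreier(H)$ were not core it would carry a hanging tree, which by \Cref{prop: propietats Sch}\ref{item: Sch hanging trees} is infinite and attached by a single bridge. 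Since a tree admits no nontrivial reduced closed paths, any such path based at a vertex $\verti$ of the hanging tree must leave it across the bridge and come back, whence $g(\verti)\geq 2\,\mathrm{dist}(\verti,\core(\schreier(H)))$; as the tree is infinite these distances are unbounded, so $g$ cannot be constant, a contradiction. Hence $\schreier(H)$ is core. Finally, (iii)$\Leftrightarrow$(iv) is a pure translation through \Cref{prop: propietats St}\ref{item: St sat iff Sch cor}, which states ``$\schreier(H)$ core $\Leftrightarrow$ $\stallings(H)=\schreier(H)$ $\Leftrightarrow$ $\stallings(H)$ saturated''; under either (iii) or (iv) one has $\stallings(H)=\schreier(H)$, so the two automata coincide and ``vertex-transitive and core'' and ``vertex-transitive and saturated'' become literally the same assertion about this single object.

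The main obstacle is the step (ii)$\Rightarrow$(iii): ruling out hanging trees in a nontrivial vertex-transitive $\schreier(H)$. Everything else is bookkeeping with results already available, whereas here the hypothesis $H\neq\trivial$ is genuinely essential — for $H=\trivial$ the automaton $\schreier(\trivial)=\cayley(\Free[A],A)$ is a vertex-transitive tree satisfying (ii) but neither (iii) nor (iv) — and the estimate $g(\verti)\to\infty$ along the infinite hanging tree is the crux that forces coreness.
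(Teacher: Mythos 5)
Your proof is correct, and its skeleton coincides with the paper's: the equivalence between normality and vertex-transitivity is obtained essentially as in the paper (via \Cref{lem: conj iff basepoint}, the identification $\schreier(H)=\cayley(\Free[A]/H,A)$ for $H\normaleq\Free[A]$, and the fact that the pointed automaton $(\schreier(H),Hw)$ recognizes $H^w$), and the equivalence between the last two statements is, in both cases, a pure translation through \Cref{prop: propietats St}\ref{item: St sat iff Sch cor}. The genuine divergence is in the implication ``vertex-transitive $\Rightarrow$ core''. The paper settles this step in one line, asserting that ``clearly'' no automorphism of an $A$-automaton can send a core vertex to a non-core vertex. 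Read literally, that assertion is false: the automorphisms in the definition of vertex-transitivity ignore the base point, whereas membership in the core is defined relative to it, so there is no a priori reason for automorphisms to preserve the core; your own counterexample makes this concrete, since $\schreier(\trivial)=\cayley(\Free[A],A)$ is vertex-transitive, has core $\set{\bp}$, and its translations send the unique core vertex to non-core vertices. Consistently with this, the paper's argument for that step never invokes the hypothesis $H\neq\trivial$. Your replacement argument --- the invariant $g(\verti)$ is constant by vertex-transitivity, finite at $\bp$ because $H\neq\trivial$, and yet satisfies $g(\verti)\geq 2\operatorname{dist}\big(\verti,\core(\schreier(H))\big)$ for vertices $\verti$ in a hanging tree, by \Cref{prop: propietats Sch}\ref{item: Sch hanging trees} --- is a complete and correct proof of exactly this implication, and it pinpoints where nontriviality enters. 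One small point to tighten: when $A$ is infinite you cannot deduce ``unbounded distances'' from mere infiniteness of the hanging tree (an infinite tree that is not locally finite can have bounded radius); instead, use \Cref{prop: propietats St}\ref{item: Sch = St + Cay branches}, which says the hanging trees are Cayley branches and hence contain arbitrarily long geodesic rays, so your conclusion stands. In short, the paper's version buys brevity at the cost of a real gap, while your version supplies an honest proof of the only step of the proposition that is not bookkeeping.
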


\begin{proof}
L'equivalència entre  (a) i (b) se segueix clarament de les definicions a través del \Cref{lem: conj iff basepoint}. Concretament, per a tot $w \in \Free[A]$:
\begin{equation*}
    H^w = H \ \Leftrightarrow\  \schreier(H^w) = \schreier(H) \ \Leftrightarrow\  \schreier_{Hw}(H) = \schreier(H).
\end{equation*}
La interpretació de l'esquerra correspon a la normalitat de $H$, mentre que la de la dreta correspon a la vèrtex-transitivitat de $\schreier(H)$.

Per veure l'equivalència entre (b) i (c) és suficient veure que (b) implica~(c). En efecte, si~$\schreier(H)$ és vèrtex-transitiu aleshores ha de ser cor ja que, clarament, no existeix cap automorfisme d'$A$-autòmats $\Ati$ enviant un vèrtex del cor de $\Ati$ (i, per tant, pertanyent a un $\bp$-camí reduït de $\Ati$) a un vèrtex fora del cor de $\Ati$ (i, per tant, no pertanyent a cap $\bp$-camí reduit de $\Ati$). 

Finalment, l'equivalència entre (c) i (d) és conseqüència immediata de la \Cref{prop: propietats St}\ref{item: St sat iff Sch cor}.
\end{proof}

\begin{rem}
Tot i que per decidir si un subgrup finitament generat $H\leqslant \Free[A]$ és normal, n'hi ha prou amb comprovar si els conjugats dels generadors per les lletres de $A^{\pm}$ tornen a pertànyer a $H$ (un nombre finit d'instàncies del problema de la pertinença), la proposició anterior ens dóna una caracterització gràfica molt natural per a la normalitat, que també es pot usar per donar un algorisme alternatiu per decidir la normalitat de $H$. 
\end{rem}

\subsection{El problema de l'índex} \label{ssec: FIP}

Recordem que si $H$ és un subgrup d'un grup $G$, aleshores $G$ sempre admet una partició $G = \bigsqcup_{i\in I} H g_i$, on $g_i \in G$. Els conjunts $H g_i$ s'anomenen \defin{classes laterals (per la dreta)} de $H$ a $G$; el cardinal del conjunt de classes laterals $H\backslash G=\set{H g_i\st i\in I}$ s'anomena \defin{índex} de $H$ en $G$, i es designa per $\ind{H}{G}$; es diu també que $\set{g_i}_{i\in I}$ és una \defin{família de representants} (o \defin{transversal}) de les classes laterals (per la dreta) de $H$ en~$G$.

En aquesta secció usarem els autòmats de Stallings per estudiar qüestions relatives a l'índex dels subgrups dels grups lliures, i resoldrem el problema de la finitud de l'índex, que  enunciem en general a continuació.

\begin{named}[Problema de la finitud de l'índex a $G=\pres{A}{R}$, $\FIP(G)$]
Donada una família finita $v_1,\ldots, v_k$ de paraules en els generadors de $G$, decidir si el subgrup ${\gen{v_1,\ldots, v_k}\leqslant G}$ és d'índex finit a $G$ i, en cas afirmatiu, calcular aquest índex i una família de representants de les corresponents classes laterals.
\end{named}

Recordem (\Cref{def: Sch}) que els vèrtexs de l'autòmat de Schreier $\schreier(H)$ són precisament les classes laterals per la dreta de $H$ a $\Free$, és a dir $\ind{H}{\Free} = \card\Verts\schreier(H)$; i observem que
l'apartat \ref{item: fi iff St} de la \Cref{prop: propietats St} pren la forma següent per a subgrups finitament generats.

\begin{cor} \label{cor: fi iff sat}
Sigui $H$ un subgrup finitament generat d'un grup lliure $\Free$. Aleshores, l'índex $\ind{H}{\Free}$ és finit si i només si $\stallings(H)$ és saturat; en aquest cas, $\ind{H}{\Free}=\card\Verts \stallings(H)$. \qed
\end{cor}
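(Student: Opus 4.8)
The plan is to read this off as the finitely generated specialization of \Cref{prop: propietats St}\ref{item: fi iff St}, where the extra cardinality hypothesis becomes automatic. Recall that the vertices of the Schreier automaton are exactly the right cosets of $H$, so that $\ind{H}{\Free} = \card\Verts\schreier(H)$, a fact established just before the statement. The whole argument then amounts to combining this identity with the two relevant items of \Cref{prop: propietats St}, so I do not expect to prove anything genuinely new here.

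First I would observe that, since $H$ is finitely generated, \Cref{thm: Stallings bijection2} (finitely generated subgroups correspond precisely to finite reduced automata) guarantees that $\stallings(H)$ is a finite automaton, and in particular $\card\Verts\stallings(H) < \infty$. This is the key point: in \Cref{prop: propietats St}\ref{item: fi iff St} the finiteness of the index is equivalent to the conjunction of two conditions, namely that $\stallings(H)$ be saturated \emph{and} that $\card\Verts\stallings(H) < \infty$; but in the finitely generated setting the second condition holds unconditionally, so the characterization collapses to the single requirement that $\stallings(H)$ be saturated. Thus the biconditional of the corollary follows directly.

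For the index formula, I would then specialize to the case where $\stallings(H)$ is saturated. Here I invoke \Cref{prop: propietats St}\ref{item: St sat iff Sch cor}, which tells us that saturation of $\stallings(H)$ is equivalent to the identity $\stallings(H) = \schreier(H)$. Feeding this into the already-noted equality $\ind{H}{\Free} = \card\Verts\schreier(H)$ yields $\ind{H}{\Free} = \card\Verts\schreier(H) = \card\Verts\stallings(H)$, as claimed.

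There is no real obstacle in this derivation; the only point requiring care is making explicit that the hypothesis ``$H$ finitely generated'' is precisely what upgrades \Cref{prop: propietats St}\ref{item: fi iff St} to the cleaner statement, by forcing $\stallings(H)$ to be finite and thereby rendering the cardinality clause redundant. Everything else is a transparent bookkeeping step chaining together results already proved in the excerpt.
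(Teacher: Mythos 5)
Your proposal is correct and follows essentially the same route as the paper: the corollary is precisely the specialization of \Cref{prop: propietats St}\ref{item: fi iff St} to finitely generated subgroups, where finiteness of $\stallings(H)$ is automatic, and the index formula comes from combining $\ind{H}{\Free} = \card\Verts\schreier(H)$ with \Cref{prop: propietats St}\ref{item: St sat iff Sch cor}. Nothing to add.
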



Com a primera conseqüència podem deduir que $\Free[A]$ té una quantitat finita de subgrups d'un índex finit $k$ donat (vegeu \cite{hall_jr_subgroups_1949} per una fórmula recurrent que compta aquest nombre de subgrups). D'aquí es dedueix fàcilment el mateix resultat per a qualsevol grup $G$ finitament generat.

\begin{cor} 
Sigui $G$ un grup finitament generat. Per a tot $k\geq 1$, $G$ conté una quantitat finita de subgrups d'índex $k$.\qed
\end{cor}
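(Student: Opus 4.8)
The plan is to reduce the statement to the free-group case recorded immediately above this corollary, namely that a free group of finite rank has only finitely many subgroups of any prescribed finite index $k$. First I would invoke \Cref{th: G = F/N}, but taking care to keep the rank finite: since $G$ is finitely generated, I may choose a \emph{finite} generating set $A$ with $\card A = n$, and the construction in that theorem then yields a surjective morphism $\pi\colon \Free[n] \onto G$ whose kernel $N \normaleq \Free[n]$ satisfies $G \simeq \Free[n]/N$. The point of insisting on a finite $A$ is precisely that the free-group finiteness statement is only available for $\Free[n]$ of finite rank.

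Next I would pass subgroups back and forth through $\pi$. By the correspondence theorem for the quotient, the assignment $K \mapsto \pi^{-1}(K)$ is an inclusion-preserving bijection between the subgroups $K \leqslant G$ and the subgroups of $\Free[n]$ that contain $N$. The key observation, which is what makes the deduction work, is that this bijection preserves the index: because $N \leqslant \pi^{-1}(K)$, the map $\pi$ descends to a well-defined bijection between the coset spaces $\Free[n]/\pi^{-1}(K)$ and $G/K$, so that $\ind{\pi^{-1}(K)}{\Free[n]} = \ind{K}{G}$. Hence the index-$k$ subgroups of $G$ correspond bijectively to those index-$k$ subgroups of $\Free[n]$ that happen to contain $N$; in particular, $\pi^{-1}$ injects the index-$k$ subgroups of $G$ into the full set of index-$k$ subgroups of $\Free[n]$. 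Since that set is finite by the free-group case, $G$ can have only finitely many subgroups of index $k$, which is the claim.

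I do not expect a serious obstacle: the genuine content already resides in the free-group statement, and what remains is the standard quotient argument. The one point that deserves a line of care is the index-preservation of the correspondence, i.e.\ verifying that $\pi$ induces a bijection of coset spaces, but this is immediate from $N \leqslant \pi^{-1}(K)$. As an alternative route that bypasses \Cref{th: G = F/N} entirely, I could argue directly and intrinsically: every subgroup $K \leqslant G$ of index $k$ gives a transitive right action of $G$ on the $k$ cosets $K\backslash G$, encoded by a homomorphism $\rho_K\colon G \to S_k$, and $K$ is recovered as the stabilizer of the base coset $K$. Thus each index-$k$ subgroup is a point-stabilizer of some $\rho \in \operatorname{Hom}(G, S_k)$ at one of $k$ points. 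Because $G$ is finitely generated and $S_k$ is finite, a homomorphism is determined by the finitely many images of the generators, so $\operatorname{Hom}(G, S_k)$ is finite; together with the $k$ choices of base point this produces only finitely many stabilizers, and hence only finitely many index-$k$ subgroups.
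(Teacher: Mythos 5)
Your primary argument is correct and is essentially the paper's own route: the paper obtains the free-group case from Stallings theory (index-$k$ subgroups of $\Free[n]$ correspond to saturated reduced automata with exactly $k$ vertices, of which there are only finitely many over a finite alphabet) and then simply remarks that the finitely generated case ``es dedueix fàcilment''; your quotient--correspondence argument, including the index-preservation check $\ind{\pi^{-1}(K)}{\Free[n]} = \ind{K}{G}$, is precisely that easy deduction written out. Your alternative argument via $\operatorname{Hom}(G,S_k)$ is also correct and is a genuinely different, self-contained route --- it bounds the count by $k\,(k!)^n$ and bypasses both the free-group statement and Stallings automata --- though it no longer exercises the graphical machinery this section is showcasing.
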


\begin{exm}
    A continuació representem els autòmats de Stallings dels $13$ subgrups d'índex $3$ de $\Free[2] = \pres{a,b}{-}$ (amb les corresponents classes de conjugació encerclades en gris). Òbviament, els subgrups normals corresponen a les classes amb un únic representant.
\begin{figure}[H]
    \centering
\begin{tikzpicture}[shorten >=1pt, node distance=1.2cm and 1.75cm, on grid,auto,auto,>=stealth']

\draw [ultra thick, rounded corners, draw=black, opacity=0.1]
    (-0.5,0.5) -- (-0.5,-1.5) -- (1.5,-1.5) -- (1.5,0.5) -- cycle;

\draw [ultra thick, rounded corners, draw=black, opacity=0.1]
    (1.68,0.5) -- (1.68,-1.5) -- (3.66,-1.5) -- (3.66,0.5) -- cycle;

\draw [ultra thick, rounded corners, draw=black, opacity=0.1]
    (3.85,0.5) -- (3.85,-1.5) -- (5.5,-1.5) -- (5.5,0.5) -- cycle;

\draw [ultra thick, rounded corners, draw=black, opacity=0.1]
    (5.7,0.5) -- (5.7,-1.5) -- (7.25,-1.5) -- (7.25,0.5) -- cycle;

\draw [ultra thick, rounded corners, draw=black, opacity=0.1]
       (7.45,0.5) -- (7.45,-1.5) -- (12.65,-1.5) -- (12.65,0.5) -- cycle;
       
\draw [ultra thick, rounded corners, draw=black, opacity=0.1]
       (0.5,-2) -- (0.5,-3.65) -- (5.75,-3.65) -- (5.75,-2) -- cycle;
       
\draw [ultra thick, rounded corners, draw=black, opacity=0.1]
       (6,-2) -- (6,-3.65) -- (11.3,-3.65) -- (11.3,-2) -- cycle;

\begin{scope}
    \node[state,accepting] (1) {};
    \node[state] (2) at (1,0) {};
    \node[state] (3) at (.5,{-sin(60)}) {};
  
   \path[->] (1) edge[blue,bend left=15] (2);
   \path[->] (2) edge[blue,bend left=15] (3);
   \path[->] (3) edge[blue,bend left=15] (1);
   \path[->] (1) edge[red,loop above,min distance=6mm,in=110,out=160] (1);
   \path[->] (2) edge[red,loop above,min distance=6mm,in=20,out=70] (2);
   \path[->] (3) edge[red,loop below,min distance=6mm,in=245,out=295] (3);
\end{scope}
   
  \begin{scope}[shift={(2.2,0)}]
   \node[state,accepting] (1) {};
   \node[state] (2) at (1,0) {};
   \node[state] (3) at (.5,{-sin(60)}) {};

   \path[->] (1) edge[red,bend left=15] (2);
   \path[->] (2) edge[red,bend left=15] (3);
   \path[->] (3) edge[red,bend left=15] (1);
   \path[->] (1) edge[blue,loop above,min distance=6mm,in=110,out=160] (1);
   \path[->] (2) edge[blue,loop above,min distance=6mm,in=20,out=70] (2);
   \path[->] (3) edge[blue,loop below,min distance=6mm,in=245,out=295] (3);
  \end{scope}
  
  \begin{scope}[shift={(4.2,0)}]
   \node[state,accepting] (1) {};
    \node[state] (2) at (1,0) {};
    \node[state] (3) at (.5,{-sin(60)}) {};
  
   \path[->] (1) edge[red,bend right=15] (2);
   \path[->] (2) edge[red,bend right=15] (3);
   \path[->] (3) edge[red,bend right=15] (1);
   \path[->] (1) edge[blue,bend left=15] (2);
   \path[->] (2) edge[blue,bend left=15] (3);
   \path[->] (3) edge[blue,bend left=15] (1);
  \end{scope}
  
  \begin{scope}[shift={(6,0)}]
   \node[state,accepting] (1) {};
    \node[state] (2) at (1,0) {};
    \node[state] (3) at (.5,{-sin(60)}) {};
  
   \path[->] (2) edge[red,bend left=15] (1);
   \path[->] (3) edge[red,bend left=15] (2);
   \path[->] (1) edge[red,bend left=15] (3);
   \path[->] (1) edge[blue,bend left=15] (2);
   \path[->] (2) edge[blue,bend left=15] (3);
   \path[->] (3) edge[blue,bend left=15] (1);
  \end{scope}
  
  \begin{scope}[shift={(8,0)}]
    \node[state,accepting] (1) {};
    \node[state] (2) at (1,0) {};
    \node[state] (3) at (.5,{-sin(60)}) {};
  
   \path[->] (1) edge[blue,bend left=15] (2);
   \path[->] (2) edge[blue,bend left=15] (1);
   \path[->] (2) edge[red,bend left=15] (3);
   \path[->] (3) edge[red,bend left=15] (2);
   \path[->] (1) edge[red,loop above,min distance=6mm,in=110,out=160] (1);
   \path[->] (3) edge[blue,loop below,min distance=6mm,in=245,out=295] (3);
  \end{scope}
  
  \begin{scope}[shift={(9.7,0)}]
    \node[state] (1) {};
    \node[state,accepting] (2) at (1,0) {};
    \node[state] (3) at (.5,{-sin(60)}) {};
  
   \path[->] (1) edge[blue,bend left=15] (2);
   \path[->] (2) edge[blue,bend left=15] (1);
   \path[->] (2) edge[red,bend left=15] (3);
   \path[->] (3) edge[red,bend left=15] (2);
   \path[->] (1) edge[red,loop above,min distance=6mm,in=110,out=160] (1);
   \path[->] (3) edge[blue,loop below,min distance=6mm,in=245,out=295] (3);
  \end{scope}
  
  \begin{scope}[shift={(11.4,0)}]
    \node[state] (1) {};
    \node[state] (2) at (1,0) {};
    \node[state,accepting] (3) at (.5,{-sin(60)}) {};
  
   \path[->] (1) edge[blue,bend left=15] (2);
   \path[->] (2) edge[blue,bend left=15] (1);
   \path[->] (2) edge[red,bend left=15] (3);
   \path[->] (3) edge[red,bend left=15] (2);
   \path[->] (1) edge[red,loop above,min distance=6mm,in=110,out=160] (1);
   \path[->] (3) edge[blue,loop below,min distance=6mm,in=245,out=295] (3);
  \end{scope}
  
  \begin{scope}[shift={(1,-2.5)}]
    \node[state,accepting] (1) {};
    \node[state] (2) at (1,0) {};
    \node[state] (3) at (.5,{-sin(60)}) {};
  
   \path[->] (1) edge[blue,bend left=15] (2);
   \path[->] (2) edge[blue,bend right=25] (3);
   \path[->] (2) edge[red] (3);
   \path[->] (3) edge[red,bend right=25] (2);
   \path[->] (3) edge[blue,bend left=15] (1);
   \path[->] (1) edge[red,loop above,min distance=6mm,in=110,out=160] (1);
  \end{scope}
  
  \begin{scope}[shift={(2.75,-2.5)}]
    \node[state] (1) {};
    \node[state,accepting] (2) at (1,0) {};
    \node[state] (3) at (.5,{-sin(60)}) {};
  
   \path[->] (1) edge[blue,bend left=15] (2);
   \path[->] (2) edge[blue,bend right=25] (3);
   \path[->] (2) edge[red] (3);
   \path[->] (3) edge[red,bend right=25] (2);
   \path[->] (3) edge[blue,bend left=15] (1);
   \path[->] (1) edge[red,loop above,min distance=6mm,in=110,out=160] (1);
  \end{scope}
  
  \begin{scope}[shift={(4.5,-2.5)}]
    \node[state] (1) {};
    \node[state] (2) at (1,0) {};
    \node[state,accepting] (3) at (.5,{-sin(60)}) {};
  
   \path[->] (1) edge[blue,bend left=15] (2);
   \path[->] (2) edge[blue,bend right=25] (3);
   \path[->] (2) edge[red] (3);
   \path[->] (3) edge[red,bend right=25] (2);
   \path[->] (3) edge[blue,bend left=15] (1);
   \path[->] (1) edge[red,loop above,min distance=6mm,in=110,out=160] (1);
  \end{scope}
  
  \begin{scope}[shift={(6.5,-2.5)}]
    \node[state,accepting] (1) {};
    \node[state] (2) at (1,0) {};
    \node[state] (3) at (.5,{-sin(60)}) {};
  
   \path[->] (1) edge[red,bend left=15] (2);
   \path[->] (2) edge[red,bend right=25] (3);
   \path[->] (2) edge[blue] (3);
   \path[->] (3) edge[blue,bend right=25] (2);
   \path[->] (3) edge[red,bend left=15] (1);
   \path[->] (1) edge[blue,loop above,min distance=6mm,in=110,out=160] (1);
  \end{scope}
  
  \begin{scope}[shift={(8.25,-2.5)}]
    \node[state] (1) {};
    \node[state,accepting] (2) at (1,0) {};
    \node[state] (3) at (.5,{-sin(60)}) {};
  
   \path[->] (1) edge[red,bend left=15] (2);
   \path[->] (2) edge[red,bend right=25] (3);
   \path[->] (2) edge[blue] (3);
   \path[->] (3) edge[blue,bend right=25] (2);
   \path[->] (3) edge[red,bend left=15] (1);
   \path[->] (1) edge[blue,loop above,min distance=6mm,in=110,out=160] (1);
  \end{scope}
  
  \begin{scope}[shift={(10,-2.5)}]
    \node[state] (1) {};
    \node[state] (2) at (1,0) {};
    \node[state,accepting] (3) at (.5,{-sin(60)}) {};
  
   \path[->] (1) edge[red,bend left=15] (2);
   \path[->] (2) edge[red,bend right=25] (3);
   \path[->] (2) edge[blue] (3);
   \path[->] (3) edge[blue,bend right=25] (2);
   \path[->] (3) edge[red,bend left=15] (1);
   \path[->] (1) edge[blue,loop above,min distance=6mm,in=110,out=160] (1);
  \end{scope}
  
\end{tikzpicture}
\caption{Els $13$ subgrups d'índex $3$ de $\Free[2] = \pres{a,b}{-}$}
\end{figure}
\end{exm}

Del \Cref{cor: fi iff sat} se segueix immediatament el resultat següent.
\begin{thm}
El problema de l'índex finit per a grups lliures, $\FIP(\Free)$, és computable.
\end{thm}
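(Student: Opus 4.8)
El pla és reduir tot l'enunciat a dues computacions ja garantides per la teoria desenvolupada: el càlcul de l'autòmat de Stallings i una inspecció finita d'aquest. Concretament, donada la família finita de paraules $v_1,\ldots,v_k$ (que podem suposar reduïdes, ja que la reducció és trivialment algorísmica), primer aplicaria el \Cref{thm: Stallings bijection2} per construir efectivament l'autòmat de Stallings $\stallings(H,A)$ del subgrup $H=\gen{v_1,\ldots,v_k}$: es forma l'autòmat flor $\flower(\set{v_1,\ldots,v_k})$ i s'hi realitza una seqüència de plegaments de Stallings fins a obtenir un autòmat determinista i cor. Com que $H$ és finitament generat, aquest $\stallings(H,A)$ és finit, i per tant completament manipulable algorísmicament.

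Segon, decidiria la finitud de l'índex comprovant si $\stallings(H,A)$ és saturat: només cal recórrer els (finits) vèrtexs i verificar que de cadascun en surt un $a$-arc per a cada $a\in A^{\pm}$. Aquesta és una comprovació finita i immediata. Pel \Cref{cor: fi iff sat}, $\ind{H}{\Free[A]}$ és finit si i només si $\stallings(H,A)$ és saturat i, en cas afirmatiu, l'índex val exactament $\card\Verts\,\stallings(H,A)$, un nombre que ja tenim a mà un cop construït l'autòmat.

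Tercer, per exhibir una transversal explícita usaria el fet (\Cref{prop: propietats St}\ref{item: St sat iff Sch cor}) que, quan $\stallings(H,A)$ és saturat, coincideix amb l'autòmat de Schreier $\schreier(H,A)$, els vèrtexs del qual són precisament les classes laterals per la dreta $H\backslash\Free[A]$. Llavors triaria un arbre d'expansió $T$ de $\stallings(H,A)$ i, per a cada vèrtex $\verti$, prendria $g_{\verti}=\rlab(\bp \xwalk{\scriptscriptstyle{T}} \verti)$, l'etiqueta reduïda de la $T$-geodèsica des del punt base fins a $\verti$. El conjunt $\set{g_{\verti}\st \verti\in\Verts\,\stallings(H,A)}$ és una família de representants de les classes laterals, ja que el vèrtex $\verti$ no és més que la classe $Hg_{\verti}$, i vèrtexs diferents donen classes diferents.

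La part essencialment rutinària és, doncs, la totalitat de l'argument: no preveig cap obstacle real, perquè tots els ingredients ja estan demostrats i la computabilitat de la bijecció~\eqref{eq: Stallings bijection} fa la feina pesada. L'únic punt que demana una mica de cura és justificar que les etiquetes de les $T$-geodèsiques recorren \emph{exactament} (i sense repeticions) totes les classes laterals; això se segueix de la identificació $\Verts\,\schreier(H,A)=H\backslash\Free[A]$ i del fet que, en un autòmat connex, cada vèrtex és l'extrem d'una única $T$-geodèsica des de $\bp$, que en llegeix un representant de la classe corresponent.
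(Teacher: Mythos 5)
La teva proposta és correcta i segueix essencialment el mateix camí que la demostració de l'article: computar $\stallings(H,A)$ via plegaments, decidir la finitud de l'índex amb el \Cref{cor: fi iff sat}, i obtenir la transversal prenent les etiquetes de les $T$-geodèsiques des de $\bp$ a cada vèrtex, usant la identificació $\Verts\,\schreier(H,A)=H\backslash\Free[A]$. No hi ha cap diferència substancial d'enfocament ni cap llacuna.
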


\begin{proof}
La decidibilitat de $\FIP(\Free)$ és conseqüència de \Cref{cor: fi iff sat} i de la computabilitat de $\stallings(H)$ en el cas finitament generat.

Suposem ara que $\stallings(H)$ és saturat i finit. Per a calcular una família de representats de les classes per la dreta mòdul $H$, només hem d'observar que, a l'autòmat $\stallings(H)= \core (\schreier(H))=\schreier(H)$, qualsevol camí $\walki$ amb inici $\bp$ té la forma~\eqref{eq: cami ampliat} i, per tant, compleix $\tau \walki=Hw$, on $w=\lab(\walki)\in \Free[A]$. Per tant, podem formar una família de representants de les classes laterals per la dreta mòdul $H$ simplement prenent un camí qualsevol del punt base $\bp$ a cadascun dels vèrtexs de $\stallings(H)$. Una manera sistemàtica de fer-ho és calcular un arbre d'expansió $T$ de $\stallings(H)$, i prendre les etiquetes de les $T$-geodèsiques de $\bp$ a cada vèrtex: $\{\ell(T[\bp, p]) \mid \verti \in \Verts\stallings(H)\}$ és una família de representants de les classes laterals per la dreta mòdul $H$, és a dir, 
\begin{equation}
H\backslash \Free[A] \,=\, \bigsqcup_{\mathclap{\verti \in \Verts\stallings(H)}} \ H\ell(T[\bp, p]).
\end{equation}
Òbviament, l'index $\ind{H}{\Free}$ és el cardinal (finit) d'aquest conjunt. Això resol completament el problema de l'índex finit per a grups lliures. (Si volem representants de les classes per l'esquerra, només hem d'invertir les paraules obtingudes, ja que~$(Hw)^{-1}=w^{-1}H^{-1}=w^{-1}H$.)
\end{proof}

\begin{exm}
Sigui $\Free[2]$ el grup lliure sobre $A=\{a,b\}$ i considerem el subgrup $H=\langle v_1, v_2, v_3, v_4 \rangle\leqslant \Free[2]$ generat pels elements $v_1=a$, $v_2=b^2$, $v_3=ba^2b^{-1}$, i $v_4=baba^{-1}b^{-1}$. Per decidir si $H$ és d'índex finit a $\Free[2]$ calculem $\stallings(H)$ a partir de l'autòmat flor $\flower(\{v_1, v_2, v_3, v_4\})$, tal i com hem explicat al \Cref{thm: Stallings bijection}. El resultat és l'autòmat següent:
\vspace{-10pt}
\begin{figure}[H]
\centering
  \begin{tikzpicture}[shorten >=1pt, node distance=1.2 and 1.2, on grid,auto,>=stealth']
   \node[state,accepting] (0) {};
   \node[state] (1) [right = of 0]{};
   \node[state] (2) [right = of 1]{};

   \path[->]
        (0) edge[loop left,blue,min distance=10mm,in=145,out=215]
            node[left] {\scriptsize{$a$}}
            (0);
            
    \path[->]
        (0) edge[bend left,red,thick]
            node[above]{$b$} (1);
            
    \path[->]
        (1) edge[bend left,red]
            (0);
            
    \path[->]
        (1) edge[bend left,blue,thick] (2);
            
    \path[->]
        (2) edge[bend left,blue]
            (1);
            
    \path[->]
        (2) edge[loop left,red,min distance=10mm,in=35,out=325]
            (2);

\end{tikzpicture}
\end{figure}
\vspace{-10pt}

Com que els $3$ vèrtexs de $\stallings(H)$ tenen un $a$-arc entrant i un sortint, i un $b$-arc entrant i un sortint, l'autòmat $\stallings(H)$ és saturat i, per tant, $H$ és d'índex finit a~$\Free[2]$, concretament d'índex $\card \Verts\stallings(H)=3$. Prenent com a arbre d'expansió el donat pels dos arcs dibuixats amb traç gruixut, obtenim $\{1, b, ba\}$ com a conjunt de representants de les classes per la dreta mòdul $H$, és a dir, $\Free[2]=H\sqcup Hb\sqcup H(ba)$.
\end{exm} 

Com veiem a continuació, la coneguda fórmula de Schreier, que relaciona l'índex d'un subgrup d'índex finit amb el seu rang, també es dedueix de forma transparent de la Teoria de Stallings.  

\begin{thm}[Fórmula de Schreier]
Sigui $\Free[\kappa]$ el grup lliure de rang $\kappa$, i sigui $H\leqslant \Free[\kappa]$ un subgrup d'índex finit. Aleshores, $\rk(H)-1=[\Free[\kappa] :H](\kappa-1)$. En particular, el subgrup~$H$ és finitament generat si i només si el rang ambient $\kappa$ és finit.
\end{thm}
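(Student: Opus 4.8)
El pla és calcular directament el rang de $H$ a partir del seu autòmat de Stallings, comptant-ne els arcs positius. Com que $H$ té índex finit, el \Cref{cor: fi iff sat} ens assegura que $\stallings(H)$ és saturat i que el seu nombre (finit) de vèrtexs coincideix amb l'índex; posem $n = \card\Verts \stallings(H) = \ind{H}{\Free[\kappa]}$. A més, $\stallings(H)$ és reduït, i per tant connex (és un cor), de manera que tota la demostració es reduirà a comptar arcs i aplicar l'apartat~(ii) de la \Cref{prop: S_T}.

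Primer comptaria els arcs positius. Com que $\stallings(H)$ és alhora determinista (per ser reduït) i saturat, de cada vèrtex en surt \emph{exactament} un $a$-arc per a cada lletra $a\in A$ (almenys un per la saturació, i com a molt un pel determinisme). Això dóna una bijecció entre $\Edgs^{+}\stallings(H)$ i el conjunt de parells $\Verts\stallings(H) \times A$, enviant cada arc positiu al parell (origen, etiqueta). En conseqüència, per aritmètica de cardinals, $\card\Edgs^{+}\stallings(H) = n\,\kappa$.

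Tot seguit fixaria un arbre d'expansió $T$ de $\stallings(H)$ (existeix per la connexió) i aplicaria l'apartat~(ii) de la \Cref{prop: S_T}: el conjunt $S_T$ és una base de $H = \gen{\stallings(H)}$, de manera que $\rk(H) = \card S_T = \card\big(\Edgs^{+}\stallings(H) \setmin \Edgs T\big)$. Com que $\stallings(H)$ té $n$ vèrtexs amb $n$ \emph{finit}, qualsevol arbre d'expansió seu té exactament $n-1$ arestes, és a dir $\card\Edgs T = n-1$. Combinant-ho amb el recompte anterior obtinc
\[
\rk(H) \,=\, n\,\kappa - (n-1) \,=\, n(\kappa-1) + 1,
\]
això és, $\rk(H) - 1 = n(\kappa-1) = \ind{H}{\Free[\kappa]}(\kappa-1)$, tal com volíem. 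L'apartat final se'n segueix per aritmètica de cardinals: si $\kappa$ és finit, aleshores $\rk(H) = 1 + n(\kappa-1)$ és finit i $H$ és finitament generat; si $\kappa$ és infinit, aleshores $\kappa-1 = \kappa$ i $n(\kappa-1) = \kappa$ (perquè $1\leq n < \infty$), d'on $\rk(H)$ és infinit i $H$ no és finitament generat.

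L'obstacle principal no és conceptual sinó de rigor, i rau precisament en el cas $\kappa$ infinit: aleshores $\stallings(H)$ té infinits arcs (tot i conservar $n$ vèrtexs), de manera que la fórmula de recompte \emph{finit} de l'apartat~(iii) de la \Cref{prop: S_T} no s'aplica literalment. Per això convé recolzar-se en la caracterització via arbre d'expansió de l'apartat~(ii) --- vàlida sense cap hipòtesi de finitud sobre l'autòmat --- i fer tot el recompte amb aritmètica de cardinals, aprofitant que el conjunt de vèrtexs (i per tant l'arbre d'expansió) és finit. Un cop aclarit aquest punt, la resta és una verificació directa.
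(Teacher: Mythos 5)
La teva demostració és correcta i segueix essencialment el mateix camí que la de l'article: un arbre d'expansió $T$, la base $S_T$ de la \Cref{prop: S_T}.\ref{item: S_T lliure}, i el recompte $\card\Edgs^{+}\stallings(H) = \kappa\,\card\Verts\stallings(H)$ (que l'article obté sumant graus via el \emph{handshaking lemma}, i tu mitjançant la bijecció origen--etiqueta: dues formulacions del mateix fet); a més, el teu tractament unificat del cas $\kappa$ infinit amb aritmètica de cardinals és fins i tot una mica més polit que el del text, que el despatxa a part.

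L'única esmena necessària és la primera cita: el \Cref{cor: fi iff sat} té com a hipòtesi que $H$ sigui finitament generat, cosa que aquí no pots suposar (és, precisament, part de la conclusió que vols demostrar). Cal invocar en el seu lloc la \Cref{prop: propietats St}\ref{item: fi iff St} --- com fa l'article ---, que sense cap hipòtesi de generació finita dóna que l'índex és finit si i només si $\stallings(H)$ és saturat i té un nombre finit de vèrtexs, juntament amb la \Cref{prop: propietats St}\ref{item: St sat iff Sch cor} i la \Cref{def: Sch} per justificar la identificació $\ind{H}{\Free[\kappa]} = \card\Verts\,\schreier(H) = \card\Verts\,\stallings(H)$. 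Amb aquest canvi de cita, la resta de l'argument queda intacte.
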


\begin{proof}
Per la \Cref{prop: propietats St}\ref{item: fi iff St}, si $H$ és d'índex finit, llavors $\stallings(H)$ és $2\kappa$-regular i $\card \Verts\stallings(H)<\infty$. Per tant, $\rk(\stallings(H))<\infty$ (\ie hi ha un nombre finit d'arcs fora de qualsevol arbre d'expansió) si i només si $\kappa<\infty$. En particular, si $\kappa =\infty$ llavors $\rk(H)=\infty$ i ja hem acabat. 

Suposem ara $\kappa <\infty$. Sabem que $H$ és finitament generat i, per tant, $\stallings(H)$ és finit. Sigui $T$ un arbre d'expansió de $\stallings(H)$; tal i com s'ha demostrat al final de la prova del~\Cref{thm: Stallings bijection}, $H$ té una base de $\card(\Edgs^+\stallings(H)\setminus \Edgs T)$ elements; per tant, 
 \begin{align*}
\rk (H)-1 &\,=\,\card(\Edgs^+\stallings(H)\setminus \Edgs T)-1 \,=\, \card \Edgs^+\stallings(H)-\card \Edgs T-1 \\ &\,=\, \card\Edgs^+\stallings(H)-\card \Verts T \,=\, \kappa \card\Verts\stallings(H)-\card\Verts\stallings(H) \\ &\,=\, \ind{H}{\Free[\kappa]}(\kappa-1),
 \end{align*}
on la penúltima igualtat prové de $2\kappa \,\card\Verts\stallings(H)=2\, \card \Edgs^+\stallings(H)$, obtingut sumant els graus de tots els vèrtexs. 
\end{proof}

Combinant les caracteritzacions gràfiques d'índex finit (\Cref{cor: fi iff sat}) i normalitat (\Cref{prop: normal iff}), arribem a una mena de recíproc del lem de Schreier per a subgrups normals no trivials del grup lliure.

\begin{cor}
Un subgrup normal no trivial $H$ de $\Fn$ és finitament generat si i només si té índex finit. \qed
\end{cor}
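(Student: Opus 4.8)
El pla és tractar les dues implicacions per separat i reduir-les, en ambdós casos, a propietats geomètriques de l'autòmat de Stallings $\stallings(H)$, combinant la caracterització gràfica de l'índex finit (\Cref{cor: fi iff sat}, \Cref{prop: propietats St}\ref{item: fi iff St}) amb la de la normalitat (\Cref{prop: normal iff}).

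Per la implicació $[\Leftarrow]$ (índex finit $\Rightarrow$ finitament generat) observo d'entrada que no cal ni la normalitat ni la no trivialitat de $H$: n'hi ha prou amb que el rang ambient $n$ sigui finit. De fet, aquesta direcció ja està continguda a la clàusula ``en particular'' de la Fórmula de Schreier, que per a $\Fn$ amb $n<\infty$ garanteix que tot subgrup d'índex finit és finitament generat. Alternativament, es pot argumentar directament amb la teoria de Stallings: si $\ind{H}{\Fn}<\infty$, la \Cref{prop: propietats St}\ref{item: fi iff St} assegura que $\stallings(H)$ és saturat i amb $\card\Verts\stallings(H)<\infty$, per tant \emph{finit}; i aleshores, com que $\stallings(H)$ és reduït, la \Cref{prop: S_T}(iii) dóna que $H=\gen{\stallings(H)}$ és finitament generat.

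Per la implicació $[\Rightarrow]$ (normal no trivial i finitament generat $\Rightarrow$ índex finit) és on entren en joc, de manera essencial, les dues hipòtesis extra sobre $H$. Primer, com que $H$ és finitament generat, el \Cref{thm: Stallings bijection2} em diu que $\stallings(H)$ és un autòmat reduït \emph{finit}. Segon, com que $H\neq\trivial$ és normal a $\Fn$, puc aplicar la \Cref{prop: normal iff} (equivalència entre (a) i (d)) per concloure que $\stallings(H)$ és vèrtex-transitiu i, en particular, \emph{saturat}. Combinant ara les dues observacions --- $\stallings(H)$ saturat i amb $\card\Verts\stallings(H)<\infty$ --- la \Cref{cor: fi iff sat} (o de nou la \Cref{prop: propietats St}\ref{item: fi iff St}) proporciona $\ind{H}{\Fn}=\card\Verts\stallings(H)<\infty$, tal com volíem.

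La part més delicada no és cap càlcul, sinó vigilar que cada hipòtesi s'usa al lloc just. La no trivialitat és imprescindible per poder invocar la \Cref{prop: normal iff}, enunciada només per a $H\neq\trivial$, i descarta el cas degenerat $H=\trivial$, que és normal i finitament generat (de rang $0$) però d'índex infinit a $\Fn$ per a $n\geq 1$; de manera anàloga, la finitud del rang ambient és exactament el que fa funcionar la direcció $[\Leftarrow]$. Per acabar, val la pena comprovar la coherència amb els exemples ja vistos: $\normalcl{b}$ i el commutador $\Comm{\Free_{\set{a,b}}}$ són subgrups normals no trivials, de rang infinit i d'índex infinit, precisament com exigeix la contrapositiva de l'enunciat.
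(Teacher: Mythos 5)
La teva proposta és correcta i segueix essencialment el mateix camí que el paper: combinar la caracterització gràfica de la normalitat (\Cref{prop: normal iff}, que dóna la saturació de $\stallings(H)$ per a $H$ normal no trivial) amb la de l'índex finit (\Cref{cor: fi iff sat}), juntament amb la finitud de $\stallings(H)$ quan $H$ és finitament generat; la direcció recíproca via la Fórmula de Schreier també coincideix. Les observacions sobre on s'usa cada hipòtesi (la no trivialitat per poder invocar la \Cref{prop: normal iff}, i la finitud del rang ambient per a la implicació $[\Leftarrow]$) són encertades i completen correctament l'argument que el paper deixa implícit.
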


\begin{cor}
Un subgrup normal no trivial $H$ de $\Free[\aleph_0]$ té sempre rang infinit. \qed
\end{cor}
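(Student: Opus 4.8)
The plan is to reduce the statement entirely to the geometry of $\stallings(H,A)$, where $A$ is a fixed countable basis of $\Free[\aleph_0]$, and then to exploit the single fact that $A$ is \emph{infinite}. First I would feed the hypotheses into \Cref{prop: normal iff}: since $H \neq \trivial$ is normal in $\Free[A]$, its Stallings automaton $\stallings(H,A)$ is (vertex-transitive and) saturated. Saturation is the only structural consequence of normality that the argument will use.

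Next I would count the outgoing arcs at a single vertex. By \Cref{prop: propietats St}\ref{item: St inv,det,cor} the automaton $\stallings(H,A)$ is deterministic, and combined with saturation this means that every vertex $\verti$ emits \emph{exactly one} $a$-arc for each letter $a \in A$. Since $\card A = \aleph_0$ and $\stallings(H,A)$ contains at least the base vertex $\bp$, this already forces $\card \Edgs^{+}\stallings(H,A) = \aleph_0$; in particular $\stallings(H,A)$ is an infinite automaton.

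Finally I would invoke \Cref{prop: S_T}(iii): because $\stallings(H,A)$ is reduced, the subgroup $H = \gen{\stallings(H,A)}$ is finitely generated if and only if $\stallings(H,A)$ is finite. As we have just shown it to be infinite, $H$ is not finitely generated, and hence $\rk(H) = \infty$, which is exactly the claim.

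I do not expect a genuine obstacle: once phrased through Stallings automata, the whole proof is a one-line consequence of saturation over an infinite alphabet. The only points deserving care are (a) checking that \Cref{prop: normal iff} is being applied in the infinite-rank setting, which is legitimate since it is stated for an arbitrary $A$, and (b) noting that the hypothesis $H \neq \trivial$ serves only to place us within the scope of that proposition, the saturation conclusion being what actually does the work. A slightly heavier alternative would be to split on the index of $H$ and use the Schreier formula when $[\Free[\aleph_0]:H]<\infty$, but the infinite-index case would then demand a separate argument, so the uniform saturation argument above is preferable.
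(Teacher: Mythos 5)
Your proof is correct, and it hinges on the same pivot as the paper's (implicit) argument: by \Cref{prop: normal iff}, a nontrivial normal subgroup of $\Free[A]$ has a saturated Stallings automaton, and saturation is incompatible with finite generation when $\card A=\aleph_0$. Where you differ is in how that incompatibility is established. The paper, as announced in the paragraph preceding the corollary, routes through the index machinery: if $H$ were finitely generated, saturation plus \Cref{cor: fi iff sat} would force $[\Free[\aleph_0]:H]<\infty$, and the Schreier formula would then give $\rk(H)-1=[\Free[\aleph_0]:H]\,(\aleph_0-1)=\infty$, a contradiction. You short-circuit that detour: determinism (\Cref{prop: propietats St}\ref{item: St inv,det,cor}) together with saturation means every vertex carries exactly one outgoing $a$-arc for each of the $\aleph_0$ letters, so $\stallings(H,A)$ has infinitely many edges, hence is infinite, and \Cref{prop: S_T}(iii) concludes that $H$ is not finitely generated. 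Your route is more elementary --- it never mentions the index or the Schreier formula and makes transparent that the only obstruction is the infinite alphabet --- while the paper's route has the expository virtue of exhibiting the corollary as ``a kind of converse of Schreier's lemma'', which is precisely the point of that passage. One side remark: your concern that an index-based argument would require a separate treatment of the infinite-index case is unfounded, since the paper's version is a single argument by contradiction (finite generation is assumed, finite index is \emph{derived}), not a case split; but as you do not rely on that alternative, this does not affect your proof.
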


\medskip

Acabem aquesta secció amb una demostració gràfica particularment neta d'un resultat clàssic de Marshall Hall Jr., que mostra especialment bé la potència de l'enfoc geomètric per obtenir resultats algebraics.

Recordem que donats $H$ i $K$ subgrups d'un grup lliure $\Free$, es diu que $H$ és \defin{factor lliure} de $K$, i ho designem per $H \leqslant\ff K$, si una base (i, per tant, totes les bases) de $H$ es pot ampliar a una base de $K$. Observeu també que si $\Atii$ és un subautòmat de~$\Ati$, aleshores $\gen{\Atii}$ és un factor lliure $\gen{\Ati}$ (ampliant un arbre d'expansió de $\Atii$ a un arbre d'expansió de $\Ati$).

Un resultat ben sabut d'àlgebra lineal diu que, donat un espai vectorial $E$ i un subespai seu $F\leqslant E$, tota base de $F$ es pot ampliar a una base de $E$ (tècnicament, això correspon al fet que $F$ és un sumand directe de $E$, noció de la qual els factors lliures en són la generalització a ambients no commutatius). El corresponent resultat en un grup lliure és fals: sabem que $\Free[n]$ té subgrups $H$ de rang superior a $n$ (inclús de rang infinit) que, clarament, no en seran factors lliures, $H\not\leqslant\ff \Free[n]$. El clàssic teorema de Marshall Hall ens diu que un resultat d'aquesta naturalesa sí que és cert, si ens restringim a subgrups finitament generats, i canviem $\Free[n]$ per un subgrup d'índex finit adequat.

\begin{thm}[\citenr{hall_jr_topology_1950}] \label{thm: Hall vff}
Si $H$ és un subgrup finitament generat de $\Free[A]$, aleshores $H$ és un factor lliure d'un subgrup d'índex finit de $\Free[A]$; és a dir:
 \begin{equation} \label{eq: Hall vff}
H\leqslant\fg \Free \Imp \exists K\st H\leqslant\ff K\leqslant\fin \Free.
 \end{equation}
\end{thm}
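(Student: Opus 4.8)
The plan is to realize $H$ inside a suitable finite-index subgroup entirely at the level of Stallings automata, by \emph{saturating} $\stallings(H)$ without ever creating a new vertex. First I would record what the theory already gives: since $H\leqslant\fg\Free[A]$, by \Cref{thm: Stallings bijection2} its Stallings automaton $\Ati=\stallings(H)$ is \emph{finite}, and by \Cref{prop: propietats St}\ref{item: St inv,det,cor},\ref{item: <St>=H} it is a connected, involutive, deterministic, core $A$-automaton with $\gen{\Ati}=H$. If $\Ati$ happens to be already saturated, then $H$ itself has finite index by \Cref{cor: fi iff sat} and we may simply take $K=H$; so the real content lies in the case where $\Ati$ has $a$-deficient vertices.

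The heart of the argument is a finite matching, carried out one letter at a time. Fix $a\in A$ and consider the positive $a$-arcs of $\Ati$. By determinism together with its involutive reformulation (no two distinct positive $a$-arcs share an origin, and none share an endpoint), these arcs define a \emph{partial injection} $\sigma_a$ on the finite set $V=\Verts\Ati$. Its domain $D_a$ (vertices with an outgoing $a$-arc) and its image $I_a$ (vertices with an incoming $a$-arc) have the same cardinality, so the complements $V\setminus D_a$ (the $a$-deficient vertices) and $V\setminus I_a$ have equal --- and, crucially, \emph{finite} --- cardinality. I would then pick any bijection between $V\setminus D_a$ and $V\setminus I_a$ and add the corresponding positive $a$-arcs (together with their formal inverses), turning $\sigma_a$ into an honest permutation of $V$. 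Doing this independently for every $a\in A$ yields an $A$-automaton $\widehat{\Ati}\supseteq\Ati$ on the same vertex set that is now saturated, still connected (only arcs were added), and still involutive and deterministic, since each letter now acts by a permutation and hence both the letter and its inverse read deterministically.

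To finish I would read the two required conclusions off $\widehat{\Ati}$. Put $K=\gen{\widehat{\Ati}}$. As $\widehat{\Ati}$ is involutive, deterministic, saturated and connected, \Cref{cor: SchSt}(i) identifies it (up to isomorphism) with $\schreier(K,A)$, whence $\ind{K}{\Free[A]}=\card\Verts\schreier(K,A)=\card V<\infty$, i.e.\ $K\leqslant\fin\Free[A]$. On the other hand $\Ati=\stallings(H)$ is a subautomaton of $\widehat{\Ati}$ containing the base point $\bp$ and recognizing $H$, so the observation preceding the statement applies: extending a spanning tree of $\Ati$ to one of $\widehat{\Ati}$ exhibits a basis of $H$ as a subset of the resulting basis of $K$, giving $H=\gen{\Ati}\leqslant\ff\gen{\widehat{\Ati}}=K$. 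Chaining the two yields exactly $H\leqslant\ff K\leqslant\fin\Free[A]$.

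I expect the one genuinely delicate point to be the matching step, and specifically its reliance on the \emph{finiteness} of $V$: it is precisely the finite generation of $H$ (hence $\card\Verts\stallings(H)<\infty$) that forces $\card(V\setminus D_a)=\card(V\setminus I_a)$ and lets a partial injection be completed to a permutation --- this is the combinatorial core that Marshall Hall's original argument also isolates, and exactly where the naive linear-algebra analogue would fail for non-finitely-generated $H$. Everything else is bookkeeping with results already in hand. I would also remark that the construction survives verbatim when $A$ is infinite: $\widehat{\Ati}$ then acquires infinitely many arcs but still only $\card V$ vertices, so $K$ still has finite index while the (finite) basis of $H$ remains a subset of the basis of $K$.
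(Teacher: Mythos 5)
Your proposal is correct and follows essentially the same route as the paper: saturate the finite automaton $\stallings(H)$ by pairing, letter by letter, the vertices missing an outgoing $a$-arc with those missing an incoming one (your partial-injection count is exactly the paper's handshaking-type observation that $\defc[a]{\Ati}=\defc[a^{-1}]{\Ati}$), and then read off $K\leqslant\fin\Free[A]$ from saturation and $H\leqslant\ff K$ from the subautomaton/spanning-tree observation. The only cosmetic difference is that you justify the counting fact explicitly and treat the already-saturated case separately, which the paper dispatches as the trivial case $K=H$.
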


\begin{proof}
Si $H$ ja és d'índex finit a $\Free[A]$, el resultat és obvi. En cas contrari, sabem que $\stallings(H)$ és un $A$-autòmat finit i insaturat. Ara és suficient adonar-se del següent fet purament geomètric:\footnote{Es tracta d'una adaptació natural del ben conegut \emph{handshaking lemma}.} en un $A$-autòmat involutiu i finit, per a tot $a\in A$, $\defc[a]{\Ati}=\defc[a^{-1}]{\Ati}$. Per tant, només caldrà saturar $\stallings(H)$ aparellant, per cada $a\in A$, els vèrtexs $a$-deficients amb els $(a^{-1})$-deficients, i afegint un $a$-arc dels primers als segons. L'autòmat $\Ati$ obtingut és finit i saturat (per tant reconeix un subgrup d'índex finit) i conté $\stallings(H)$ com a subautòmat. Per tant, $H = \gen{\stallings(H)} \leqslant\ff \gen{\Ati}$, tal i com volíem demostrar.
Noteu que l'aparellament de vèrtexs deficients de $\stallings(H)$ es pot fer, en general, de diverses maneres i per tant el subgrup obtingut $K$ no és únic, en general.
\end{proof}

\begin{exm}
Sigui $H=\gen{a^{-1} b,a b^2,a^{-2} b^{4},a^{-2}baba^{-1}b^{-1}a^2}\leqslant \Free[\set{a,b}]$. Per a obtenir un subgrup d'índex finit de $\Free[\{a,b\}]$ del qual $H$ en sigui factor lliure, és suficient calcular $\stallings(H)$ (en traç continu a la \Cref{fig: Marshall Hall}) i completar-lo a un autòmat saturat (per exemple, tal com s'indica amb traç discontinu). L'autòmat obtingut reconeix un subgrup $K$ d'índex 6 a $\Free[\{a,b\}]$ que conté $H$ com a factor lliure.
\begin{figure}[H] 
  \centering
  \begin{tikzpicture}[shorten >=1pt, node distance=1.2 and 1.2, on grid,auto,>=stealth']
   \node[state, accepting] (0) {};
   \node[state] (1) [right = of 0]{};
   \node[state] (2) [right = of 1]{};
   \node[state] (3) [right = of 2]{};
   \node[state] (4) [right = of 3]{};
   \node[state] (5) [below = 1 of 2]{};

   \path[->]
        (0) edge[red,dashed, bend left = 50] (2)
            edge[blue, bend right = 25] (5);

    \path[->]
        (1) edge[red, bend left= 25]
        node[below] {\scriptsize{$b$}}
            (0)
            edge[blue, bend right= 25]
         node[above] {\scriptsize{$a$}}
            (0);

    \path[->]
        (2) edge[blue]
            (1)
            edge[red]
            (3);

    \path[->]
        (3) edge[blue] (4)
            edge[red, bend left] (5);
            
    \path[->]
        (4) edge[loop,red,min distance=10mm,in=35,out=-35](4)
        (4) edge[blue,dashed, bend right = 50]
            (2);
            
    \path[->]
        (5) edge[red]
            (1)
            edge[blue,dashed, bend left]
            (3);
\end{tikzpicture}
\caption{Exemple del \Cref{thm: Hall vff} (Marshall Hall Jr.)}
\label{fig: Marshall Hall}
\end{figure}
\end{exm}

\subsection{El problema de la intersecció} \label{ssec: SIP}

En aquesta darrera secció usarem els autòmats de Stallings per resoldre el problema proposat a l'\Cref{exe: interseccio}: el problema de la intersecció de subgrups. Recordem que existeixen grups $G$ contenint parelles de subgrups $H,K\leqslant G$ finitament generats amb intersecció $H\cap K$ no finitament generada (a \cite{delgado_intersection_2022} i \cite{delgado_intersection_2018} trobareu exemples relativament senzills de grups d'aquest tipus; analitzats, a més, usant generalitzacions de les tècniques gràfiques que expliquem en aquesta secció). \citeauthor{howson_intersection_1954} va estudiar aquest fenomen en el grup lliure i el seu nom s'usa per referir-se als grups que no tenen aquest comportament ``estrany'': un grup $G$ es diu que té la \defin{propietat de Howson} (o que és \defin[Howson group]{Howson}) si la intersecció de qualssevol dos subgrups finitament generats $H,K\leqslant G$ és sempre finitament generada (per inducció, el mateix és cert llavors per interseccions d'una quantitat finita de subgrups). Per a un grup qualsevol $G = \pres{A}{R}$ té sentit, doncs, plantejar-se el problema de decisió següent:

\begin{named}[Problema de la intersecció de subgrups a $G = \pres{A}{R}$, $\SIP(G)$]
Donades \\ dues famílies finites de paraules
en els generadors de $G$,

$u_1,\ldots,u_k;v_1,\ldots,v_l \in (A^{\pm})^*$,
decidir si la intersecció dels corresponents subgrups generats
$\gen{u_1,\ldots,u_k} \cap \gen{v_1,\ldots,v_l}$
(a $G$) és finitament generada i, en cas afirmatiu, calcular-ne un sistema de generadors. 
\end{named}

A l'article \parencite{howson_intersection_1954}, Howson va demostrar que els grups lliures satisfan la propietat que porta el seu nom. A continuació, veurem com els autòmats de Stallings aporten una demostració gràfica molt neta i transparent (i, a més, constructiva!) d'aquest fet remarcable: de la pròpia demostració en sortirà una manera efectiva de calcular una base per a la intersecció, i una fita superior per al rang de $H\cap K$ en termes dels rangs de $H$ i de $K$: la famosa conjectura de Hanna Neumann, recentment resolta, de la qual parlarem al final de la secció.  


\begin{defn} \label{def: product of automata}
Siguin $\Ati_{\hspace{-1pt}1}=(\Verts_1, \Edgs_1, \init_1, \term_1, \ell_1, \bp_1)$ i 
$\Ati_{\hspace{-1pt}2}=(\Verts_2, \Edgs_2, \init_2, \term_2, \ell_2, \bp_2)$ dos $A$-autòmats. El \defin{producte}\footnote{o \emph{pull-back}, en terminologia categòrica.} de $\Ati_{\hspace{-1pt}1}$ i $\Ati_{\hspace{-1pt}2}$, designat per $\Ati_{\hspace{-1pt}1} \times \Ati_{\hspace{-1pt}2}$, és el $A$-autòmat amb: 
 \begin{itemize}[beginpenalty=10000]
\item conjunt de vèrtexs el producte cartesià $\Verts_{\!1} \times \Verts_{\!2}$;
\item un arc $(\verti_1,\verti_2) \xarc{a\,} (\vertii_1,\vertii_2)$ per a cada parell d'arcs $\verti_1 \xarc{a\,} \vertii_1$ a~$\Ati_{\hspace{-1pt}1}$, i $\verti_2 \xarc{a\,} \vertii_2$ a~$\Ati_{\hspace{-1pt}2}$ amb la mateixa etiqueta $a\in A$;
\item les funcions naturals $\iota, \tau, \ell$ com a funcions d'incidència i d'etiquetatge; 
\item vèrtex base $\bp=(\bp_1, \bp_2)$;
 \end{itemize}
vegeu la~\Cref{fig doubly enriched arc}.
\end{defn}
\begin{figure}[h]
     \centering
 \begin{tikzpicture}[shorten >=1pt, node distance=1.25cm and 1.25cm, on grid,auto,>=stealth']
 \newcommand{\dx}{1}
\newcommand{\dy}{1}
   \node (00) {};
   \node[state,accepting] (0) [below = \dy*0.5 of 00]{};
   \node[] (bp1) [below right =0.1 and 0.15 of 0] {$\scriptscriptstyle{1}$};
   \node[state,accepting] (0') [right = \dx*0.5 of 00]{};
   \node[] (bp1) [below right =0.1 and 0.15 of 0'] {$\scriptscriptstyle{2}$};
   \node[state,accepting,blue] (00') [right = \dx*0.5 of 0]{};
   \node[state] (1) [below = \dy*0.75 of 0] {};
   \node[state] (2) [below = \dy of 1] {};
   \node[] (G') [left = 0.8 of 1] {$\scriptstyle{\Ati_{\hspace{-1pt}1}}$};
   \node[state] (1') [right = \dx*0.75 of 0'] {};
   \node[state] (2') [right = \dx of 1'] {};
   \node[] (G) [right = 0.7 of 2'] {$\scriptstyle{\Ati_{\hspace{-1pt}2}}$};
   \node[state,blue] (11') [below right = \dy*0.75 and \dx*0.75 of 00'] {};
   \node[state,blue] (22') [below right = \dy and \dx of 11'] {};
     \node[blue] (GG') [right = 0.9 of 22'] {$\scriptstyle{\Ati_{\hspace{-1pt}1} \times \Ati_{\hspace{-1pt}2}}$};

   \path[dashed]
       (0') edge[] (1');
   \path[dashed]
       (0) edge[] (1);
   \path[dashed,blue]
       (00') edge[] (11');
   \path[->]
       (1') edge[]
             node[pos=0.5,above=-.1mm] {$a$}
             (2');
   \path[->]
         (1) edge[]
             node[pos=0.5,left=-.1mm] {$a$}
             (2);
   \path[->,blue]
         (11') edge[]
             node[pos=0.5,above right] {$a$}
             (22');
 \end{tikzpicture}
     \caption{Esquema del producte (en blau) de dos $A$-autòmats (en negre)}
     \label{fig doubly enriched arc}
 \end{figure}
Per tant, $\Ati_{\hspace{-1pt} 1}\times \Ati_{\hspace{-1pt} 2}$ no és res més que el producte tensorial o categòric\footnote{vegeu \url{https://en.wikipedia.org/wiki/Graph_product}.} de $\Ati_{\hspace{-1pt} 1}$ i $\Ati_{\hspace{-1pt} 2}$, \emph{respectant les etiquetes dels arcs} (\ie no aparellant mai arcs amb etiquetes diferents). Amb un exemple s'entén clarament el funcionament d'aquesta operació ben natural entre $A$-autòmats.  

\begin{exm}\label{exe: producte}
Considerem els subgrups $H=\gen{b, a^3, a^{-1}bab^{-1}a}$ i $K=\gen{ab,a^3, a^{-1}ba}$ de $\Free[\set{\!a,b\!}]$ de l'\Cref{exe: interseccio}. A la~\Cref{fig: pullback} podem veure l'autòmat $\stallings(H)$ dibuixat a la part superior (en format horitzontal), l'autòmat $\stallings(K)$ dibuixat a la part esquerra (en format vertical), i el producte $\stallings(H)\times \stallings(K)$ a la part central (i reorganitzat, a la dreta). 

\begin{figure}[H]
\centering
\begin{tikzpicture}[shorten >=1pt, node distance=1.2cm and 2cm, on grid,auto,auto,>=stealth']

\newcommand{\dx}{1.3}
\newcommand{\dy}{1.2}
\node[] (0)  {};
\node[state,accepting] (a1) [right = \dy-1/3 of 0] {};
\node[state] (a2) [right = \dx of a1] {};
\node[state] (a3) [right = \dx of a2] {};
\node[state] (a4) [right = \dx of a3] {};

\node[state,accepting] (b1) [below = 2*\dy/3 of 0] {};
\node[state] (b2) [below = \dy of b1] {};
\node[state] (b3) [below = \dy of b2] {};

\foreach \y in {1,...,4}
\foreach \x in {1,...,3} 
\node[state] (\x\y) [below right = (\x-1/3)*\dy and (\y-1/3)*\dx of 0] {};
    
\node[state,accepting] () [below right = 2*\dy/3 and 2*\dx/3 of 0] {};

\path[->]
     (a1) edge[red,loop above,min distance=7mm,in=205,out=155]
     node[above = 0.1] {$b$}
     (a1)
     (a1) edge[blue] node[below] {$a$} (a2)
     (a2) edge[blue] (a3)
     (a3) edge[blue,bend right=30] (a1)
     (a3) edge[red] (a4)
     (a4) edge[blue,loop above,min distance=7mm,in=25,out=-25] (a4)

     (b1) edge[blue,bend right=25] (b2)
     (b2) edge[red,bend right=25] (b1)
     (b2) edge[blue] (b3)
     (b3) edge[blue,bend left=40] (b1) 
     (b3) edge[red,loop left,min distance=7mm,in=295,out=245] (b3);
             
 \path[->]   
     (11) edge[blue] (22)
     (22) edge[blue] (33) 
     (33) edge[blue, bend right=25] (11)
            
     (21) edge[blue] (32)
     (32) edge[blue] (13)
     (13) edge[blue] (21)
        
     (12) edge[blue] (23)
     (23) edge[blue] (31) 
     (31) edge[blue] (12)
            
     (14) edge[blue] (24)
     (24) edge[blue] (34)
     (34) edge[blue, bend right=25] (14)
            
     (21) edge[red] (11)
     (23) edge[red] (14)
     (33) edge[red] (34)            
     (31) edge[red,loop left,min distance=7mm,in=250,out=200] (31);
    
 \node[] (i) [right = 1 of 24] {$=$};  
 \node[state,accepting] (1) [above right = 0.7 and 2.25 of i] {};
 \node[state] (1a) [below left = \dy and \dx/3 of 1] {};
 \node[state] (1b) [below right = \dy and \dx/3 of 1] {};
 \node[state] (2) [right = \dx*0.7 of 1b] {};
 \node[state] (2a) [above left = \dy and \dx/3 of 2] {};
 \node[state] (2b) [above right = \dy and \dx/3 of 2] {};
 \node[state] (3) [right = \dx*0.7 of 2b] {};
 \node[state] (3a) [below left = \dy and \dx/3 of 3] {};
 \node[state] (3b) [below right = \dy and \dx/3 of 3] {};
 \node[state] (0) [left = \dx*0.7 of 1a] {};
 \node[state] (0a) [above left = \dy and \dx/3 of 0] {};
 \node[state] (0b) [above right = \dy and \dx/3 of 0] {};

 \path[->]
     (0b) edge[red,thick] (1)
     (1b) edge[red,thick] (2)
     (3) edge[red,thick] (2b)
    
     (0b) edge[blue,thick] (0)
     (0a) edge[blue,thick] (0b)
     (0) edge[blue] (0a)
    
     (1b) edge[blue,thick] (1)
     (1a) edge[blue,thick] (1b)
     (1) edge[blue] (1a)
    
     (2) edge[blue,thick] (2b)
     (2b) edge[blue,thick] (2a)
     (2a) edge[blue] (2)
    
     (3) edge[blue,thick] (3b)
     (3b) edge[blue,thick] (3a)
     (3a) edge[blue] (3)
     (3b) edge[red,loop,min distance=7mm,in=25,out=-25] (3b);
\end{tikzpicture}
\vspace{-5pt}
\caption{
Producte dels autòmats $\stallings(H)$ i $\stallings(K)$}
\label{fig: pullback}
\end{figure}
\end{exm}

La proposició següent recull les propietats principals d'aquest producte d'autòmats, que el relacionen molt de prop amb la intersecció de subgrups. 

\begin{prop} \label{prop: prod props}
Siguin $\Ati_{\hspace{-1pt}1}=(\Verts_1, \Edgs_1, \init_1, \term_1, \ell_1, \bp_1)$ i $\Ati_{\hspace{-1pt}2}=(\Verts_2, \Edgs_2, \init_2, \term_2, \ell_2, \bp_2)$ dos $A$-autòmats. Aleshores, 
 \begin{enumerate}[ind]
\item \label{item: deg prod}
per a tot $(\verti,\vertii) \in \Ati_{\hspace{-1pt}1}\times \Ati_{\hspace{-1pt}2}$, tenim 
$0\leqslant \deg(\verti,\vertii)\leqslant \min \{\deg(\verti),\, \deg(\vertii)\}$;
\item \label{item: int det}
si $\Ati_{\hspace{-1pt}1}$ i $\Ati_{\hspace{-1pt}2}$ són deterministes llavors $\Ati_{\hspace{-1pt}1}\times \Ati_{\hspace{-1pt}2}$ també és determinista, i, en tal cas, $\gen{\Ati_{\hspace{-1pt}1}\times \Ati_{\hspace{-1pt}2}}=\gen{\Ati_{\hspace{-1pt}1}}\cap \gen{\Ati_{\hspace{-1pt}2}}$;
\item \label{item: prod not connected}
encara que $\Ati_{\hspace{-1pt}1}$ i $\Ati_{\hspace{-1pt}2}$ siguin connexos, $\Ati_{\hspace{-1pt}1}\times \Ati_{\hspace{-1pt}2}$ pot no ser connex;
\item \label{item: prod no core}
encara que $\Ati_{\hspace{-1pt}1}$ i $\Ati_{\hspace{-1pt}2}$ siguin cor, $\Ati_{\hspace{-1pt}1}\times \Ati_{\hspace{-1pt}2}$ pot no ser cor. 
 \end{enumerate}
\end{prop}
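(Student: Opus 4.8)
El pla és reduir els quatre apartats a l'estudi de les dues projeccions naturals $\pi_1\colon \Ati_1\times\Ati_2 \to \Ati_1$ i $\pi_2\colon \Ati_1\times\Ati_2\to\Ati_2$, donades per $(\verti,\vertii)\mapsto\verti$ (\resp $\mapsto\vertii$) i que envien l'arc $(\verti_1,\verti_2)\xarc{a\,}(\vertii_1,\vertii_2)$ a l'arc $\verti_1\xarc{a\,}\vertii_1$ de $\Ati_1$ (\resp $\verti_2\xarc{a\,}\vertii_2$ de $\Ati_2$). Són morfismes de $A$-dígrafs etiquetats, i la seva propietat fonamental és l'\emph{elevació de camins}: tot camí de $\Ati_1\times\Ati_2$ es projecta en dos camins amb la mateixa etiqueta (un a cada factor) i, recíprocament, en el cas determinista una parella de camins amb la mateixa etiqueta que surten de $\verti_1$ i $\verti_2$ s'eleva a un únic camí de $\Ati_1\times\Ati_2$ (el que ressegueix les parelles de vèrtexs successives). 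L'apartat~\ref{item: deg prod} és aleshores immediat: tot arc incident a $(\verti,\vertii)$ es projecta en un arc incident a $\verti$ i en un incident a $\vertii$, i queda determinat per aquesta parella; com que (quan els factors són deterministes, que és el cas d'interès) a cada vèrtex de $\Ati_1$ i de $\Ati_2$ no hi ha dos arcs amb la mateixa etiqueta, la restricció de $\pi_1$ i $\pi_2$ a l'estrella de $(\verti,\vertii)$ és injectiva, d'on $\deg(\verti,\vertii)\leqslant\min\{\deg\verti,\deg\vertii\}$; la fita inferior és trivial, ja que $(\verti,\vertii)$ pot quedar aïllat tot i no estar-ho $\verti$ ni $\vertii$.

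Per a l'apartat~\ref{item: int det}, el determinisme del producte surt de la mateixa definició: dos arcs amb la mateixa etiqueta sortint de $(\verti,\vertii)$ es projectarien en arcs amb la mateixa etiqueta sortint de $\verti$ (coincidents pel determinisme de $\Ati_1$) i de $\vertii$ (coincidents pel determinisme de $\Ati_2$), i per tant coincidirien. Per a la igualtat $\gen{\Ati_1\times\Ati_2}=\gen{\Ati_1}\cap\gen{\Ati_2}$ faria servir el criteri de pertinença per a autòmats deterministes: si $w$ és una paraula reduïda, aleshores $w\in\gen{\Ati}$ si i només si l'únic camí que, partint del punt base, llegeix $w$ existeix i és tancat (la unicitat i la reduïbilitat d'aquest camí són conseqüència del determinisme). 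Aplicant-ho al producte i usant l'elevació de camins, la lectura de $w$ des de $(\bp_1,\bp_2)$ existeix i és tancada si i només si les lectures de $w$ des de $\bp_1$ i des de $\bp_2$ existeixen i són totes dues tancades, \ie si i només si $w\in\gen{\Ati_1}$ i $w\in\gen{\Ati_2}$.

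Per als apartats~\ref{item: prod not connected} i~\ref{item: prod no core} n'hi ha prou d'exhibir un contraexemple, i un de ben econòmic serveix per a tots dos alhora. Prenent $A=\{a\}$ i $\Ati_1=\Ati_2=\stallings(\gen{a^2})$ --- el $2$-cicle $\bp\xarc{a\,}\bullet\xarc{a\,}\bp$, que és involutiu, determinista, connex i cor --- el producte té per vèrtexs $\{0,1\}\times\{0,1\}$ amb un $a$-arc $(i,j)\to(i+1,j+1)$ (índexs mòdul $2$), i per tant es parteix en dos $2$-cicles disjunts, $\{(0,0),(1,1)\}$ i $\{(0,1),(1,0)\}$. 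Això ja prova~\ref{item: prod not connected}. A més, el punt base és $(0,0)$, que pertany a la primera component, de manera que cap vèrtex de la segona no apareix en cap $\bp$-camí; per tant el producte tampoc no és cor, cosa que prova~\ref{item: prod no core}. (Alternativament, l'\Cref{exe: producte} i la \Cref{fig: pullback} ja mostren un producte de dos autòmats cor que no és cor.)

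L'obstacle principal serà l'apartat~\ref{item: int det}: cal argumentar amb cura que, a l'hora de detectar la pertinença als subgrups reconeguts, n'hi ha prou de treballar amb la lectura de la paraula \emph{reduïda} $w$ i no amb etiquetes possiblement no reduïdes de $\bp$-camins arbitraris. Aquest pas descansa del tot en el determinisme --- que garanteix la unicitat de la lectura i que aquesta lectura és automàticament un camí reduït --- i és la peça que fa coincidir exactament els $\bp$-camins del producte amb les parelles de $\bp$-camins dels factors. Assentat aquest punt, la resta de comprovacions són rutinàries.
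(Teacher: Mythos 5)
Per als apartats \ref{item: deg prod} i \ref{item: int det} segueixes essencialment el mateix camí que el text: la identificació dels arcs del producte amb parelles d'arcs d'igual etiqueta dels factors, la projecció de camins a cada coordenada i el ``cosit'' invers, i el determinisme com a garantia que n'hi ha prou de treballar amb la lectura de la paraula reduïda $w$. De fet, a \ref{item: deg prod} ets més curós que el text: la injectivitat de les projeccions restringides a l'estrella de $(\verti,\vertii)$ \emph{necessita} realment el determinisme (si $\verti$ i $\vertii$ tenen cadascun dos $a$-arcs sortints amb la mateixa etiqueta, aleshores $(\verti,\vertii)$ en té quatre i la desigualtat de graus falla), de manera que la teva restricció explícita al cas determinista --- que és l'únic que s'usa després --- és pertinent, mentre que el text la passa per alt.

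On et desvies genuïnament del text és en els contraexemples de \ref{item: prod not connected} i \ref{item: prod no core}. El text pren $H=\gen{a^2,b}$ i $K=\gen{b,aba^{-1}}$ dins $\Free[\set{a,b}]$; tu prens el producte de $\stallings(\gen{a^2})$ (el $2$-cicle sobre una sola lletra) amb ell mateix. El teu exemple és més econòmic i demostra \ref{item: prod not connected} perfectament. Per a \ref{item: prod no core}, en canvi, només obtens que el producte no és cor com a conseqüència automàtica de la desconnexió ($(0,1)$ i $(1,0)$ no apareixen a cap $\bp$-camí perquè són a una altra component). Això satisfà literalment l'enunciat, però converteix \ref{item: prod no core} en una conseqüència formal de \ref{item: prod not connected} i amaga el fenomen substancial: en el teu exemple la component del punt base \emph{sí} que és cor (és el $2$-cicle que reconeix $\gen{a^2}=H\cap K$), mentre que a l'exemple del text la pròpia component del punt base conté un vèrtex de grau $1$ (un pèl penjant), i el seu cor és estrictament més petit: la rosa amb un sol pètal $b$, que reconeix $H\cap K=\gen{b}$. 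Aquest és el punt que justifica que al \Cref{cor: x} calgui prendre el cor, i no simplement la component connexa del punt base. Finalment, la teva alternativa entre parèntesis és errònia: el producte de l'\Cref{exe: producte} (\Cref{fig: pullback}) és connex \emph{i cor} --- el text ho afirma explícitament quan l'usa per calcular $H\cap K$ a la \Cref{ssec: SIP} --- i per tant no serveix de contraexemple per a cap dels dos apartats. Et recomano suprimir aquest parèntesi i, si vols cobrir \ref{item: prod no core} en el sentit fort, afegir un exemple on la component del punt base del producte tingui un pèl, com el del text.
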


\begin{proof}
\ref{item: deg prod}. És clar de la \Cref{def: product of automata}, ja que tot $a$-arc sortint de $(\verti,\vertii) \in \Verts (\Ati_{\hspace{-1pt}1} \times \Ati_{\hspace{-1pt}2})$ correspon a un $a$-arc sortint tant de $\verti \in \Verts \Ati_{\hspace{-1pt}1}$ com de $\vertii \in \Verts\Ati_{\hspace{-1pt}2}$.

\ref{item: int det}. Que el producte d'autòmats deterministes és, de nou, determinista, és clar de la definició. Suposem que $\Ati_{\hspace{-1pt}1}$ i  $\Ati_{\hspace{-1pt}2}$ són determinstes; aleshores,
si $w\in \gen{\Ati_{\hspace{-1pt}1}\times \Ati_{\hspace{-1pt}2}}$, hi ha un $\bp$-camí reduït a $\Ati_{\hspace{-1pt}1}\times \Ati_{\hspace{-1pt}2}$ amb etiqueta $w$; projectant-lo a la primera i la segona coordenades, obtenim $\bp$-camins reduïts a $\Ati_{\hspace{-1pt}1}$ i a $\Ati_{\hspace{-1pt}2}$, respectivament, llegint igualment $w$; per tant, $w\in \gen{\Ati_{\hspace{-1pt}1}}\cap \gen{\Ati_{\hspace{-1pt}2}}$. I, recíprocament, si $w\in \gen{\Ati_{\hspace{-1pt}1}}\cap \gen{\Ati_{\hspace{-1pt}2}}$, aleshores hi ha $\bp$-camins reduïts a $\Ati_{\hspace{-1pt}1}$ i a $\Ati_{\hspace{-1pt}2}$, respectivament, ambdós llegint la paraula $w$; `cosint-los' arc per arc, obtenim un $\bp$-camí reduït a $\Ati_{\hspace{-1pt}1}\times \Ati_{\hspace{-1pt}2}$ que llegeix $w$ i, per tant, $w\in \gen{\Ati_{\hspace{-1pt}1}\times \Ati_{\hspace{-1pt}2}}$.

\ref{item: prod not connected} i \ref{item: prod no core}. Els subgrups $H=\gen{a^2, b}$ i $K=\gen{b, aba^{-1}}$ de $\Free[\{a,b\}]$ serveixen de contraexemple (deixem al lector el càlcul dels respectius autòmats de Stallings, i del seu producte). 
\end{proof}

\begin{cor}\label{cor: x}
Per $H,K\leqslant F_A$, tenim $\stallings(H\cap K)=\core (\stallings(H) \times \stallings(K))$. \qed
\end{cor}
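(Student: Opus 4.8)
La demostració serà essencialment un acoblament dels resultats ja establerts en aquesta secció; el pla és partir dels autòmats reduïts $\stallings(H)$ i $\stallings(K)$, formar-ne el producte, prendre'n el cor, i reconèixer l'objecte resultant com $\stallings(H\cap K)$ a través de la unicitat del representant reduït. La idea clau és que l'única propietat que el producte $\stallings(H)\times\stallings(K)$ pot no tenir (segons la \Cref{prop: prod props}\ref{item: prod not connected} i \ref{item: prod no core}) és precisament la de ser cor, i aquesta es repara exactament prenent-ne el cor.

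Primer observaria que, per la \Cref{prop: propietats St}\ref{item: St inv,det,cor}, tant $\stallings(H)$ com $\stallings(K)$ són $A$-autòmats involutius, deterministes i cor, amb $\gen{\stallings(H)}=H$ i $\gen{\stallings(K)}=K$. Posant $P=\stallings(H)\times\stallings(K)$, com que els dos factors són deterministes, la \Cref{prop: prod props}\ref{item: int det} garanteix que $P$ és determinista i que $\gen{P}=\gen{\stallings(H)}\cap\gen{\stallings(K)}=H\cap K$. A més, $P$ hereta la involutivitat dels factors, aparellant cada arc $(\verti_1,\verti_2)\xarc{a\,}(\vertii_1,\vertii_2)$ amb el seu invers coordenada a coordenada.

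A continuació prendria el cor $\core(P)$. Per la definició de cor, $\core(P)$ és un subautòmat \emph{induït} i connex de $P$ que conté el punt base i que satisfà $\gen{\core(P)}=\gen{P}=H\cap K$. En ser un subautòmat induït d'un autòmat involutiu i determinista, $\core(P)$ continua essent involutiu i determinista (un eventual parell d'arcs amb la mateixa etiqueta sortint d'un vèrtex ja apareixeria a $P$, i tots els arcs incidents i els seus inversos es conserven en ser induït); i és cor per construcció. Per tant, $\core(P)$ és un $A$-autòmat reduït que reconeix $H\cap K$. Com que $\stallings(H\cap K)$ és també un autòmat reduït reconeixent $H\cap K$, el \Cref{cor: gen iff isom} (o, equivalentment, el \Cref{cor: SchSt}(ii)) força $\core(P)\isom\stallings(H\cap K)$, que és el que volem.

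L'únic pas que demana una mica de cura —i que identificaria com el nucli argumental— és l'aplicació de la \Cref{prop: prod props}\ref{item: int det}: és el determinisme de $\stallings(H)$ i $\stallings(K)$ el que permet ``cosir'' arc per arc un $\bp$-camí a cada factor en un únic $\bp$-camí del producte, i és per això que $\gen{P}$ coincideix amb la intersecció $H\cap K$ i no merament la conté. La resta és comptabilitat ja coberta pels resultats anteriors: que prendre el cor preserva el subgrup reconegut i les propietats d'involutivitat i determinisme, i que un autòmat reduït determina unívocament (mòdul isomorfisme) el subgrup que reconeix.
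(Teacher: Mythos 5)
La teva proposta és correcta i segueix exactament el camí que el paper deixa implícit en marcar el corol·lari amb \qed: combinar la \Cref{prop: prod props}\ref{item: int det} (el producte d'autòmats deterministes és determinista i reconeix la intersecció) amb el fet que prendre el cor preserva el subgrup reconegut, i concloure per la unicitat del representant reduït (\Cref{cor: gen iff isom}). L'observació que el pas de prendre el cor repara alhora la possible manca de connexió i la de ser cor és precisament el contingut dels apartats \ref{item: prod not connected} i \ref{item: prod no core} de la mateixa proposició, així que no hi ha cap buit.
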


Combinat amb la~\Cref{prop: S_T}, això ens demostra la propietat de Howson, la fita de Hanna Neumann, i ens resol el problema de la intersecció per a grups lliures. Definim el \defin{rang reduït} d'un grup lliure $\Free[n]$ com $\rrk(\Free[n])=\max\{0, n-1\}$, és a dir, $\rrk(\Free[n])=n-1$ excepte per al grup trivial, pel que posem $\rrk(1)=0$ (enlloc de~$-1$). 

\begin{thm}[Howson, \cite{howson_intersection_1954}; H. Neumann, \cite{neumann_intersection_1956}]\label{thm: Howson}
Els grups lliures $\Free$ (de qualsevol rang) són Howson. A més, si $H,K\leqslant\fg \Free$, aleshores $\rrk(H\cap K)\leq 2\rrk(H)\rrk(K)$.
\end{thm}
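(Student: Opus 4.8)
The plan is to read off both conclusions from the pullback description $\stallings(H\cap K)=\core(\stallings(H) \times \stallings(K))$ (\Cref{cor: x}) together with the rank count of \Cref{prop: S_T}. First I would settle the Howson property. If $H,K\leqslant\fg \Free[A]$, then by \Cref{thm: Stallings bijection2} the automata $\stallings(H)$ and $\stallings(K)$ are \emph{finite}; their product has at most $\card\Verts\stallings(H)\cdot\card\Verts\stallings(K)$ vertices, hence is finite, and so is its core. By \Cref{prop: S_T}(iii) a finite reduced automaton recognizes a finitely generated (indeed free) subgroup, so $H\cap K=\gen{\core(\stallings(H)\times\stallings(K))}$ is finitely generated. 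The statement for finitely many subgroups then follows by induction.

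For the quantitative bound I would work with reduced rank via degrees. Passing to the restricted cores $\stallings^*(H)$ and $\stallings^*(K)$ (which preserve the rank and recognize a conjugate, by \Cref{rem: <Ati> = <core> = <core*>}), all vertices have degree $\geq 2$; combining this with \eqref{eq: rang-degree} gives, for a connected such automaton, $\rrk = \rk-1 = \tfrac12\sum_{\verti}(\deg(\verti)-2)$, so that $\sum_{\verti}(\deg(\verti)-2)=2\,\rrk(H)$ and similarly for $K$. Since deleting pendant (degree-$1$) vertices does not affect restricted cores, the restricted-core components of $\stallings^*(H)\times\stallings^*(K)$ coincide with those of $\stallings(H)\times\stallings(K)$; in particular, by \Cref{cor: x}, $\stallings^*(H\cap K)$ is exactly the component through the basepoint $(\bp_1,\bp_2)$.

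The heart of the argument is a vertexwise degree inequality. By \Cref{prop: prod props}\ref{item: deg prod}, every product vertex satisfies $\deg(\verti,\vertii)\leq\min\{\deg(\verti),\deg(\vertii)\}$, so a product vertex of degree $\geq 3$ projects to factor vertices of degree $\geq 3$; for $d_1,d_2\geq 3$ the elementary estimate $\min\{d_1,d_2\}-2\leq (d_1-2)(d_2-2)$ then yields
\[
\deg(\verti,\vertii)-2 \,\leq\, (\deg(\verti)-2)(\deg(\vertii)-2),
\]
an inequality that holds trivially (the right side being $\geq 0$) when $\deg(\verti,\vertii)\leq 2$. Since forming cores only lowers degrees, summing over all restricted-core components $\mathcal{C}$ and then over all of $\stallings^*(H)\times\stallings^*(K)$ gives
\[
\sum_{\mathcal{C}}\rrk(\mathcal{C})
=\tfrac12\sum_{\mathcal{C}}\sum_{(\verti,\vertii)\in\mathcal{C}}\!\big(\deg_{\mathcal{C}}(\verti,\vertii)-2\big)
\leq \tfrac12\Big(\sum_{\verti}(\deg(\verti)-2)\Big)\Big(\sum_{\vertii}(\deg(\vertii)-2)\Big)
=\tfrac12\,(2\rrk(H))(2\rrk(K)).
\]
As $\stallings^*(H\cap K)$ is one of these components, $\rrk(H\cap K)\leq 2\,\rrk(H)\,\rrk(K)$; the cases where $H$ or $K$ has rank $\leq 1$ are immediate since then one side vanishes. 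The main obstacle I expect is the bookkeeping, not the algebra: one must justify that the degree formula $\rrk=\tfrac12\sum(\deg-2)$ requires the minimum-degree-$2$ condition (hence the passage to restricted cores), verify cleanly that the restricted-core components match across $\stallings(H)\times\stallings(K)$ and $\stallings^*(H)\times\stallings^*(K)$ so that the component recovering $H\cap K$ is genuinely among those counted, and confirm that core-taking only decreases degrees so that the termwise inequality survives the sum.
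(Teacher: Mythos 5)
Your proposal is correct and takes essentially the same route as the paper's proof: the Howson property from finiteness of the pullback $\stallings(H)\times\stallings(K)$ (\Cref{cor: x} plus \Cref{prop: S_T}), and the bound $\rrk(H\cap K)\leq 2\rrk(H)\rrk(K)$ by passing to restricted cores, applying the degree formula \eqref{eq: rang-degree}, the vertexwise inequality $\deg(\verti,\vertii)-2\leq(\deg(\verti)-2)(\deg(\vertii)-2)$ deduced from \Cref{prop: prod props}\ref{item: deg prod}, and factorizing the double sum over $\Verts(\stallings^*(H)\times\stallings^*(K))$. The only cosmetic difference is that you organize the estimate as a sum over all restricted-core components (in the spirit of the strengthened inequality \eqref{eq: WN}), whereas the paper bounds the basepoint component directly and then enlarges the sum to all product vertices; the manipulation is the same, and both treat the compatibility of restricted cores with the product at the same level of detail.
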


\begin{proof}
Observeu que, com que la propietat de Howson només involucra subgrups finitament generats, és suficient demostrar-la quan l'ambient $\Free$ és de rang finit, cosa que assumirem durant la demostració. Si $H,K\leqslant \Free[n]$ són finitament generats aleshores $\stallings(H)$ i $\stallings(K)$ són finits, i per tant $\stallings(H)\times \stallings(K)$ també serà finit, d'on (per la \Cref{prop: prod props}.\ref{item: int det}) es dedueix que $H\cap K$ torna a ser finitament generada. En conseqüència, $\Free[n]$ (i, per tant, tot grup lliure) és Howson.
 
Per veure la desigualtat, observem primer que si $\Ati$ és un graf connex i finit, i~$\Delta$ és el graf obtingut després d'eliminar successivament vèrtexs de grau $1$ (i les corresponents arestes incidents), aleshores $\rk(\Atii)=\rk(\Ati)$. D'altra banda, per la \Cref{prop: prod props}\ref{item: deg prod}, si $\deg(\verti)\geq 2$ i $\deg(\vertii)\geq 2$, aleshores 
\begin{equation}
\deg(\verti,\vertii)-2
\,\leq\,
\left( \deg(\verti)-2\big)\big( \deg(\vertii)-2 \right).
 \end{equation}

És clar que ens podem restringir al cas $H,K, H\cap K\neq \Trivial$. Siguin ara $\Ati_{\hspace{-1pt}H} = \stallings(H)$, $\Ati_{\hspace{-1pt}K} = \stallings(K)$, $\Ati$ la component connexa de $\Ati_{\hspace{-1pt}H}\times \Ati_{\hspace{-1pt}K}$ contenint el punt base, i~$\Ati^*_{\!H}$, $\Ati^*_{\!K}$ i $\Ati^*\subseteq \Ati^*_{\!H}\times \Ati^*_{\!K}$ els respectius cors reduïts. Com que $\Ati^*$ té tots els vèrtexs $(\verti,\vertii)$ de grau 2 o superior (i, per tant, $\verti$ i $\vertii$ també), usant les observacions anteriors, l'\Cref{eq: rang-degree} i les propietats del cor reduït obtenim:
\begin{align*}
\rk(H\cap K)-1
&\,=\,
\rk (\Ati^*) -1 \\
&\,=\,
\textstyle{\frac{1}{2}\sum_{(\verti,\vertii)\in \Verts\Ati^*} 
( \deg(\verti,\vertii)-2)}\\
&\,\leq\,
\textstyle{\frac{1}{2}\sum_{(\verti,\vertii)\in \Verts\Ati^*} ( \deg(\verti)-2\big)(\deg(\vertii)-2 )}\\
&\,\leq\,
\textstyle{\frac{1}{2}\sum_{(\verti,\vertii)\in \Verts(\Ati^*_{\!H}\times \Ati^*_{\!K})} (\deg(\verti)-2)(\deg(\vertii)-2)}\\
&\,=\,
\textstyle{\frac{1}{2}\big(\sum_{\verti\in \Verts\Ati^*_{\!\!H}} (\deg(\verti)-2) \big)
 \big( \sum_{\vertii\in \Verts\Ati^*_{\!\!K}} ( \deg(\vertii)-2) \big)}\\
&\, =\, 2( \rk(H)-1)( \rk(K)-1). &&\qedhere
\end{align*}
\end{proof}
Amb una demostració una mica més tècnica es pot veure una desigualtat més forta, que va demostrar Walter Neumann a~\cite{kovacs_intersections_1990}: 
 \begin{equation}\label{eq: WN}
\sum\nolimits_{w\in H\backslash \Free[A]/K} \rrk(H^w\cap K)\,\leq\, 2\rrk(H)\rrk(K)
 \end{equation}
on $H^w=w^{-1}Hw$, i el sumatori recorre tots els $w$'s d'un transversal qualsevol del conjunt de classes laterals dobles $H\backslash \Free[A]/K=\{HwK \mid w\in \Free[A]\}$ (es pot veure que els sumands no nuls es corresponen bijectivament amb les components connexes no arbre i no cícliques del producte $\stallings(H)\times \stallings(K)$). 

Arribats a aquest punt, és obligat mencionar que Hanna Neumann primer (sobre la desigualtat al \Cref{thm: Howson}) i Walter Neumann més tard (sobre la desigualtat~\eqref{eq: WN}) van conjecturar que el factor `2' es pot eliminar de les fites respectives. Aquestes són les famoses `Conjectura de Hanna Neumann' i `Conjectura Forta de Hanna Neumann'\footnote{\emph{Hanna Neumann Conjecture} i \emph{Strenghtened Hanna Neumann Conjecture}, en anglès.}, demostrades (independentment i quasi simultània) fa pocs anys per J. Friedman i I. Mineyev; vegeu \cite{friedman_sheaves_2015}, \cite{mineyev_submultiplicativity_2012}, i les remarcables simplificacions obtingudes per W. Dicks, a \cite[Appendix B]{friedman_sheaves_2015} i a \cite{dicks_simplified_2012}, respectivament.

\begin{thm}[Friedman \cite{friedman_sheaves_2015} i I. Mineyev \cite{mineyev_submultiplicativity_2012}]
Sigui $\Free$ un grup lliure i $H,K$ subgrups finitament generats de $\Free$. Aleshores, 
 \begin{equation*}\label{eq: WN2}
\sum\nolimits_{w\in H\backslash \Free[A]/K} \rrk(H^w\cap K)
\,\leq\,
 \rrk(H)\rrk(K).
 \end{equation*}
\end{thm}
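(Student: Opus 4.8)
The plan is to build on the fiber-product construction already used for the factor-$2$ bound in \Cref{thm: Howson} and \Cref{eq: WN}, and to pinpoint exactly where that argument concedes a factor of $2$. Since the statement only involves finitely generated $H,K$, I would first reduce to finite ambient rank, so that $\stallings(H)$, $\stallings(K)$ and $\stallings(H)\times\stallings(K)$ are all finite. By the remark accompanying \Cref{eq: WN}, those connected components of $\stallings(H)\times\stallings(K)$ that are neither trees nor circles correspond bijectively to the double cosets $HwK$ with $\rrk(H^w\cap K)>0$, each such component $C$ satisfying $\rrk(\gen{C})=\rrk(H^w\cap K)$. Hence the entire left-hand side equals $\sum_C \rrk(\gen{C})$, and by \Cref{eq: rang-degree} this is the single Euler-characteristic count $\frac{1}{2}\sum_{(\verti,\vertii)}(\deg(\verti,\vertii)-2)$ ranging over all vertices of the reduced cores of the non-degenerate components.

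The obstruction is now transparent. The vertex-local inequality $\deg(\verti,\vertii)-2\leq(\deg(\verti)-2)(\deg(\vertii)-2)$ established in the proof of \Cref{thm: Howson}, summed over vertices, gives $\bigl(\sum_{\verti}(\deg(\verti)-2)\bigr)\bigl(\sum_{\vertii}(\deg(\vertii)-2)\bigr)=4\,\rrk(H)\rrk(K)$, and the prefactor $\frac{1}{2}$ cancels only one of the two factors. Removing the surviving factor of $2$ cannot be achieved vertex by vertex --- this is precisely the difficulty that kept the conjecture open for over half a century --- so the local estimate must be replaced by a global device that is sharp only in aggregate. Two routes are known. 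The combinatorial--analytic route, due to Mineyev \cite{mineyev_submultiplicativity_2012} and streamlined by Dicks \cite{dicks_simplified_2012}, builds a left-$\Free[A]$-equivariant order (a \emph{Mineyev system}) on the edges of the Cayley tree $\cayley(\Free[A],A)$; transported coherently across all $\Free[A]$-translates, this order singles out in the fiber product an equivariant sub-forest whose complement carries exactly the aggregate excess, yielding $\frac{1}{2}\sum_{(\verti,\vertii)}(\deg(\verti,\vertii)-2)\leq \rrk(H)\rrk(K)$. The sheaf-theoretic route, due to Friedman \cite{friedman_sheaves_2015} (with the simplification in \cite[Appendix B]{friedman_sheaves_2015}), equips the graphs with suitable sheaves, introduces their \emph{maximum excess}, shows it computes the reduced rank of the relevant objects, and establishes that maximum excess is submultiplicative under the tensor product of sheaves; the inequality then follows from a first-cohomology vanishing.

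I would pursue the Mineyev--Dicks route, since it stays within the automata-theoretic language of this article: after fixing the equivariant order, one designates a canonical incident edge at each product vertex and verifies that the undesignated edges assemble $\Free[A]$-equivariantly into a forest, so that the designated edges account for the excess without double counting. The main obstacle --- and in truth the whole mathematical content of the theorem --- is the construction of the Mineyev system itself: an $\Free[A]$-equivariant order on the tree with the requisite consistency, whose existence is a genuinely global fact that occupies the bulk of \cite{mineyev_submultiplicativity_2012} and \cite{dicks_simplified_2012}, and which no naive averaging on $\stallings(H)\times\stallings(K)$ alone reproduces. Accordingly, in an expository account I would state the theorem, carry out the reduction to the aggregate degree count above, and then defer the construction of the Mineyev system (equivalently, Friedman's maximum-excess submultiplicativity) to the cited works.
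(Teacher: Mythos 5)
El propi article no demostra aquest teorema: l'enuncia com la Conjectura (Forta) de Hanna Neumann, resolta per Friedman i Mineyev, i en remet completament la demostració a \cite{friedman_sheaves_2015}, \cite{mineyev_submultiplicativity_2012} i a les simplificacions de Dicks \cite{dicks_simplified_2012}. La teva proposta fa exactament el mateix ---diagnostiques correctament que el factor $2$ del \Cref{thm: Howson} prové de l'estimació local per vèrtexs i no es pot eliminar amb les eines d'autòmats de Stallings d'aquest article, i difereixes l'ingredient essencial (els sistemes de Mineyev, resp.\ l'excés màxim de Friedman) a les obres citades---, de manera que el teu enfocament coincideix amb el del text.
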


Com ja hem justificat, la tècnica del producte d'autòmats també ens permet resoldre el problema de la intersecció.

\begin{thm}
El problema de la intersecció de subgrups per a grups lliures, $\SIP(\Free)$, és resoluble. \qed
\end{thm}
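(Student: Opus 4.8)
The plan is to turn the constructive identity $\stallings(H\cap K)=\core(\stallings(H)\times\stallings(K))$ of \Cref{cor: x} into an explicit algorithm, and then read off a base of $H\cap K$ by means of \Cref{prop: S_T}. First I would reduce to a finitely generated ambient group: since the input words $u_1,\ldots,u_k,v_1,\ldots,v_l$ involve only finitely many letters, let $A_0\subseteq A$ be the finite set of letters actually occurring. Working inside $\Free[A_0]$ changes neither $H=\gen{u_1,\ldots,u_k}$, nor $K=\gen{v_1,\ldots,v_l}$, nor their intersection, so we may assume $A$ finite throughout.

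Next, by \Cref{thm: Stallings bijection2} the Stallings automata $\stallings(H,A)$ and $\stallings(K,A)$ are computable from the given finite generating sets, and—being the Stallings automata of finitely generated subgroups—they are finite. I would then form the product automaton $\stallings(H)\times\stallings(K)$ of \Cref{def: product of automata}; it is finite, since its vertex set is the cartesian product of two finite sets. By \Cref{prop: prod props}\ref{item: int det}, as both factors are deterministic the product is deterministic and recognizes exactly $H\cap K$.

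The one point requiring care is that this product need be neither connected nor core (\Cref{prop: prod props}\ref{item: prod not connected},\ref{item: prod no core}). This is precisely what the core operation at the base point $(\bp_H,\bp_K)$ remedies: by \Cref{cor: x} we have $\core(\stallings(H)\times\stallings(K))=\stallings(H\cap K)$, again a finite automaton. Computing the connected component of the base point and then deleting hanging hairs is a routine finite graph manipulation. Finally, choosing any spanning tree $T$ of this finite, deterministic, reduced automaton, \Cref{prop: S_T}\ref{item: S_T lliure} yields an explicit base $S_T$ of $H\cap K$ together with its rank.

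Because $\Free$ is Howson (\Cref{thm: Howson}), the intersection of two finitely generated subgroups is always finitely generated, so the decision part of $\SIP(\Free)$ answers affirmatively in every instance, and the construction above always terminates with a finite generating set. The only genuine subtlety—the possible disconnectedness and non-coreness of the product—is thus absorbed entirely into the (algorithmic) core operation, leaving a completely effective procedure; I do not expect any serious obstacle beyond bookkeeping, precisely because Howson's property (itself a corollary of the finiteness of the product in \Cref{thm: Howson}) removes the only nontrivial decision.
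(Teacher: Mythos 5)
La teva proposta és correcta i segueix essencialment el mateix camí que l'article: calcular $\stallings(H)$ i $\stallings(K)$, fer-ne el producte (\Cref{def: product of automata}), prendre'n el cor per obtenir $\stallings(H\cap K)$ via el \Cref{cor: x}, i extreure'n una base amb la \Cref{prop: S_T}, observant que la part de decisió sempre és afirmativa per la propietat de Howson (\Cref{thm: Howson}). L'única diferència és cosmètica (la reducció explícita a un alfabet finit, que l'article fa implícitament a la prova del \Cref{thm: Howson}).
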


\begin{exm}
Com a exemple, acabem el càlcul començat a l'\Cref{exe: interseccio}. Es tractava de calcular la intersecció de $H=\langle u_1, u_2, u_3\rangle$ i $K=\langle v_1, v_2, v_3\rangle$ donats per les paraules $u_1=b$, $u_2=a^3$, $u_3=a^{-1}bab^{-1}a$, $v_1=ab$, $v_2=a^3$, i $v_3=a^{-1}ba$, com a subgrups de $\Free[A]$, $A=\{a,b\}$. A simple vista ja havíem trobat els elements $a^3,\, b^{-1}a^3 b,\, a^{-1}ba^3 b^{-1}a \in H\cap K$; però no quedava gens clar si aquests tres elements generen $H\cap K$, o en falten de més complicats per descobrir. 

A l'\Cref{exe: producte} hem calculat $\stallings(H)$, $\stallings(K)$ i $\stallings(H)\times \stallings(K)$. Com que aquest últim ja és connex i cor, el~\Cref{cor: x} ens diu que $\stallings(H\cap K)=\stallings(H)\times \stallings(K)$. Prenent com a arbre maximal $T$ l'indicat amb arcs en traç gruixut a la \Cref{fig: pullback}, obtenim la base
 \begin{equation*}
S_T
=
\set{
b^{-1}a^3 b,
a^3,a^{-1}ba^3b^{-1}a,
a^{-1}bab^{-1}a^3ba^{-1}b^{-1}a,
a^{-1}bab^{-1}aba^{-1}ba^{-1}b^{-1}a}
 \end{equation*}
per a la intersecció $H\cap K$.

A més, com que cadascun dels cinc generadors a $S_T$ és l'etiqueta d'un $\bp$-camí reduït a $\stallings(H)\times \stallings(K)$, el podem projectar a la primera i a la segona coordenada, respectivament, i, usant el mètode de la \Cref{ssec: MP}, podem reescriure'l com a paraula en $\{u_1, u_2, u_3\}$ i en $\{ v_1, v_2, v_3\}$:
\begin{equation*}
\begin{array}{rcl}
H\ni u_1^{-1} u_2 u_1 =\ & b^{-1}a^3 b &\ = v_1^{-1}v_2 v_1 \in K, \\
H\ni u_2 =\ & a^3 &\ = v_2 \in K, \\ 
H\ni u_3^3 =\ & a^{-1}ba^3b^{-1}a &\ = v_3v_2 v_3^{-1} \in K, \\
H\ni u_3u_2u_3^{-1} =\ & a^{-1}bab^{-1}a^3ba^{-1}b^{-1}a &\ = v_3v_1^{-1}v_2v_1v_3^{-1}\in K, \\
H\ni u_3u_1u_3^{-1} =\ & a^{-1}bab^{-1}aba^{-1}ba^{-1}b^{-1}a &\ = v_3v_1^{-1}v_2v_3v_2^{-1}v_1v_3^{-1}\in K.
 \end{array}
 \end{equation*}
\end{exm}

\begin{rem}
    Tal i com succeeix amb el problema de la pertinença (veure \cite{mikhailova_occurrence_1958}), no és difícil demostrar que el problema de la intersecció és també indecidible per a productes directes de grups lliures (veure~\cite[Corollary 2.3]{delgado_intersection_2018}), confirmant la indocilitat algorísmica d'aquesta família, en cert sentit propera al grup lliure.
\end{rem}

\medskip

Per acabar, vegem que la mateixa idea és útil també per a resoldre el problema de la intersecció de classes laterals en un grup lliure. Recordem que si $G$ és un grup qualsevol, $u,v\in G$ dos elements, i $H,K\leqslant G$ dos subgrups, llavors les classes laterals $Hu$ i $Kv$ o bé són disjuntes,
o bé s'intersequen exactament en una classe lateral de $H\cap K$, \ie $Hu\cap Kv=(H\cap K)w$. 

\begin{named}[Problema de la intersecció de classes laterals a $G = \pres{A}{R}$, $\CIP(G)$]
Donades dues famílies finites de paraules $u,u_1,\ldots,u_k;v,u_1,\ldots,u_l \in (A^{\pm})^*$, decidir si la intersecció $\gen{u_1,\ldots,u_k}u \cap \gen{v_1,\ldots,v_l}v \subseteq G$ és buida, i, en cas negatiu, calcular-ne un representant.
\end{named}

\begin{thm}
El problema de la intersecció de classes laterals per a grups lliures, $\CIP(\Free)$, és resoluble.
\end{thm}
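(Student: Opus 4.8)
The plan is to reduce $\CIP(\Free)$ to a reachability question in a finite product automaton, paralleling the treatment of $\SIP$ via \Cref{cor: x}. Write $H=\gen{u_1,\dots,u_k}$ and $K=\gen{v_1,\dots,v_l}$, and recall the structural fact noted above: $\gen{u_1,\dots,u_k}u\cap\gen{v_1,\dots,v_l}v$ is either empty or a single coset $(H\cap K)w$. The geometric reformulation I would exploit is that, in the (infinite) Schreier automaton $\schreier(H)$, reading reduced words from $\bp$ to a fixed vertex yields precisely the reduced representatives of the right coset attached to that vertex; in particular the reduced labels of $\bp$-to-$Hu$ paths are exactly the reduced forms of the elements of $Hu$. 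Hence $w\in Hu\cap Kv$ iff reading $w$ in $\schreier(H)\times\schreier(K)$ from the basepoint $(\bp,\bp)=(H,K)$ reaches the vertex $(Hu,Kv)$. So the decision amounts to testing whether $(Hu,Kv)$ lies in the connected component of $(H,K)$, and the label of any connecting path is a representative.

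To make this effective I first compute $\stallings(H)$ and $\stallings(K)$, which are finite since $H,K$ are finitely generated. These capture the cosets indexed by core vertices, but $Hu$ and $Kv$ may lie in the Cayley branches hanging off the core (\Cref{prop: propietats St}\ref{item: Sch = St + Cay branches}). I therefore augment $\stallings(H)$ to a finite automaton $\Gamma_H$ by reading $\red{u}$ from $\bp$: whenever the unique deterministic path meets an $a$-deficient vertex I append a fresh vertex and $a$-arc (an initial segment of the relevant Cayley branch), stopping at a vertex $q_H$ after $|\red{u}|$ steps. Since I only attach a hair, $\core(\Gamma_H)=\stallings(H)$ and $\gen{\Gamma_H}=H$, the automaton $\Gamma_H$ is involutive and deterministic, it embeds in $\schreier(H)$, and $q_H$ is exactly the vertex $Hu$. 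I build $\Gamma_K$ and $q_K=Kv$ analogously, form the product $\Gamma_H\times\Gamma_K$ (deterministic, by \Cref{prop: prod props}\ref{item: int det}), and test reachability of $(q_H,q_K)$ from $(\bp,\bp)$.

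The crux, and the step needing genuine care, is checking that this \emph{finite} product loses no solutions: a priori a synchronizing word $w$ could, when read in the full $\schreier(H)$, wander through branches absent from $\Gamma_H$. A determinism argument rules this out. As $w$ is reduced, its reading path is reduced, so it has no backtracking. Each Cayley branch is a tree joined to the core by a single bridge arc (\Cref{prop: propietats Sch}\ref{item: Sch hanging trees}), whence a reduced path cannot enter a branch and leave it again without crossing that bridge twice, i.e.\ backtracking. Consequently the reduced path from $\bp$ to $Hu$ decomposes as a core path to the attachment vertex followed by the unique tree geodesic up to $Hu$; but that geodesic is precisely the hair appended while reading $\red{u}$. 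Hence the path reading $w$ stays inside $\Gamma_H$ (and likewise $\Gamma_K$), so it lifts to a $(\bp,\bp)$-to-$(q_H,q_K)$ path in $\Gamma_H\times\Gamma_K$. The converse is immediate: any $(\bp,\bp)$-to-$(q_H,q_K)$ path has a reduced label $w$ with $Hw=Hu$ and $Kw=Kv$, so $w\in Hu\cap Kv$. Thus $(q_H,q_K)$ is reachable iff $Hu\cap Kv\neq\emptyset$, the search is decisive, and the label of a connecting path is the desired representative, which together with $\stallings(H\cap K)=\core(\stallings(H)\times\stallings(K))$ from \Cref{cor: x} even describes the whole coset $(H\cap K)w$.
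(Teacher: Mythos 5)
Your proposal is correct and follows essentially the same route as the paper: augment $\stallings(H)$ and $\stallings(K)$ with hairs obtained by reading $u$ and $v$ (your $\Gamma_H,\Gamma_K$ are exactly the paper's $\stallings_u(H)$ and $\stallings_v(K)$), take the product, and decide by testing whether $(\bp_H,\bp_K)$ and $(q_H,q_K)$ lie in the same connected component, extracting a representative from the label of a connecting path. Your bridge/tree argument showing that a reduced reading of any element of $Hu$ cannot stray outside the finite augmented automaton makes explicit a completeness step the paper leaves implicit, but the construction and conclusion coincide.
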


\begin{proof}
Donades paraules reduïdes $u,u_1,\ldots,u_k;v,v_1,\ldots,v_l \in \Free[A]$, calculem els autòmats de Stallings
de
$H=\gen{u_1,\ldots,u_k}$ i $K=\gen{v_1,\ldots,v_l}$. Ara intentem seguir un camí reduït a $\stallings(H)$ començant a $\bp_H$ i llegint $u=a_{i_1}\cdots a_{i_n}$, (on $a_{i_1},\ldots a_{i_n}\in A^{\pm}$). Si no és possible, perquè després de llegir $a_{i_j}$ arribem a un vèrtex $(a_{i_{j+1}})$-deficient $\vertiii$, ampliem l'autòmat $\stallings(H)$ tot afegint el \emph{pèl} 
 \begin{equation}
\vertiii \! \xarc{\raisebox{0.6ex}{$\scriptstyle{a_{i_{j+1}}}$}} \bullet \xarc{\raisebox{0.6ex}{$\scriptstyle{a_{i_{j+2}}}$}} \bullet \xarc{\raisebox{0.6ex}{$\scriptstyle{a_{i_{j+3}}}$}} \ \cdots \ \xarc{\raisebox{0.6ex}{$\scriptstyle{a_{i_n}}$}} \verti\
 \end{equation}
(per poder completar la lectura de $u$ com a etiqueta d'un camí de $\bp_H$ a, diguem~$\verti$) i designem per $\stallings_u(H)$ l'autòmat resultant. Anàlogament, construïm $\stallings_v(K)$ (assegurant-nos que conté un camí llegint $v$, de $\bp_K$ a, diguem~$\vertii$) i calculem el producte $\Ati =\stallings_u(H)\times \stallings_v(K)$.

Ara, si $(\bp_H, \bp_K)$ i $(\verti,\vertii)$ estan a la mateixa component connexa de $\Ati$, prenem un camí $\walki$ de $(\bp_H, \bp_K)$ a $(\verti,\vertii)$, i el projectem a $\stallings_u(H)$ i a $\stallings_v(K)$, respectivament. Com que ambdues projeccions tenen etiqueta $w=\rlab(\walki)$, i van de $\bp_H$ a $p$, i de $\bp_K$ a $\vertii$, respectivament, concloem que $Hu=Hw$ i $Kv=Kw$; per tant, $w\in Hu\cap Hv$. 

Recíprocament, si $(\bp_H, \bp_K)$ a $(\verti,\vertii)$ són a components connexes diferents no hi ha cap parella de camins reduïts, de $\bp_H$ a $\verti$ (a $\stallings_u(H)$) i de $\bp_K$ a $\vertii$ (a $\stallings_v(K)$), compartit la mateixa etiqueta. Per tant, $Hu\cap Kv\neq \emptyset$ si i només si $(\bp_H, \bp_K)$ i $(\verti,\vertii)$ són a la mateixa component connexa de $\stallings_u(H)\times \stallings_v(K)$. Això completa la demostració. 
\end{proof}

Per acabar  d'i\lgem ustrar la utilitat d'aquesta tècnica del producte de $A$-autòmats, donem un argument força curt i elegant per demostrar, en l'àmbit dels grups lliures, un resultat sobre subgrups força conegut i vàlid en general (la prova del cas general requereix d'eines algebraiques més sofisticades com pot ser el Teorema del Subgrup de Kurosh).

\begin{prop}
Sigui $G$ un grup i siguin $H,K,H',K'$ subgrups de $G$. Si $H\leqff K$ i $H'\leqff K'$, llavors $H\cap H'\leqff K\cap K'$. 
\end{prop}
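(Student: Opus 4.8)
The plan is to prove the proposition for an \emph{arbitrary} ambient group $G$, reading $H \leqff K$ in its general sense: $K$ decomposes as an internal free product $K = H * L$ for some $L \leqslant K$ (this specialises to the basis-extension condition when $G$ is free). The single general tool I would invoke is a clean consequence of the Kurosh Subgroup Theorem, isolated as a lemma: \emph{whenever $U \leqslant A * B$, the intersection $U \cap A$ is a free factor of $U$.} Indeed, Kurosh writes $U$ as a free product of a free group with subgroups of the form $U \cap A^{d}$ and $U \cap B^{d'}$, where $d, d'$ run over suitable double-coset representatives; the representative $d = 1$ of the double coset $UA$ contributes precisely the factor $U \cap A^{1} = U \cap A$, so $U \cap A \leqff U$ (trivially so if this intersection happens to be trivial).

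With this lemma in hand, the proposition follows by two successive intersections and one application of transitivity. First I would work inside $K = H * L$ and take $U = K \cap K'$: the lemma gives $(K \cap K') \cap H \leqff K \cap K'$, and since $H \leqslant K$ (so $K \cap H = H$) this simplifies to $H \cap K' \leqff K \cap K'$.

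Next I would work inside $K' = H' * L'$ and take $U = H \cap K'$: the lemma gives $(H \cap K') \cap H' \leqff H \cap K'$, and since $H' \leqslant K'$ (so $K' \cap H' = H'$) this is exactly $H \cap H' \leqff H \cap K'$. It then remains to chain the two relations, which is the elementary fact that free factors compose: if $H \cap K' = (H \cap H') * Q$ and $K \cap K' = (H \cap K') * R$, then $K \cap K' = (H \cap H') * (Q * R)$, whence $H \cap H' \leqff K \cap K'$, as claimed.

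The main point to flag is why the geometric machinery of this section does \emph{not} apply directly: Stallings automata live only over a free ambient group, so the product identity $\stallings(H \cap H') = \core(\stallings(H) \times \stallings(H'))$ together with the remark that a subautomaton yields a free factor settles only the special case $G = \Free$. For general $G$ the geometry must be replaced by Kurosh, and the only genuine care needed is the bookkeeping in the lemma --- checking that the \emph{trivial} double-coset representative really yields $U \cap A$ itself, and not a proper conjugate, as a Kurosh free factor. Everything else reduces to the formal simplifications $K \cap H = H$, $K' \cap H' = H'$, and the transitivity of $\leqff$.
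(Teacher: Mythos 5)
Your proof is correct, but it follows a genuinely different --- and in fact more general --- route than the paper's. The paper proves the proposition \emph{only} for $G=\Free$ a free group: its proof opens by saying so explicitly, after remarking just before the statement that the general case requires the Kurosh Subgroup Theorem, which is precisely the tool you deploy. The paper's argument stays inside the Stallings machinery you correctly set aside as insufficient for general $G$: it first shows that $H\leqff K\leqslant \Free$ and $L\leqslant \Free$ imply $H\cap L\leqff K\cap L$, by taking a basis $A$ of $K$ extending a basis of $H$, so that $\stallings(H,A)$ is a sub-rose of the rose $\bouquet_A=\stallings(K,A)$; the product $\stallings(H,A)\times\stallings(K\cap L,A)$, which recognizes $H\cap (K \cap L) = H\cap L$, is then visibly a subautomaton of $\stallings(K\cap L,A)$ (keep only the arcs labelled by basis letters lying in $H$), and a subautomaton of a reduced automaton always recognizes a free factor (by extending a spanning tree). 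Two applications (first with $L=H'$, then with the roles of the two pairs swapped and $L=K$) give $H\cap H'\leqff K\cap H'\leqff K\cap K'$, and transitivity finishes. Your Kurosh lemma ($U\leqslant A*B$ implies $U\cap A\leqff U$) actually contains the paper's lemma as the special case $U=K\cap L$ inside $K=H*M$, and your chain $H\cap H'\leqff H\cap K'\leqff K\cap K'$ is the mirror image of the paper's. What your approach buys is the proposition in the generality in which it is actually stated (arbitrary ambient group $G$), which the paper asserts but deliberately does not prove; what the paper's approach buys is a self-contained, constructive argument using only the product-of-automata technique of that section, at the price of covering only the free case. The subtlety you flag --- that the double coset of the identity contributes $U\cap A$ itself, not a proper conjugate, as a Kurosh factor --- is indeed the one delicate point, and it is standard: via Bass--Serre theory one simply chooses the vertex fixed by $A$ as the representative of its own $U$-orbit.
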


\begin{proof}
(Fem la demostració només en el cas que $G$ és un grup lliure, $G=\Free$.)

Veurem primer que si $H\leqslant\ff K\leqslant \Free$, i $L\leqslant \Free$, llavors $H\cap L \leqslant\ff K\cap L$. Considerem una base $A$ de $K$ que extengui una base de $H$, i observem que $\stallings(H,A)$ és simplement una rosa d'uns quants pètals (precisament, els etiquetats pels elements de $A$ pertanyents a $H$). Considerem $\stallings(K\cap L, A)$ i calculem ${H\cap L}=H\cap (K\cap L)$ fent el producte d'aquests dos $A$-autòmats: clarament, $\stallings(H,A)\times \stallings(K\cap L,A)$ és el $A$-subautòmat de $\stallings(K\cap L,A)$ determinat pels arcs amb etiqueta a $H$. Per tant, $H\cap L\leqff K\cap L$, tal com volíem. 

Aplicant aquest fet dues vegades, obtenim $H\cap H'\leqff K\cap H'\leqff K \cap K'$ i, per tant, $H\cap H'\leqff K\cap K'$, tal com volíem demostrar. 
\end{proof}


\section{Per saber-ne més}

La teoria dels autòmats de Stallings és un tema molt conegut i ben representat a la literatura. A més de l'article original de Stallings \parencite{stallings_topology_1983} (d'orientació més aviat topològica) existeixen diversos \emph{surveys} amb un punt de vista més proper a l'usat en aquest article (vegeu \eg \cite{kapovich_stallings_2002,bartholdi_rational_2021,delgado_list_2022}). Els articles usant la teoria de Stallings per resoldre altres problemes específics són innumerables i continuen apareixent amb regularitat. Podeu trobar alguns exemples a \parencite{stallings_todd-coxeter_1987,arzhantseva_class_1996,martino_automorphism-fixed_2000,margolis_closed_2001,miasnikov_algebraic_2007,silva_algorithm_2008,bassino_random_2008,bassino_statistical_2013,puder_primitive_2014,delgado_lattice_2020,delgado_relative_2022}.

Arrel de l'immens èxit de la Teoria de Stallings pel grup lliure, també hi ha hagut múltiples extensions d'aquesta idea a àmbits més generals, per exemple: grafs de grups
\parencite{stallings_foldings_1991,
rips_cyclic_1997,
sela_acylindrical_1997,
sela_diophantine_2001,
guirardel_approximations_1998,
guirardel_reading_2000,
kapovich_foldings_2005}; monoides i semigrups \parencite{margolis_free_1993,
margolis_closed_2001,
delgado_combinatorial_2002,
steinberg_inverse_2002}; grups satisfent certes propietats de petita cance\l.lació\footnote{\emph{small cancellation} en anglès.} \parencite{arzhantseva_class_1996}; grups totalment residualment lliures \parencite{myasnikov_fully_2006,
kharlampovich_subgroups_2004,
nikolaev_finite_2011}; productes lliures \parencite{ivanov_intersection_1999}; amalgames de grups finits \parencite{markus-epstein_stallings_2007}; grups actuant lliurement a $\ZZ^n$-arbres \parencite{nikolaev_membership_2012}; grups virtualment lliures \parencite{silva_finite_2016};
subgrups quasi-convexos \parencite{kharlampovich_stallings_2017}; productes directes i semidirectes de grups lliures amb grups lliure-abelians \parencite{delgado_algorithmic_2013,delgado_extensions_2017,delgado_stallings_FTA_2022}; complexos cúbics CAT(0)~\parencite{beeker_stallings_2018}; grups relativament hiperbòlics \parencite{kharlampovich_generalized_2020}; i grups de Coxeter d'angle recte\footnote{\emph{right-angled Coxeter groups} en anglès.}~\parencite{dani_subgroups_2021}, entre d'altres. 

\section*{Agraïments}
Els autors agraïm el suport parcial rebut de l'Agencia Estatal de Investigación, a través del projecte de recerca MTM2017-82740-P (AEI/FEDER, UE). El primer autor va realitzar la primera part d'aquest treball a la Universitat del País Basc (UPV/EHU) amb suport parcial del MINECO a través del projecte PID2019-107444GA-I00, i del govern basc amb el projecte IT974-16.

Agraïm als revisors i editors l'acurada lectura d'aquest text i els pertinents comentaris i suggeriments realitzats.

\newpage

\renewcommand*{\bibfont}{\footnotesize}
\printbibliography


\end{document}